\documentclass[12pt]{article}
\usepackage{amssymb, amsmath, amsthm, amscd}
\usepackage{mathtools}
\usepackage[utf8]{inputenc}
\usepackage[all,cmtip]{xy}
\usepackage{enumitem}
\usepackage{setspace}
\usepackage{mathdots}
\usepackage[mathscr]{euscript}
\usepackage[colorlinks=true]{hyperref}
\usepackage{dsfont}
\hypersetup{colorlinks   = true}
\hypersetup{linkcolor=blue}
\usepackage{tikz}

\begin{document}

\mathchardef\mhyphen="2D
\newtheorem{The}{Theorem}[section]
\newtheorem{Lem}[The]{Lemma}
\newtheorem{Prop}[The]{Proposition}
\newtheorem{Cor}[The]{Corollary}
\newtheorem{Rem}[The]{Remark}
\newtheorem{Obs}[The]{Observation}
\newtheorem{SConj}[The]{Standard Conjecture}
\newtheorem{Titre}[The]{\!\!\!\! }
\newtheorem{Conj}[The]{Conjecture}
\newtheorem{Question}[The]{Question}
\newtheorem{Prob}[The]{Problem}
\newtheorem{Def}[The]{Definition}
\newtheorem{Not}[The]{Notation}
\newtheorem{Claim}[The]{Claim}
\newtheorem{Conc}[The]{Conclusion}
\newtheorem{Ex}[The]{Example}
\newtheorem{Fact}[The]{Fact}
\newtheorem{Formula}[The]{Formula}
\newtheorem{Formulae}[The]{Formulae}
\newtheorem{The-Def}[The]{Theorem and Definition}
\newtheorem{Prop-Def}[The]{Proposition and Definition}
\newtheorem{Def-Prop}[The]{Definition and Proposition}
\newtheorem{Lem-Def}[The]{Lemma and Definition}
\newtheorem{Cor-Def}[The]{Corollary and Definition}
\newtheorem{Conc-Def}[The]{Conclusion and Definition}
\newtheorem{Terminology}[The]{Note on terminology}
\newcommand{\C}{\mathbb{C}}
\newcommand{\R}{\mathbb{R}}
\newcommand{\N}{\mathbb{N}}
\newcommand{\Z}{\mathbb{Z}}
\newcommand{\Q}{\mathbb{Q}}
\newcommand{\Proj}{\mathbb{P}}
\newcommand{\Rc}{\mathcal{R}}
\newcommand{\Oc}{\mathcal{O}}
\newcommand{\Vc}{\mathcal{V}}
\newcommand{\Id}{\operatorname{Id}}
\newcommand{\pr}{\operatorname{pr}}
\newcommand{\rk}{\operatorname{rk}}
\newcommand{\del}{\partial}
\newcommand{\delbar}{\bar{\partial}}
\newcommand{\Cdot}{{\raisebox{-0.7ex}[0pt][0pt]{\scalebox{2.0}{$\cdot$}}}}
\newcommand\nilm{\Gamma\backslash G}
\newcommand\frg{{\mathfrak g}}
\newcommand{\fg}{\mathfrak g}
\newcommand{\Oh}{\mathcal{O}}
\newcommand{\Kur}{\operatorname{Kur}}
\newcommand\gc{\frg_\mathbb{C}}
\newcommand\slawek[1]{{\textcolor{red}{#1}}}
\newcommand\dan[1]{{\textcolor{blue}{#1}}}
\newcommand\question[1]{{\textcolor{green}{#1}}}

\begin{center}

  {\Large\bf $m$-Positive Stability of Holomorphic Vector Bundles and Moduli Spaces}

\end{center}

\begin{center}

{\large Dan Popovici}

\end{center}

\vspace{1ex}

\noindent{\small{\bf Abstract.} We first propose a notion of $m$-positivity for higher-rank vector bundles, a variant of which reduces to the classical Griffiths positivity when $m=1$. Based on this, we go on to propose a generalisation of the classical Mumford-Takemoto theory of stability by means of a smooth function that we associate with every proper coherent subsheaf ${\cal F}$ of a given holomorphic vector bundle $E$. This places the emphasis on the holomorphic structure and the Hermitian fibre metric of $E$, rather than on numerical invariants of the smooth structure of $E$, making our stability conditions into relative pointwise $m$-positivity properties of $E$ with respect to its proper coherent subsheaves ${\cal F}$. We establish links with Hermite–Einstein geometry, prove that Hermite–Einstein bundles are uniformly semi-stable, study the resulting moduli spaces, and compare the new notions with the classical Mumford–Takemoto (semi-)stability notions.}

\vspace{1ex}

\section{Introduction}\label{section:introd} {\it Background and motivation}

The classical Mumford-Takemoto stability theory associates a real number, called the {\it slope} $\mu({\cal F})$, with every torsion-free coherent sheaf ${\cal F}$ over an $n$-dimensional compact K\"ahler manifold $(X,\,\omega)$ (or, more generally, a compact complex manifold on which a Gauduchon metric $\omega$ has been fixed). An important feature of the slope, defined as the ratio \begin{eqnarray*}\mu({\cal F})=\mu_\omega({\cal F}):=\frac{\int\limits_Xc_1({\cal F})\wedge\omega^{n-1}}{\rk({\cal F})} = \frac{\deg_\omega({\cal F})}{\rk({\cal F})},\end{eqnarray*} is that it depends only on the underlying $C^\infty$ determinant line bundle $\det{\cal F}$ associated with the sheaf ${\cal F}$ in at least any of the following two cases:

\vspace{1ex}

(a)\, the Hermitian metric $\omega$ on $X$ is {\it balanced} (i.e. $d\omega^{n-1} = 0$);

\vspace{1ex}

(b)\, the Hermitian metric $\omega$ on $X$ is Gauduchon (such metrics always exist) and $X$ is a {\it $\partial\bar\partial$-manifold}.

\vspace{1ex}

\noindent In other words, the degree (hence also the slope) is independent of the holomorphic structure of $\det{\cal F}$ in either of the cases (a) and (b).

Indeed, the first Chern class $c_1({\cal F})$ of ${\cal F}$ is defined to be the first Chern class of $\det{\cal F}$, the image under the canonical group isomorphism $c_1:H^1(X,\,{\cal E}^\star)\longrightarrow H^2(X,\,\Z)$ of the isomorphism class (of $C^\infty$ line bundles on $X$) of $\det{\cal F}$. One then first takes the image of this class in $H^2(X,\,\R)$ under the canonical morphism $H^2(X,\,\Z)\longrightarrow H^2(X,\,\R)$ and then the image $c_1^{dR}({\cal F})$ in the de Rham cohomology group $H^2_{dR}(X,\,\R)$ (which is known to be the de Rham cohomology class of the Chern curvature form of $\det{\cal F}$ with respect to any fibre metric) under the canonical isomorphism $H^2(X,\,\R)\longrightarrow H^2_{dR}(X,\,\R)$. Thus, by the Stokes theorem, the quantity $\int_Xc_1^{dR}({\cal F})\wedge\omega^{n-1}$ is well defined (i.e. is independent of the choice of representative of the de Rham class $c_1^{dR}({\cal F})$) when $d\omega^{n-1} = 0$.

However, if $\omega$ only satisfies the condition $\partial\bar\partial\omega^{n-1} = 0$ (equivalently, $\omega$ is a {\it Gauduchon metric}), one has to use the Bott-Chern version $c_1^{BC}({\cal F})$ (defined as the Bott-Chern cohomology class of the Chern curvature form of $\det{\cal F}$ with respect to any fibre metric) of the first Chern class of $\det{\cal F}$ in order to make the integral $\int_Xc_1^{BC}({\cal F})\wedge\omega^{n-1}$ well defined. Now, the map $H^{1,\,1}_{BC}(X,\,\C)\longrightarrow H^2_{dR}(X,\,\C)$ induced by the identity map of forms is always well defined, but it need not be injective. However, when $X$ is a {\it $\partial\bar\partial$-manifold}, this map is well known to be {\it injective}. Meanwhile, changing the holomorphic structure of $\det{\cal F}$ without changing the underlying $C^\infty$ structure returns the same de Rham class $c_1^{dR}({\cal F})$, hence also the same Bott-Chern class $c_1^{BC}({\cal F})$ if, in addition, $X$ is a {\it $\partial\bar\partial$-manifold}.

\vspace{1ex}

Then, a holomorphic vector bundle $E\to X$ is said to be {\it slope (semi-)stable} if $\mu({\cal F})<\mu(E)$ (respectively $\mu({\cal F})\leq\mu(E)$) for every proper coherent subsheaf ${\cal F}$ of ${\cal O}(E)$ (or of $E$, for short). This notion depends on the holomorphic structure of $E$ through the coherent subsheaves ${\cal F}$ on which it has to be tested (and which change when the holomorphic structure $D''$ on a given $C^\infty$ complex vector bundle $E$ changes). However, the slope does not.

\vspace{2ex}

\noindent {\it Our notions: $m$-positivity and $m$-positive stability}

In this paper, we propose a generalisation of the classical theory of stability by replacing the numerical slope invariant $\mu({\cal F})$ by a $C^\infty$ function $Z_h({\cal F}):X\longrightarrow\R$ associated with every triple $(E,\,h,\,{\cal F})$ consisting of a Hermitian holomorphic vector bundle $(E,\,h)$ of any rank $r\geq 1$ on $X$ and of a coherent subsheaf ${\cal F}\subset{\cal O}(E)$ of any rank $s\in\{1,\dots , r-1\}$. The underlying idea is to make the resulting (semi-)stability notions depend more heavily on the holomorphic structure $D''$ of $E$ and, in addition, on the given Hermitian fibre metric $h$ on $E$. This flexibility leads to phenomena that do not occur in the the classical setting.

Our approach starts with the introduction in $\S$\ref{section:m-pos_v-bundles} of pointwise $m$-positivity notions for holomorphic vector bundles, one of which reduces to the classical Griffiths positivity when $m=1$. Stability is then defined by requiring a suitable relative $m$-positivity condition between the given vector bundle $E$ and each of its proper coherent subsheaves ${\cal F}$.  

Throughout much of the paper, we will assume the existence on the given $n$-dimensional compact complex manifold $X$ of a K\"ahler metric $\omega$ and, for a fixed $m\in\{1,\dots , n\}$, of a form $\Omega\in C^\infty_{n-m,\,n-m}(X,\,\R)$ such that $\partial\bar\partial\Omega = 0$ and $\Omega>0$ (metrically weakly) on $X$. This positivity condition imposed on $\Omega$ was introduced in Definition 3.13. of the appendix to [Pop25] together with other strong and weak strict positivity notions for forms and currents that complement the classical strong and weak semi-positivity notions introduced by Lelong in the 1950's. It means that $\Omega - \varepsilon\,\omega^{n-m}\geq 0$ (weakly) at every point of $X$, for some constant $\varepsilon>0$. 

\subsection{$m$-positivity}\label{subsection:introd_m-positivity}

One of the (semi-)positivity notions we propose in $\S$\ref{subsection:m-pos_def}, at an arbitrary point $x\in X$, in the fairly general setting of an arbitrary Hermitian metric $\omega$ on $X$ is \begin{eqnarray*}(E,\,h) \hspace{1ex}\mbox{is}\hspace{1ex}  \omega\mbox{\bf -m-semi-positive}\hspace{1ex} \mbox{at}\hspace{1ex}  x & \iff & \bigg\{\bigg(i\Theta_h(E)\wedge\omega^{m-1}\wedge\Omega\bigg)\,u,\,u\bigg\}_h\geq 0, \hspace{2ex} u\in E_x, \\
  & & \Omega\in\Lambda^{n-m,\,n-m}T^\star_x X \hspace{1ex}\mbox{with}\hspace{1ex} \Omega> 0 \hspace{1ex} \mbox{(metrically weakly)},\end{eqnarray*} where $i\Theta_h(E)$ is the curvature form of the Chern connection of the Hermitian holomorphic vector bundle $(E,\,h)$.

Moreover, {\bf $\omega$-$m$-positivity} is defined in the same way by requiring a strict inequality ``$>0$''. Choosing $\Omega = \omega\wedge\Gamma$ with $\Gamma\in\Lambda^{n-m-1,\,n-m-1}T^\star_x X$ and $\Gamma>0$ (metrically weakly), we get the implication: \begin{eqnarray*}(E,\,h) \hspace{1ex}\mbox{is}\hspace{1ex}  \omega\mbox{\bf -m-semi-positive}\hspace{1ex} \mbox{at}\hspace{1ex}  x \implies (E,\,h) \hspace{1ex}\mbox{is}\hspace{1ex}  \omega\mbox{\bf -(m+1)-semi-positive}\hspace{1ex} \mbox{at}\hspace{1ex}  x\end{eqnarray*} for every $m\in\{1,\dots , n-2\}$. We notice in Observation \ref{Obs:m-pos_Griffiths-Nakano_comparison} that the strongest of these properties (the one for $m=1$) is equivalent to $(E,\,h)$ being {\bf Griffiths (semi-)positive} at $x$.

  When $\Omega$ has been fixed, these notions are called {\bf $(\omega,\,\Omega)$-$m$-(semi-)positivity}.

  Before proving functoriality properties for these positivity notions in $\S$\ref{subsection:prop_m-pos_operations}, we prove in $\S$\ref{subsection:vanishing} a Kobayashi-style vanishing theorem tailored for our setting in which a K\"ahler metric $\omega$ and a globally defined form $\Omega\in C^\infty_{n-m,\,n-m}(X,\,\R)$ with the properties specified above have been fixed (see Theorem \ref{The:Kobayashi-vanishing_gen} for a more precise statement):

  \begin{The}\label{The:introd_Kobayashi-vanishing_gen} (a)\, $E$ has no non-zero global holomorphic sections if there exists a $C^\infty$ Hermitian fibre metric $h$ on $E$ such that $(E,\,h)$ is {\bf $(\omega,\,\Omega)$-$m$-semi-negative} at every point of $X$ and {\bf $(\omega,\,\Omega)$-$m$-negative} at some point $x_0\in X$.

\vspace{1ex}

(b)\, Any non-zero global holomorphic section of $E$ is {\bf parallel} with respect to the Chern connection $D_h$ of any $C^\infty$ Hermitian fibre metric $h$ on $E$ such that $(E,\,h)$ is {\bf $(\omega,\,\Omega)$-$m$-semi-negative} at every point of $X$.

\end{The}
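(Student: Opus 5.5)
We follow the Bochner–Kodaira argument: integrate against the $\partial\bar\partial$-closed form $\omega^{m-1}\wedge\Omega$ instead of $\omega^{n-1}$. Let $s\in H^0(X,\,E)$ and consider the smooth function $|s|_h^2$. Write $D_h = D_h' + \bar\partial$ for the Chern connection. Since $\bar\partial s = 0$, the standard computation gives the pointwise identity of $(1,\,1)$-forms
\begin{eqnarray*}
i\partial\bar\partial|s|_h^2 = i\{D_h's,\,D_h's\}_h - \{i\Theta_h(E)\,s,\,s\}_h .
\end{eqnarray*}
This follows from $\bar\partial|s|^2_h = \{s,\,D'_hs\}_h$, followed by applying $\partial$ and using $\bar\partial D'_h s = \Theta_h(E)\,s$, with the usual sign bookkeeping. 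Fix the sign convention carefully, so that $(E,\,h)$ being $m$-semi-negative means $\{(i\Theta_h(E)\wedge\omega^{m-1}\wedge\Omega)\,u,\,u\}_h\leq 0$.

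Next we wedge this identity with $\omega^{m-1}\wedge\Omega$ and integrate over $X$. Because $\omega$ is Kähler, $d\omega=0$; combined with $\partial\bar\partial\Omega=0$, this gives $\partial\bar\partial(\omega^{m-1}\wedge\Omega)=0$. By Stokes, integrating by parts twice, the left-hand side therefore integrates to zero:
\begin{eqnarray*}
0 = \int_X i\{D_h's,\,D_h's\}_h\wedge\omega^{m-1}\wedge\Omega \; - \; \int_X \{(i\Theta_h(E)\wedge\omega^{m-1}\wedge\Omega)\,s,\,s\}_h .
\end{eqnarray*}
We then control the sign of each integrand.

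\begin{itemize}
\item The second integrand is $\leq 0$ everywhere by $m$-semi-negativity, so the term $-\int_X\{\cdots\}_h$ is $\geq 0$.
\item For the first integrand, $i\{D'_hs,\,D'_hs\}_h$ is a semi-positive $(1,\,1)$-form: locally it equals $\sum_\lambda i\alpha_\lambda\wedge\bar\alpha_\lambda$ for $(1,\,0)$-forms $\alpha_\lambda$, in an $h$-orthonormal frame. The form $\omega^{m-1}\wedge\Omega$ is weakly positive, since $\Omega>0$ weakly and $\omega^{m-1}$ is strongly positive.
\item So $i\alpha\wedge\bar\alpha\wedge\omega^{m-1}\wedge\Omega\geq 0$ as a top-degree form. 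Moreover, since $\Omega\geq\varepsilon\,\omega^{n-m}$, it dominates $\varepsilon\, i\alpha\wedge\bar\alpha\wedge\omega^{n-1} = \frac{\varepsilon}{n}\,|\alpha|^2_\omega\,\omega^n$.
\end{itemize}

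Hence both integrals are non-negative with total zero, so each vanishes. The first gives $D'_hs\equiv0$; together with $\bar\partial s=0$ this yields $D_hs=0$, i.e. $s$ is parallel, which proves (b). For (a), $s$ is parallel, so $|s|_h$ is constant. The second integrand vanishes identically, so at $x_0$ we get $\{(i\Theta_h(E)\wedge\omega^{m-1}\wedge\Omega)\,s(x_0),\,s(x_0)\}_h=0$. Strict negativity at $x_0$ then forces $s(x_0)=0$, so the constant $|s|_h$ is zero and $s\equiv0$. If strict negativity at $x_0$ is only for the possibly pointwise-chosen $\Omega$ in the definition, one uses the fixed global $\Omega$, as in the $(\omega,\,\Omega)$ notion.

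The main obstacle is the positivity step for $i\alpha\wedge\bar\alpha\wedge\omega^{m-1}\wedge\Omega$ with only weak positivity of $\Omega$. This works because $i\alpha\wedge\bar\alpha\wedge\omega^{m-1}$ is strongly positive, and strongly positive forms pair non-negatively with weakly positive ones. The domination by $\varepsilon\,\omega^{n-m}$ comes from the metric weak strict positivity $\Omega-\varepsilon\omega^{n-m}\geq0$. The secondary care point is the sign convention in the Bochner identity.
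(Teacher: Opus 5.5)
Your proposal is correct and follows essentially the paper's proof. Both use the same pointwise Bochner identity
$i\bar\partial\partial\{s,\,s\}_h\wedge\omega^{m-1}\wedge\Omega = \{(i\Theta_h(E)\wedge\omega^{m-1}\wedge\Omega)\,s,\,s\}_h - i\,\{D_h's,\,D_h's\}_h\wedge\omega^{m-1}\wedge\Omega$,
in which both terms have a definite sign. The differences are cosmetic: the paper globalises via the maximum principle for $P_{\omega,\,\Omega}$ rather than by Stokes with $\partial\bar\partial(\omega^{m-1}\wedge\Omega)=0$, and it cites Lemma 2.9 of [Pop25] for the positivity and non-degeneracy of $i\,\{\eta,\,\eta\}_h\wedge\omega^{m-1}\wedge\Omega$, which you prove inline from $\Omega\geq\varepsilon\,\omega^{n-m}$.
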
    

  The proof is based on an application of the Bochner technique. Variants of it are used on several occasions throughout the paper.

  \subsection{$m$-positive stability}\label{subsection:introd_m-positive-stability}

  $\bullet$ Central to our approach is the use of the {\bf canonical section} $\sigma_{{\cal F}}\in H^0(X,\,\Lambda^s E\otimes\det{\cal F}^\star)$ induced by every coherent subsheaf ${\cal F}\subset{\cal O}(E)$ of rank $s\in\{1,\dots , r-1\}$ of the given holomorphic vector bundle $E$. (See Fact \ref{Fact:canonical-section}.) Once a smooth Hermitian fibre metric $h$ has been fixed on $E$, we associate with the pair $(\sigma_{{\cal F}},\,h)$ (see Definition \ref{Def:Z-function}) a $C^\infty$ function $Z_h({\cal F})= Z_{\omega,\,\Omega,\,h}^{(m)}({\cal F}):X\longrightarrow\R$, that we call the {\bf stability function}, comparing the curvature of $E$ with the Hermite-Einstein geometry of $\det{\cal F}$ through the canonical section associated with ${\cal F}$. The definition uses the (unique, up to a positive multiplicative constant) $(\omega,\,\Omega)$-Hermite-Einstein fibre metric (a generalisation proposed in [Pop25] of the classical Hermite-Einstein condition) on the line bundle $\det{\cal F}^\star$. Its normalised version, $\widehat{Z}_h({\cal F}):=Z_h({\cal F})/|\sigma_{{\cal F}}|^2:X\longrightarrow\R$, is independent of the choice of $(\omega,\,\Omega)$-Hermite-Einstein fibre metric on $\det{\cal F}^\star$.

 A key observation that shapes our approach is provided by Proposition \ref{Prop:Z-function_alternative} which gives the following dichotomy for every proper coherent subsheaf ${\cal F}\subset{\cal O}(E)$: \begin{eqnarray*}\label{eqn:introd_Z-function_alternative}\mbox{either}\hspace{2ex} Z_h({\cal F})\equiv 0 \hspace{5ex}\mbox{or}\hspace{5ex} Z_h({\cal F})(x_0)>0 \hspace{1ex}\mbox{for some point}\hspace{1ex} x_0\in X.\end{eqnarray*} The same proposition ensures that we always have $\int_X Z_h({\cal F})\,dV_\omega \geq 0$, with equality if and only if $\sigma_{{\cal F}}$ is parallel with respect to the Chern connection of $(\Lambda^s E\otimes\det{\cal F}^\star,\,\Lambda^sh\otimes h^{H-E}_{\det{\cal F}^\star})$.

This leads us to introduce in Definition \ref{Def:m-pos-stability} our notions of stability. For the sake of perspicuity, we drop the pair $(\omega,\,\Omega)$ from the terminology used in this introduction and occasionally elsewhere throughout the paper.

A holomorphic vector bundle $E = (E,\,D'')\longrightarrow X$ of rank $r\geq 1$ is said to be {\bf $m$-positively stable} (respectively {\bf $m$-positively semi-stable}) if for every coherent subsheaf ${\cal F}\subset{\cal O}(E)$ of any rank $s\in\{1,\dots , r-1\}$, there exists a $C^\infty$ Hermitian fibre metric $h$ on $E$ such that \begin{eqnarray*} Z_h({\cal F})> 0 \hspace{5ex} (\mbox{respectively} \hspace{1ex} Z_h({\cal F})\geq 0) \hspace{5ex} \mbox{at every point of}\hspace{1ex} X.\end{eqnarray*} Note that $h$ depends on ${\cal F}$ in this definition.

The {\bf uniform} strengthening of this definiton requires the same conditions to hold with a same $C^\infty$ Hermitian fibre metric $h$ on $E$ for all proper coherent subsheaves ${\cal F}\subset{\cal O}(E)$. In this case, we use interchangeably any of the following formulations (and we may further drop the ``$m$-positively''): \begin{eqnarray*}(E,\,h) \hspace{1ex}\mbox{is {\bf uniformly stable} (resp. {\bf uniformly semi-stable})} & \iff & \\
  h \hspace{1ex}\mbox{is {\bf uniformly stable} (resp. {\bf uniformly semi-stable}) for}\hspace{1ex} D'' \hspace{1ex} \mbox{(or for}\hspace{1ex} E) &\iff & \\
  D'' \hspace{1ex}\mbox{is {\bf uniformly stable} (resp. {\bf uniformly semi-stable}) for}\hspace{1ex} h.\end{eqnarray*} When no metric is specified, we will say that the holomorphic structure $D''$ of $E$ (or $E$ or $(E,\,D'')$) is {\bf uniformly (semi-)stable} if there exists a $C^\infty$ Hermitian fibre metric $h$ on $E$ for which $D''$ has this property.

\vspace{2ex}

The quadratic form that defines the stability function of ${\cal F}\subset{\cal O}(E)$ through an evaluation on the canonical section $\sigma_{\cal F}$ links these stability notions to the $m$-positivity notions introduced in $\S$\ref{subsection:m-pos_def}. 

\vspace{2ex}

$\bullet$ A large portion of the paper is devoted to the study of the properties of these stability notions and to the moduli spaces they induce. We gather some of them in the following statement that merges (parts of) several results obtained in $\S$\ref{section:m-pos-stability_def-prop_stability} and $\S$\ref{section:moduli}.

\begin{The}\label{The:introd_summary} $(1)$\, (cf. Theorem \ref{The:m-pos-stable_implies_simple}) If $(E,\,D'')$ is {\bf $m$-positively stable}, then $(E,\,D'')$ is {\bf simple} (in the sense that every holomorphic endomorphism $T\in H^0(X,\,\operatorname{End} E)$ is an {\bf isometry} -- namely, it equals $\lambda\,\mbox{Id}_E$ for some constant $\lambda\in\C$).

\vspace{1ex}

$(2)$\, (cf. Proposition \ref{Prop:m-pos-stability_G-C-action_invariance}) For every $C^\infty$ $\C$-vector bundle $E$ of rank $r\geq 1$ on $X$, the subset \begin{eqnarray*}{\cal H}_{m\mbox{-stable}}(E)\subset{\cal H}^s(E),\end{eqnarray*} of holomorphic structures $D''$ on $E$ such that the holomorphic vector bundle $(E,\,D'')$ is {\bf $m$-positively stable} is {\bf invariant} under the right action of the complex gauge group ${\cal G}^{\C} = C^\infty(X,\,\mbox{Aut}\,E) = GL(E)\subset C^\infty(X,\,\operatorname{End} E)$ of $C^\infty$ automorphisms of $E$. By ${\cal H}^s(E)$ we mean the set of simple holomorphic structures $D''$ on $E$.

\vspace{1ex}

$(3)$\, (cf. Corollary and Definition \ref{Cor-def:moduli-stable_set}) The set-theoretic {\bf moduli space of $m$-positively stable holomorphic structures} on $E$, defined to be the first quotient in the inclusion: \begin{eqnarray*}{\cal M}_{m\mbox{-stable}}(E):={\cal H}_{m\mbox{-stable}}(E)/{\cal G}^{\cal C}\xhookrightarrow { } {\cal M}^s(E)={\cal H}^s(E)/{\cal G}^{\cal C},\end{eqnarray*} where the second quotient is the moduli space of simple holomorphic structures on $E$, is meaningful.

\vspace{1ex}

$(4)$\, (cf. Corollary \ref{Cor:uniform-stab_G-C-action_invariance} -- a refinement of $(2)$) For every $T\in{\cal G}^{\cal C}$, the following equivalence holds: \begin{eqnarray*}D'' \hspace{1ex} \mbox{is {\bf uniformly stable} for}\hspace{1ex} h \iff  D''_T:=T^{-1}\circ D''\circ T \hspace{1ex} \mbox{is {\bf uniformly stable} for}\hspace{1ex} h_T,\end{eqnarray*} where $h_T$ is the $C^\infty$ Hermitian fibre metric on $E$ defined by $T$ by requiring \begin{eqnarray*}\langle u,\,v\rangle_{h_T}:= \langle Tu,\,Tv\rangle_h, \hspace{5ex} u,v\in E_x,\hspace{1ex} x\in X.\end{eqnarray*}

\vspace{1ex}

$(5)$\, (cf. Corollary and Definition \ref{Cor-Def:m-pos-stable_moduli} and Proposition \ref{Prop:Hausdorff_moduli_h-unif-stable}) For any $C^\infty$ Hermitian fibre metric $h$ on $E$ that is uniformly stable for at least one holomorphic structure $D''$ on $E$, the set \begin{eqnarray*}{\cal H}_{m\mbox{-u-stable}}(E,\,h) &:=& \bigg\{D''\,\mid\, D''\in{\cal H}(E) \hspace{1ex} \mbox{is {\bf uniformly stable for} h}\bigg\}\end{eqnarray*} is invariant under the right action of the group $U(E,\,h)$ of $h$-unitary automorphisms of $E$.

  Moreover, the set-theoretic {\bf moduli space of $h$-uniformly stable holomorphic structures} on $E$, defined to be the quotient: \begin{eqnarray*}{\cal M}_{m\mbox{-u-stable}}(E,\,h):={\cal H}_{m\mbox{-u-stable}}(E,\,h)/U(E,\,h)\end{eqnarray*} is meaningful and is {\bf Hausdorff} when equipped with the quotient topology induced by the $C^\infty$ topology of ${\cal H}_{m\mbox{-u-stable}}(E,\,h)$. 

\end{The}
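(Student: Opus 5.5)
The strategy is to prove the five parts in order, since each later part rests on the mechanism of the earlier ones. The common mechanism is that $Z_h({\cal F})$ is built from the curvature $i\Theta_h(E)$, the canonical section $\sigma_{{\cal F}}$ and the $(\omega,\Omega)$-Hermite–Einstein metric on $\det{\cal F}^\star$. A gauge transformation $T$ therefore transports every ingredient of the triple $(E,h,{\cal F})$ into the corresponding ingredient for $(E, D''_T, h_T, T^{-1}{\cal F})$.

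For $(1)$, let $T\in H^0(X,\operatorname{End}E)$ and pick a point $x$ and an eigenvalue $\lambda$ of $T_x$. Then $T-\lambda\,\mbox{Id}_E$ is a holomorphic endomorphism that is not injective at $x$, so it is either identically zero or has proper, non-zero kernel and image sheaves. In the second case I would take ${\cal F}=\ker(T-\lambda\,\mbox{Id}_E)$, or alternatively the saturation of the image, and argue by contradiction. Stability for ${\cal F}$ gives a metric $h$ with $Z_h({\cal F})>0$ everywhere. The aim is to play this against the Bochner-type identity behind Proposition \ref{Prop:Z-function_alternative}, which expresses $\int_X Z_h\,dV_\omega$ through $\|D'\sigma\|^2$ minus a non-negative term. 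The key point to use is that the induced map $E/{\cal F}\to E$ identifies the quotient with a subsheaf ${\cal G}$ of $E$. Combining the canonical sections of ${\cal F}$ and ${\cal G}$ via $\sigma_{\cal F}\wedge\sigma_{\cal G}$ should produce a holomorphic section of $\det E\otimes\det{\cal F}^\star\otimes\det{\cal G}^\star$. The Hermite–Einstein normalisations and the Kobayashi-type vanishing of Theorem \ref{The:Kobayashi-vanishing_gen} should then force a contradiction with strict positivity of $Z$ for one of ${\cal F}$ or ${\cal G}$. This step is the main obstacle. The difficulty is that the metric $h$ depends on the subsheaf, so a clean slope-type additivity is unavailable. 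I would first try to apply the stability hypothesis to both ${\cal F}$ and ${\cal G}$ and integrate, so that only integrated quantities appear. These are controlled by $\partial\bar\partial\Omega=0$ and the Kähler condition.

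For $(2)$ and $(4)$, the steps are direct. $T$ gives a holomorphic isomorphism $(E,D''_T)\to(E,D'')$ and an isometry $(E,h_T)\to(E,h)$, so the Chern connections correspond: $D_{h_T,D''_T}=T^{-1}\circ D_{h,D''}\circ T$ and $\Theta_{h_T}=T^{-1}\Theta_hT$. Coherent subsheaves correspond via ${\cal F}\mapsto T^{-1}{\cal F}$ with $\det(T^{-1}{\cal F})\simeq\det{\cal F}$, and the same Hermite–Einstein metric can be used on both. The canonical sections satisfy $\sigma_{T^{-1}{\cal F}}=\Lambda^sT^{-1}\sigma_{{\cal F}}$, hence $Z_{h_T}(T^{-1}{\cal F})=Z_h({\cal F})\circ$ (identity on $X$), pointwise. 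This gives $(4)$ immediately, for a single metric and all subsheaves. It also gives $(2)$, by choosing for each $T^{-1}{\cal F}$ the metric $h_T$, where $h$ is the metric chosen for ${\cal F}$. Simplicity is preserved since $(E,D'')\simeq(E,D''_T)$, and with $(1)$ the stable set lies in ${\cal H}^s(E)$.

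Part $(3)$ follows formally from $(2)$: ${\cal H}_{m\mbox{-stable}}(E)$ is a union of ${\cal G}^{\C}$-orbits, so its quotient embeds in ${\cal M}^s(E)$. For $(5)$, when $T\in U(E,h)$ we have $h_T=h$, so $(4)$ gives invariance. For Hausdorffness I would use the standard argument that the unitary gauge group acts properly in the $C^\infty$ topology. Take sequences $D''_k\to D''$ and $D''_k\cdot T_k\to D'''$ with $T_k$ unitary. Then $T_k$ is uniformly bounded pointwise, and the equation $D''_k T_k = T_k\,(D''_k\cdot T_k)$ together with elliptic bootstrapping yields a $C^\infty$-convergent subsequence $T_k\to T$ with $D'''=D''\cdot T$. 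Hence the orbit relation is closed, and the quotient of a space by a group acting with closed orbit relation and open quotient map is Hausdorff.
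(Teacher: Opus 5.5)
\textbf{Parts $(2)$--$(5)$.} Your arguments here are the paper's. Transporting every ingredient under $T$ gives $Z^{(m)}_{\omega,\Omega,h_T}(T^{-1}{\cal F})=Z^{(m)}_{\omega,\Omega,h}({\cal F})$. Unitary $T$ satisfy $h_T=h$. Hausdorffness comes from properness of the $U(E,h)$-action; the paper cites [Kob87] for this, while you sketch the bootstrapping.

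\textbf{Part $(1)$ has a genuine gap.} You list ingredients but never produce an inequality. Stability hands you unrelated metrics for ${\cal F}$ and for ${\cal G}$. The Bochner identity behind Proposition~\ref{Prop:Z-function_alternative} gives exactly
$\int_X Z_h({\cal F})\,dV_\omega=\int_X i\{D'\sigma_{\cal F},D'\sigma_{\cal F}\}\wedge\omega^{m-1}\wedge\Omega\geq 0$,
with no subtracted term, and this is perfectly compatible with $Z_h({\cal F})>0$. The section $\sigma_{\cal F}\wedge\sigma_{\cal G}$ either vanishes identically (when ${\cal F}\cap{\cal G}\neq 0$) or merely records a holomorphic splitting $E\simeq{\cal F}\oplus{\cal G}$, which contradicts nothing.

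The paper argues differently. It views ${\cal F}=f(E)$ both as a subsheaf and as a quotient, and asserts $\sharp\mu_{\cal F}=\sigma_{\cal F}/|\sigma_{\cal F}|^2_{h_A}$. This would make $W_h({\cal F})=Z_h(\ker f)$ have the opposite sign to $Z_h({\cal F})$ pointwise. That duality holds only if $\ker f_x=(\operatorname{Im} f_x)^{\perp_h}$ and $f$ is the identity on its image. In general $\mu_{\cal F}(\sigma_{\cal F})=\det(f_{|{\cal F}})$, which is $0$ for nilpotent $f$. Generically, $\sharp\mu_{\cal F}$ spans $\Lambda^s((\ker f)^{\perp_h})$, not $\Lambda^s(\operatorname{Im} f)$.

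\textbf{No argument can close $(1)$ when $h$ may depend on ${\cal F}$.} Take the following setting.
\begin{enumerate}
\item Let $X=\C^2/\Gamma$ be a torus with $NS(X)=0$, $\omega$ flat, $m=1$, $\Omega=\omega$, and $E={\cal O}_X^{\oplus 2}$.
\item For every rank-one ${\cal F}\subset{\cal O}(E)$, $\det{\cal F}={\cal F}^{\vee\vee}$ is a line bundle with $c_1=0$ and a non-zero map to ${\cal O}_X^{\oplus 2}$. Hence it is trivial, $\sigma_{\cal F}$ is a non-zero constant vector $e_1$, and the Hermite--Einstein metric on $\det{\cal F}^\star$ is constant.
\item Complete $e_1$ to a constant frame $(e_1,e_2)$. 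Choose a non-zero real-linear $\ell$ with $\ell(\Gamma)\subset\Z$ and put $\psi:=e^{2\pi i\ell}$.
\item Define $h$ by $h(e_1,e_1)=1$, $h(e_2,e_2)=2$, $h(e_2,e_1)=\psi$. This is positive definite since $|\psi|=1$.
\item Since $|e_1|_h\equiv 1$, the Bochner identity gives $\{i\Theta_h(E)e_1,e_1\}_h=i\{D'_he_1,D'_he_1\}_h$.
\item Moreover $h(D'_he_1,e_2)=\partial\bar\psi=-2\pi i\bar\psi\,\partial\ell$, which never vanishes. Hence $Z_h({\cal F})>0$ on $X$.
\end{enumerate}
So ${\cal O}_X^{\oplus 2}$ is $(\omega,\omega)$-$1$-positively stable but not simple.

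The same $h$ also refutes the paper's sign identity. Take $f$ the projection onto ${\cal O}e_1$ along ${\cal O}e_2$. Since $|e_2|^2_h\equiv 2$ and $h(D'_he_2,e_1)=\partial\psi\neq 0$, we get $Z_h({\cal O}e_2)>0$ as well as $Z_h({\cal O}e_1)>0$.

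\textbf{Consequences for the other parts.} The inclusion ${\cal H}_{m\mbox{-stable}}(E)\subset{\cal H}^s(E)$ in $(2)$ and the embedding into ${\cal M}^s(E)$ in $(3)$ are also unsupported. The ${\cal G}^{\C}$-invariance itself and parts $(4)$--$(5)$ stand as you prove them.
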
  

\vspace{2ex}

$\bullet$ In $\S$\ref{section:w-h-subbundle_formalism}, we adapt to our setting the Uhlenbeck-Yau formalism of [UY86] and [UY89] to give an alternative point of view on the stability notions introduced in this paper. Recall that, once a $C^\infty$ Hermitian fibre metric $h$ has been fixed on a holomorphic vector bundle $E$, a coherent analytic subsheaf ${\cal F}\subset{\cal O}(E)$ identifies with the orthogonal projection $\pi\in L^2_1(X,\,\operatorname{End}E)$ of $E$ onto the holomorphic vector subbundle $F$ of $E$ that ${\cal F}$ defines outside an analytic subset $S\subset X$ of codimension $\geq 2$. The regularity of $\pi$ is $C^\infty$ on $X\setminus S$ and Sobolev $L^2_1$ on the whole of $X$.

We start from the observation that $\Lambda^s\pi$ is actually $C^\infty$ on the whole of $X$, where $s$ is the rank of ${\cal F}$, and that it relates nicely through the identity \begin{eqnarray*}\Lambda^s\pi = \sigma_{\cal F}\circ\sigma_{\cal F}^\star\in C^\infty(X,\,\operatorname{End}(\Lambda^sE))\end{eqnarray*} to the canonical section $\sigma_{\cal F}$ induced by ${\cal F}$ (see Corollary \ref{Cor:Lambda_s_exact-seq}). We go on to prove the following result (see Theorem and Definition \ref{The-Def:discrepancy-equation} for further details):

\begin{The}\label{The:introd_discrepancy-equation} For every coherent subsheaf ${\cal F}$ of rank $s\in\{1,\dots , r-1\}$ of a Hermitian holomorphic vector bundle $(E,\,h)$ of rank $r\geq 1$, the following identities hold: \begin{eqnarray*}\widehat{Z}_h({\cal F}) = e^{-f}\,\frac{Z_{\omega,\,\Omega,\,h}^{(m)}({\cal F})}{|\sigma_{{\cal F}}|^2_{h_A}} = P_{\omega,\,\Omega}(f) - Q(\pi),\end{eqnarray*} where the unique $C^\infty$ function $f = f_{\cal F}:X\longrightarrow\R$ such that \begin{eqnarray*}h_{\det{\cal F}}^{H-E} = e^{-f}\,h_{\det{\cal F}}\end{eqnarray*} is called the {\bf discrepancy function} of the triple $(E,\,h,\,{\cal F})$.

  Moreover, $Q(\pi)\leq 0$ everywhere on $X$. 

\end{The}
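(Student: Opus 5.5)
We plan to unwind the definition of the stability function and then split the curvature of $\Lambda^sE\otimes\det{\cal F}^\star$ into the part coming from $\det{\cal F}$ and the part coming from $E$. The first part produces $P_{\omega,\,\Omega}(f)$; the second produces $Q(\pi)$ through the second fundamental form of $F\subset E$.

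We assume, as in Definition \ref{Def:Z-function}, that $Z_h({\cal F})$ is the quadratic expression
\begin{eqnarray*}
\bigg\{\bigg(i\Theta(\Lambda^sE\otimes\det{\cal F}^\star)\wedge\omega^{m-1}\wedge\Omega\bigg)\,\sigma_{\cal F},\,\sigma_{\cal F}\bigg\},
\end{eqnarray*}
suitably traced against $\omega^n$. The metric used is $\Lambda^sh\otimes h^{H-E}_{\det{\cal F}^\star}$. Here $h_A$ denotes the metric $\Lambda^sh\otimes h_{\det{\cal F}^\star}$ built from the metric induced by $h$ on $\det{\cal F}$.

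Step 1 (the function $f$). On $X\setminus S$, $h$ induces a metric $h_{\det{\cal F}}=\det(h_{|F})$. Using $\Lambda^s\pi=\sigma_{\cal F}\circ\sigma_{\cal F}^\star$ from Corollary \ref{Cor:Lambda_s_exact-seq}, $|\sigma_{\cal F}|^2_{h_A}$ is smooth and positive wherever $\sigma_{\cal F}\neq 0$. This lets us write $h^{H-E}_{\det{\cal F}}=e^{-f}h_{\det{\cal F}}$ with $f$ smooth; we expect any apparent singularities along $S$ to cancel after normalising by $|\sigma_{\cal F}|^2$, and we will check this. Then $|\sigma_{\cal F}|^2=e^{f}|\sigma_{\cal F}|^2_{h_A}$, which gives the first identity $\widehat Z_h=e^{-f}Z_h/|\sigma_{\cal F}|^2_{h_A}$ directly.

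Step 2 (splitting the curvature). We have
\begin{eqnarray*}
i\Theta(\Lambda^sE\otimes\det{\cal F}^\star)=i\Theta_{\Lambda^sh}(\Lambda^sE)\otimes\Id-\Id\otimes i\Theta^{H-E}(\det{\cal F}),
\end{eqnarray*}
and $i\Theta^{H-E}(\det{\cal F})=i\Theta_{h}(\det{\cal F})+i\partial\bar\partial f$.

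Evaluating the $\det{\cal F}$ part on $\sigma_{\cal F}$, wedging with $\omega^{m-1}\wedge\Omega$ and normalising contributes two things. The first is the $(\omega,\,\Omega)$-trace of $-i\partial\bar\partial f$ together with the Hermite–Einstein constant. This is the operator $P_{\omega,\,\Omega}(f)$, which we take to be the linear elliptic operator of [Pop25] defining the $(\omega,\,\Omega)$-Hermite–Einstein condition. The second is the term $-\mbox{tr}_{\omega,\Omega}\,i\Theta_h(\det{\cal F})$.

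The $\Lambda^sE$ part, evaluated on the unit decomposable vector $\sigma_{\cal F}/|\sigma_{\cal F}|$ spanning $\Lambda^sF$, gives $\mbox{tr}_{\omega,\Omega}\,\mbox{Tr}(\pi\,i\Theta_h(E)\,\pi)$.

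Step 3 (the Gauss–Codazzi identity). On $X\setminus S$ we use
\begin{eqnarray*}
i\Theta_h(F)=\pi\,i\Theta_h(E)\,\pi+i\beta^\star\wedge\beta,
\end{eqnarray*}
where $\beta=\bar\partial\pi$ is the second fundamental form. Taking traces shows that the two terms left over from Step 2 combine to $-\mbox{tr}_{\omega,\Omega}\,\mbox{Tr}(i\beta^\star\wedge\beta)$. We set $Q(\pi)$ to be this quantity, which can be written as
\begin{eqnarray*}
c\,\mbox{Tr}\big(i\bar\partial\pi\wedge(\bar\partial\pi)^\star\wedge\omega^{m-1}\wedge\Omega\big)/\omega^n
\end{eqnarray*}
with the appropriate sign $c$, so that the decomposition reads $P(f)-Q(\pi)$.

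Step 4 (the sign $Q(\pi)\le 0$). This is the main obstacle. The form $i\beta^\star\wedge\beta$ is, in local frames, a sum of terms $-i\,\overline{\gamma_{jk}}\wedge\gamma_{jk}$ with $\gamma_{jk}$ scalar $(0,1)$-forms, each of which is a weakly negative $(1,1)$-form. Wedging with $\omega^{m-1}\wedge\Omega$ preserves this sign provided $\omega^{m-1}\wedge\Omega$ is weakly positive, since a weakly positive $(n-1,n-1)$-form pairs nonnegatively with semipositive $(1,1)$-forms. That condition holds because $\Omega>0$ metrically weakly and $\omega^{m-1}$ is strongly positive. We will need to verify the exact sign convention of $Q$ against the paper's normalisation and confirm that $Q$ extends continuously across $S$. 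For the extension, we expect the $L^2_1$ regularity of $\pi$ together with the smooth identity from the other terms to suffice, since $Q(\pi)=P(f)-\widehat Z_h$ is smooth.
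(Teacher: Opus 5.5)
Your argument is correct in substance. It differs from the paper in which subbundle the Gauss--Codazzi step is applied to.

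The paper never uses $F\subset E$. It writes $h_A^{H-E}=e^f h_A$, so that $i\Theta_{h_A^{H-E}}(A)=i\Theta_{h_A}(A)-i\partial\bar\partial f\otimes\operatorname{Id}$, and this produces $P_{\omega,\,\Omega}(f)$. It then applies the second-fundamental-form formula to the \emph{line} subbundle $\det{\cal F}\subset\Lambda^sE$ (Corollary \ref{Cor:Lambda_s_exact-seq}(4)), which gives $\{i\Theta_{h_A}(A)\sigma_{\cal F},\sigma_{\cal F}\}_{h_A}=i\{D'_A\sigma_{\cal F},D'_A\sigma_{\cal F}\}_{h_A}$. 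Because $\sigma_{\cal F}$ embeds $\det{\cal F}$ into $\Lambda^sE$ over all of $X$, every ingredient, including $\Lambda^s\pi=\sigma_{\cal F}\circ\sigma_{\cal F}^\star$, is smooth on $X$. No passage across $S$ is needed, and the sign comes from [Pop25, Lemma 2.9].

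You instead use Gauss--Codazzi for $F\subset E$ on $X\setminus S$ and take traces, which is essentially Corollary \ref{Cor:pi_j_g_curvature}(3)(a). This gives $Q$ in the more familiar form $\operatorname{Tr}_E(iD''_{\operatorname{End}E}\pi\wedge D'_{\operatorname{End}E}\pi)\wedge\omega^{m-1}\wedge\Omega/dV_\omega$, at the cost of an extension step. The two expressions for $Q$ agree on $X\setminus S$, since $i\widetilde\beta^\star\wedge\widetilde\beta=\operatorname{Tr}_F(i\beta^\star\wedge\beta)$ there.

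Your extension step does not need the $L^2_1$ regularity of $\pi$:
\begin{itemize}
\item $h_{\det{\cal F}}$ is the pull-back of $\Lambda^sh$ by $\sigma_{\cal F}$, which is nowhere zero by Fact \ref{Fact:canonical-section}. So $h_{\det{\cal F}}$ is smooth on $X$ and there is nothing to cancel along $S$. This non-vanishing, not any cancellation, is what makes $f$ smooth, and both arguments depend on it equally.
\item Hence $f$ and $\widehat Z_h$ are smooth on $X$, so $Q:=P_{\omega,\,\Omega}(f)-\widehat Z_h$ is smooth.
\item $Q\leq 0$ on all of $X$ by continuity from the dense set $X\setminus S$.
\end{itemize}
Your weak-positivity argument for the sign is a valid self-contained substitute for [Pop25, Lemma 2.9]. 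It pairs a semi-negative trace $(1,1)$-form against $\omega^{m-1}\wedge\Omega$, which is weakly positive as the product of a strongly positive and a weakly positive form.

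Three slips need correcting.
\begin{itemize}
\item No Hermite--Einstein constant enters. Your own split $i\Theta^{H-E}(\det{\cal F})=i\Theta_h(\det{\cal F})+i\partial\bar\partial f$ contains none. $P_{\omega,\,\Omega}(f)$ is exactly $-i\partial\bar\partial f\wedge\omega^{m-1}\wedge\Omega/dV_\omega$ as in (\ref{eqn:P_operator_def}). The identity in fact holds for \emph{any} metric $e^{-f}h_{\det{\cal F}}$ on $\det{\cal F}$.
\item $\bar\partial\pi=D''_{\operatorname{End}E}\pi=j\circ\beta^\star\circ g$ corresponds to $\beta^\star$, not to the $(1,0)$-form $\beta$. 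With $\beta$ of type $(1,0)$, the leftover terms of your Step 2 equal $-\operatorname{Tr}_F(i\beta^\star\wedge\beta)\wedge\omega^{m-1}\wedge\Omega/dV_\omega\geq 0$. So $Q(\pi)$ must be defined as the \emph{negative} of that leftover, not as the leftover itself; this corresponds to your $c$ being a positive constant ($1$ if one divides by $dV_\omega$). That choice is what yields $\widehat Z_h=P_{\omega,\,\Omega}(f)-Q(\pi)$ with $Q(\pi)\leq 0$.
\item $|\sigma_{\cal F}|_{h_A}\equiv 1$, because $\sigma_{\cal F}^\star\circ\sigma_{\cal F}=\operatorname{Id}_{\det{\cal F}}$. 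So the normalisations in your Step 1 are trivial.
\end{itemize}
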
  

  The notation used above is: $h_{\det{\cal F}}^{H-E}$ is the (unique, up to a positive multiplicative constant) {\bf $(\omega,\,\Omega)$-Hermite-Einstein} fibre metric on the line bundle $\det{\cal F}$; $h_{\det{\cal F}}$ is the fibre metric on $\det{\cal F}$ induced by the fibre metric $\Lambda^sh$ of $\Lambda^sE$ when viewing $\det{\cal F}$ as a holomorphic {\bf subbundle} of $\Lambda^sE$; $P_{\omega,\,\Omega}$ is a Laplacian-like second-order differential operator on the functions on $X$ (see (\ref{eqn:P_operator_def})); $Q(\pi)$ is a $C^\infty$ function on $X$ depending on $\Lambda^s\pi$, its first order derivatives and the canonical section $\sigma_{\cal F}$.   

The key takeaway from the above theorem is Corollary \ref{Cor:discrepancy-equation} whose part (i) expresses the {\bf uniform (semi-)stability} condition on $(E,\,h)$ as the inequality $P_{\omega,\,\Omega}(f) > Q(\pi)$ (respectively $\geq 0$) holding on $X$ for every coherent ${\cal F}\subset{\cal O}(E)$. In particular, $(E,\,h)$ is {\bf uniformly semi-stable} if $h$ induces an $(\omega,\,\Omega)$-Hermite-Einstein metric on $\det {\cal F}$ for every ${\cal F}$. 

\vspace{1ex}

$\bullet$ In $\S$\ref{section:H-E_link}, we relate our notion of stability with the classical slope stability and Hermite-Einstein theories. The results of that section can be loosely summed up as follows:

\begin{The}\label{The:introd_H-E-stability} The following implications and non-implication hold: \begin{eqnarray}\label{eqn:introd_H-E-stability_implications}\exists\, h \hspace{1ex} \mbox{{\bf $(\omega,\,\Omega)$-Hermite-Einstein} metric on}\hspace{1ex} E \implies (E,\,h) \hspace{1ex} \mbox{is {\bf uniformly semi-stable}};\end{eqnarray} \begin{eqnarray}\label{eqn:introd_slope-stab_implies_unif-stab}E \hspace{1ex} \mbox{\bf slope stable} \implies (E,\,h) \hspace{1ex} \mbox{{\bf uniformly stable} for some}\hspace{1ex} C^\infty \hspace{1ex} \mbox{Hermitian fibre metric}\hspace{1ex} h;\end{eqnarray} \begin{eqnarray}\label{eqn:introd_unif-semi-stab_not-implies_slope-semi}(E,\,h) \hspace{1ex} \mbox{\bf uniformly semi-stable} \not\Longrightarrow (E,\,h) \hspace{1ex} \mbox{{\bf slope semi-stable}}. \end{eqnarray}  

The notions considered in each of these implications are relative to a same pair $(\omega,\,\Omega)$.

\end{The}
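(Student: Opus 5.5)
Theorem~\ref{The:introd_H-E-stability} has three parts, and I will treat them separately, using the discrepancy identity of Theorem~\ref{The:introd_discrepancy-equation}:
\begin{eqnarray*}\widehat{Z}_h({\cal F}) = P_{\omega,\,\Omega}(f_{\cal F}) - Q(\pi),\end{eqnarray*}
where $Q(\pi)\leq 0$.

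\emph{Implication (\ref{eqn:introd_H-E-stability_implications}).} Let $h$ be an $(\omega,\,\Omega)$-Hermite-Einstein metric on $E$, so that $i\Theta_h(E)\wedge\omega^{m-1}\wedge\Omega = \lambda\,\mbox{Id}_E\,\omega^n$ for a constant $\lambda$. Fix a proper coherent ${\cal F}\subset{\cal O}(E)$ of rank $s$. On $X\setminus S$, the metric $h_{\det{\cal F}}$ induced by $\Lambda^sh$ has curvature given by the Gauss–Codazzi formula:
\begin{eqnarray*}i\Theta(\det{\cal F}) = \operatorname{tr}\bigl(\pi\, i\Theta_h(E)\bigr) - \bigl(\mbox{second fundamental form term}\bigr).\end{eqnarray*}
Wedging with $\omega^{m-1}\wedge\Omega$ gives $s\lambda\,\omega^n$ plus a term controlled by $|D''\pi|^2$. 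The function $f_{\cal F}$ solves
\begin{eqnarray*}P_{\omega,\,\Omega}(f) = \mbox{(constant)} - \mbox{(trace of the curvature of } h_{\det{\cal F}}).\end{eqnarray*}
Plugging this in, $\widehat{Z}_h({\cal F})$ becomes a nonnegative quantity. The first contribution is the constant difference $s\lambda - \deg/\ldots$. The second is the second fundamental form contribution, which exactly cancels against, or dominates, $Q(\pi)$.

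The delicate point is the sign of the constant term. I would identify it with $s(\mu(E)-\mu({\cal F}))$ after normalisation. Hermite–Einstein bundles are slope semi-stable (Kobayashi–Lübke, valid for Gauduchon-type $\omega^{m-1}\wedge\Omega$ since $\partial\bar\partial\Omega=0$), so this constant is $\geq 0$. I also have to check that the resulting expression extends smoothly across $S$. This follows since $\Lambda^s\pi=\sigma_{\cal F}\sigma_{\cal F}^\star$ is $C^\infty$ on $X$ by Corollary~\ref{Cor:Lambda_s_exact-seq}. I expect this matching of the constant in $P(f)$ with the slope difference to be the main computation.

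\emph{Implication (\ref{eqn:introd_slope-stab_implies_unif-stab}).} If $E$ is slope stable, the Donaldson–Uhlenbeck–Yau theorem (in the generalised $(\omega,\,\Omega)$ form from [Pop25], or Li–Yau/Buchdahl in the Gauduchon case) provides an $(\omega,\,\Omega)$-Hermite-Einstein metric $h$. I then rerun the computation above. For every ${\cal F}$ the constant term is now strictly positive, since $\mu({\cal F})<\mu(E)$, and the remaining terms are $\geq 0$. Hence $Z_h({\cal F})>0$ wherever $\sigma_{\cal F}\neq 0$. Since $\widehat{Z}$ is normalised by $|\sigma_{\cal F}|^2$, I need to check that the strictness persists at zeros of $\sigma_{\cal F}$, or that the definition of $Z>0$ is understood accordingly. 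This gives uniform stability with the same $h$ for all ${\cal F}$.

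\emph{Non-implication (\ref{eqn:introd_unif-semi-stab_not-implies_slope-semi}).} I would construct an explicit example. Take $E={\cal O}\oplus L$ on a curve or torus, where $\deg L<0$. The subsheaf ${\cal O}$ destabilises $E$ in the slope sense. However, choose $h$ so that the induced metric on every $\det{\cal F}$ is Hermite–Einstein; for instance, take $h$ flat on ${\cal O}$ and Hermite–Einstein on $L$, and check the finitely many saturated subsheaves. By Corollary~\ref{Cor:discrepancy-equation}(i), this choice makes $(E,\,h)$ uniformly semi-stable. I would verify that $P(f)=0\geq Q(\pi)$ for each subsheaf, which is automatic because $Q\le 0$.

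The subtle point is that subsheaves ${\cal F}\cong L\otimes{\cal O}(-D)$ embedding diagonally do not make $\det{\cal F}$ Hermite–Einstein for the induced metric. If this fails, I would instead use the dichotomy in Proposition~\ref{Prop:Z-function_alternative} together with a direct sign check.
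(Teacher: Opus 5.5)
Parts (\ref{eqn:introd_H-E-stability_implications}) and (\ref{eqn:introd_slope-stab_implies_unif-stab}) of your proposal are correct in substance but take a longer route than the paper, and there is a sign slip. From $h^{H-E}_{\det{\cal F}}=e^{-f}h_{\det{\cal F}}$ one gets $P_{\omega,\,\Omega}(f)=d-\lambda_{\det{\cal F}}$, with $d:=i\Theta_{h_{\det{\cal F}}}(\det{\cal F})\wedge\omega^{m-1}\wedge\Omega/dV_\omega$. So it is curvature minus constant, not the reverse.

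With this sign, Gauss--Codazzi for $\det{\cal F}\subset\Lambda^sE$ shows that $d-s\lambda_E$ is exactly $Q(\pi)$. The second fundamental form therefore cancels identically, and $\widehat{Z}_h({\cal F})\equiv s\lambda_E-\lambda_{\det{\cal F}}=c\,s\,(\mu(E)-\mu({\cal F}))$ with $c>0$.

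The paper reaches the same constant in one line, without the discrepancy formalism. The bundle $(\Lambda^sE\otimes\det{\cal F}^\star,\,\Lambda^sh\otimes h^{H-E}_{\det{\cal F}^\star})$ is $(\omega,\,\Omega)$-Hermite--Einstein with factor $s\lambda_E+\lambda_{\det{\cal F}^\star}$. Hence $Z_h({\cal F})$ is that constant times $|\sigma_{\cal F}|^2$. The dichotomy of Proposition \ref{Prop:Z-function_alternative}, a Bochner argument applied to the non-zero holomorphic section $\sigma_{\cal F}$, then forces the constant to be $\geq 0$, with no appeal to Kobayashi--L\"ubke.

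What your route buys is the explicit identification with the slope difference. This gives (\ref{eqn:introd_slope-stab_implies_unif-stab}) at once, once Li--Yau/Buchdahl supplies the Hermite--Einstein metric. The paper instead gets strictness from the second half of Theorem \ref{The:H-E_implies_m-pos-stability}: failure of uniform stability forces a holomorphic splitting into Hermite--Einstein summands with equal Einstein factor, which a slope stable bundle cannot have.

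Your hesitation about zeros of $\sigma_{\cal F}$ is justified. The paper settles it only through the assertion in Fact \ref{Fact:canonical-section} that $\sigma_{\cal F}$ never vanishes. That assertion fails for non-saturated subsheaves such as ${\cal F}\otimes{\cal O}(-D)$ with $D$ a non-zero effective divisor, whose canonical section vanishes along $D$, so their $Z_h$ vanishes there for every $h$. What both arguments really establish is that $\widehat{Z}_h({\cal F})$ is a positive constant, i.e.\ $c_h({\cal F})>0$ in Corollary and Definition \ref{Cor-Def:Z_unif-lower-bounf_F}.

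For (\ref{eqn:introd_unif-semi-stab_not-implies_slope-semi}), your example $E={\cal O}\oplus L$ with $\deg L<0$ and $h=h_{\mathrm{flat}}\oplus h^{H-E}_L$ is a valid counterexample, but the verification you outline does not go through. There are infinitely many saturated subsheaves: every line subbundle $M\hookrightarrow E$ given by two sections without common zeros. For the diagonal ones the induced metric on $M$ is not Hermite--Einstein, so Corollary \ref{Cor:discrepancy-equation} gives nothing. The dichotomy of Proposition \ref{Prop:Z-function_alternative} on its own only yields positivity at some point.

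The missing step is the componentwise sign check that the paper carries out for ${\cal O}(d_1)\oplus{\cal O}(d_2)\to\Proj^1$:
\begin{itemize}
\item Write ${\cal F}=M\hookrightarrow E$ with components $s_1\in H^0(X,\,M^\star)$ and $s_2\in H^0(X,\,L\otimes M^\star)$.
\item Since $h$ is a direct sum of Hermite--Einstein metrics, $i\Theta\wedge\omega^{m-1}\wedge\Omega/dV_\omega$ for $(E\otimes M^\star,\,h\otimes h^{H-E}_{M^\star})$ is the constant matrix $\operatorname{diag}(-\lambda_M,\,\lambda_L-\lambda_M)$.
\item Hence $Z_h({\cal F})=-\lambda_M|s_1|^2+(\lambda_L-\lambda_M)|s_2|^2$.
\item A non-zero $s_1$ forces $\deg M\leq 0$, i.e.\ $\lambda_M\leq 0$. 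A non-zero $s_2$ forces $\lambda_M\leq\lambda_L$. So each term is $\geq 0$.
\end{itemize}
With this check added, your example works by exactly the same mechanism as the paper's.
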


Implication (\ref{eqn:introd_H-E-stability_implications}) is proven in Theorem \ref{The:H-E_implies_m-pos-stability} whose second part gives an analogue in our {\it uniformly stable} context of the classical polystability property.

Implication (\ref{eqn:introd_slope-stab_implies_unif-stab}), in which we omitted all the complexities of the notation in order to enhance readability, follows by combining the generalisation to Gauduchon metrics of the main Uhlenbeck-Yau result of [UY86] and [UY89], obtained by Li and Yau in [LY86] (see also [Buc88]), with our Theorem \ref{The:H-E_implies_m-pos-stability}. Indeed, with our assumptions on the forms $\omega$ and $\Omega$, there exists a unique positive definite $(1,\,1)$-form $\rho$ on $X$ such that $\omega^{m-1}\wedge\Omega = \rho^{n-1}/(n-1)!$. Moreover, $\rho$ is $C^\infty$ and $\partial\bar\partial\rho^{n-1} = 0$ at every point of $X$. Thus, $\rho$ is a Gauduchon metric on $X$.

In other words, our notion of {\it uniform stability} generalises the classical Mumford-Takemoto notion of {\it slope stability}. The generalisation is probably strict (as the definitions and the available evidence seem to suggest), but at the moment we cannot produce an example of a vector bundle that is uniformly stable, but not slope stable.

However, implication (\ref{eqn:introd_unif-semi-stab_not-implies_slope-semi}), proven in Theorem \ref{The:unif-semi-stab_not-slope-semi-stab} by producing an explicit counter-example, shows that, at least at the ``semi-stability'' level, our notion is different from the classical Mumford-Takemoto slope semi-stability. 

\vspace{2ex}

\noindent {\bf Acknowledgments.} The author is very grateful to S. Dinew for helpful discussions and to N. Buchdahl for useful comments on the manuscript.

\section{$m$-positivity for higher-rank vector bundles}\label{section:m-pos_v-bundles} Let $(E,\,h)\longrightarrow X$ be a Hermitian holomorphic vector bundle of rank $r\geq 1$ over a complex manifold with $\mbox{dim}_\C X =n$. The Hermitian fibre metric $h$ that has been fixed on $E$ is supposed to be $C^\infty$.

It is standard that the curvature form of the Chern connection $D_h=D'_h + \bar\partial$ of $(E,\,h)$ is a $C^\infty$ $(1,\,1)$-form on $X$ with values in the holomorphic vector bundle $\mbox{End}(E)$ of endomorphisms of $E$: \begin{eqnarray*}i\Theta_h(E)\in C^\infty_{1,\,1}(X,\,\mbox{End}(E)).\end{eqnarray*}

We will use the standard pointwise sesquilinear pairing $\{\,\cdot\,,\,\cdot\,\}_h$ induced by $h$ on $E$-valued differential forms on $X$ (cf. e.g. [Dem97, chapter V]). For any point $x\in X$ and any forms $u\in\Lambda^{p,\,q}T^\star_x X\otimes E_x$ and $v\in\Lambda^{r,\,s}T^\star_x X\otimes E_x$, the form $\{u,\,v\}_h \in\Lambda^{p+s,\,q+r}T^\star_x X\otimes E_x$ is defined by taking the exterior product of the scalar part of $u$ by the conjugate of the scalar part of $v$ and then multiplying the result by the inner product (defined by $h(x)$) of the vector part (i.e. the $E_x$-valued part) of $u(x)$ by the vector part of $v(x)$.

In particular, $\{(i\Theta_h(E)\wedge\omega^{m-1}\wedge\Omega)\,u,\,u\}_h$ is an $\R$-valued $(n,\,n)$-form at any given point $x\in X$ if $u\in E_x$.

\subsection{Definition of vector bundle $m$-positivity}\label{subsection:m-pos_def}

The first notion we propose in this paper is the following alternative to the classical Griffiths/m-/Nakano positivity notions for holomorphic vector bundles.

\begin{Def}\label{Def:m-pos} Let $\omega$ be a Hermitian metric on an $n$-dimensional complex manifold $X$, let $m\in\{1,\dots , n\}$ and let $x\in X$. A Hermitian holomorphic vector bundle $(E,\,h)\longrightarrow X$ of rank $r\geq 1$ over $X$ is said to be:

\vspace{1ex}

(a)\,{\bf $\omega$-$m$-semi-positive} at $x$ if \begin{eqnarray*}\bigg\{\bigg(i\Theta_h(E)\wedge\omega^{m-1}\wedge\Omega\bigg)\,u,\,u\bigg\}_h\geq 0\end{eqnarray*} for every $\R$-valued $(n-m,\,n-m)$-form $\Omega\in\Lambda^{n-m,\,n-m}T^\star_x X$ at $x$ such that $\Omega> 0$ (metrically weakly) and for every $u\in E_x$; 

\vspace{1ex}

(b)\,{\bf $\omega$-$m$-positive} at $x$ if \begin{eqnarray*}\bigg\{\bigg(i\Theta_h(E)\wedge\omega^{m-1}\wedge\Omega\bigg)\,u,\,u\bigg\}_h> 0\end{eqnarray*} for every $\R$-valued $(n-m,\,n-m)$-form $\Omega\in\Lambda^{n-m,\,n-m}T^\star_x X$ at $x$ such that $\Omega> 0$ (metrically weakly) and for every $u\in E_x\setminus\{0\}$;

\vspace{1ex}

(c)\,{\bf $(\omega,\,\Omega)$-$m$-semi-positive} at $x$ for a given $\R$-valued $(n-m,\,n-m)$-form $\Omega\in\Lambda^{n-m,\,n-m}T^\star_x X$ at $x$ such that $\Omega> 0$ (metrically weakly) if \begin{eqnarray*}\bigg\{\bigg(i\Theta_h(E)\wedge\omega^{m-1}\wedge\Omega\bigg)\,u,\,u\bigg\}_h\geq 0\end{eqnarray*} for every $u\in E_x$;

\vspace{1ex}

(d)\,{\bf $(\omega,\,\Omega)$-$m$-positive} at $x$ for a given $\R$-valued $(n-m,\,n-m)$-form $\Omega\in\Lambda^{n-m,\,n-m}T^\star_x X$ at $x$ such that $\Omega> 0$ (metrically weakly) if \begin{eqnarray*}\bigg\{\bigg(i\Theta_h(E)\wedge\omega^{m-1}\wedge\Omega\bigg)\,u,\,u\bigg\}_h> 0\end{eqnarray*} for every $u\in E_x\setminus\{0\}$.

\end{Def}

Parts (a) and (b) of this definition mean that $(E,\,h)$ is $\omega$-$m$-(semi-)positive if and only if the $\mbox{End} (E)$-valued $(m,\,m)$-form $i\Theta_h(E)\wedge\omega^{m-1}$ defines a {\it strongly (semi-)positive} $\mbox{End} (E)$-valued current of bidegree $(m,\,m)$ on $X$. Thus, the theoretical underpinning of this definition is the duality between strongly (semi-)positive currents and weakly (semi-)positive test forms in the complementary bidegree. Of course, $\omega$ disappears from the definitions (and we will suppress it from the names) when $m=1$.

The counterparts of the notions introduced in Definition \ref{Def:m-pos} obtained by replacing the word ``positive'' by the word ``negative'' are defined by replacing $i\Theta_h(E)$ by $-i\Theta_h(E)$. 

\vspace{1ex}

To begin with, we notice that our $m$-(semi-)positivity notions for vector bundles generalise the classical Griffiths (semi-)positivity, hence also the classical Demailly $m$-(semi-)positivity and Nakano (semi-)positivity notions. (See e.g. [Dem97, VII-$\S6$] for the classical definitions.)

\begin{Obs}\label{Obs:m-pos_Griffiths-Nakano_comparison} Let $(E,\,h)\longrightarrow X$ be a Hermitian holomorphic vector bundle of rank $r\geq 1$ over an $n$-dimensional complex manifold. Let $m\in\{1,\dots , n\}$, $x\in X$ and $\omega$ a Hermitian metric on $X$.

  The following equivalences, implications and statement hold:

\vspace{1ex}  

\hspace{2ex} (a)\,\, $(E,\,h)$ is {\bf Griffiths (semi-)positive} at $x$ $\iff$ $(E,\,h)$ is {\bf $1$-(semi-)positive} at $x$;

\vspace{1ex}  

\hspace{2ex} (b)\,\, $(E,\,h)$ is {\bf $\omega$-$m$-(semi-)positive} at $x$ $\implies$ $(E,\,h)$ is {\bf $\omega$-$(m+1)$-(semi-)positive} at $x$;

\vspace{1ex}  

\hspace{2ex} (c)\,\, $(E,\,h)$ is {\bf $\omega$-$m$-(semi-)positive} at $x$ $\iff$ 

\noindent $\iff$ $(E,\,h)$ is {\bf $(\omega,\,\Omega)$-$m$-(semi-)positive} at $x$ for all $\Omega\in\Lambda^{n-m,\,n-m}T^\star_x X$ such that $\Omega> 0$ (metrically weakly);

\vspace{1ex}  

\hspace{2ex} (d)\,\, for each $\Omega\in\Lambda^{n-m,\,n-m}T^\star_x X$ such that $\Omega> 0$ (metrically weakly), the condition that $(E,\,h)$ be {\bf $(\omega,\,\Omega)$-$m$-(semi-)positive} at $x$ is strictly {\bf weaker} than the condition that  $(E,\,h)$ be {\bf Griffiths (semi-)positive} at $x$.

\end{Obs}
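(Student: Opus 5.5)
The plan is to reduce everything to the pointwise linear-algebra picture at the fixed point $x$. Fix $u\in E_x$ and consider the real $(1,\,1)$-form $\alpha_u:=\langle i\Theta_h(E)\,u,\,u\rangle_h\in\Lambda^{1,\,1}T^\star_xX$. Then $\{(i\Theta_h(E)\wedge\omega^{m-1}\wedge\Omega)\,u,\,u\}_h=\alpha_u\wedge\omega^{m-1}\wedge\Omega$, because $\omega^{m-1}\wedge\Omega$ is a scalar form and the pairing only affects the $E$-valued part. Griffiths (semi-)positivity at $x$ means that $\alpha_u(\xi,\,\bar\xi)\geq 0$ (resp. $>0$) for all $\xi\neq 0$ and all $u\neq 0$. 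This is exactly the statement that $\alpha_u$ is a (semi-)positive $(1,\,1)$-form, and in bidegree $(1,\,1)$ weak and strong positivity coincide. So all four parts become statements about the single real $(1,\,1)$-form $\alpha_u$ wedged against positive test forms.

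For (a), with $m=1$ the condition reads $\alpha_u\wedge\Omega\geq 0$ for every $\Omega\in\Lambda^{n-1,\,n-1}$ with $\Omega>0$ weakly. By Lelong duality, a real $(1,\,1)$-form is (strongly, equivalently weakly) semi-positive if and only if its wedge with every weakly positive $(n-1,\,n-1)$-form is $\geq 0$. For the converse direction I would test against $\Omega=i\,\xi^\star\wedge\bar\xi^\star\wedge(\mbox{rest})$-type forms, i.e. products $\prod_{j\geq 2}i\zeta_j\wedge\bar\zeta_j$ for a basis completing $\xi$. These are only semi-positive, so I add $\varepsilon\,\omega^{n-1}$ to make them metrically weakly strictly positive and let $\varepsilon\to 0$. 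This extracts $\alpha_u(\xi,\,\bar\xi)\geq 0$. For strict positivity, the forward direction uses that $\alpha_u>0$ and $\Omega>0$ give $\alpha_u\wedge\Omega\geq c\,\alpha_u\wedge\omega^{n-1}>0$ via $\Omega-\varepsilon\,\omega^{n-1}\geq 0$. The converse is the delicate step. If $\alpha_u(\xi,\,\bar\xi)=0$ for some $\xi\neq 0$, I would take $\Omega=\Omega_\xi+\varepsilon\,\omega^{n-1}$ with $\Omega_\xi$ concentrated in the $\xi$ direction and of large mass. Then $\alpha_u\wedge\Omega$ is approximately $\varepsilon\,\mbox{tr}_\omega\alpha_u$, which is not obviously $\leq 0$. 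So I would instead use a negative eigenvalue when one exists, and handle the degenerate boundary case carefully; I expect this strict-case converse to be the main obstacle. If needed, I would interpret the strict notion so that it matches the Griffiths strict notion through the quantifier over all $\Omega>0$.

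For (b), given $\Gamma\in\Lambda^{n-m-1,\,n-m-1}$ with $\Gamma>0$ weakly, the form $\omega\wedge\Gamma$ is again weakly strictly positive. This holds because a product of a strongly positive form with a weakly positive one is weakly positive, and $\omega\wedge\Gamma\geq\varepsilon\,\omega^{n-m}$ since $\Gamma\geq\varepsilon\,\omega^{n-m-1}$. Then $\alpha_u\wedge\omega^{m}\wedge\Gamma=\alpha_u\wedge\omega^{m-1}\wedge(\omega\wedge\Gamma)$, and the $m$-hypothesis applies directly. Part (c) is immediate by unwinding Definition \ref{Def:m-pos}, since (a) and (b) there quantify over all such $\Omega$, exactly as in (c) and (d).

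For (d), the implication Griffiths $\Rightarrow$ $(\omega,\,\Omega)$-$m$ follows from (a), (b) and (c). For strictness I would take $n\geq 2$ and a line bundle $E$ with $h$ flat in a product situation. Concretely, at a point choose $\omega$ standard and $\alpha=i\,dz_1\wedge d\bar z_1-\frac12\,i\,dz_2\wedge d\bar z_2$, realised as the curvature of $h=e^{-|z_1|^2+\frac12|z_2|^2}$. Then take $\Omega$ heavily weighted on $dz_2$-free directions, e.g. $m=1$, $n=2$, $\Omega=i\,dz_2\wedge d\bar z_2+\delta\,i\,dz_1\wedge d\bar z_1$ with small $\delta>0$. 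This gives $\alpha\wedge\Omega>0$ while $\alpha$ is not semi-positive, which shows the strict weakening.
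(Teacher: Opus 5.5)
Your arguments for the semi-positive half of (a), for (b) and for (c) are correct and follow the paper. For (a) this is duality in bidegree $(1,1)$; your $\varepsilon\,\omega^{n-1}$-perturbation of the boundary test forms $\prod_{j\geq 2}i\zeta_j\wedge\bar\zeta_j$ makes explicit what the paper leaves implicit. For (b) you use $\Omega=\omega\wedge\Gamma$, as the paper does in its introduction.

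\textbf{The gap: the strict converse of (a).} You leave it open, and no careful handling of the ``degenerate boundary case'' will close it: with Definition \ref{Def:m-pos}(b) as stated, that converse is false for every $n\geq 2$. Take the trivial line bundle over $\C^2$ with $h=e^{-|z_1|^2}$, so that $\alpha:=i\Theta_h(E)=i\,dz_1\wedge d\bar z_1$. Let $\Omega=i\sum_{j,k}\Omega_{j\bar k}\,dz_j\wedge d\bar z_k$ be $>0$ metrically weakly, i.e. positive definite. Then $\alpha\wedge\Omega=\Omega_{2\bar 2}\;i\,dz_1\wedge d\bar z_1\wedge i\,dz_2\wedge d\bar z_2$ with $\Omega_{2\bar 2}>0$. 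So $(E,h)$ is $1$-positive at every point. Yet $\alpha(\partial/\partial z_2,\,\overline{\partial/\partial z_2})=0$, so it is not Griffiths positive.

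The term $\varepsilon\,\mbox{tr}_\omega\alpha_u$ you ran into is exactly the obstruction: it is $>0$ whenever $\alpha_u\geq 0$ and $\alpha_u\neq 0$. The paper's own proof has the same hole. The coincidence of weak and strong positivity in bidegree $(1,1)$ gives duality between the \emph{closed} cones, hence only the semi-positive equivalence. Positive definiteness of $\alpha_u$ is dual to testing against \emph{all non-zero weakly semi-positive} $\Omega$, and the definition excludes the boundary ones.

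What is actually true is this: Griffiths positive $\Rightarrow$ $1$-positive (your forward argument is fine), and $(E,h)$ is $1$-positive at $x$ if and only if $\alpha_u\geq 0$ and $\alpha_u\neq 0$ for every $u\in E_x\setminus\{0\}$. Your fallback of reinterpreting the strict notion is a change of definition. It must be announced as a correction of the statement, not used as a proof of it.

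\textbf{Part (d).} The implication Griffiths $\Rightarrow$ $(\omega,\Omega)$-$m$ is fine. However, your strictness example fixes one particular $\Omega$ (and $m=1$), while the claim is for each $\Omega$. The paper's reformulation handles this uniformly. If $\rho>0$ satisfies $\rho^{n-1}/(n-1)!=\omega^{m-1}\wedge\Omega$, then $\{(i\Theta_h(E)\wedge\omega^{m-1}\wedge\Omega)u,\,u\}_h=\Lambda_\rho(\alpha_u)\,dV_\rho$. So $(\omega,\Omega)$-$m$-positivity is only a trace condition. Choose coordinates with $\rho(x)=\sum_j i\,dz_j\wedge d\bar z_j$ and take $h=e^{-\sum_j\lambda_j|z_j|^2}$ with $\sum_j\lambda_j>0>\lambda_n$; this works for every $m$ and every $\Omega$.

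This needs $n\geq 2$, as you assumed; for $n=1$ the two conditions coincide, so ``strictly'' in (d) fails there. Also, the metric in your (d) example is not flat; you only need a metric on a trivial bundle with the prescribed curvature at $x$.
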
  

\noindent {\it Proof.} (a)\, If one uses the $\C$-linear isomorphism: \begin{eqnarray*}E\longrightarrow\overline{E^\star}, \hspace{5ex} u\longmapsto\langle u,\,\cdot\rangle_h,\end{eqnarray*} the condition (as spelt out in [Dem97, VII-$\S6$]) that $(E,\,h)$ be Griffiths (semi-)positive at $x$ translates to requiring that, for each fixed $u\in E_x$, the $\R$-valued $(1,\,1)$-form $\{i\Theta_h(E)\,u,\,u\}_h$ at $x$ be (semi-)positive definite. Now, it is standard that in bidegree $(1,\,1)$ the weak and strong positivity notions coincide, so $\{i\Theta_h(E)\,u,\,u\}_h$ is (semi-)positive definite if and only if \begin{eqnarray*}\{i\Theta_h(E)\,u,\,u\}_h\wedge\Omega = \bigg\{\bigg(i\Theta_h(E)\wedge\Omega\bigg)\,u,\,u\bigg\}_h >0 \hspace{3ex} (\mbox{resp.} \hspace{1ex} \geq 0)\end{eqnarray*} for every positive definite $(n-1,\,n-1)$-form $\Omega$ at $x$. This last condition holding for every $u\in E_x$ is precisely the $1$-(semi-)positivity condition on $(E,\,h)$ as spelt out under (a) of Definition \ref{Def:m-pos}.

(b) and (c) follow at once from the definitions.

(d)\, It is standard (and easily checked) that the map $\rho\mapsto\rho_{n-1}:=\rho^{n-1}/(n-1)!$ is a bijection from the set of positive definite $(1,\,1)$-forms at $x$ and the set of positive definite $(n-1,\,n-1)$-forms at $x$. In particular, there exists a unique $(1,\,1)$-form $\rho>0$ at $x$ such that $\rho_{n-1} = \omega^{m-1}\wedge\Omega$.

Meanwhile, for a $(1,\,1)$-form $\alpha$, we have: $\alpha\wedge\rho_{n-1} = \Lambda_\rho(\alpha)\,dV_\rho$, where $\Lambda_\rho$ is the adjoint w.r.t. the pointwise inner product induced by $\rho$ of the multiplication operator $\rho\wedge\cdot$. For a $(1,\,1)$-form $\alpha$, $\Lambda_\rho(\alpha)$ is the trace of $\alpha$ w.r.t. $\rho$. 

Therefore, $(E,\,h)$ is $(\omega,\,\Omega)$-$m$-(semi-)positive at $x$ if and only if for each fixed $u\in E_x$, the trace with respect to $\rho$ of the $\R$-valued $(1,\,1)$-form $\{i\Theta_h(E)\,u,\,u\}_h$ at $x$ is positive (respectively non-negative): \begin{eqnarray*}\Lambda_\rho\bigg(\{i\Theta_h(E)\,u,\,u\}_h\bigg) = \bigg\{\Lambda_\rho\bigg(i\Theta_h(E)\bigg)\,u,\,u\bigg\}_h > 0  \hspace{3ex} (\mbox{resp.} \hspace{1ex} \geq 0).\end{eqnarray*}

On the other hand, Griffiths (semi-)positivity of $(E,\,h)$ at $x$ means that, for each fixed $u\in E_x$, the $\R$-valued $(1,\,1)$-form $\{i\Theta_h(E)\,u,\,u\}_h$ at $x$ (not just its trace) is (semi-)positive definite.  \hfill $\Box$

\subsection{Vanishing theorem}\label{subsection:vanishing}

The first application of our $m$-(semi-)positivity notions for vector bundles shows that Kobayashi's vanishing theorem (which plays a key role in the Kobayashi-Hitchin correspondence) still holds in this more general context. The next statement and its proof generalise to the setting of this paper Theorem 2.8. of [Pop25].

\begin{The}\label{The:Kobayashi-vanishing_gen} Let $(X,\,\omega)$ be a compact K\"ahler manifold with $\mbox{dim}_\C X =n$. Let $m\in\{1,\dots , n\}$ and suppose there exists a form $\Omega\in C^\infty_{n-m,\,n-m}(X,\,\R)$ such that $\partial\bar\partial\Omega = 0$ and $\Omega>0$ (metrically weakly) on $X$.

\vspace{1ex}  

(a)\, If there exists a $C^\infty$ Hermitian fibre metric $h$ on $E$ such that $(E,\,h)$ is {\bf $(\omega,\,\Omega)$-$m$-semi-negative} at every point of $X$ and {\bf $(\omega,\,\Omega)$-$m$-negative} at some point $x_0\in X$, then $H^0(X,\,E) = \{0\}$.

\vspace{1ex}  

(b)\, If there exists a $C^\infty$ Hermitian fibre metric $h$ on $E$ such that $(E,\,h)$ is {\bf $(\omega,\,\Omega)$-$m$-semi-negative} at every point of $X$,
then for every $s\in H^0(X,\,E)$ we have $D_h s=0$ on $X$ (i.e. $s$ is parallel with respect to the Chern connection $D_h$ of $(E,\,h)$).

\end{The}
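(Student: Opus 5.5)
Both parts will come from a Bochner-type argument applied to the function $|s|_h^2$ for $s\in H^0(X,\,E)$. The computation takes place against the closed-in-the-right-sense form $\omega^{m-1}\wedge\Omega$.

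\textbf{The pointwise identity.} Since $s$ is holomorphic, $D_h s = D'_h s$. The standard identity for the Chern connection gives
\begin{eqnarray*}
i\partial\bar\partial |s|^2_h = i\{D'_hs,\,D'_hs\}_h - \{i\Theta_h(E)\,s,\,s\}_h .
\end{eqnarray*}
I would wedge this with $\omega^{m-1}\wedge\Omega$, which is a positive definite $(n-1,\,n-1)$-form. As in the proof of (d) of Observation \ref{Obs:m-pos_Griffiths-Nakano_comparison}, we may write $\omega^{m-1}\wedge\Omega=\rho_{n-1}$ for a unique $\rho>0$.

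The first term $i\{D'_hs,\,D'_hs\}_h$ is a semi-positive $(1,\,1)$-form. Its wedge with $\rho_{n-1}$ is therefore $|D'_hs|^2_\rho\,dV_\rho\geq 0$, and it vanishes at a point only where $D'_hs=0$. The second term, after the wedge, is $\{(i\Theta_h(E)\wedge\omega^{m-1}\wedge\Omega)s,\,s\}_h$. This is $\leq 0$ everywhere by the $(\omega,\,\Omega)$-$m$-semi-negativity hypothesis, so $-\{\cdots\}_h\geq 0$. Hence $i\partial\bar\partial|s|^2_h\wedge\omega^{m-1}\wedge\Omega$ is a sum of two non-negative $(n,\,n)$-forms.

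\textbf{Integration.} Integrate over the compact manifold $X$. By Stokes,
\begin{eqnarray*}
\int_X i\partial\bar\partial|s|^2_h\wedge\omega^{m-1}\wedge\Omega=\int_X |s|^2_h\, i\partial\bar\partial(\omega^{m-1}\wedge\Omega).
\end{eqnarray*}
Because $d\omega=0$, we have $\partial\bar\partial(\omega^{m-1}\wedge\Omega)=\omega^{m-1}\wedge\partial\bar\partial\Omega=0$. So the integral vanishes, and both non-negative terms must vanish identically on $X$.

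\textbf{Consequences.} For (b), the vanishing of the first term gives $D'_hs=0$. Together with $\bar\partial s=0$ this yields $D_hs=0$.

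For (a), the vanishing of the second term gives $\{(i\Theta_h(E)\wedge\omega^{m-1}\wedge\Omega)s,\,s\}_h(x_0)=0$. Strict negativity at $x_0$ then forces $s(x_0)=0$. Since $s$ is parallel by (b), $|s|_h$ is constant: indeed $d|s|^2_h=\{D_hs,\,s\}_h+\{s,\,D_hs\}_h=0$. Hence $s\equiv 0$. Here I would use that $X$ is connected, which is implicit in the setting.

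\textbf{Main obstacle.} The only delicate point is the positivity of $i\{D'_hs,\,D'_hs\}_h\wedge\rho_{n-1}$ together with its equality case. I would handle it by noting that the weak and strong notions coincide in bidegree $(1,\,1)$. The trace interpretation via $\Lambda_\rho$ from the proof of Observation \ref{Obs:m-pos_Griffiths-Nakano_comparison}(d) then gives $\Lambda_\rho(i\{D'_hs,\,D'_hs\}_h)=|D'_hs|^2_{\rho,h}$, computed in a $\rho$-orthonormal frame at each point.

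The sign conventions in the Bochner identity also need care. With $\{\cdot,\cdot\}_h$ as in [Dem97], one gets $\partial\bar\partial|s|^2=\{D's,D's\}-\{\Theta s,s\}$, so the semi-negativity hypothesis enters with exactly the sign used above.
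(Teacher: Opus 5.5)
Your proof is correct. It is built on the same pointwise Bochner identity as the paper's, but the global step is different.

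The paper writes the identity as $P_{\omega,\Omega}|s|^2_h\leq 0$ for the elliptic operator $P_{\omega,\Omega}$. It then invokes the maximum principle to get $P_{\omega,\Omega}|s|^2_h\equiv 0$ and $|s|^2_h$ constant. From there it reads off (a) from strictness at $x_0$, and (b) from the vanishing of $i\{D'_hs,\,D'_hs\}_h\wedge\omega^{m-1}\wedge\Omega$. For the positivity of that form and its equality case, it cites Lemma 2.9 of [Pop25].

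You instead integrate. Stokes together with $\partial\bar\partial(\omega^{m-1}\wedge\Omega)=\omega^{m-1}\wedge\partial\bar\partial\Omega=0$ kills the integral, which forces both non-negative terms to vanish identically. You then recover the constancy of $|s|_h$ afterwards from $D_hs=0$. This is exactly the Stokes argument the paper itself uses later, in Proposition \ref{Prop:Z-function_alternative}.

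What your route buys is self-containment. It needs neither the ellipticity and adjoint facts about $P_{\omega,\Omega}$ from [DP25] nor the external lemma. Your trace computation with $\rho$, where $\rho_{n-1}=\omega^{m-1}\wedge\Omega$, proves the positivity of $i\{D'_hs,\,D'_hs\}_h\wedge\rho_{n-1}$ and its equality case directly.

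The price is that you use $d\omega=0$ and $\partial\bar\partial\Omega=0$ in an essential way. The maximum-principle route gets the constancy of $|s|^2_h$ straight from the pointwise inequality. $|s|^2_h$ is a subsolution of a second-order elliptic operator with no zeroth-order term, so Hopf's strong maximum principle on the compact connected $X$ applies. Applied that way, it only uses the pointwise positivity of $\omega^{m-1}\wedge\Omega$, so that argument would survive without the closedness hypotheses.
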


\noindent {\it Proof.} We will use the second-order differential operator: \begin{eqnarray}\label{eqn:P_operator_def}P_{\omega,\,\Omega}:C^\infty(X,\,\R)\longrightarrow C^\infty(X,\,\R), \hspace{5ex} P_{\omega,\,\Omega}(\varphi):=-\frac{i\partial\bar\partial\varphi\wedge\omega^{m-1}\wedge\Omega}{dV_\omega},\end{eqnarray} that we introduced in [DP25] and re-used in [Pop25].

Let $s\in H^0(X,\,E)$. We have: \begin{eqnarray*}P_{\omega,\,\Omega}|s|^2_h = \frac{i\bar\partial\partial\{s,\,s\}_h\wedge\omega^{m-1}\wedge\Omega}{dV_\omega}.\end{eqnarray*}

The $h$-compatibility of $D_h$ gives the first inequality below, while the holomorphicity of $s$ yields the second: \begin{eqnarray*}\partial\{s,\,s\}_h = \{D_h's,\,s\}_h + \{s,\,\bar\partial s\}_h = \{D_h's,\,s\}_h.\end{eqnarray*} Taking $\bar\partial$, we further get: \begin{eqnarray*}\bar\partial\partial\{s,\,s\}_h = \{\bar\partial D_h's,\,s\}_h - \{D_h's,\,D_h's\}_h = \{\Theta_h(E)s,\,s\}_h - \{D_h's,\,D_h's\}_h.\end{eqnarray*} Hence: \begin{eqnarray*}i\bar\partial\partial\{s,\,s\}_h\wedge\omega^{m-1}\wedge\Omega & = & \bigg\{\bigg(i\Theta_h(E)\wedge\omega^{m-1}\wedge\Omega\bigg)s,\,s\bigg\}_h - i\,\{D_h's,\,D_h's\}_h\wedge\omega^{m-1}\wedge\Omega \\
  & \leq & 0\end{eqnarray*} at every point of $X$. The last inequality follows from the $(\omega,\,\Omega)$-$m$-semi-negativity hypothesis on $(E,\,h)$ at every point of $X$ (see (c) of Definition \ref{Def:m-pos} with $-i\Theta_h(E)$ in place of $i\Theta_h(E)$) and from Lemma 2.9. of [Pop25] which guarantees (for any $C^\infty$ Hermitian fibre metric $h$ on $E$, irrespective of whether its curvature form satisfies a positivity/negativity condition or not) that, for every $\eta\in C^\infty_{1,\,0}(X,\,E)$, the $\R$-valued $C^\infty$ $(n,\,n)$-form $i\,\{\eta,\,\eta\}_h\wedge\omega^{m-1}\wedge\Omega$ is $\geq 0$ at every point of $X$. 

  We have thus proved that, under the assumptions of either (a) or (b), we have: \begin{eqnarray}\label{eqn:P_s-squared_formula}P_{\omega,\,\Omega}|s|^2_h \leq 0\end{eqnarray} at every point of $X$ for every $s\in H^0(X,\,E)$. Since $P_{\omega,\,\Omega}$ is an elliptic second-order differential operator whose formal adjoint $P_{\omega,\,\Omega}^\star$ has no zero$^{th}$-order terms (cf. [DP25, Lemma 2.8. and Corollary 2.9.]) and $X$ is compact, the maximum principle implies that \begin{eqnarray*}P_{\omega,\,\Omega}|s|^2_h\equiv 0 \hspace{3ex}\mbox{and}\hspace{3ex} |s|^2_h = Const \hspace{1ex} \mbox{on}\hspace{1ex} X.\end{eqnarray*}

    Moreover, inequality (\ref{eqn:P_s-squared_formula}) is strict at $x_0\in X$ under the hypothesis of (a) if $s(x_0)\neq 0\in E_{x_0}$. We deduce that $s(x_0)=0$, hence $s(x)=0\in E_x$ for every $x\in X$. This proves (a).

    Under the weaker hypothesis of (b), we get $P_{\omega,\,\Omega}|s|^2_h\equiv 0$, hence also $i\,\{D_h's,\,D_h's\}_h\wedge\omega^{m-1}\wedge\Omega \equiv 0$. This implies that $D_h's \equiv 0$ (hence also $D_hs \equiv 0$ since $\bar\partial s\equiv 0$ by the holomorphicity of $s$) thanks to the second part of Lemma 2.9. of [Pop25] which guarantees (for any $C^\infty$ Hermitian fibre metric $h$ on $E$) the following equivalence for every $\eta\in C^\infty_{1,\,0}(X,\,E)$ at every point $x\in X$: \begin{eqnarray*}i\,\{\eta,\,\eta\}_h\wedge\omega^{m-1}\wedge\Omega = 0 \hspace{2ex}\mbox{at}\hspace{1ex} x \iff \eta(x)=0.\end{eqnarray*} \hfill $\Box$

\subsection{Properties of vector bundle $m$-positivity}\label{subsection:prop_m-pos_operations}

We now show that our m-(semi-)positivity notions behave well under vector bundle operations.

\begin{Prop}\label{Prop:m-pos_properties} Let $(E,\,h_E)\longrightarrow X$ be a Hermitian holomorphic vector bundle of rank $r\geq 1$ over an $n$-dimensional complex manifold $X$. Let $\omega$ be a Hermitian metric on $X$, $m\in\{1,\dots , n\}$, $x\in X$ and $\Omega\in\Lambda^{n-m,\,n-m}T^\star_x X$ an $\R$-valued $(n-m,\,n-m)$-form at $x$ such that $\Omega> 0$ (metrically weakly).

\vspace{1ex}

 (1)\, The following equivalence holds:

\vspace{1ex}

\hspace{-4ex} $(E,\,h_E)$ is  $(\omega,\,\Omega)$-$m$-(semi-)positive at $x$ {\bf if and only if} $(E^\star,\,h_E^\star)$ is  $(\omega,\,\Omega)$-$m$-(semi-)negative at $x$,

\vspace{1ex}

\noindent where $(E^\star,\,h_E^\star)$ is the dual vector bundle of $E$ equipped with the fibre metric induced by $h_E$.

\vspace{1ex}

 (2)\, Let \begin{eqnarray*}0\longrightarrow (S,\,h_S) \longrightarrow (E,\,h_E) \longrightarrow (Q,\,h_Q)\longrightarrow 0\end{eqnarray*} be a short exact sequence of holomorphic vector bundles on $X$, where $h_S$, resp. $h_Q$, are the induced, resp. quotient, Hermitian fibre metrics on $S$, resp. $Q$.

  The following implications hold:

\vspace{1ex}

(a)\, {\bf if} $(E,\,h_E)$ is  $(\omega,\,\Omega)$-$m$-(semi-)negative at $x$, {\bf then} $(S,\,h_S)$ is $(\omega,\,\Omega)$-$m$-(semi-)negative at $x$;

\vspace{1ex}

(b)\, {\bf if} $(E,\,h_E)$ is  $(\omega,\,\Omega)$-$m$-(semi-)positive at $x$, {\bf then} $(Q,\,h_Q)$ is $(\omega,\,\Omega)$-$m$-(semi-)positive at $x$.

\end{Prop}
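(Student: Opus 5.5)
The plan is to reduce everything to the trace reformulation from the proof of Observation \ref{Obs:m-pos_Griffiths-Nakano_comparison} (d). Let $\rho>0$ be the unique positive definite $(1,\,1)$-form at $x$ with $\rho^{n-1}/(n-1)! = \omega^{m-1}\wedge\Omega$. For any Hermitian bundle $(V,\,h_V)$, $(\omega,\,\Omega)$-$m$-(semi-)positivity of $(V,\,h_V)$ at $x$ means that the $h_V$-Hermitian endomorphism $\Lambda_\rho(i\Theta_{h_V}(V))$ of $V_x$ is positive definite (resp. semi-definite), and negativity means the opposite sign. All three claims then become pointwise linear-algebra statements about the endomorphism $\Lambda_\rho(i\Theta)$. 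Taking the contraction $\Lambda_\rho$ is a linear operation on the form part, so it commutes with the usual curvature identities.

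For (1), I use the standard formula $i\Theta_{h_E^\star}(E^\star) = -\,{}^t(i\Theta_{h_E}(E))$ for the Chern curvature of the dual bundle with the dual metric. Applying $\Lambda_\rho$ gives $\Lambda_\rho(i\Theta(E^\star)) = -\,{}^t\Lambda_\rho(i\Theta(E))$. Under the $h_E$-conjugate-linear identification $E\simeq\overline{E^\star}$, $u\mapsto\langle\cdot\,,u\rangle$, we have $\langle {}^tA\,\xi,\,\xi\rangle_{h_E^\star} = \langle A u,\,u\rangle_{h_E}$ for $\xi$ corresponding to $u$, when $A$ is Hermitian. Hence the two quadratic forms have opposite signs, with $\xi\neq 0$ if and only if $u\neq 0$, which gives the equivalence in both the strict and semi cases.

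For (2), I use the second fundamental form $\beta\in C^\infty_{1,0}(X,\,\mathrm{Hom}(S,Q))$ (or its adjoint $\beta^\star$ of type $(0,1)$). The standard curvature formulas are
\begin{eqnarray*}
i\Theta(S) = i\Theta(E)_{|S} - i\beta^\star\wedge\beta,\qquad i\Theta(Q) = i\Theta(E)_{|Q} + i\beta\wedge\beta^\star,
\end{eqnarray*}
with the sign conventions as in [Dem97, V-\S14]. I take $u\in S_x$, resp. $u\in Q_x\simeq S_x^\perp$, and evaluate $\{(\cdot\wedge\omega^{m-1}\wedge\Omega)u,u\}$. This gives the form of $E$, which has the required sign by hypothesis, plus a correction term. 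The correction is $-\,i\{\beta u,\,\beta u\}_{h_Q}\wedge\omega^{m-1}\wedge\Omega$ for $S$, and $+\,i\{\beta^\star u,\ldots\}$ rewritten as $i\{\eta,\eta\}$ with $\eta$ of type $(1,0)$ for $Q$. By Lemma 2.9 of [Pop25], already invoked in the proof of Theorem \ref{The:Kobayashi-vanishing_gen}, $i\{\eta,\eta\}_h\wedge\omega^{m-1}\wedge\Omega\geq 0$ for every $(1,0)$-form $\eta$ with vector values. In trace language this is simply $\mathrm{tr}_\rho(i\eta\wedge\bar\eta)\geq 0$. Consequently the correction is $\leq 0$ for $S$ and $\geq 0$ for $Q$, which is exactly the direction required in (a) and (b), and strictness is inherited from $E$.

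The main obstacle is the sign bookkeeping in (2). One must check, in the $Q$ case, that the term $i\beta\wedge\beta^\star$ evaluated on $u$ really has the form $i\{\eta,\eta\}$ with $\eta$ a $(1,0)$-form. Here $\eta = \beta^\star u$ conjugated, since $\beta^\star$ is of type $(0,1)$ and produces a factor $\overline{\eta}\wedge\eta$ with the correct sign. I would verify this in a local frame at $x$ with $\rho$ diagonalized, where everything reduces to $\sum_j\lambda_j^{-1}|\beta_j u|^2$ with $\lambda_j>0$ the eigenvalues of $\rho$ relative to a fixed basis.
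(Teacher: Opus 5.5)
Your proposal is correct and follows essentially the paper's route. Part (1) goes via $i\Theta_{h_E^\star}(E^\star) = -(i\Theta_{h_E}(E))^{\dagger}$ and the conjugate-linear identification $u\mapsto\langle\cdot\,,\,u\rangle_{h_E}$, and part (2) via the second-fundamental-form curvature formulas, with the sign of the correction term checked in coordinates diagonalising the $(1,\,1)$-form $\rho$ satisfying $\rho^{n-1}/(n-1)! = \omega^{m-1}\wedge\Omega$ (the paper normalises it to $\sum_l idz_l\wedge d\bar{z}_l$ at $x$).

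One sign slip to fix: in the convention of [Dem97, V-$\S14$] that you invoke, the subbundle formula is $i\Theta(S) = i\Theta(E)_{|S} + i\beta^\star\wedge\beta$, where $i\beta^\star\wedge\beta$ is itself $\leq 0$. The minus sign you display therefore contradicts the correction term $-i\{\beta u,\,\beta u\}_{h_Q}\wedge\omega^{m-1}\wedge\Omega$ that you then use; that correction term is the right one, so the argument goes through once the display is corrected.
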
  

\noindent {\it Proof.}  (1)\, If $V$ and $W$ are $\C$-vector spaces, the following canonical isomorphism of $\C$-vector spaces: \begin{eqnarray*}\mbox{Hom}\,(V,\,W) &\longrightarrow & \mbox{Hom}\,(W^\star,\,V^\star), \\
  T & \longmapsto & T^{\dagger}, \hspace{2ex} \mbox{where} \hspace{1ex} T^{\dagger}(w^\star)(v):= w^\star(Tv), \hspace{2ex} \mbox{for all} \hspace{1ex} w^\star\in W^\star, v\in V,\end{eqnarray*} is the well-known {\it transposition} operator. When acting fibrewise in the context of a $\C$-vector bundle $E$, it induces a canonical isomorphism of $\C$-vector bundles: \begin{eqnarray*}\operatorname{End} (E) \longrightarrow \operatorname{End} (E^\star), \hspace{5ex} T \longmapsto T^{\dagger},\end{eqnarray*} under which the image of the curvature form $i\Theta(D_E)\in C^\infty_{1,\,1}(X,\,\operatorname{End} (E))$ of a connection $D_E$ on $E$ is denoted by $(i\Theta(D_E))^{\dagger}\in C^\infty_{1,\,1}(X,\,\operatorname{End} (E^\star))$. It is standard (see e.g. [Dem97, chapter V]) that the curvature forms of the Chern connections on $(E,\,h_E)$ and $(E^\star,\,h_E^\star)$ are related as follows: \begin{eqnarray}\label{eqn:curvature_E-star_dagger}\bigg(i\Theta_{h_E}(E)\bigg)^{\dagger} = -i\Theta_{h_{E^\star}}(E^\star).\end{eqnarray}

This means that \begin{eqnarray*}i\Theta_{h_{E^\star}}(E^\star)(\xi_1,\,\xi_2)(s_2^\star)(s_1) = -s_2^\star\bigg(i\Theta_{h_E}(E)(\xi_1,\,\xi_2)(s_1)\bigg)\end{eqnarray*} for all $\xi_1,\xi_2\in T^{1,\,0}_xX$ and all $s_2^\star\in E^\star_x$, all $s_1\in E_x$ and all $x\in X$.

If for every $s\in E_x$, we set $s^\star:=\langle\,\cdot\,,\,s\rangle_{h_E}\in E^\star_x$, this amounts to \begin{eqnarray*}\bigg\langle i\Theta_{h_{E^\star}}(E^\star)(\xi_1,\,\xi_2)(s_2^\star),\, s_1^\star\bigg\rangle_{h_{E^\star}} = - \bigg\langle s_2,\, i\Theta_{h_E}(E)(\xi_1,\,\xi_2)(s_1)\bigg\rangle_{h_E}.\end{eqnarray*}

Multiplying by $\omega^{m-1}\wedge\Omega$, we get: \begin{eqnarray*}\label{eqn:Chern-curvatures_E-E-star_bis}\bigg\{\bigg(i\Theta_{h_{E^\star}}(E^\star)\wedge\omega^{m-1}\wedge\Omega\bigg)(s_2^\star),\, s_1^\star\bigg\}_{h_{E^\star}} = - \bigg\{s_2,\, \bigg(i\Theta_{h_E}(E)\wedge\omega^{m-1}\wedge\Omega\bigg)(s_1)\bigg\}_{h_E}\end{eqnarray*} for all $s_2^\star\in E^\star_x$, $s_1\in E_x$ and $x\in X$.

The statement under (1) follows from this by taking $s_1=s_2$ and using the definitions.

\vspace{1ex}

(2)\, Let $\beta\in C^\infty_{1,\,0}(X,\,\mbox{Hom}\,(S,\,Q))$ be the second fundamental form of $S$ in $E$ and let $\beta^\star\in C^\infty_{0,\,1}(X,\,\mbox{Hom}\,(Q,\,S))$ be its adjoint with respect to the pointwise inner product induced by $h_S$ and $h_Q$. Denoting by $i\Theta_{h_E}(E)_{|S}$ (resp. $i\Theta_{h_E}(E)_{|Q}$) the $(S\to S)$-part (resp. $(Q\to Q)$-part) of $i\Theta_{h_E}(E)\in C^\infty_{1,\,1}(X,\,\operatorname{End}  (E))$ with respect to the (in general non-holomorphic) $C^\infty$ splitting $E\simeq S\oplus Q$ induced by $h_E, h_S, h_Q$, the Chern curvature forms of $(S,\,h_S)$ and $(Q,\,h_Q)$ are given by the well-known formulae (\ref{eqn:curvatures_S-Q_exact-seq}) - see e.g. [Dem97, V-$\S14$].

Writing $\beta = \sum\limits_{j=1}^n dz_j\otimes\beta_j$, with $\beta_j\in\mbox{Hom}\,(S,\,Q)$, in a coordinate neighbourhood of $x$, we get $\beta^\star = \sum\limits_{k=1}^n d\bar{z}_k\otimes\beta_k^\star$, where $\beta_k^\star\in\mbox{Hom}\,(Q,\,S)$ is the adjoint of $\beta_k$. Meanwhile, let $\gamma>0$ be the (unique) positive definite $(1,\,1)$-form on $X$ such that $\gamma_{n-1} = \gamma^{n-1}/(n-1)! = \omega^{m-1}\wedge\Omega$. Choosing the local holomorphic coordinates $z_1,\dots , z_n$ centered at $x$ such that $\gamma = \sum_{l=1}^n idz_l\wedge d\bar{z}_l$ at $x$, we get: \begin{eqnarray*}\omega^{m-1}\wedge\Omega = \sum\limits_{l=1}^n \widehat{idz_l\wedge d\bar{z}_l}    \hspace{6ex} \mbox{at} \hspace{1ex} x,\end{eqnarray*} where $\widehat{idz_l\wedge d\bar{z}_l}$ stands for the wedge product of all $idz_j\wedge d\bar{z}_j$ with $j = 1, \dots , l-1, l+1,\dots , n$.

Consequently, we get the following expressions (the second of which holds only at $x$): \begin{eqnarray*}i\beta^\star\wedge\beta\wedge\omega^{m-1}\wedge\Omega = -\sum\limits_{j,k=1}^n idz_j\wedge d\bar{z}_k\wedge\omega^{m-1}\wedge\Omega\otimes\beta_k^\star\beta_j = - dV_n\otimes\sum_{j=1}^n\beta_j^\star\beta_j,\end{eqnarray*} where $dV_n:= idz_1\wedge d\bar{z}_1\wedge\dots\wedge idz_n\wedge d\bar{z}_n$. Hence, \begin{eqnarray*}\bigg\{\bigg(i\beta^\star\wedge\beta\wedge\omega^{m-1}\wedge\Omega\bigg)\,u_x,\,u_x\bigg\}_{h_S(x)} = -\sum_{j=1}^n\bigg\langle(\beta_j^\star\beta_j)\,u_x,\,u_x\bigg\rangle_{h_S(x)}\,dV_n = -\sum_{j=1}^n\bigg|\beta_j\,u_x\bigg|^2_{h_Q(x)}\,dV_n \leq 0.\end{eqnarray*} From this and the equality $i\Theta_{h_S}(S)\wedge\omega^{m-1}\wedge\Omega = i\Theta_{h_E}(E)_{|S}\wedge\omega^{m-1}\wedge\Omega + i\beta^\star\wedge\beta\wedge\omega^{m-1}\wedge\Omega$, we conclude that \begin{eqnarray*}\bigg\{\bigg(i\Theta_{h_S}(S)\wedge\omega^{m-1}\wedge\Omega\bigg)\,u_x,\,u_x\bigg\}_{h_S(x)}\leq \bigg\{\bigg(i\Theta_{h_E}(E)_{|S}\wedge\omega^{m-1}\wedge\Omega\bigg)\,u_x,\,u_x\bigg\}_{h_S(x)}\end{eqnarray*}  for each $u_x\in S_x$. This proves (a).

(b) is proved in a similar way after observing that \begin{eqnarray*}\bigg\{\bigg(i\beta\wedge\beta^\star\wedge\omega^{m-1}\wedge\Omega\bigg)\,v_x,\,v_x\bigg\}_{h_Q(x)} = \sum_{j=1}^n\bigg|\beta_j^\star\,v_x\bigg|^2_{h_S(x)}\,dV_n \geq 0\end{eqnarray*} for each $v_x\in Q_x$ and after using the general equality $i\Theta_{h_Q}(Q)\wedge\omega^{m-1}\wedge\Omega = i\Theta_{h_E}(E)_{|Q}\wedge\omega^{m-1}\wedge\Omega + i\beta\wedge\beta^\star\wedge\omega^{m-1}\wedge\Omega$.  \hfill $\Box$

\vspace{2ex}

Next, we show that one of the above positivity notions behaves well under pullbacks as well.

\begin{Prop}\label{Prop:m-pos_pullback} Let $\pi:Y\longrightarrow X$ be a holomorphic submersion between complex manifolds with $\mbox{dim}_\C X =n$ and $\mbox{dim}_\C Y = N = n+l$. Suppose there exists $\gamma\in C^\infty_{1,\,1}(Y,\,\R)$ such that the restriction $\gamma_{|\pi^{-1}(z)}$ is positive definite everywhere on the fibre $Y_z:=\pi^{-1}(z)$ for every $z\in X$.

  Let $(E,\,h)\longrightarrow X$ be a Hermitian holomorphic vector bundle of rank $r\geq 1$ over $X$. Let $\omega$ be a Hermitian metric on $X$, $m\in\{1,\dots , n\}$, $x\in X$ and $\Omega\in\Lambda^{n-m,\,n-m}T^\star_x X$ an $\R$-valued $(n-m,\,n-m)$-form at $x$ such that $\Omega> 0$ (metrically weakly).

\vspace{1ex}  

 (a)\, Let $\rho\in\Lambda^{1,\,1}T^\star_x X$ be the unique positive definite $(1,\,1)$-form at $x\in X$ such that $\rho^{n-1} = \omega^{m-1}\wedge\Omega$. For every $\varepsilon>0$, set $\tilde\rho_\varepsilon:=\pi^\star\rho + \varepsilon\gamma_{|\pi^{-1}(x)}\in C^\infty_{1,\,1}(\pi^{-1}(x),\,\R)$.

  Then, the following implication holds:

  \vspace{1ex}

  {\bf if} $(E,\,h)$ is  $(\omega,\,\Omega)$-$m$-(semi-)positive at $x\in X$, {\bf then} $(\pi^\star E,\,\pi^\star h)$ is $\tilde\rho_\varepsilon$-$(N-1)$-(semi-)positive at every point $y\in\pi^{-1}(x)$ for every $\varepsilon>0$.

  \vspace{1ex}  

  (b)\, For every $\varepsilon>0$, set $\widetilde\omega_\varepsilon:=\pi^\star\omega + \varepsilon\gamma \in C^\infty_{1,\,1}(Y,\,\R)$ and $\widetilde\Omega_\varepsilon:=\pi^\star\Omega + \varepsilon\gamma^{n-m}_{|\pi^{-1}(x)}\in C^\infty_{n-m,\,n-m}(\pi^{-1}(x),\,\R)$.

  Then, if $n-m\leq l$, the following implication holds:

  \vspace{1ex}

  {\bf if} $(E,\,h)$ is  $(\omega,\,\Omega)$-$m$-(semi-)positive at $x\in X$ and $\omega$-$n$-(semi-)positive at $x\in X$, {\bf then} $(\pi^\star E,\,\pi^\star h)$ is $(\widetilde\omega_\varepsilon,\,\widetilde\Omega_\varepsilon)$-$(l+m)$-(semi-)positive at every point $y\in\pi^{-1}(x)$ for every $\varepsilon>0$.

\end{Prop}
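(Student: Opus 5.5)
The plan is to work pointwise at a fixed $y\in\pi^{-1}(x)$, exploiting the fact that the Chern curvature of the pullback is the pullback of the curvature: $i\Theta_{\pi^\star h}(\pi^\star E)=\pi^\star\big(i\Theta_h(E)\big)$. For $u\in E_x=(\pi^\star E)_y$, set $\alpha_u:=\{i\Theta_h(E)\,u,\,u\}_h\in\Lambda^{1,1}_\R T^\star_xX$. Then $\{(\pi^\star i\Theta_h(E)\wedge\Phi)\,u,\,u\}_{\pi^\star h}=\pi^\star\alpha_u\wedge\Phi$ for every scalar form $\Phi$ of the right bidegree. So both (a) and (b) reduce to scalar sign statements about $\pi^\star\alpha_u\wedge\Phi$.

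By the proof of (d) of Observation \ref{Obs:m-pos_Griffiths-Nakano_comparison}, the hypothesis is a trace condition, $\alpha_u\wedge\rho^{n-1}\geq 0$ (resp. $>0$ for $u\neq 0$). Since $\pi$ is a submersion, I choose local holomorphic coordinates $(z_1,\dots,z_n,w_1,\dots,w_l)$ on $Y$ near $y$ with $\pi(z,w)=z$. At $y$, I take $\rho=\sum_j idz_j\wedge d\bar z_j$. Moreover, after a linear change of the $w$'s (and, if needed, a linear change $w\mapsto w+Lz$, which does not alter $\pi$), I arrange $\gamma=\sum_k idw_k\wedge d\bar w_k+(\text{pure }dz\wedge d\bar z\text{ terms})$. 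Thus $\tilde\rho_\varepsilon$ becomes block diagonal at $y$. Its horizontal block is $\rho+\varepsilon\gamma_H$, which tends to $\rho$ as $\varepsilon\to 0$ but is not equal to it. The key structural fact is that $\pi^\star\alpha_u$ has no $dw$-components: it is block diagonal, with the block $\alpha_u$ on the horizontal part and $0$ on the vertical part.

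For (a), the test forms $\Omega'$ have bidegree $(1,1)$, so they are positive definite. I use the identity
$$a\wedge b\wedge\tilde\rho^{N-2}/(N-2)!=\big(\mathrm{tr}_{\tilde\rho}a\,\mathrm{tr}_{\tilde\rho}b-\langle a,b\rangle_{\tilde\rho}\big)\,dV_{\tilde\rho}.$$
This reduces $\tilde\rho_\varepsilon$-$(N-1)$-(semi-)positivity to the following condition: the sum of any $N-1$ eigenvalues of $\pi^\star\alpha_u$ with respect to $\tilde\rho_\varepsilon$ is $\geq 0$. In the block form these eigenvalues are the eigenvalues of $\alpha_u$ with respect to the horizontal block, together with $l$ zeros. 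So the quantity to control is $\mathrm{tr}\,\alpha_u-\max(\lambda_{\max},0)$.

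This is where I expect the main obstacle. The trace hypothesis with respect to $\rho$ does not by itself control this quantity. It also does not transfer automatically to the perturbed horizontal metric $\rho+\varepsilon\gamma_H$. What I would try first is to expand $\tilde\rho_\varepsilon^{N-2}\wedge\Omega'$ binomially in $\pi^\star\rho$ and $\varepsilon\gamma$. Only terms with total horizontal degree $(n,n)$ survive against $\pi^\star\alpha_u$. I would then isolate the leading term $\pi^\star(\alpha_u\wedge\rho^{n-1})\wedge\varepsilon^{l}\gamma_V^{l-1}\wedge\Omega'_V$, which is $\geq 0$ by hypothesis times a vertical volume. The remaining mixed terms must then be shown to carry a sign or to vanish by the block structure. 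If they do not, the statement needs to be read with $\Omega'$ restricted to the vertical directions; I would check this against the paper's intended reading.

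For (b) the same expansion applies. We have $\widetilde\omega_\varepsilon^{\,l+m-1}\wedge\widetilde\Omega_\varepsilon$ tested against the bidegree-$(N-l-m)=(n-m)$ weakly positive forms $\Omega''$. Expanding in powers of $\varepsilon$, the surviving terms are of two kinds. The first kind is $\pi^\star(\alpha_u\wedge\omega^{m-1}\wedge\Omega)$ times a vertical volume built from $\gamma^{l}$. The constraint $n-m\leq l$ is exactly what lets the $\gamma^{n-m}$ part of $\widetilde\Omega_\varepsilon$ plus powers of $\gamma$ fill the vertical degree. The second kind is $\pi^\star(\alpha_u\wedge\omega^{n-1})$-type terms times vertical forms, whose sign is given by the $\omega$-$n$-(semi-)positivity hypothesis. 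Each vertical factor is a positive multiple of the vertical volume, by the weak positivity of $\Omega''$ and the positivity of $\gamma$ on fibres. Summing the terms gives the conclusion. Strictness in the positive case follows from the leading term.
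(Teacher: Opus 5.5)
Your reduction (pullback curvature, adapted coordinates, scalar forms $\pi^\star\alpha_u\wedge\Phi$) is fine, but the proposal does not prove (a). It stops at the obstacle you identified, and the binomial fallback cannot close it.

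\textbf{Part (a).} Read literally through Definition \ref{Def:m-pos}(a)--(b), $\tilde\rho_\varepsilon$-$(N-1)$-positivity quantifies over \emph{all} positive $(1,1)$ test forms $\Omega'$. Your eigenvalue reduction correctly shows this amounts to $\mathrm{tr}\,\alpha_u-\max(\lambda_{\max},0)>0$, which the trace hypothesis does not give. That version is in fact false, already with $\gamma$ purely vertical. Take $n=2$, $m=1$, $\Omega=\rho=\sum_j idz_j\wedge d\bar z_j$, and a line bundle with $i\Theta_h(E)(x)=2\,idz_1\wedge d\bar z_1-idz_2\wedge d\bar z_2$. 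The hypothesis holds (trace $1$). But $\pi^\star i\Theta_h(E)$ has $\tilde\rho_\varepsilon$-eigenvalues $2,-1,0,\dots,0$, and the sum of the $N-1$ of them omitting $2$ is $-1$. So the ``remaining mixed term'' $\pi^\star(\alpha_u\wedge\rho^{n-2}\wedge\Omega'_H)\wedge\gamma_V^{l}$ in your expansion is exactly the non-trace contribution and has no sign.

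What the paper actually proves is the trace version: positivity for the single test form $\Omega'=\tilde\rho_\varepsilon$. It also treats $\gamma_{|\pi^{-1}(x)}$ as a purely vertical form, so $\tilde\rho_\varepsilon=\rho\oplus\varepsilon\gamma_V$ at $y$ with no $\gamma_H$ correction. Since $\pi^\star(i\Theta_h(E)\wedge\rho^{n})=0$ and $\gamma_V^{j}=0$ for $j>l$, only one binomial term survives:
\[
i\Theta_{\pi^\star h}(\pi^\star E)\wedge\tilde\rho_\varepsilon^{N-1}=\binom{N-1}{n-1}\,\varepsilon^{l}\,\pi^\star\big(i\Theta_h(E)\wedge\rho^{n-1}\big)\wedge\gamma_V^{l},
\]
and the conclusion is immediate. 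To turn your plan into a proof, you must commit explicitly to this reading of both $\tilde\rho_\varepsilon$-$(N-1)$-positivity and $\gamma_{|\pi^{-1}(x)}$, and then carry out this computation. Your remark that a horizontal block $\rho+\varepsilon\gamma_H$ would break even the trace condition for large $\varepsilon$ is correct, and it is exactly why the vertical reading is needed.

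\textbf{Part (b).} There is no further test form $\Omega''$: $\widetilde\Omega_\varepsilon$ is itself the test form, and $\pi^\star i\Theta_h(E)\wedge\widetilde\omega_\varepsilon^{\,l+m-1}\wedge\widetilde\Omega_\varepsilon$ already has bidegree $(N,N)$. With $\gamma$ vertical, the $\pi^\star\Omega$ part pairs only with $\pi^\star\omega^{m-1}\wedge\gamma^{l}$. The $\varepsilon\gamma^{n-m}$ part pairs only with $\pi^\star\omega^{n-1}\wedge\gamma^{l+m-n}$, which is where $n-m\leq l$ is used. This gives
\[
\binom{l+m-1}{m-1}\varepsilon^{l}\,\pi^\star\big(i\Theta_h(E)\wedge\omega^{m-1}\wedge\Omega\big)\wedge\gamma^{l}+\binom{l+m-1}{n-1}\varepsilon^{l+m-n+1}\,\pi^\star\big(i\Theta_h(E)\wedge\omega^{n-1}\big)\wedge\gamma^{l}.
\]
Each term is signed by one of the two hypotheses, and the vertical factor is just $\gamma^{l}>0$ on the fibre. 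Your ``two kinds of terms'' coincide with this, which is the paper's argument. However, the appeal to the weak positivity of $\Omega''$ is misplaced. Also, by your own block decomposition, a horizontal block $\omega+\varepsilon\gamma_H$ in $\widetilde\omega_\varepsilon$ would produce extra sign-indefinite terms. So here too you must adopt the vertical reading of $\gamma$ explicitly.
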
  

\noindent {\it Proof.} (a)\, The hypothesis means that, for every $u\in E_x\setminus\{0\}$ (resp. for every $u\in E_x$), we have: \begin{eqnarray*}\bigg\{\bigg(i\Theta_h(E)\wedge\rho^{n-1}\bigg)\,u,\,u\bigg\}_h>0 \hspace{2ex} (\mbox{resp.} \hspace{1ex} \geq 0).\end{eqnarray*} In other words, the linear map $i\Theta_h(E)\wedge\rho^{n-1}/dV_\rho:E_x\longrightarrow E_x$ is positive definite (resp. semi-positive definite) with respect to the inner product $\langle\,\cdot\,,\,\cdot\,\rangle_{h(x)}$.

On the other hand, $i\Theta_{\pi^\star h}(\pi^\star E) = \pi^\star(i\Theta_h(E))$ and \begin{eqnarray*}i\Theta_{\pi^\star h}(\pi^\star E)\wedge\tilde\rho_\varepsilon^{N-1} = {N-1 \choose n-1}\,\varepsilon^l\,\pi^\star\bigg(i\Theta_h(E)\wedge\rho^{n-1}\bigg)\wedge(\gamma_{|\pi^{-1}(x)})^l\end{eqnarray*} at every point $y\in\pi^{-1}(x)$. Indeed, $l$ is the dimension of the fibres $Y_z=\pi^{-1}(z)$ as complex manifolds, so $l$ is the top power that the vertical $(1,\,1)$-form $\gamma_{|\pi^{-1}(x)}$ can non-trivially be raised to on $\pi^{-1}(x)$, while $n-1$ is the top power of $\rho$ that can non-trivially be multiplied with $i\Theta_h(E)$ on the $n$-dimensional manifold $X$. Therefore, this equality yields: \begin{eqnarray*}\bigg\{\bigg(i\Theta_{\pi^\star h}(\pi^\star E)\wedge\tilde\rho_\varepsilon^{N-1}\bigg)\,u,\,u\bigg\}_{\pi^\star h} =  {N-1 \choose n-1}\,\varepsilon^l\,\bigg\{\bigg(i\Theta_h(E)\wedge\rho^{n-1}\bigg)\,u,\,u\bigg\}_h\wedge(\gamma_{|\pi^{-1}(x)})^l\end{eqnarray*} for every $y\in\pi^{-1}(x)$ and every $u\in(\pi^\star E)_y = E_x$.

    Now, $\gamma_{|\pi^{-1}(x)}>0$ as a scalar form on the fibre $\pi^{-1}(x)$, so the above equality shows that the (semi-)positive definiteness of $i\Theta_h(E)\wedge\rho^{n-1}/dV_\rho:E_x\longrightarrow E_x$ implies the same property for $i\Theta_{\pi^\star h}(\pi^\star E)\wedge\tilde\rho_\varepsilon^{N-1}/dV_{\tilde\rho_\varepsilon}:(\pi^\star E)_y\longrightarrow(\pi^\star E)_y$. This proves the contention.

    \vspace{1ex}

    (b)\, Given the restrictions on the exponents of the horizontal form $\pi^\star\omega$ and the vertical form $\gamma_{|\pi^{-1}(x)}$ in terms of $n, m, l$ that were explained in the proof of (a), we get: \begin{eqnarray*} i\Theta_{\pi^\star h}(\pi^\star E)\wedge\widetilde\omega_\varepsilon^{l+m-1}\wedge\widetilde\Omega_\varepsilon & = & \\
 \pi^\star\bigg[{l+m-1 \choose m-1}\,\varepsilon^l\,\bigg(i\Theta_h(E)\wedge\omega^{m-1}\wedge\Omega\bigg) & + & {l+m-1 \choose n-1}\,\varepsilon^{l+1-n+m}\,\bigg(i\Theta_h(E)\wedge\omega^{n-1}\bigg)\bigg]\wedge(\gamma_{|\pi^{-1}(x)})^l\end{eqnarray*} at every point $y\in\pi^{-1}(x)$. This proves the contention. \hfill $\Box$

\vspace{2ex}

Next, recall the following standard construction (see e.g. [Dem97]). Let $E$ be a holomorphic vector bundle of rank $r\geq 1$ over an $n$-dimensional complex manifold $X$. One denotes by $\pi:\Proj(E^\star)\longrightarrow X$ the associated projectivised bundle whose fibre at every point $x\in X$ is $\Proj(E^\star)_x = \Proj(E^\star_x)$, the set of $\C$-lines in $E^\star_x$. One then defines the {\it tautological hyperplane vector subbundle} $S$ of $\pi^\star E\longrightarrow\Proj(E^\star)$ whose fibre at every point $(x,\,[\xi])\in\Proj(E^\star)$ (where $x\in X$ and $\xi\in E^\star_x\setminus\{0\}$ can be viewed as a non-zero $\C$-linear map $\xi:E_x\longrightarrow\C$) is the following $\C$-hyperplane of $E_x$: \begin{eqnarray*}S_{(x,\,[\xi])}:=\ker\xi\subset E_x = (\pi^\star E)_{(x,\,[\xi])}.\end{eqnarray*} One then considers the quotient $\C$-line bundle ${\cal O}_E(1):=\pi^\star E/S\longrightarrow\Proj(E^\star)$ and gets the following short exact sequence of holomorphic vector bundles over $\Proj(E^\star)$: \begin{eqnarray}\label{eqn:short-exact_O(1)}0\longrightarrow S\longrightarrow\pi^\star E\longrightarrow{\cal O}_E(1)\longrightarrow 0.\end{eqnarray} If, moreover, $E$ is given a $C^\infty$ Hermitian fibre metric $h$, the pullback fibre metric $\pi^\star h$ on $\pi^\star E$ induces, via (\ref{eqn:short-exact_O(1)}), a quotient fibre metric $h_Q$ on ${\cal O}_E(1)$.

\vspace{1ex}

Putting together part (2)(b) of Proposition \ref{Prop:m-pos_properties} and Proposition \ref{Prop:m-pos_pullback}, we get the following:

\begin{Cor}\label{Cor:m-pos_O(1)} Let $(E,\,h)\longrightarrow X$ be a Hermitian holomorphic vector bundle of rank $r\geq 1$ over an $n$-dimensional complex manifold $X$. Let $\omega$ be a Hermitian metric on $X$, $m\in\{1,\dots , n\}$, $x\in X$ and $\Omega\in\Lambda^{n-m,\,n-m}T^\star_x X$ an $\R$-valued $(n-m,\,n-m)$-form at $x$ such that $\Omega> 0$ (metrically weakly).

  Set $N:=n+r-1$ and let $\gamma=\omega_{FS}\in C^\infty_{1,\,1}(\Proj(E^\star),\,\R)$ be the form induced by $h$ whose restriction $\gamma_{|\pi^{-1}(z)}$ is the Fubini-Study metric of $\pi^{-1}(z)=\Proj(E^\star_z)$ for every $z\in X$.

\vspace{1ex}

  Then, the following implications hold (notation is as in Proposition \ref{Prop:m-pos_pullback}): 

\vspace{1ex}

(a)\, {\bf if} $(E,\,h)$ is  $(\omega,\,\Omega)$-$m$-(semi-)positive at $x$, {\bf then}  $\bigg({\cal O}_E(1),\,h_Q\bigg)$ is $\tilde\rho_\varepsilon$-$(N-1)$--(semi-)positive at every point $(x,\,[\xi])\in\pi^{-1}(x)$ for every $\varepsilon>0$;

\vspace{1ex}

(b)\, {\bf if} $n-m\leq r-1$, $(E,\,h)$ is  $(\omega,\,\Omega)$-$m$-(semi-)positive  at $x$ and $\omega$-$n$-(semi-)positive at $x\in X$, {\bf then} $\bigg({\cal O}_E(1),\,h_Q\bigg)$ is $(\widetilde\omega_\varepsilon,\,\widetilde\Omega_\varepsilon)$-$(r-1+m)$-(semi-)positive at every $(x,\,[\xi])\in\pi^{-1}(x)$ for every $\varepsilon>0$.

\end{Cor}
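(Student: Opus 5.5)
The plan is to chain Proposition \ref{Prop:m-pos_pullback} with part (2)(b) of Proposition \ref{Prop:m-pos_properties}, applied on $Y:=\Proj(E^\star)$ to the short exact sequence (\ref{eqn:short-exact_O(1)}). First we check that the setting of Proposition \ref{Prop:m-pos_pullback} is met. The projection $\pi:\Proj(E^\star)\longrightarrow X$ is a holomorphic submersion with fibres $\Proj(E^\star_z)\simeq\Proj^{r-1}$, so $l=r-1$ and $\dim_\C Y=n+r-1=N$. The form $\gamma=\omega_{FS}$ is a smooth real $(1,\,1)$-form on $Y$ whose restriction to each fibre is the Fubini-Study metric induced by $h(z)$, hence positive definite there. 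For instance, $\gamma=i\Theta_{h_Q}({\cal O}_E(1))$ has this property, with the standard curvature computation supplying fibrewise positivity.

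For (a), assume $(E,\,h)$ is $(\omega,\,\Omega)$-$m$-(semi-)positive at $x$. Proposition \ref{Prop:m-pos_pullback}(a) then gives that $(\pi^\star E,\,\pi^\star h)$ is $\tilde\rho_\varepsilon$-$(N-1)$-(semi-)positive at every $y\in\pi^{-1}(x)$. Unwinding Definition \ref{Def:m-pos}, this means $\{(i\Theta_{\pi^\star h}(\pi^\star E)\wedge\tilde\rho_\varepsilon^{N-2}\wedge\Omega')\,u,\,u\}\geq 0$ (resp. $>0$) for every weakly positive $(1,\,1)$-form $\Omega'$ at $y$. In fact Proposition \ref{Prop:m-pos_pullback}(a) was proved in the form $(\tilde\rho_\varepsilon,\,\tilde\rho_\varepsilon)$-$(N-1)$-positivity, i.e. for $\Omega'=\tilde\rho_\varepsilon$. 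In either reading we fix the relevant $\Omega'$ and apply Proposition \ref{Prop:m-pos_properties}(2)(b), pointwise at $y$, to $0\to S\to\pi^\star E\to{\cal O}_E(1)\to 0$. That statement holds for an arbitrary Hermitian metric and an arbitrary weakly positive form of complementary bidegree. It shows that the quotient $({\cal O}_E(1),\,h_Q)$ inherits the same (semi-)positivity at $y$ for every such $\Omega'$, which is exactly the claim.

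For (b), the hypothesis $n-m\leq r-1=l$ is precisely the condition required in Proposition \ref{Prop:m-pos_pullback}(b). Combined with the $(\omega,\,\Omega)$-$m$- and $\omega$-$n$-(semi-)positivity of $(E,\,h)$ at $x$, that proposition gives that $(\pi^\star E,\,\pi^\star h)$ is $(\widetilde\omega_\varepsilon,\,\widetilde\Omega_\varepsilon)$-$(l+m)$-(semi-)positive at each $y\in\pi^{-1}(x)$. Proposition \ref{Prop:m-pos_properties}(2)(b), applied with $\widetilde\omega_\varepsilon$, $\widetilde\Omega_\varepsilon$ and $l+m=r-1+m$ on the $N$-dimensional manifold $Y$, then transfers this to $({\cal O}_E(1),\,h_Q)$.

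The only delicate point is verifying that the forms fed into Proposition \ref{Prop:m-pos_properties} are legitimate. First, $\widetilde\omega_\varepsilon$ must be a Hermitian metric near the fibre; this holds because it is horizontally $\pi^\star\omega>0$ plus vertically $\varepsilon\gamma>0$, with any mixed terms of $\gamma$ absorbed by choosing $\gamma$ appropriately, or by noting that positivity need only hold at the point $y$. Second, $\widetilde\Omega_\varepsilon$ and $\tilde\rho_\varepsilon$ must be metrically weakly positive at $y$ in the correct bidegree. These are the same verifications already implicit in Proposition \ref{Prop:m-pos_pullback}, so we expect no new difficulty. Strictness is also preserved, because in the proof of Proposition \ref{Prop:m-pos_properties}(2)(b) the added term $\sum_j|\beta_j^\star v|^2\,dV_n$ is nonnegative.
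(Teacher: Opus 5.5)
Your proposal is correct and follows the paper's own argument: apply Proposition \ref{Prop:m-pos_pullback} to $\pi:\Proj(E^\star)\to X$ (so $l=r-1$ and $N=n+r-1$), then pass to the quotient ${\cal O}_E(1)$ of the tautological sequence (\ref{eqn:short-exact_O(1)}) using Proposition \ref{Prop:m-pos_properties}(2)(b), where the only extra input is a form $\gamma$ whose restriction to each fibre is the Fubini--Study metric. One small correction for (b): choose $\gamma$ to be the vertical part $i\beta\wedge\beta^\star$ of $i\Theta_{h_Q}({\cal O}_E(1))$, as the paper does, because this form is semi-positive and therefore makes $\widetilde\omega_\varepsilon=\pi^\star\omega+\varepsilon\gamma$ positive definite for every $\varepsilon>0$; the full curvature form $i\Theta_{h_Q}({\cal O}_E(1))$ you mention can be negative in horizontal directions, and your remark that positivity is only needed at $y$ does not handle large $\varepsilon$.
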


\noindent {\it Proof.} The only thing to notice before applying said results to deduce this one is the existence of $\gamma=\omega_{FS}$ as in the statement. Indeed, once a $C^\infty$ Hermitian fibre metric $h$ has been fixed on $E$, for each $z\in X$ the Hermitian vector space $(E^\star_z,\,h_z^\star)$ induces the Fubini-Study metric $\omega_{FS,\,z}$ on the associated complex projective space $\Proj(E^\star_z)$. When $z$ is made to vary in $X$, the family $(\omega_{FS,\,z})_{z\in X}$ of forms (which depend in a $C^\infty$ way on $z\in X$ because $h$ does) defines a $C^\infty$ $(1,\,1)$-form $\omega_{FS}$ on the complex manifold $\Proj(E^\star)$.

Equivalently, the $C^\infty$ $(1,\,1)$-form $\omega_{FS}$ on $\Proj(E^\star)$ can be seen as the ``vertical'' component of the curvature form $i\Theta_{h_Q}({\cal O}_E(1))$. In particular, $\omega_{FS}$ and $i\Theta_{h_Q}({\cal O}_E(1))$ have the same restriction to each fibre $\Proj(E^\star_z)$ with $z\in X$. Indeed, the restriction of ${\cal O}_E(1)$ to $\Proj(E^\star_z)\simeq\Proj^{r-1}$ identifies with the line bundle ${\cal O}_{\Proj^{r-1}}(1)$ whose curvature form (with respect to the induced fibre metric) is the usual Fubini-Study metric of the projective space $\Proj^{r-1}$. More precisely, $\omega_{FS} = i\beta\wedge\beta^\star$ as $(1,\,1)$-forms on $\Proj(E^\star)$, where $\beta\in C^\infty_{1,\,0}(\Proj(E^\star),\,\mbox{Hom}\,(S,\,{\cal O}_E(1))$ is the second fundamental form of the short exact sequence (\ref{eqn:short-exact_O(1)}) with fibre metrics induced by $h$. Put differently, in the standard formula (cf. e.g. [Dem97, V-$\S14$]): \begin{eqnarray*}i\Theta_{h_Q}({\cal O}_E(1)) = i\Theta_{\pi^\star h}(\pi^\star E)_{|{\cal O}_E(1)} + i\beta\wedge\beta^\star,\end{eqnarray*} the $({\cal O}_E(1)\to{\cal O}_E(1))$-part of the curvature form of $(\pi^\star E,\, \pi^\star h)$ is the ``horizontal'' component (with respect to the fibration $\pi:\Proj(E^\star)\longrightarrow X$) of the curvature form of $({\cal O}_E(1),\,h_Q)$. \hfill $\Box$

\vspace{2ex}

As an aside, we note that the above reasoning implies the well-known fact that the tautological short exact sequence (\ref{eqn:short-exact_O(1)}) never splits holomorphically.

\section{$m$-positive stability: main definitions and basic properties}\label{section:m-pos-stability_def-prop_stability} We propose notions of stability for holomorphic vector bundles that generalise the classical Mumford-Takemoto slope stability theory.

\subsection{Coherent subsheaves and stability functions}\label{subsection:subsheaves_Z-functions}

 Let $E$ be a holomorphic vector bundle of rank $r\geq 1$ on an $n$-dimensional complex manifold $X$. It is well known that one has the following

\begin{Fact} ({\bf standard})\label{Fact:canonical-section} Every coherent subsheaf ${\cal F}\subset{\cal O}(E)$ of rank $s\in\{1,\dots , r-1\}$ of the sheaf of germs of holomorphic sections of $E$ induces a {\bf canonical} non-vanishing global holomorphic section \begin{eqnarray*}\sigma_{{\cal F}}\in H^0(X,\,\Lambda^s E\otimes\det{\cal F}^\star),\end{eqnarray*} where $\det{\cal F}:=(\Lambda^s{\cal F})^{\vee\vee}$ is the determinant line bundle of ${\cal F}$ and $\det{\cal F}^\star$ is its dual line bundle.

\end{Fact}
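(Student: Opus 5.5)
The plan is to construct $\sigma_{\cal F}$ first on the open set where ${\cal F}$ is a subbundle, and then extend it across the small singular set by Hartogs.

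\emph{Step 1: the locally free locus.} Since ${\cal F}\subset{\cal O}(E)$ is a coherent subsheaf of rank $s$, it is torsion-free, being a subsheaf of a locally free sheaf. Hence it is locally free outside an analytic subset $S_1\subset X$ of codimension $\geq 2$. Consider also the quotient ${\cal O}(E)/{\cal F}$, which is coherent. I can replace ${\cal F}$ by its saturation, or simply note the following: outside a proper analytic subset $S\supset S_1$, the inclusion ${\cal F}\to{\cal O}(E)$ is a morphism of vector bundles of constant rank $s$, i.e.\ ${\cal F}$ is a holomorphic subbundle $F\subset E$ there. I do not need $S$ to have codimension $\geq 2$.

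\emph{Step 2: the determinant map.} Taking $\Lambda^s$ of the inclusion ${\cal F}\hookrightarrow{\cal O}(E)$ gives a sheaf morphism $\Lambda^s{\cal F}\to\Lambda^s{\cal O}(E)$. Its target is locally free, hence reflexive, so the morphism factors through the double dual. This yields a morphism of sheaves $\det{\cal F}=(\Lambda^s{\cal F})^{\vee\vee}\to\Lambda^s E$, equivalently a global holomorphic section
\[
\sigma_{\cal F}\in H^0(X,\,\operatorname{Hom}(\det{\cal F},\Lambda^sE)) = H^0(X,\,\Lambda^sE\otimes\det{\cal F}^\star).
\]
Concretely, on $X\setminus S_1$ take a local frame $f_1,\dots,f_s$ of ${\cal F}$. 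Then $\sigma_{\cal F}$ is $f_1\wedge\dots\wedge f_s\otimes(f_1\wedge\dots\wedge f_s)^{\star}$. This expression is independent of the frame, because a change of frame multiplies both factors by reciprocal determinants, so it glues. It extends holomorphically across $S_1$ by Hartogs' theorem, since $\det{\cal F}$ is a line bundle (reflexive of rank one) and $\operatorname{codim}S_1\geq 2$. This is the canonicity: no choices are involved.

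\emph{Step 3: non-vanishing.} The phrase ``non-vanishing'' should be read as ``not identically zero'', since $\sigma_{\cal F}$ may well vanish on the analytic set where ${\cal F}$ fails to be a subbundle. At a point of $X\setminus S$, the frame $f_1,\dots,f_s$ is pointwise linearly independent in $E_x$, so $f_1\wedge\dots\wedge f_s(x)\neq 0$ and $\sigma_{\cal F}(x)\neq 0$. Thus $\sigma_{\cal F}\not\equiv 0$, and its zero locus is contained in $S$.

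\emph{Main obstacle.} The only delicate point is the extension across the singular locus. This is settled by reflexivity of $\det{\cal F}$, via the double dual in the definition, together with normality of $X$ (Hartogs). A further check is that the factorisation through $(\Lambda^s{\cal F})^{\vee\vee}$ agrees with the local-frame description on $X\setminus S_1$; this holds because both are determined on a dense open set.
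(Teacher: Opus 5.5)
Your proof is correct and follows essentially the same route as the paper. Both apply $\Lambda^s$ to the inclusion where ${\cal F}$ is locally free, extend the resulting morphism $\det{\cal F}\to\Lambda^sE$ across the codimension-$\geq 2$ set by Hartogs (your double-dual factorisation is the same fact in sheaf language), and read it as a section of $\Lambda^sE\otimes\det{\cal F}^\star$.

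Your version is more precise than the paper's in two places. First, the paper asserts that ${\cal F}$ is a subbundle off a codimension-$\geq 2$ set, which is false when ${\cal O}(E)/{\cal F}$ has torsion along a divisor. Second, the paper later uses $|\sigma_{\cal F}|^2>0$ everywhere, whereas, as you note, $\sigma_{\cal F}$ may vanish where ${\cal F}$ fails to be a subbundle (e.g.\ for ${\cal O}(d)\hookrightarrow{\cal O}(d_1)\oplus{\cal O}(d_2)$ on $\Proj^1$ given by two sections with a common zero), so only $\sigma_{\cal F}\not\equiv 0$ holds in general.
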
      

\noindent {\it Proof.} Since ${\cal F}$ is a coherent subsheaf of the locally free sheaf ${\cal O}(E)$, it is torsion-free. Since any torsion-free coherent sheaf on a complex manifold is locally free outside an analytic subset of codimension $\geq 2$, there exists such an analytic subset $S\subset X$ such that the restriction ${\cal F}_{|X\setminus S}$ is locally free. Thus, there exists a rank-$s$ holomorphic vector subbundle $F$ of $E_{|X\setminus S}$ such that ${\cal F}_{|X\setminus S} = O(F)$.

Taking $s$-th exterior powers in the inclusion $F\hookrightarrow E_{|X\setminus S}$ of vector bundles over $X\setminus S$, we get the canonical injective vector bundle morphism $\Lambda^s F = \det{\cal F}_{|X\setminus S}\hookrightarrow\Lambda^sE_{|X\setminus S}$ over $X\setminus S$. Since $\mbox{codim}_X S\geq 2$, the classical Hartogs extension theorem for holomorphic functions in codimension $\geq 2$ yields a unique extension of this morphism across $S$: \begin{eqnarray*}\det{\cal F}\longrightarrow\Lambda^sE.\end{eqnarray*} This morphism (of holomorphic vector bundles over $X$) obtained by extension across $S$ remains injective as a sheaf morphism. Indeed, recall that the determinant line bundle $\det{\cal F} = (\Lambda^s{\cal F})^{\vee\vee}$ is defined on the whole of $X$ as the double dual of the top exterior power of ${\cal F}$. It is the unique extension across $S$ of the holomorphic line bundle $\Lambda^sF$ (a priori defined on $X\setminus S$).

Tensoring by the holomorphic line bundle $\det{\cal F}^\star$, we obtain a morphism of locally free ${\cal O}_X$-modules over $X$: \begin{eqnarray*}{\cal O}_X\longrightarrow{\cal O}(\Lambda^sE\otimes\det{\cal F}^\star).\end{eqnarray*} The sought-after canonical global section $\sigma_{{\cal F}}\in H^0(X,\,\Lambda^s E\otimes\det{\cal F}^\star)$ is the image of $1$ under this last morphism. \hfill $\Box$

\vspace{2ex}

In the above context, we propose the following

\begin{Def}\label{Def:Z-function} Let $X$ be a compact complex manifold with $\mbox{dim}_\C X =n$ equipped with a Hermitian metric $\omega$ and a form $\Omega\in C^\infty_{n-m,\,n-m}(X,\,\R)$ such that $\Omega>0$ (metrically weakly) at every point of $X$ for some $m\in\{1,\dots , n\}$. Let $(E,\,h)$ be a holomorphic vector bundle of rank $r\geq 1$ on $X$ endowed with a $C^\infty$ Hermitian fibre metric.

  With every coherent subsheaf ${\cal F}\subset{\cal O}(E)$ of rank $s\in\{1,\dots , r-1\}$, we associate the {\bf stability function} $Z_{\omega,\,\Omega,\,h}^{(m)}({\cal F}):X\longrightarrow\R$ defined by \begin{eqnarray*}Z_{\omega,\,\Omega,\,h}^{(m)}({\cal F}):=\bigg\{\frac{i\Theta_{\Lambda^sh\otimes h^{H-E}_{\det{\cal F}^\star}}(\Lambda^s E\otimes\det{\cal F}^\star)\wedge\omega^{m-1}\wedge\Omega}{dV_\omega}\,\sigma_{{\cal F}},\,\sigma_{{\cal F}}\bigg\}_{\Lambda^sh\otimes h^{H-E}_{\det{\cal F}^\star}},\end{eqnarray*} where $\Lambda^sh$ is the Hermitian fibre metric induced on $\Lambda^s E$ by $h$, $h^{H-E}_{\det{\cal F}^\star} = \det h_{{\cal F}}^\star$ is the (unique, up to a positive multiplicative constant) $(\omega,\,\Omega)$-Hermite-Einstein fibre metric on the line bundle $\det{\cal F}^\star$ and $\sigma_{{\cal F}}\in H^0(X,\,\Lambda^s E\otimes\det{\cal F}^\star)$ is the canonical section of Fact \ref{Fact:canonical-section}.

  The {\bf normalised stability function} $\widehat{Z}_{\omega,\,\Omega,\,h}^{(m)}({\cal F}):X\longrightarrow\R$ is defined to be \begin{eqnarray*}\widehat{Z}_{\omega,\,\Omega,\,h}^{(m)}({\cal F}):= \frac{Z_{\omega,\,\Omega,\,h}^{(m)}({\cal F})}{|\sigma_{{\cal F}}|^2_{\Lambda^sh\otimes h^{H-E}_{\det{\cal F}^\star}}}.\end{eqnarray*}

\end{Def}  

Note that the curvature forms $i\Theta_{h^{H-E}_{\det{\cal F}^\star}}(\det{\cal F}^\star)$ and $i\Theta_{\Lambda^sh\otimes h^{H-E}_{\det{\cal F}^\star}}(\Lambda^s E\otimes\det{\cal F}^\star)$ are independent of the choice of positive multiplicative constant for the fibre metric $h^{H-E}_{\det{\cal F}^\star} = \det h_{{\cal F}}^\star$. Indeed, such a constant contributes an additive constant to the local weights of the fibre metric, hence it contributes the $i\partial\bar\partial$ of this additive constant, which is 0, to the resulting curvature form. We infer that the stability function $Z_{\omega,\,\Omega,\,h}^{(m)}({\cal F})$ is defined only up to a positive multiplicative constant, while its normalised counterpart $\widehat{Z}_{\omega,\,\Omega,\,h}^{(m)}({\cal F})$ is independent of the choice of $(\omega,\,\Omega)$-Hermite-Einstein fibre metric on the line bundle $\det{\cal F}^\star$.

Separately, note that the stability function associated with ${\cal F}\subset{\cal O}(E)$ in Definition \ref{Def:Z-function} is obtained by evaluating on the canonical section $\sigma_{{\cal F}}$ the quadratic form against which the $(\omega,\,\Omega)$-$m$-(semi-)positivity of the vector bundle $\Lambda^s E\otimes\det{\cal F}^\star$ would have to be tested under Definition \ref{Def:m-pos}.

\begin{Prop}\label{Prop:Z-function_alternative} In the setting of Definition \ref{Def:Z-function}, suppose that $\omega$ is K\"ahler and $\partial\bar\partial\Omega = 0$.

  Then, for any coherent subsheaf ${\cal F}\subset{\cal O}(E)$, the associated function falls into one of the following two cases: \begin{eqnarray}\label{eqn:Z-function_alternative}\mbox{either}\hspace{2ex} Z_{\omega,\,\Omega,\,h}^{(m)}({\cal F})\equiv 0 \hspace{5ex}\mbox{or}\hspace{5ex} Z_{\omega,\,\Omega,\,h}^{(m)}({\cal F})(x_0)>0 \hspace{1ex}\mbox{for some point}\hspace{1ex} x_0\in X.\end{eqnarray}

  In either case, it has the property: \begin{eqnarray}\label{eqn:Z-function_integral}\int\limits_X Z_{\omega,\,\Omega,\,h}^{(m)}({\cal F})\,dV_\omega \geq 0.\end{eqnarray}

  Moreover, if $\int_XZ_{\omega,\,\Omega,\,h}^{(m)}({\cal F})\,dV_\omega = 0$, then $D_{\Lambda^sh\otimes h^{H-E}_{\det{\cal F}^\star}}\sigma_{{\cal F}}\equiv 0$ (i.e. $\sigma_{{\cal F}}$ is parallel with respect to the Chern connection $D_{\Lambda^sh\otimes h^{H-E}_{\det{\cal F}^\star}} = D_{\Lambda^sh\otimes h^{H-E}_{\det{\cal F}^\star}}' + \bar\partial$ of $(\Lambda^s E\otimes\det{\cal F}^\star,\,\Lambda^sh\otimes h^{H-E}_{\det{\cal F}^\star})$).

\end{Prop}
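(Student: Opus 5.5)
The plan is to run the Bochner computation from the proof of Theorem \ref{The:Kobayashi-vanishing_gen} on the holomorphic section $\sigma:=\sigma_{\cal F}$ of the Hermitian holomorphic bundle $(G,\,h_G):=(\Lambda^sE\otimes\det{\cal F}^\star,\,\Lambda^sh\otimes h^{H-E}_{\det{\cal F}^\star})$, with the sign now read in the opposite direction. Since $\bar\partial\sigma=0$, the identity obtained there gives, pointwise on $X$,
\begin{eqnarray*}
i\bar\partial\partial|\sigma|^2_{h_G}\wedge\omega^{m-1}\wedge\Omega = Z_{\omega,\,\Omega,\,h}^{(m)}({\cal F})\,dV_\omega - i\,\{D'_{h_G}\sigma,\,D'_{h_G}\sigma\}_{h_G}\wedge\omega^{m-1}\wedge\Omega .
\end{eqnarray*}
In terms of the operator (\ref{eqn:P_operator_def}), this reads $Z_{\omega,\,\Omega,\,h}^{(m)}({\cal F}) = P_{\omega,\,\Omega}(|\sigma|^2_{h_G}) + \Psi$, where $\Psi:=i\{D'_{h_G}\sigma,D'_{h_G}\sigma\}_{h_G}\wedge\omega^{m-1}\wedge\Omega/dV_\omega$. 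By Lemma 2.9 of [Pop25], $\Psi\geq 0$ everywhere, and $\Psi(x)=0$ if and only if $D'_{h_G}\sigma(x)=0$.

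Next, integrate this identity against $dV_\omega$. The integral $\int_X i\partial\bar\partial|\sigma|^2\wedge\omega^{m-1}\wedge\Omega$ vanishes by Stokes' theorem, because $d\omega=0$ and $\partial\bar\partial\Omega=0$: integrating by parts twice moves $\partial\bar\partial$ onto $\omega^{m-1}\wedge\Omega$, and that form is $\partial\bar\partial$-closed. Equivalently, $P^\star_{\omega,\,\Omega}(1)=0$, which is the statement that the formal adjoint has no zeroth-order term ([DP25, Lemma 2.8, Corollary 2.9]). Hence $\int_X Z\,dV_\omega=\int_X\Psi\,dV_\omega\geq 0$, which is (\ref{eqn:Z-function_integral}). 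If this integral is zero, then $\Psi\equiv 0$ since it is continuous and non-negative, so $D'_{h_G}\sigma\equiv 0$. Together with $\bar\partial\sigma=0$ this gives $D_{h_G}\sigma\equiv 0$, which is the final assertion.

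For the dichotomy (\ref{eqn:Z-function_alternative}), suppose $Z\leq 0$ everywhere; I must show $Z\equiv 0$. The integral identity gives $0\geq\int_X Z\,dV_\omega=\int_X\Psi\,dV_\omega\geq 0$, so both integrals vanish. Since $Z\leq 0$ is continuous with zero integral, $Z\equiv 0$. Consequently, either $Z\equiv0$ or $Z$ is positive somewhere. An alternative route for this step is the maximum principle: $P_{\omega,\,\Omega}|\sigma|^2\leq -\Psi\leq 0$ forces $|\sigma|^2$ to be constant, hence $\Psi\equiv0$ and $Z\equiv 0$. Either way the argument does not depend on the particular Hermite-Einstein normalisation, since the constant ambiguity in $h^{H-E}$ only rescales $Z$.

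The main point to check is the sign conventions. The Bochner formula in Theorem \ref{The:Kobayashi-vanishing_gen} was written with $\bar\partial\partial$, so I must confirm that it yields $Z=P(|\sigma|^2)+\Psi$ with $\Psi\geq0$ rather than the opposite sign. The other point to verify is that Stokes' theorem applies to the smooth global function $|\sigma|^2_{h_G}$. This holds because $\sigma$ is a global smooth section on compact $X$, and no singularity along $S$ enters, since $\det{\cal F}$ is an honest line bundle on all of $X$.
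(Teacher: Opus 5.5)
Your proposal is correct and follows essentially the same route as the paper: the Bochner identity from the proof of Theorem \ref{The:Kobayashi-vanishing_gen} applied to $\sigma_{\cal F}$, the pointwise non-negativity of $i\,\{D'\sigma_{\cal F},\,D'\sigma_{\cal F}\}\wedge\omega^{m-1}\wedge\Omega$, and Stokes' theorem using $\partial\bar\partial(\omega^{m-1}\wedge\Omega)=0$; your sign bookkeeping $Z=P_{\omega,\,\Omega}(|\sigma_{\cal F}|^2)+\Psi$ is right. The only difference is that you obtain the dichotomy directly from the integral inequality ($Z\leq 0$ together with $\int_X Z\,dV_\omega\geq 0$ forces $Z\equiv 0$), whereas the paper uses the maximum principle for $P_{\omega,\,\Omega}$; your version is slightly more economical and otherwise equivalent.
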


\noindent {\it Proof.} We drop the subscripts specifying the fibre metrics to improve readability. Replacing $E$ with $\Lambda^s E\otimes\det{\cal F}^\star$ and $s$ with $\sigma_{{\cal F}}$ in the proof of Theorem \ref{The:Kobayashi-vanishing_gen}, we get, as in that proof, the following equality of $\R$-valued $(n,\,n)$-forms on $X$: \begin{eqnarray}\label{eqn:Z-function_alternative_proof_1}i\bar\partial\partial\{\sigma_{{\cal F}},\,\sigma_{{\cal F}}\}\wedge\omega^{m-1}\wedge\Omega & = & Z_{\omega,\,\Omega,\,h}^{(m)}({\cal F})\,dV_\omega - i\,\{D'\sigma_{{\cal F}},\,D'\sigma_{{\cal F}}\}\wedge\omega^{m-1}\wedge\Omega.\end{eqnarray}

As argued in the proof of Theorem \ref{The:Kobayashi-vanishing_gen}, we always have: \begin{eqnarray}\label{eqn:Z-function_alternative_proof_2}- i\,\{D'\sigma_{{\cal F}},\,D'\sigma_{{\cal F}}\}\wedge\omega^{m-1}\wedge\Omega\leq 0\end{eqnarray} at every point of $X$. Therefore, if $Z_{\omega,\,\Omega,\,h}^{(m)}({\cal F})$ were $\leq 0$ everywhere on $X$ and $<0$ at some point of $X$, equality (\ref{eqn:Z-function_alternative_proof_1}) would imply (by the ellipticity argument used in the proof of Theorem \ref{The:Kobayashi-vanishing_gen}) that $\sigma_{{\cal F}} = 0$ at every point of $X$, a contradiction. This proves the dichotomy (\ref{eqn:Z-function_alternative}).

  Meanwhile, integrating (\ref{eqn:Z-function_alternative_proof_1}) we get the first equality below: \begin{eqnarray*}\int\limits_X Z_{\omega,\,\Omega,\,h}^{(m)}({\cal F})\,dV_\omega & = & \int\limits_X i\bar\partial\partial\{\sigma_{{\cal F}},\,\sigma_{{\cal F}}\}\wedge\omega^{m-1}\wedge\Omega + \int\limits_X i\,\{D'\sigma_{{\cal F}},\,D'\sigma_{{\cal F}}\}\wedge\omega^{m-1}\wedge\Omega \\
    & = & \int\limits_X i\,\{D'\sigma_{{\cal F}},\,D'\sigma_{{\cal F}}\}\wedge\omega^{m-1}\wedge\Omega \geq 0,\end{eqnarray*} where the second equality follows from the Stokes theorem (thanks to our assumptions $d\omega = 0$ and $\partial\bar\partial\Omega = 0$) and the inequality follows from the pointwise non-negativity of the integrand (see (\ref{eqn:Z-function_alternative_proof_2})). This proves (\ref{eqn:Z-function_integral}).

   Moreover, we see that if $\int_XZ_{\omega,\,\Omega,\,h}^{(m)}({\cal F})\,dV_\omega = 0$, then $i\,\{D'\sigma_{{\cal F}},\,D'\sigma_{{\cal F}}\}\wedge\omega^{m-1}\wedge\Omega \equiv 0$ since we always have $i\,\{D'\sigma_{{\cal F}},\,D'\sigma_{{\cal F}}\}\wedge\omega^{m-1}\wedge\Omega \geq 0$ at every point of $X$. As we argued at the end of the proof of Theorem \ref{The:Kobayashi-vanishing_gen} for that $s$, this implies that $D'\sigma_{{\cal F}}\equiv 0$. The last statement follows. \hfill $\Box$

\vspace{2ex}

  Property (\ref{eqn:Z-function_alternative}) leaves the door open to the function $Z_{\omega,\,\Omega,\,h}^{(m)}({\cal F})$ assuming negative values outside a neighbourhood of the point $x_0\in X$. The main definition we propose in this section singles out the vector bundles $E$ for which this kind of negativity does not occur for any coherent subsheaf. Intuitively, these are the vector bundles $E$ that have at least as much positivity as each of their coherent subsheaves ${\cal F}$ in the canonical direction (defined by the canonical section $\sigma_{{\cal F}}$). The next definition is not affected by the positive constant that multiplies the stability function $Z_{\omega,\,\Omega,\,h}^{(m)}({\cal F})$ if a different choice of $(\omega,\,\Omega)$-Hermite-Einstein fibre metric on $\det{\cal F}^\star$ is made. One can also replace $Z_{\omega,\,\Omega,\,h}^{(m)}({\cal F})$ by its normalised (and uniquely defined) counterpart $\widehat{Z}_{\omega,\,\Omega,\,h}^{(m)}({\cal F})$.

\begin{Def}\label{Def:m-pos-stability} Let $E\longrightarrow X$ be a holomorphic vector bundle of rank $r\geq 1$ over a compact complex $n$-dimensional manifold and let $m\in\{1,\dots , n\}$. Suppose there exists a K\"ahler metric $\omega$ on $X$. Let $\Omega\in C^\infty_{n-m,\,n-m}(X,\,\R)$ be a form such that \begin{eqnarray*}\Omega>0 \hspace{1ex} \mbox{(metrically weakly) \hspace{3ex} and} \hspace{3ex} \partial\bar\partial\Omega = 0.\end{eqnarray*}

 (a)\, The vector bundle $E\longrightarrow X$ is said to be {\bf $(\omega,\,\Omega)$-$m$-positively stable} (respectively {\bf $(\omega,\,\Omega)$-$m$-positively semi-stable}) if for every coherent subsheaf ${\cal F}\subset{\cal O}(E)$ of any rank $s\in\{1,\dots , r-1\}$, there exists a $C^\infty$ Hermitian fibre metric $h$ on $E$ such that \begin{eqnarray}\label{eqn:m-pos-stability} Z_{\omega,\,\Omega,\,h}^{(m)}({\cal F})> 0 \hspace{5ex} (\mbox{respectively} \hspace{1ex} Z_{\omega,\,\Omega,\,h}^{(m)}({\cal F})\geq 0) \hspace{5ex} \mbox{at every point of}\hspace{1ex} X, \end{eqnarray} where $Z_{\omega,\,\Omega,\,h}^{(m)}({\cal F})$ is the function introduced in Definition \ref{Def:Z-function}.

\vspace{1ex}

(b)\, A $C^\infty$ Hermitian fibre metric $h$ on $E$ is said to be {\bf uniformly $(\omega,\,\Omega)$-$m$-positively stable} (respectively {\bf uniformly $(\omega,\,\Omega)$-$m$-positively semi-stable}) if for every coherent subsheaf ${\cal F}\subset{\cal O}(E)$ of any rank $s\in\{1,\dots , r-1\}$, property (\ref{eqn:m-pos-stability}) holds.   

\vspace{1ex}

When no confusion is likely, we may omit mentioning ``$(\omega,\,\Omega)$-$m$-positively'' and refer to these notions as simply {\bf uniformly stable}, respectively {\bf uniformly semi-stable}.

\end{Def}

\vspace{2ex}

Since we will sometimes change the holomorphic structure $D''$ on a given $C^\infty$ complex vector bundle $E$, we can refer to property (b) in the above Definition \ref{Def:m-pos-stability} by any of the equivalent formulations spelt out in $\S$\ref{subsection:introd_m-positive-stability} of the introduction.


\begin{Cor-Def}\label{Cor-Def:Z_unif-lower-bounf_F} Let $(E,\,D'',\,h)$ be a Hermitian holomorphic vector bundle of rank $r\geq 1$ over a {\bf compact} complex $n$-dimensional manifold $X$ supposed to carry differential forms $\omega,\,\Omega$ satisfying the properties of Definition \ref{Def:m-pos-stability}.

  For every coherent subsheaf ${\cal F}\subset{\cal O}(E)$ of rank $s\in\{1,\dots , r-1\}$, the real constant \begin{eqnarray}\label{eqn:h-unif-constant}c_h({\cal F}):=\min\limits_X \widehat{Z}_{\omega,\,\Omega,\,h}^{(m)}({\cal F})\in\R \end{eqnarray} is called the {\bf $h$-uniformity constant} of  ${\cal F}$. Moreover, one has the following equivalences: \begin{eqnarray*}\label{eqn:Z_unif-lower-bounf_F}(E,\,D'',\,h) \hspace{1ex} \mbox{is {\bf uniformly stable}} & \iff & c_h({\cal F})>0  \hspace{1ex} \forall\,{\cal F}\subset{\cal O}(E)\hspace{1ex} \mbox{proper coherent subsheaf}\\
    (E,\,D'',\,h) \hspace{1ex} \mbox{is {\bf uniformly semi-stable}} & \iff & c_h({\cal F})\geq 0  \hspace{1ex} \forall\,{\cal F}\subset{\cal O}(E)\hspace{1ex} \mbox{proper coherent subsheaf}.\end{eqnarray*}

\end{Cor-Def}  

\noindent {\it Proof.} The function $\widehat{Z}_{\omega,\,\Omega,\,h}^{(m)}({\cal F}):X\longrightarrow\R$ is $C^\infty$, in particular continuous, hence it attains its minimum on the compact manifold $X$, showing that the constant $c_h({\cal F})$ is well defined.

The stated equivalences follow at once from the definitions. \hfill $\Box$

\subsection{Quotient sheaves and stability functions}\label{subsection:quotient-sheaves_W-functions}

We shall now see that this discussion can be run equivalently with quotient sheaves instead of subsheaves. We start by observing the analogue of Fact \ref{Fact:canonical-section}.

\begin{Fact}\label{Fact:canonical-section_quotient} Let $E$ be a holomorphic vector bundle of rank $r\geq 1$ on an $n$-dimensional complex manifold $X$. Let ${\cal F}\subset{\cal O}(E):={\cal E}$ be a coherent subsheaf of rank $s\in\{1,\dots , r-1\}$.

  Then, the quotient morphism ${\cal E}\to{\cal E}/{\cal F}$ induces a {\bf canonical} non-vanishing global holomorphic section \begin{eqnarray*}\mu_{{\cal F}}\in H^0(X,\,\Lambda^{r-s} E^\star\otimes\det({\cal E}/{\cal F})),\end{eqnarray*} where $\det({\cal E}/{\cal F}):=\Lambda^{r-s}({\cal E}/{\cal F})^{\vee\vee}$ is the determinant line bundle of ${\cal E}/{\cal F}$.

\end{Fact}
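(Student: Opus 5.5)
The plan is to mirror the proof of Fact \ref{Fact:canonical-section}, but applied to the quotient sheaf ${\cal Q}:={\cal E}/{\cal F}$ instead of ${\cal F}$, and using the dual exterior power.

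\textbf{Step 1: reduce to a vector-bundle quotient off a small set.} The quotient ${\cal Q}$ is coherent of rank $r-s$, but it need not be torsion-free. Still, there is an analytic subset $S\subset X$ of codimension $\geq 2$ off which ${\cal F}$ is a subbundle and ${\cal Q}$ is locally free. To see this, replace ${\cal F}$ by its saturation; this changes $\det{\cal Q}$ only by the divisorial contribution of the torsion, which the double-dual/determinant construction is meant to absorb. More safely, one works on the open set where ${\cal Q}$ is locally free, whose complement is analytic. On $X\setminus S$ we then have a holomorphic quotient bundle $Q$ with a surjection $E\to Q$.

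\textbf{Step 2: dualize and take top exterior powers.} Dualizing the surjection $E\to Q$ gives an injective bundle morphism $Q^\star\hookrightarrow E^\star$ on $X\setminus S$. Taking $(r-s)$-th exterior powers yields an injection of line bundle into vector bundle,
\begin{eqnarray*}\Lambda^{r-s}Q^\star=(\det Q)^\star\hookrightarrow\Lambda^{r-s}E^\star.\end{eqnarray*}
Tensoring with $\det Q$ turns this into a nowhere-vanishing holomorphic section of $\Lambda^{r-s}E^\star\otimes\det Q$ over $X\setminus S$.

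\textbf{Step 3: extend across $S$.} Suppose $\det({\cal E}/{\cal F})=\Lambda^{r-s}({\cal E}/{\cal F})^{\vee\vee}$ is a line bundle on all of $X$ extending $\det Q$. Then the target $\Lambda^{r-s}E^\star\otimes\det({\cal E}/{\cal F})$ is locally free, and Hartogs' theorem in codimension $\geq 2$ extends the section uniquely. This gives $\mu_{\cal F}$. Canonicity follows from uniqueness of the extension and from the naturality of each step. The section is not identically zero since it is non-zero off $S$, which is the sense of ``non-vanishing'' used in Fact \ref{Fact:canonical-section}.

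\textbf{Main obstacle.} The hard step is the one just assumed: that $\Lambda^{r-s}({\cal E}/{\cal F})^{\vee\vee}$ is a line bundle extending $\det Q$. Reflexive rank-one sheaves on a complex manifold are invertible, so local freeness is fine. The question is where the torsion of ${\cal Q}$ is supported, since it may live in codimension one.

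I would handle this with the exact sequence $0\to{\cal F}\to{\cal E}\to{\cal Q}\to0$. It gives $\det{\cal Q}\simeq\det E\otimes\det{\cal F}^\star$ off codimension $2$, and hence on all of $X$ by reflexivity, provided determinants are taken in the Knudsen–Mumford sense. Then $\mu_{\cal F}$ can alternatively be defined as the image of $\sigma_{\cal F}$ under the canonical isomorphism
\begin{eqnarray*}\Lambda^sE\otimes\det E^\star\simeq\Lambda^{r-s}E^\star,\end{eqnarray*}
which gives an independent check of the construction. If the paper's $\det({\cal E}/{\cal F})$ is meant literally as $\Lambda^{r-s}{\cal Q}^{\vee\vee}$, I would first replace ${\cal Q}$ by ${\cal Q}/\mathrm{Tors}$, which agrees with ${\cal Q}$ off a codimension-$\geq 2$ set after saturating ${\cal F}$. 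The two possible line bundles then differ only along a divisor, and the Hartogs argument applies on the common locus.
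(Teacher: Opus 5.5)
Your Steps 2--3 are the paper's construction up to transposition. The paper takes $\Lambda^{r-s}$ of the surjection $E_{|X\setminus S}\to E_{|X\setminus S}/F$, extends the resulting morphism $\Lambda^{r-s}E\to\det({\cal E}/{\cal F})$ across $S$ by Hartogs, and reads it as a section of $\Lambda^{r-s}E^\star\otimes\det({\cal E}/{\cal F})$. That is the same element as your $(\det Q)^\star\hookrightarrow\Lambda^{r-s}E^\star$. The paper gets its codimension-$2$ set simply by calling $0\to{\cal F}\to{\cal E}\to{\cal E}/{\cal F}\to 0$ a sequence of torsion-free sheaves, i.e.\ it tacitly assumes ${\cal F}$ saturated. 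So the issue you raise is real, but your proposal does not resolve it.

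Step 1, as asserted, is false for non-saturated ${\cal F}$. On a curve, take $E={\cal O}\oplus{\cal O}$ and ${\cal F}={\cal O}(-p)\oplus 0$. The quotient ${\cal O}_p\oplus{\cal O}$ is not locally free at $p$, ${\cal F}$ is not a subbundle there, and $\{p\}$ has codimension one. Your first remedy, saturating ${\cal F}$, is the right move, but you misjudge its effect: it changes $\det{\cal F}$ by ${\cal O}(D)$, not $\det({\cal E}/{\cal F})$. You then set it aside for alternatives that fail:
\begin{itemize}
\item The locus where ${\cal Q}$ is locally free, like the ``common locus'' of your last sentence, has a divisor in its complement, and Hartogs gives nothing across a divisor.
\item The Knudsen--Mumford route proves a statement about a different line bundle. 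For the double-dual determinant of the statement one has $\det E\otimes\det{\cal F}^\star\simeq\det({\cal E}/{\cal F})\otimes{\cal O}(D)$, with $D$ the divisor of the codimension-one part of $\mathrm{Tors}({\cal Q})$. In the example, $\det({\cal E}/{\cal F})={\cal O}$ but $\det E\otimes\det{\cal F}^\star={\cal O}(p)$.
\end{itemize}
So the isomorphism you claim ``off codimension $2$'' fails along $D$. The image of $\sigma_{\cal F}$ is, up to sign, $\mu_{\cal F}\otimes s_D$, a section of the wrong bundle.

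What is missing is the reason saturation costs nothing: the double dual kills torsion exactly. We have $\mathcal{H}om({\cal T},\,{\cal O}_X)=0$ for every torsion sheaf ${\cal T}$, and the kernel of $\Lambda^{r-s}{\cal Q}\to\Lambda^{r-s}({\cal Q}/\mathrm{Tors})$ is torsion. Dualizing therefore gives, canonically, $(\Lambda^{r-s}{\cal Q})^{\vee\vee}=(\Lambda^{r-s}({\cal Q}/\mathrm{Tors}))^{\vee\vee}=\det({\cal E}/{\cal F}^{\mathrm{sat}})$, where ${\cal F}^{\mathrm{sat}}:=\ker({\cal E}\to{\cal Q}/\mathrm{Tors})$. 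You may thus replace ${\cal F}$ by ${\cal F}^{\mathrm{sat}}$ without changing the target bundle at all. Since ${\cal E}/{\cal F}^{\mathrm{sat}}$ is torsion-free, it is locally free off an analytic set of codimension $\geq 2$. So Step 1 holds for it, and Steps 2--3 go through verbatim.

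More directly still, the composite of sheaf morphisms $\Lambda^{r-s}{\cal E}\to\Lambda^{r-s}{\cal Q}\to(\Lambda^{r-s}{\cal Q})^{\vee\vee}$ maps a locally free sheaf to an invertible one (reflexive of rank one on a manifold). It is therefore already a global section of $\Lambda^{r-s}E^\star\otimes\det({\cal E}/{\cal F})$, and no extension step is needed.

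Your reading of ``non-vanishing'' as ``not identically zero'' is the correct one. For ${\cal F}={\cal O}\cdot(z_1,\,z_2)\subset{\cal O}^{2}$ near $0\in\C^2$, the construction gives $\mu_{\cal F}=z_2\,e^1-z_1\,e^2$, which vanishes at $0$.
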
      

\noindent {\it Proof.} We use the notation of the proof of Fact \ref{Fact:canonical-section}. In particular, the short exact sequence of torsion-free coherent sheaves over $X$: \begin{eqnarray*}0\longrightarrow{\cal F}\longrightarrow{\cal E}\longrightarrow{\cal E}/{\cal F}\longrightarrow 0\end{eqnarray*} induces a short exact sequence of holomorphic vector bundles over $X\setminus S$: \begin{eqnarray*}0\longrightarrow F\longrightarrow E_{|X\setminus S}\longrightarrow E_{|X\setminus S}/F\longrightarrow 0.\end{eqnarray*}

  Taking $(r-s)$-th exterior powers in the surjection $E_{|X\setminus S}\longrightarrow E_{|X\setminus S}/F$, we get the canonical surjective vector bundle morphism $\Lambda^{r-s}E_{|X\setminus S}\longrightarrow\Lambda^{r-s}(E_{|X\setminus S}/F) = \det({\cal E}/{\cal F})_{|X\setminus S}$. Since $\mbox{codim}_X S\geq 2$, the classical Hartogs extension theorem for holomorphic functions in codimension $\geq 2$ yields a unique extension of this morphism across $S$: \begin{eqnarray*}\Lambda^{r-s}E\longrightarrow\det({\cal E}/{\cal F}).\end{eqnarray*} Tensoring by the holomorphic vector bundle $\Lambda^{r-s}E^\star$, we obtain a morphism of locally free ${\cal O}_X$-modules over $X$: \begin{eqnarray*}{\cal O}_X\longrightarrow{\cal O}(\Lambda^{r-s}E^\star\otimes\det({\cal E}/{\cal F})).\end{eqnarray*} The sought-after canonical global section $\mu_{{\cal F}}\in H^0(X,\,\Lambda^{r-s} E^\star\otimes\det({\cal E}/{\cal F}))$ is the image of $1$ under this last morphism. \hfill $\Box$

  \vspace{2ex}

  By analogy with Definition \ref{Def:Z-function}, we propose the following

  \begin{Def}\label{Def:Q-function} The setting is the same as in Definition \ref{Def:Z-function}. With every quotient sheaf ${\cal E}/{\cal F}$ of a coherent subsheaf ${\cal F}\subset{\cal O}(E)$ of rank $s\in\{1,\dots , r-1\}$, we associate the function
  $W_{\omega,\,\Omega,\,h}^{(m)}({\cal E}/{\cal F}):X\longrightarrow\R$ defined by \begin{eqnarray*}W_{\omega,\,\Omega,\,h}^{(m)}({\cal E}/{\cal F}):=\bigg\{\frac{i\Theta_{\Lambda^{r-s}h^\star\otimes\det h_{{\cal E}/{\cal F}}}(\Lambda^{r-s} E^\star\otimes\det({\cal E}/{\cal F}))\wedge\omega^{m-1}\wedge\Omega}{dV_\omega}\,\mu_{{\cal F}},\,\mu_{{\cal F}}\bigg\}_{\Lambda^{r-s}h^\star\otimes\det h_{{\cal E}/{\cal F}}},\end{eqnarray*} where $\Lambda^{r-s}h^\star$ is the Hermitian fibre metric induced on $\Lambda^{r-s} E^\star$ by $h$, $\det h_{{\cal E}/{\cal F}} = h^{H-E}_{\det({\cal E}/{\cal F})}$ is the (unique, up to a positive multiplicative constant) $(\omega,\,\Omega)$-Hermite-Einstein fibre metric on the line bundle $\det({\cal E}/{\cal F})$ and $\mu_{{\cal F}}\in H^0(X,\,\Lambda^{r-s} E^\star\otimes\det({\cal E}/{\cal F}))$ is the canonical section of Fact \ref{Fact:canonical-section_quotient}.

\end{Def}

  The following observation shows that considering quotient sheaves is equivalent to considering subsheaves when discussing {\it $m$-positively (semi-)stable} properties of vector bundles.

\begin{Prop}\label{Prop:subsheaves_quotient-sheaves_equiv} Under the assumptions of Definitions \ref{Def:Z-function} and \ref{Def:Q-function}, for every coherent subsheaf ${\cal F}\subset{\cal O}(E)$ of rank $s\in\{1,\dots , r-1\}$, we have: \begin{eqnarray*}Z_{\omega,\,\Omega,\,h}^{(m)}({\cal F}) = W_{\omega,\,\Omega,\,h}^{(m)}({\cal E}/{\cal F})\end{eqnarray*} at every point of $X$.

\end{Prop}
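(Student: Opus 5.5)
The plan is to realise both functions as the same quadratic form, evaluated on the same section of one and the same holomorphic line-bundle-twisted bundle, by means of a canonical isometric isomorphism
\begin{eqnarray*}\Lambda^s E\otimes\det{\cal F}^\star\;\simeq\;\Lambda^{r-s}E^\star\otimes\det({\cal E}/{\cal F})\end{eqnarray*}
which sends $\sigma_{\cal F}$ to $\mu_{\cal F}$.

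\emph{First step: the vector bundle part.} I would use the fibrewise Hodge-type duality given by the wedge pairing $\Lambda^sE\otimes\Lambda^{r-s}E\to\det E$. It yields a holomorphic isomorphism $\Lambda^sE\simeq\Lambda^{r-s}E^\star\otimes\det E$. This isomorphism is an isometry for $\Lambda^sh$ on the left and $\Lambda^{r-s}h^\star\otimes\det h$ on the right, as one checks in an $h$-orthonormal frame $e_1,\dots,e_r$ by sending $e_I\mapsto\pm e_{I^c}^\star\otimes e_1\wedge\dots\wedge e_r$.

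\emph{Second step: the line bundle part.} On $X\setminus S$ the exact sequence $0\to F\to E\to E/F\to 0$ gives a canonical holomorphic isomorphism $\det E\simeq\Lambda^sF\otimes\Lambda^{r-s}(E/F)$. Both sides are line bundles on $X$ and $\mbox{codim}\,S\geq 2$, so Hartogs extends this to $\det E\simeq\det{\cal F}\otimes\det({\cal E}/{\cal F})$ on the whole of $X$. Tensoring the isomorphism of the first step by $\det{\cal F}^\star$ then gives the holomorphic isomorphism displayed above.

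\emph{Third step: the sections.} I would check on $X\setminus S$, in a local frame of $E$ adapted to $F$, that this isomorphism sends $\sigma_{\cal F}$ to $\mu_{\cal F}$, up to a universal sign or constant. Concretely, $\sigma_{\cal F}$ is $f_1\wedge\dots\wedge f_s\otimes(f_1\wedge\dots\wedge f_s)^\star$. Under the duality it becomes the functional "wedge with $f_1\wedge\dots\wedge f_s$", which is exactly the projection $\Lambda^{r-s}E\to\Lambda^{r-s}(E/F)$ defining $\mu_{\cal F}$. By continuity the identification $\sigma_{\cal F}\mapsto\mu_{\cal F}$ then holds on all of $X$.

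\emph{Fourth step: metrics and curvature.} Since Chern curvature is functorial under holomorphic isometries, $Z$ and $W$ agree as soon as the fibre metric on $\det({\cal E}/{\cal F})$ transported from the left-hand side, namely $\det h\otimes h^{H-E}_{\det{\cal F}^\star}$, coincides up to a positive constant with the $(\omega,\,\Omega)$-Hermite-Einstein metric $\det h_{{\cal E}/{\cal F}}$. Constants do not affect curvature, by the remark after Definition \ref{Def:Z-function}.

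The main obstacle is this fourth step. In general $\det h\otimes h^{H-E}_{\det{\cal F}^\star}$ is Hermite-Einstein only if $\det h$ is itself $(\omega,\,\Omega)$-Hermite-Einstein on $\det E$. Otherwise the two metrics differ by a factor $e^{-g}$. The two stability functions then differ by $P_{\omega,\,\Omega}$-type terms, namely the $\rho$-trace of $i\partial\bar\partial g$, where $\rho$ is the Gauduchon metric with $\rho^{n-1}/(n-1)!=\omega^{m-1}\wedge\Omega$ from the proof of Observation \ref{Obs:m-pos_Griffiths-Nakano_comparison}, multiplied by $|\sigma_{\cal F}|^2$.

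I would therefore first try to settle this by a direct computation of that trace term. If the term does not vanish, I would make explicit the hypothesis that the equality needs: either that $\det h$ is Hermite-Einstein, or that the metric on $\det({\cal E}/{\cal F})$ is taken to be the one induced via the isomorphism above. Under that hypothesis the identity $Z=W$ holds pointwise by the functoriality argument.
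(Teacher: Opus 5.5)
Your steps 1--3 reproduce the paper's proof. These are the contraction isomorphism $\Lambda^{r-s}E^\star\otimes\det E\simeq\Lambda^sE$, the identification $\det{\cal E}\simeq\det{\cal F}\otimes\det({\cal E}/{\cal F})$, and the adapted-frame check that $\mu_{\cal F}\mapsto\pm\sigma_{\cal F}$. The paper then finishes by asserting that all fibre metrics involved are induced canonically from $h$ and from the Hermite--Einstein property. Your step 4 pinpoints why that assertion fails: the isomorphism passes through $\det E$, and it is isometric only when $\det E$ carries $\det h$, which is not $(\omega,\,\Omega)$-Hermite--Einstein in general.

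You should not leave this as a hedge, because the computation you propose settles it negatively. Write $\det h=e^{-g}\,h^{H-E}_{\det E}$. The metric $(h^{H-E}_{\det E})^\star\otimes h^{H-E}_{\det({\cal E}/{\cal F})}$ is Hermite--Einstein on $\det E^\star\otimes\det({\cal E}/{\cal F})\simeq\det{\cal F}^\star$. So the metric of $\Lambda^{r-s}E^\star\otimes\det({\cal E}/{\cal F})$ transports to $c\,e^{g}\,\Lambda^sh\otimes h^{H-E}_{\det{\cal F}^\star}$ for some constant $c>0$. Hence
\[ W^{(m)}_{\omega,\,\Omega,\,h}({\cal E}/{\cal F}) = c\,e^{g}\Big(Z^{(m)}_{\omega,\,\Omega,\,h}({\cal F}) + P_{\omega,\,\Omega}(g)\,|\sigma_{\cal F}|^2_{\Lambda^sh\otimes h^{H-E}_{\det{\cal F}^\star}}\Big), \]
or equivalently $W/|\mu_{\cal F}|^2=\widehat{Z}^{(m)}_{\omega,\,\Omega,\,h}({\cal F})+P_{\omega,\,\Omega}(g)$. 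The correction term vanishes identically only when $g$ is constant.

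The correction term does not vanish in general. Take a compact Riemann surface ($n=m=1$, $\Omega=1$), $E=L_1\oplus L_2$, $h=e^{-\phi}h^{H-E}_{L_1}\oplus h^{H-E}_{L_2}$ with $\phi$ non-constant, and ${\cal F}=L_1$. The splitting is holomorphic and orthogonal, so all the relevant curvatures are block-diagonal.
\begin{itemize}
\item $\mu_{\cal F}$ is the identity section of $L_2^\star\otimes L_2$ with the constant metric $(h^{H-E}_{L_2})^\star\otimes h^{H-E}_{L_2}$, so $W\equiv 0$.
\item $\sigma_{\cal F}$ is the identity section of $L_1\otimes L_1^\star$ with metric $e^{-\phi}$, so $Z=-e^{-\phi}P_{\omega,\,\Omega}(\phi)$ up to a positive constant. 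This is not identically zero, by the maximum principle.
\end{itemize}

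So the statement cannot be proved as written, and the paper's proof contains the same unjustified last step. What your argument does prove is the following:
\begin{itemize}
\item $Z=W$, up to the usual positive constant, under the additional hypothesis that $\det h$ is $(\omega,\,\Omega)$-Hermite--Einstein;
\item the corrected identity above in general.
\end{itemize}

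A smaller point, also shared with the paper: steps 2--3 tacitly assume ${\cal F}$ is saturated. Take ${\cal F}=L_1\otimes{\cal O}(-D)\subset L_1\oplus L_2$ with $D$ a non-zero effective divisor. The double-dual definition then gives $\det({\cal E}/{\cal F})=L_2$, so $\det{\cal F}\otimes\det({\cal E}/{\cal F})\simeq\det E\otimes{\cal O}(-D)$. In that case the two bundles carrying $\sigma_{\cal F}$ and $\mu_{\cal F}$ are not even isomorphic.
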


\noindent {\it Proof.} There is a canonical isomorphism of vector bundles: \begin{eqnarray*}\Lambda^{r-s}E^\star\otimes\det E\simeq\Lambda^sE,\end{eqnarray*} the globalised version of the canonical isomorphism of $\C$-vector spaces (for a given $\C$-vector space $V$ of dimension $r$): \begin{eqnarray*}\Lambda^{r-s}V^\star\otimes\Lambda^r V\simeq\Lambda^sV,  \hspace{5ex} (\alpha,\,\eta)\longmapsto\alpha\lrcorner\eta,\end{eqnarray*} defined by the contraction of any $\eta\in\Lambda^r V$ by any $\alpha\in\Lambda^{r-s}V^\star$. This is the extension by linearity of the composition of contractions: $(\alpha_1\wedge\dots\wedge\alpha_{r-s})\lrcorner\eta := \alpha_1\lrcorner\dots\lrcorner\bigg(\alpha_{r-s-1}\lrcorner(\alpha_{r-s}\lrcorner\eta)\bigg)$, where $\alpha_1,\dots ,\alpha_{r-s}\in V^\star$ are arbitrary.

Now, since $\det{\cal E}\simeq\det{\cal F}\otimes\det({\cal E}/{\cal F})$, the above vector bundle isomorphism leads to: \begin{eqnarray*}\Lambda^{r-s} E^\star\otimes\det({\cal E}/{\cal F})\simeq\bigg(\Lambda^{r-s}E^\star\otimes\det E\bigg)\otimes\det {\cal F}^\star\simeq\Lambda^sE\otimes\det {\cal F}^\star.\end{eqnarray*}

Moreover, the canonical sections $\mu_{{\cal F}}\in H^0(X,\,\Lambda^{r-s} E^\star\otimes\det({\cal E}/{\cal F}))$ and $\sigma_{{\cal F}}\in H^0(X,\,\Lambda^s E\otimes\det{\cal F}^\star)$ of Facts \ref{Fact:canonical-section_quotient} and \ref{Fact:canonical-section} correspond to each other under the above vector bundle isomorphism. To see this, note that everything is pointwise, so we can work at a pregiven point $x\in X\setminus S$. Let $Q$ be the holomorphic vector bundle over $X\setminus S$ whose sheaf of germs of holomorphic sections is the restriction of ${\cal E}/{\cal F}$. Let $\{e_1,\dots , e_r\}$ be a local holomorphic frame of $E$ in a neighbourhood $U$ of $x$ such that $\{e_1,\dots , e_s\}$ is a holomorphic frame of $F$ on $U$. Then $Q$ has frame $\{\hat{e}_{s+1},\dots , \hat{e}_r\}$ and $\det Q$ has frame $\{\hat{e}_{s+1}\wedge\dots\wedge\hat{e}_r\}$ over $U$. In these frames, we have: \begin{eqnarray*}\sigma_{\cal F} = (e_1\wedge\dots\wedge e_s)\otimes(e_1\wedge\dots\wedge e_s)^\star \hspace{5ex}\mbox{and}\hspace{5ex} \mu_{\cal F} = (e^{s+1}\wedge\dots\wedge e^r)\otimes(\hat{e}_{s+1}\wedge\dots\wedge\hat{e}_r),\end{eqnarray*} where $\{e^1,\dots , e^r\}$ is the frame of $E^\star$ over $U$ that is dual to $\{e_1,\dots , e_r\}$. Since $\det E$ is generated by $e_1\wedge\dots\wedge e_r = (e_1\wedge\dots\wedge e_s)\otimes(\hat{e}_{s+1}\wedge\dots\wedge\hat{e}_r)$ (where the last equality is given by the canonical line bundle isomorphism $\det E\simeq\det F\otimes\det Q)$) and since \begin{eqnarray*}(e^{s+1}\wedge\dots\wedge e^r)\lrcorner(e_1\wedge\dots\wedge e_r) = \pm\, e_1\wedge\dots\wedge e_s,\end{eqnarray*} the image of $e^{s+1}\wedge\dots\wedge e^r$ under the canonical isomorphism $\Lambda^{r-s}E^\star\otimes\det E\simeq\Lambda^sE$ is $\pm\, e_1\wedge\dots\wedge e_s$. It follows that the image of $\mu_{\cal F}$ under the canonical isomorphism $\Lambda^{r-s} E^\star\otimes\det({\cal E}/{\cal F})\simeq\Lambda^sE\otimes\det {\cal F}^\star$ is $\pm\,\sigma_{\cal F}$.

From this and from the fact that all the fibre metrics that feature in the definitions of $Z_{\omega,\,\Omega,\,h}^{(m)}({\cal F})$ and $W_{\omega,\,\Omega,\,h}^{(m)}({\cal E}/{\cal F})$ are induced canonically from $h$ and from the $(\omega,\,\Omega)$-Hermite-Einstein property of the determinant line bundles, we conclude that  $Z_{\omega,\,\Omega,\,h}^{(m)}({\cal F}) = W_{\omega,\,\Omega,\,h}^{(m)}({\cal E}/{\cal F})$. \hfill $\Box$

\subsection{Simplicity and $m$-positive stability}\label{subsection:simplicity-stability}

The next result asserts that the holomorphic structure of any $(\omega,\,\Omega)$-$m$-positively stable vector bundle is {\it simple}, in the sense that its only holomorphic endomorphisms are the homotheties.

\begin{The}\label{The:m-pos-stable_implies_simple} Let $E\longrightarrow X$ be a holomorphic vector bundle of rank $r\geq 1$ over a compact complex manifold $X$ with $\mbox{dim}_\C X =n$ and let $m\in\{1,\dots , n\}$. Suppose there exists a K\"ahler metric $\omega$ on $X$. Let $\Omega\in C^\infty_{n-m,\,n-m}(X,\,\R)$ be a form such that \begin{eqnarray*}\Omega>0 \hspace{1ex} \mbox{(metrically weakly) \hspace{3ex} and} \hspace{3ex} \partial\bar\partial\Omega = 0.\end{eqnarray*}

  If $E$ is {\bf $(\omega,\,\Omega)$-$m$-positively stable}, then for every holomorphic endomorphism $f\in H^0(X,\,\operatorname{End} E)$ there exists a constant $\lambda\in\C$ such that $f = \lambda\,\mbox{Id}_E$.

\end{The}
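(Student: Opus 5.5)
We follow the classical argument that stable bundles are simple, replacing slope inequalities with the pointwise dichotomy of Proposition \ref{Prop:Z-function_alternative}.

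Let $f\in H^0(X,\,\operatorname{End} E)$. Since $X$ is compact, the coefficients of the characteristic polynomial of $f_x$ are holomorphic functions on $X$, hence constant. So the eigenvalues of $f_x$ do not depend on $x$. Fix one such eigenvalue $\lambda\in\C$ and put $g:=f-\lambda\,\mbox{Id}_E\in H^0(X,\,\operatorname{End} E)$. Then $\det g\equiv 0$, so $g$ is nowhere an isomorphism. If $g\equiv 0$ we are done. Otherwise, consider the coherent subsheaf ${\cal F}:=\ker g\subset{\cal O}(E)$. Its rank $s$ satisfies $1\leq s\leq r-1$, and it is saturated because ${\cal O}(E)/\ker g\simeq\mbox{Im}\,g\subset{\cal O}(E)$ is torsion-free. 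Let ${\cal G}:=\mbox{Im}\,g$, of rank $r-s$. We then have the short exact sequence $0\to{\cal F}\to{\cal O}(E)\to{\cal G}\to 0$ together with ${\cal G}\subset{\cal O}(E)$. The aim is to contradict stability by testing it on both ${\cal F}$ and ${\cal G}$.

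The key steps, in order, are as follows.

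(i) Stability applied to ${\cal F}$, read through Proposition \ref{Prop:subsheaves_quotient-sheaves_equiv}, gives a metric $h_1$ with $W_{h_1}({\cal E}/{\cal F})>0$ everywhere. Stability applied to ${\cal G}$ gives a metric $h_2$ with $Z_{h_2}({\cal G})>0$ everywhere.

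(ii) The isomorphism ${\cal E}/{\cal F}\simeq{\cal G}$ yields $\det({\cal E}/{\cal F})\simeq\det{\cal G}$, and these carry the same $(\omega,\,\Omega)$-Hermite-Einstein metric up to a constant. The Hermite-Einstein line bundle $L:=\det{\cal G}$ therefore appears twice: as a quotient of $\Lambda^{r-s}E$ and as a subsheaf of $\Lambda^{r-s}E$.

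(iii) Integrate the identity (\ref{eqn:Z-function_alternative_proof_1}), as in the proof of Proposition \ref{Prop:Z-function_alternative}. Strict positivity of $Z$ everywhere forces $\int_X Z\,dV_\omega>0$. The task is to convert this into a numerical inequality on $\int_X c_1$-type quantities. Concretely, I would compute $\int_X Z_{h_2}({\cal G})\,dV_\omega$ by the Chern curvature of $\Lambda^{r-s}h_2\otimes h^{H-E}_{L^\star}$ restricted to the line $\sigma_{\cal G}$. Using the subbundle curvature decrease (Proposition \ref{Prop:m-pos_properties}(2)(a)) on the subbundle $L\subset\Lambda^{r-s}E$, with the metric induced by $\Lambda^{r-s}h_2$, this relates $Z$ to the difference between the Hermite-Einstein curvature of $L$ and the induced curvature of $L$. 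Then $\int_X Z\,dV_\omega>0$ should translate into $\deg_\rho L<\deg_\rho(\text{something intrinsic to }E)$, with $\rho$ the Gauduchon metric $\rho^{n-1}/(n-1)!=\omega^{m-1}\wedge\Omega$. Symmetrically, positivity of $W_{h_1}({\cal E}/{\cal F})$ with the quotient-bundle curvature increase (Proposition \ref{Prop:m-pos_properties}(2)(b)) should give the reverse strict inequality for the same $L$. These two strict inequalities contradict each other. Hence $g\equiv 0$, that is, $f=\lambda\,\mbox{Id}_E$.

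The main obstacle is step (iii). The stability functions are pointwise quantities attached to different, $\cal F$-dependent metrics $h_1,h_2$. Their integrals are only known to be $\geq 0$ by Stokes, which is too weak by itself. One must show that "$>0$ everywhere" in the stable case forbids $\sigma$ from being parallel, and combine this with $\sigma_{\cal G}$ and $\mu_{\cal F}$ being essentially the same section of $\Lambda^{r-s}E\otimes L^\star$, via $g$ composed with the isomorphism of Proposition \ref{Prop:subsheaves_quotient-sheaves_equiv}. The first thing I would try is a Bochner argument for the holomorphic section $\Lambda^{r-s}g$, regarded through $L$ as a section of $\operatorname{Hom}(\Lambda^{r-s}E,\,\Lambda^{r-s}E)$ factoring through $L$. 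The aim would be to show that the two curvature positivities force the zero-order term in the Kobayashi-type computation of Theorem \ref{The:Kobayashi-vanishing_gen} to have a definite sign, producing $P_{\omega,\Omega}$ of a norm $\leq 0$ with strict inequality somewhere, and hence vanishing of that section, which contradicts $g\neq 0$.
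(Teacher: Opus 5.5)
Your reduction to $g=f-\lambda\,\Id_E$ with $1\leq\rk\ker g\leq r-1$ is fine. Step (iii), however, is a genuine gap, and the mechanism you hope for does not exist.

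Integrating (\ref{eqn:Z-function_alternative_proof_1}) gives $\int_X Z^{(m)}_{\omega,\,\Omega,\,h}({\cal F})\,dV_\omega=\int_X i\,\{D'\sigma_{\cal F},\,D'\sigma_{\cal F}\}\wedge\omega^{m-1}\wedge\Omega$. Equivalently, $\widehat{Z}_h({\cal F})=P_{\omega,\,\Omega}(f)-Q(\pi)$ with $\int_X P_{\omega,\,\Omega}(f)\,dV_\omega=0$ (Theorem and Definition~\ref{The-Def:discrepancy-equation}). In your terms: the Hermite--Einstein curvature of $L$ and the curvature of the metric induced on $L$ by $\Lambda^{r-s}h_2$ differ by $i\partial\bar\partial$ of a function. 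That term integrates to zero against the $\partial\bar\partial$-closed form $\omega^{m-1}\wedge\Omega$, so after the Gauss formula (\ref{eqn:curvatures_S-Q_exact-seq}) only the second fundamental form of $L\subset\Lambda^{r-s}E$ survives. Thus ``$Z>0$ everywhere'' only says that the canonical section is not parallel. It yields no inequality of the form $\deg_\rho L<\cdots$, neither for ${\cal G}$ nor for ${\cal E}/{\cal F}$, so there are no two opposite strict inequalities to play against each other. Your Bochner fallback on $\Lambda^{r-s}g$ fails for the reason you raise yourself: the two positivity statements concern different metrics $h_1$ and $h_2$, whereas the zero-order term of a Kobayashi-type computation uses a single metric.

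The paper's proof avoids the two-metric problem as follows.
\begin{itemize}
\item It takes ${\cal F}=\mbox{Im}\,f$ and the single metric $h$ that stability provides for it.
\item It views ${\cal F}$ both as a subsheaf, with $\sigma_{\cal F}\in H^0(X,\,A)$, and as the quotient ${\cal E}/\ker f$, with $\mu_{\cal F}\in H^0(X,\,A^\star)$.
\item It asserts $\mu_{\cal F}(\sigma_{\cal F})=1$ and $\sharp\mu_{\cal F}=\sigma_{\cal F}/|\sigma_{\cal F}|^2_{h_A}$.
\item From the dual-curvature identity it concludes that $W_h({\cal F})$ has the opposite sign to $Z_h({\cal F})$ pointwise.
\item This contradicts Proposition~\ref{Prop:Z-function_alternative} applied to $W_h({\cal F})=Z_h(\ker f)$.
\end{itemize}
This sub/quotient duality under one metric is the idea your plan lacks.

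Be aware, though, that the duality step asks for more than is true in general. One has $\mu_{\cal F}(\sigma_{\cal F})=\det(f_{|\mbox{Im} f})$, which vanishes when $f^2=0$. Moreover, $\sharp\mu_{\cal F}\parallel\sigma_{\cal F}$ forces $\ker f=(\mbox{Im}\,f)^{\perp_h}$.

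In fact the statement fails as formulated. Here is a counterexample.
\begin{itemize}
\item Let $X$ be a complex $2$-torus with $\mathrm{NS}(X)=0$, $\omega$ flat, $m=1$, $\Omega=\omega$, and $E=L_1\oplus L_2$ with $L_1\neq L_2$ in $\mathrm{Pic}^0(X)$. Every rank-one subsheaf of $E$ then has canonical section a constant multiple of $\sigma_{L_1}$ or $\sigma_{L_2}$.
\item Realise $L_1\otimes L_2^\star$ by a unitary character $e^{2\pi i\ell}$ with $\ell$ real-linear and non-zero. Then $\gamma:=e^{2\pi i\ell(z)}$ is a smooth section with $|\gamma|\equiv1$ and $\bar\partial\gamma$ nowhere zero.
\item Let $h$ make $L_1$ orthogonal to the graph of $\gamma\colon L_2\to L_1$, using flat metrics $k_1$ on $L_1$ and $k_2$ on this graph $\simeq L_2$.
\item The induced metric on $L_1$ is flat and $\beta^\star=\bar\partial\gamma$, so $\widehat{Z}_h(L_1)=|\bar\partial\gamma|^2$, a positive constant.
\item Likewise, $L_2$ has induced metric $2k_2$ and $\beta_2^\star=\tfrac12\,\overline{\partial\gamma}\neq0$, so $\widehat{Z}_h(L_2)>0$.
\end{itemize}
Hence $E$ is even uniformly stable, while $\operatorname{pr}_{L_1}$ is not a homothety. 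For $g=\operatorname{pr}_{L_1}$, both $\ker g$ and $\mbox{Im}\,g$ have positive stability functions for the same $h$. So neither your step (iii) nor any other argument can be completed without strengthening the definitions.
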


\noindent {\it Proof.} Suppose that $E$ is $(\omega,\,\Omega)$-$m$-positively stable. Let $f\in H^0(X,\,\operatorname{End} E)$.

\vspace{1ex}

$\bullet$ We first prove that $f$ is either {\bf identically zero} or an {\bf automorphism} of $E$.

Let ${\cal F}:=f(E)$. Then, the torsion-free coherent sheaf ${\cal F}$ on $X$ is both a subsheaf and a quotient sheaf of ${\cal E}:={\cal O}(E)$. Indeed, we have short exact sequences of coherent sheaves on $X$: \begin{eqnarray}\label{eqn:seq_F-quotient_sub}0\longrightarrow\ker f\longrightarrow{\cal E}\stackrel{f}{\longrightarrow}{\cal F}\longrightarrow 0 \hspace{5ex}\mbox{and}\hspace{5ex} 0\longrightarrow{\cal F}\longrightarrow{\cal E}\longrightarrow {\cal E}/{\cal F}\longrightarrow 0.\end{eqnarray}

Let $s$ be the rank of ${\cal F}$ and let $A:=\Lambda^sE\otimes\det{\cal F}^\star$. This is a holomorphic vector bundle on $X$. We denote by $A^\star = \Lambda^sE^\star\otimes\det{\cal F}$ the dual vector bundle. Regarding ${\cal F}$ as a subsheaf of ${\cal E}$ (via the second exact sequence in (\ref{eqn:seq_F-quotient_sub})), we get a canonical section $\sigma_{{\cal F}}\in H^0(X,\,A)$ thanks to Fact \ref{Fact:canonical-section}. Regarding the same ${\cal F}$ as a quotient sheaf of ${\cal E}$ (via the first exact sequence in (\ref{eqn:seq_F-quotient_sub})), we get a canonical section $\mu_{{\cal F}}\in H^0(X,\,A^\star)$ thanks to Fact \ref{Fact:canonical-section_quotient} (in which the role of ${\cal E}/{\cal F}$ is played by ${\cal F}$).

Now, fix any $C^\infty$ Hermitian fibre metric $h$ on $E$. Let $\Lambda^sh$, respectively $\Lambda^sh^\star$, be the induced Hermitian fibre metric on $\Lambda^sE$, respectively $\Lambda^sE^\star$. Meanwhile, let $\det h_{\cal F}$ be the (unique, up to a positive multiplicative constant) $(\omega,\,\Omega)$-Hermite-Einstein fibre metric on the line bundle $\det\,{\cal F}$ and let $\det h_{\cal F}^\star$ be the induced fibre metric on the dual line bundle $\det\,{\cal F}^\star$. Finally, we denote by $h_A$, respectively $h_{A^\star}$, the $C^\infty$ Hermitian fibre metric induced on $A$, respectively $A^\star$, by $h$ and $\det h_{\cal F}$. 

From Definitions \ref{Def:Q-function} and \ref{Def:Z-function} , we get the following equalities at every point of $X$: \begin{eqnarray}\label{eqn:Q-Z_A-A-star}\nonumber\frac{1}{|\mu_{\cal F}|_{h_{A^\star}}^2}\,W_{\omega,\,\Omega,\,h}^{(m)}({\cal F}) & = & \frac{1}{|\mu_{\cal F}|_{h_{A^\star}}^2}\,\bigg\{\frac{i\Theta_{h_{A^\star}}(A^\star)\wedge\omega^{m-1}\wedge\Omega}{dV_\omega}\,\mu_{{\cal F}},\,\mu_{{\cal F}}\bigg\}_{h_{A^\star}}  \\
 -\frac{1}{|\sigma_{\cal F}|_{h_A}^2}\,Z_{\omega,\,\Omega,\,h}^{(m)}({\cal F}) & = & -\frac{1}{|\sigma_{\cal F}|_{h_A}^2}\,\bigg\{\frac{i\Theta_{h_A}(A)\wedge\omega^{m-1}\wedge\Omega}{dV_\omega}\,\sigma_{{\cal F}},\,\sigma_{{\cal F}}\bigg\}_{h_A}.\end{eqnarray}

\vspace{2ex}

Now, the sections $\sigma_{{\cal F}}\in H^0(X,\,A)$ and $\mu_{{\cal F}}\in H^0(X,\,A^\star)$ are canonically dual to each other, by construction. In particular, $\mu_{{\cal F}}(\sigma_{{\cal F}}) = 1$ at every point of $X$.

Since $A$ and $A^\star$ have been equipped with {\it dual} Hermitian fibre metrics $h_A$ and $h_{A^\star}$, we have: \begin{eqnarray}\label{eqn:norms_mu-sigma}|\mu_{\cal F}|_{h_{A^\star}}^2 = \frac{1}{|\sigma_{\cal F}|_{h_A}^2}  \hspace{5ex} \mbox{at every point of}\hspace{1ex} X.\end{eqnarray} Indeed, fix an arbitrary point $x\in X$ and a local holomorphic frame $\{e_1,\dots , e_r\}$ of $E$ in a neighbourhood $U$ of $x$ such that $\{e_1,\dots , e_s\}$ is a holomorphic frame of $F$ on $U$ and $\{e_1,\dots , e_r\}$ is orthogonal at $x$ for the fibre metric $h$. Then, \begin{eqnarray*}\sigma_{\cal F} = (e_1\wedge\dots \wedge e_s)\otimes\tau^\star \hspace{5ex} \mbox{and} \hspace{5ex} \mu_{\cal F} = (e^1\wedge\dots \wedge e^s)\otimes\tau,\end{eqnarray*} where $\tau:=e_1\wedge\dots \wedge e_s$ and $\{e^1,\dots , e^r\}$ is the frame of $E^\star$ dual to $\{e_1,\dots , e_r\}$.

Moreover, let $\sharp:(A^\star,\,h_{A^\star})\longrightarrow(A,\,h_A)$ be the metric identification of these two dual vector bundles. This yields: $\sharp\mu_{\cal F} = \sigma_{\cal F}/|\sigma_{\cal F}|_{h_A}^2$. On the other hand, the curvature forms of $(A^\star,\,h_{A^\star})$ and $(A,\,h_A)$ are related by the identity: \begin{eqnarray*}\{i\Theta_{h_{A^\star}}(A^\star)\,v,\,v\}_{h_{A^\star}} = -\{i\Theta_{h_A}(A)\,\sharp v,\,\sharp v\}_{h_A},   \hspace{5ex} v\in A^\star_z,\,z\in X.\end{eqnarray*} In particular, taking $v=\mu_{\cal F}$, we get: \begin{eqnarray*}\{i\Theta_{h_{A^\star}}(A^\star)\,\mu_{\cal F},\,\mu_{\cal F}\}_{h_{A^\star}} = -\frac{1}{|\sigma_{\cal F}|_{h_A}^4}\,\{i\Theta_{h_A}(A)\,\sigma_{\cal F},\,\sigma_{\cal F}\}_{h_A}, \hspace{5ex} \mbox{at every point of}\hspace{1ex} X.\end{eqnarray*} Thanks to (\ref{eqn:norms_mu-sigma}), this translates to \begin{eqnarray*}\frac{1}{|\mu_{\cal F}|_{h_{A^\star}}^2}\{i\Theta_{h_{A^\star}}(A^\star)\,\mu_{\cal F},\,\mu_{\cal F}\}_{h_{A^\star}} = -\frac{1}{|\sigma_{\cal F}|_{h_A}^2}\,\{i\Theta_{h_A}(A)\,\sigma_{\cal F},\,\sigma_{\cal F}\}_{h_A} \hspace{5ex} \mbox{at every point of}\hspace{1ex} X.\end{eqnarray*} Multiplying this identity by $\omega^{m-1}\wedge\Omega$ and then dividing the resulting $(n,\,n)$-form by $dV_\omega$, we see that the right-hand sides of the two equalities in (\ref{eqn:Q-Z_A-A-star}) coincide. Thus, we get the following identity at every point of $X$: \begin{eqnarray}\label{eqn:Q-Z_A-A-star_bis}\frac{1}{|\mu_{\cal F}|_{h_{A^\star}}^2}\,W_{\omega,\,\Omega,\,h}^{(m)}({\cal F}) = -\frac{1}{|\sigma_{\cal F}|_{h_A}^2}\,Z_{\omega,\,\Omega,\,h}^{(m)}({\cal F}).\end{eqnarray}

Now, recall that if $s\in\{1,\dots , r-1\}$, the function $Z_{\omega,\,\Omega,\,h}^{(m)}({\cal F})$ satisfies properties (\ref{eqn:Z-function_alternative}) and (\ref{eqn:Z-function_integral}). Similarly, the function $W_{\omega,\,\Omega,\,h}^{(m)}({\cal F})$, which thanks to Proposition \ref{Prop:subsheaves_quotient-sheaves_equiv} equals $Z_{\omega,\,\Omega,\,h}^{(m)}(\ker f)$ -- see the first exact sequence in (\ref{eqn:seq_F-quotient_sub}), satisfies the same properties (\ref{eqn:Z-function_alternative}) and (\ref{eqn:Z-function_integral}). In particular, property (\ref{eqn:Z-function_integral}) for both functions, combined with (\ref{eqn:Q-Z_A-A-star_bis}), implies \begin{eqnarray}\label{eqn:integrals_Z-Q_zero}\int\limits_XZ_{\omega,\,\Omega,\,h}^{(m)}({\cal F}) = \int\limits_XW_{\omega,\,\Omega,\,h}^{(m)}({\cal F}) =0\end{eqnarray} for every $C^\infty$ Hermitian fibre metric $h$ on $E$. Note that we haven't used any stability assumption on $E$ so far.

  However, if $E$ is {\it $(\omega,\,\Omega)$-$m$-positively stable}, Definition \ref{Def:m-pos-stability} ensures the existence of a $C^\infty$ Hermitian fibre metric $h$ on $E$ such that \begin{eqnarray*}Z_{\omega,\,\Omega,\,h}^{(m)}({\cal F})> 0 \hspace{2ex} \mbox{at every point of}\hspace{1ex} X.\end{eqnarray*} This implies $\int_X Z_{\omega,\,\Omega,\,h}^{(m)}({\cal F})>0$, contradicting (\ref{eqn:integrals_Z-Q_zero}). This contradiction has been reached in the case where $s=\mbox{rank}\,({\cal F})\in\{1,\dots , r-1\}$.

  We conclude that, if $E$ is {\it $(\omega,\,\Omega)$-$m$-positively stable}, we have either $s=0$ (which amounts to the map $f:E\to E$ vanishing identically) or $s=r$ (in which case the map $f:E\to E$ is bijective). Concerning the last statement, recall that $s=r$ is equivalent to $f$ being an isomorphism outside an analytic subset of $X$. In particular, $f:E\to E$ has maximal rank almost everywhere, hence $\det f\in H^0(X,\,(\det E)^\star\otimes\det E) =H^0(X,\,{\cal O}_X)$ is non-zero almost everywhere. Since $\det f$ is a global holomorphic function on the compact complex manifold $X$, it must be constant. Moreover, this constant cannot be zero because $\det f$ is generically non-zero. It follows that, thanks to the compactness of $X$, $f:E\to E$ is indeed an automorphism of $E$ when $s=r$.  

\vspace{1ex}

  $\bullet$ We conclude in the standard way. Fix an arbitrary point $x\in X$ and let $\lambda\in\C$ be an eigenvalue of $f_x:E_x\longrightarrow E_x$. Then, by the first step, the section $f-\lambda\,\mbox{Id}_E\in H^0(X,\,\operatorname{End} E)$ would be an automorphism of $E$ if it did not vanish identically. However, by construction, $(f-\lambda\,\mbox{Id}_E)_x:E_x\longrightarrow E_x$ is not an isomorphism. Therefore, $f-\lambda\,\mbox{Id}_E\equiv 0$.  \hfill $\Box$  

\section{Moduli spaces}\label{section:moduli}

Let us start by recalling a few standard definitions and results (cf. e.g. [LT95] whose notation and terminology we largely adopt). With a $C^\infty$ complex vector bundle $E$ of rank $r$ over a compact complex $n$-dimensional manifold $X$, one associates the {\it complex gauge group} \begin{eqnarray*}{\cal G}^{\C} = C^\infty(X,\,\mbox{Aut}\,E) = GL(E)\subset C^\infty(X,\,\operatorname{End} E)\end{eqnarray*} of $C^\infty$ automorphisms of $E$ and its right action \begin{eqnarray}\label{eqn:G-C_action_A}\bar{\cal A}(E)\times{\cal G}^{\C} \longrightarrow \bar{\cal A}(E), \hspace{5ex} (D'',\, T)\longmapsto T^{-1}\circ D''\circ T:=D''_T,\end{eqnarray} on the set \begin{eqnarray*}\bar{\cal A}(E):=\bigg\{D'':C^\infty(X,\,E)\longrightarrow C^\infty_{0,\,1}(X,\,E)\,\mid\, D'' \hspace{1ex} \mbox{is a $(0,\,1)$-connection on $E$}\bigg\},\end{eqnarray*} where by the differential operator $D''$ being a $(0,\,1)$-connection on $E$ we mean that it satisfies the Leibniz rule: \begin{eqnarray*}D''(fs) = (\bar\partial f)\otimes s + f\,D''s\end{eqnarray*} for all $f\in C^\infty(X,\,\C)$ and all $s\in C^\infty(X,\,E)$. This operator, originally defined in degree $0$, extends naturally to operators $D'':C^\infty_k(X,\,E)\longrightarrow C^\infty_{k+1}(X,\,E)$ defined in any degree $k$ by extending the Leibniz rule to $D''(f\wedge s) = \bar\partial f\wedge s + (-1)^l f\wedge D''s$, for all $f\in C^\infty_l(X,\,\C)$ and all $s\in C^\infty_k(X,\,E)$.

  It is equally standard that every $(0,\,1)$-connection $D''$ (denoted sometimes $D''_E$, for emphasis) on $E$ induces a $(0,\,1)$-connection $D''_{\operatorname{End} E}$ on the $\C$-vector bundle of endomorphisms of $E$ by requiring the following Leibniz rule to hold in degree $0$: \begin{eqnarray}\label{eqn:Leibniz-rule_D_End}D''_E (Ts) = \bigg(D''_{\operatorname{End} E}T\bigg)\,s + T\,D''_E s,\end{eqnarray} for all $T\in C^\infty(X,\,\operatorname{End}E)$ and all $s\in C^\infty(X,\,E)$. This naturally extends to all the degrees by inserting the coefficient $(-1)^k$ in front of the last term when $T\in C^\infty_k(X,\,\operatorname{End}E)$.

  A $(0,\,1)$-connection $D''\in\bar{\cal A}(E)$ on $E$ is said to be {\it integrable} if $D^{''2} = 0$. (It is standard that the integrability property is equivalent to $D''$ defining a holomorphic structure on $E$ in the sense that there exist local holomorphic trivialisations with holomorphic transition isomorphisms.) On the other hand, $D''$ is said to be {\it simple} if the only {\it holomorphic} endomorphisms of $E$ for the induced $(0,\,1)$-connection $D''_{\operatorname{End}E}\in\bar{\cal A}(\operatorname{End}E)$ are the homotheties: \begin{eqnarray*}\ker D''_{\operatorname{End}E} = \C\,\mbox{Id}_E.\end{eqnarray*} The subset of {\it integrable} $(0,\,1)$-connections on $E$ ($=$ holomorphic structures on $E$) is denoted by ${\cal H}(E)\subset\bar{\cal A}(E)$, while the subset of {\it simple} $(0,\,1)$-connections on $E$ is denoted by $\bar{\cal A}^s(E)\subset\bar{\cal A}(E)$. It is immediate to check that both ${\cal H}(E)$ and $\bar{\cal A}^s(E)$ are invariant under the ${\cal G}^{\C}$-action (\ref{eqn:G-C_action_A}). Hence, so is their intersection \begin{eqnarray*}{\cal H}^s(E):= {\cal H}(E)\cap\bar{\cal A}^s(E)\subset\bar{\cal A}(E),\end{eqnarray*} the subset of simple holomorphic structures on $E$.

  \vspace{1ex}

  Finally, recall that if $D_1''$ and $D_2''$ are holomorphic structures on a $C^\infty$ $\C$-vector bundle $E$ over $X$, a $C^\infty$ vector bundle morphism $T:E_1:=(E,\,D_1'')\longrightarrow E_2:=(E,\,D_2'')$ is said to be {\bf holomorphic} if $D''_{\operatorname{Hom} (E_1,\,E_2)}T =0$. We denote by $D''_{\operatorname{Hom} (E_1,\,E_2)}$ the holomorphic structure ($=$ the integrable $(0,\,1)$-connection) induced on the $C^\infty$ morphism bundle $\operatorname{Hom} (E_1,\,E_2)$ by $D_1''$ and $D_2''$ by requiring the analogue of (\ref{eqn:Leibniz-rule_D_End}) to hold, namely: \begin{eqnarray}\label{eqn:Leibniz-rule_D_Hom}D''_2 (Ts) = \bigg(D''_{\operatorname{Hom} (E_1,\,E_2)}T\bigg)\,s + T\,D''_1 s,\end{eqnarray} for all $T\in C^\infty(X,\,{Hom} (E_1,\,E_2))$ and all $s\in C^\infty(X,\,E_1)$. In particular, $T:E_1:\longrightarrow E_2$ is a holomorphic isomorphism of vector bundles if it is holomorphic and a $C^\infty$ isomorphism.

  The following statement is standard (see e.g. [LT95, Remark 4.3.2.]) and may help explain why the ${\cal G}^{\cal C}$-action (\ref{eqn:G-C_action_A}) was defined in that way. For the reader's convenience, we give a proof.

\begin{Prop}\label{Prop:hol-isom_equiv-action} For any $D_1'', D_2''\in{\cal H}(E)$ and any $T\in{\cal G}^{\cal C}$, the following equivalence holds: \begin{eqnarray*}D_2'' = T^{-1}\circ D_1''\circ T \iff T^{-1}:(E,\,D_1'')\longrightarrow(E,\,D_2'') \hspace{2ex} \mbox{is a {\bf holomorphic} isomorphism}.\end{eqnarray*}

\end{Prop}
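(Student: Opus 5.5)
The argument is a direct computation from the Leibniz rule (\ref{eqn:Leibniz-rule_D_Hom}) defining $D''_{\operatorname{Hom}(E_1,\,E_2)}$, where $E_1:=(E,\,D_1'')$ and $E_2:=(E,\,D_2'')$. The plan is to show that, for the $C^\infty$ isomorphism $S:=T^{-1}$ (which is automatically an isomorphism since $T\in{\cal G}^{\cal C}$), one has
\begin{eqnarray*}\bigg(D''_{\operatorname{Hom}(E_1,\,E_2)}S\bigg)\,s = D_2''(Ss) - S\,D_1''s \hspace{5ex}\mbox{for all}\hspace{1ex} s\in C^\infty(X,\,E).\end{eqnarray*}
This identity is just (\ref{eqn:Leibniz-rule_D_Hom}) read with $T$ replaced by $S$. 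Hence $S$ is holomorphic if and only if $D_2''\circ S = S\circ D_1''$ as operators on $C^\infty(X,\,E)$.

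For the implication ``$\Longrightarrow$'', assume $D_2'' = T^{-1}\circ D_1''\circ T$. Then
\[D_2''(T^{-1}s) = T^{-1}D_1''(TT^{-1}s) = T^{-1}D_1''s\]
for every section $s$, so $D_2''\circ S = S\circ D_1''$ and $S = T^{-1}$ is holomorphic. For ``$\Longleftarrow$'', assume $D_2''\circ T^{-1} = T^{-1}\circ D_1''$ on sections. Apply this to $Ts$ in place of $s$:
\[D_2''s = D_2''(T^{-1}(Ts)) = T^{-1}D_1''(Ts),\]
which is exactly $D_2'' = T^{-1}\circ D_1''\circ T$.

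The only point requiring care is bookkeeping. One must check that $T^{-1}\circ D_1''\circ T$ is a genuine $(0,\,1)$-connection, i.e. that it satisfies the Leibniz rule; this follows because $T$ is $C^\infty(X,\,\C)$-linear, so that
\[T^{-1}D_1''(fTs) = \bar\partial f\otimes s + f\,T^{-1}D_1''(Ts).\]
One must also check that equality of operators in degree $0$ suffices, since both sides extend to all degrees through the same graded Leibniz rule. I do not expect a genuine obstacle here. The main thing to get right is the direction of $T$ versus $T^{-1}$ relative to the right action (\ref{eqn:G-C_action_A}).
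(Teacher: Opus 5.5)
Your proof is correct and is the paper's argument: read the Leibniz rule defining $D''_{\operatorname{Hom}(E_1,\,E_2)}$ with $T^{-1}$ in place of $T$ to get that $T^{-1}$ is holomorphic if and only if $D_2''\circ T^{-1} = T^{-1}\circ D_1''$, and then compose with $T$ on the right. Your intermediate condition is the correct form of the one printed in the paper's proof, which reads $D_2''\circ T^{-1} = D_1''\circ T$ and is evidently a misprint; your extra checks (the Leibniz rule for $T^{-1}\circ D_1''\circ T$, and that degree $0$ suffices) are harmless but not needed.
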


\noindent {\it Proof.} Applying formula (\ref{eqn:Leibniz-rule_D_Hom}) with $T^{-1}$ in place of $T$, we get: \begin{eqnarray*}D''_2 (T^{-1}s) = \bigg(D''_{\operatorname{Hom} (E_1,\,E_2)}T^{-1}\bigg)\,s + T^{-1}\,D''_1 s, \hspace{5ex} s\in C^\infty(X,\,E).\end{eqnarray*} This shows that $D''_{\operatorname{Hom} (E_1,\,E_2)}T^{-1} = 0$ (i.e. $T^{-1}:E_1\longrightarrow E_2$ is holomorphic) if and only if $D_2''\circ T^{-1} = D_1''\circ T$. This proves the contention.  \hfill $\Box$

\subsection{Moduli spaces of $m$-positively stable vector bundles}\label{subsection:moduli_m-pos-stable}

  Using this terminology, Theorem \ref{The:m-pos-stable_implies_simple} can be restated as follows:

  \begin{Cor}\label{Cor:m-pos-stable_implies_simple} Let $X$ be a compact complex manifold with $\mbox{dim}_\C X =n$ and let $m\in\{1,\dots , n\}$. Suppose there exists a K\"ahler metric $\omega$ on $X$. Let $\Omega\in C^\infty_{n-m,\,n-m}(X,\,\R)$ be a form such that \begin{eqnarray*}\Omega>0 \hspace{1ex} \mbox{(metrically weakly) \hspace{3ex} and} \hspace{3ex} \partial\bar\partial\Omega = 0.\end{eqnarray*}

    Then, for every $C^\infty$ $\C$-vector bundle $E$ of rank $r\geq 1$ on $X$, the following inclusion holds:  \begin{eqnarray*}{\cal H}_{m\mbox{-stable}}(E)\subset{\cal H}^s(E),\end{eqnarray*} where ${\cal H}_{m\mbox{-stable}}(E) = {\cal H}_{\omega-\Omega-m\mbox{-stable}}(E)\subset{\cal H}(E)$ is the subset of holomorphic structures $D''$ on $E$ such that the holomorphic vector bundle $(E,\,D'')$ is {\bf $(\omega,\,\Omega)$-$m$-positively stable}.

\end{Cor}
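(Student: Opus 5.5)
The inclusion is a direct reformulation of Theorem \ref{The:m-pos-stable_implies_simple} in the language of $(0,\,1)$-connections, so the plan is to unwind the definitions and invoke that theorem.

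First, fix $D''\in{\cal H}_{m\mbox{-stable}}(E)$. By definition $D''\in{\cal H}(E)$, so $D''^2=0$. Hence $(E,\,D'')$ is a holomorphic vector bundle on $X$, and by hypothesis it is $(\omega,\,\Omega)$-$m$-positively stable in the sense of Definition \ref{Def:m-pos-stability}. Since membership in ${\cal H}^s(E)={\cal H}(E)\cap\bar{\cal A}^s(E)$ is what we want, and $D''\in{\cal H}(E)$ is already known, it remains only to show $D''\in\bar{\cal A}^s(E)$, i.e.\ $\ker D''_{\operatorname{End}E}=\C\,\mbox{Id}_E$.

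Next, identify $\ker D''_{\operatorname{End}E}$ with $H^0(X,\,\operatorname{End}E)$, computed for the holomorphic structure $D''$. By the Leibniz rule (\ref{eqn:Leibniz-rule_D_End}), a smooth endomorphism $T$ satisfies $D''_{\operatorname{End}E}T=0$ if and only if $D''(Ts)=T\,D''s$ for all $s$. This says exactly that $T$ maps local $D''$-holomorphic sections to $D''$-holomorphic sections, i.e.\ that $T$ is a holomorphic endomorphism of $(E,\,D'')$. Theorem \ref{The:m-pos-stable_implies_simple} then gives $T=\lambda\,\mbox{Id}_E$ for some constant $\lambda\in\C$, so $\ker D''_{\operatorname{End}E}\subset\C\,\mbox{Id}_E$.

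The reverse inclusion is immediate, since $D''_{\operatorname{End}E}(\lambda\,\mbox{Id}_E)=0$ by the Leibniz rule. This yields $D''\in{\cal H}^s(E)$.

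There is no genuine obstacle. The only point needing care is the identification of $\ker D''_{\operatorname{End}E}$ with the holomorphic endomorphisms of $(E,\,D'')$, and this is the standard equivalence between integrable $(0,\,1)$-connections and holomorphic structures recalled above.
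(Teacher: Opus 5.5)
Your proposal is correct and matches the paper's approach: the paper gives no separate argument and presents this corollary as a restatement of Theorem \ref{The:m-pos-stable_implies_simple}. The content is exactly the identification $\ker D''_{\operatorname{End}E}=H^0(X,\,\operatorname{End}(E,\,D''))$ that you spell out via the Leibniz rule (\ref{eqn:Leibniz-rule_D_End}). Only the direction ``kernel elements are holomorphic endomorphisms'' is actually needed to apply the theorem.
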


Next, we prove the invariance  under the ${\cal G}^{\C}$-action (\ref{eqn:G-C_action_A}) of the subset ${\cal H}_{m\mbox{-stable}}(E)\subset{\cal H}^s(E)$. To this end, we need the following preliminary observation.

\begin{Lem}\label{Lem:gauge-transf_curvature} Let $E$ be a $C^\infty$ $\C$-vector bundle of rank $r\geq 1$ over a compact complex manifold $X$ with $\mbox{dim}_\C X = n$. Let $D''\in{\cal H}(E)$ be a holomorphic structure and $h$ a $C^\infty$ Hermitian fibre metric on $E$. We denote by $D_h=D_h' + D''$ the Chern connection of $(E,\,D'',\,h)$.

  For every $T\in{\cal G}^{\C}$, let $D''_T:=T^{-1}\circ D''\circ T$ be the holomorphic structure on $E$ induced by $(D'',\,T)$ and let $h_T$ be the $C^\infty$ Hermitian fibre metric on $E$ induced by $(h,\,T)$ that makes $T^{-1}:(E,\,h)\longrightarrow(E,\,h_T)$ {\bf isometric}: \begin{eqnarray*}\langle u,\,v\rangle_{h_T}:= \langle Tu,\,Tv\rangle_h, \hspace{5ex} u,v\in E_x,\hspace{1ex} x\in X.\end{eqnarray*} We denote by $D_{h_T}= D'_{h_T} + D''_T$ the Chern connection of $(E,\,D_T'',\,h_T)$.

  Then, the curvature forms of $(E,\, D_h)$ and $(E,\,D_{h_T})$ are related in the following way: \begin{eqnarray}\label{eqn:gauge-transf_curvature}i\Theta(D_{h_T}) = T^{-1}\circ i\Theta(D_h)\circ T.\end{eqnarray}

\end{Lem}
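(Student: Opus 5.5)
The plan is to prove (\ref{eqn:gauge-transf_curvature}) by showing first that the Chern connection transforms by conjugation, $D_{h_T} = T^{-1}\circ D_h\circ T$. Curvature is then obtained by squaring. Set $\widetilde D:=T^{-1}\circ D_h\circ T$, acting on $C^\infty_k(X,\,E)$ in every degree $k$.

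The first step is to check that $\widetilde D$ is a connection on $E$. For $f\in C^\infty(X,\,\C)$ and $s\in C^\infty(X,\,E)$ we have $\widetilde D(fs) = T^{-1}(df\otimes Ts + f\,D_hTs) = df\otimes s + f\,\widetilde Ds$, since $T$ is $C^\infty$-linear. The same computation works in every degree with the usual sign. Next, we identify the type components. The $(0,\,1)$-part of $\widetilde D$ is $T^{-1}\circ D''\circ T = D''_T$, because $T$ preserves bidegree. So $\widetilde D$ is compatible with the holomorphic structure $D''_T$.

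The second step is to check that $\widetilde D$ is $h_T$-compatible. For sections $u,v$ we compute
\begin{eqnarray*}d\langle u,\,v\rangle_{h_T} = d\langle Tu,\,Tv\rangle_h = \{D_h(Tu),\,Tv\}_h + \{Tu,\,D_h(Tv)\}_h = \{\widetilde Du,\,v\}_{h_T} + \{u,\,\widetilde Dv\}_{h_T}.\end{eqnarray*}
The last equality uses the definition of $h_T$, extended to forms: $\{\alpha,\,\beta\}_{h_T} = \{T\alpha,\,T\beta\}_h$. Uniqueness of the Chern connection, i.e. the unique $h_T$-compatible connection whose $(0,\,1)$-part is $D''_T$, then gives $D_{h_T} = \widetilde D$. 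In particular $D'_{h_T} = T^{-1}\circ D'_h\circ T$.

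The third step is the curvature. We get $\Theta(D_{h_T}) = D_{h_T}^2 = T^{-1}\circ D_h\circ T\circ T^{-1}\circ D_h\circ T = T^{-1}\circ D_h^2\circ T = T^{-1}\circ\Theta(D_h)\circ T$. These are operators acting on $E$-valued forms. Since $D_h^2$ is the $C^\infty$-linear operator of multiplication by the $\operatorname{End}E$-valued $2$-form $\Theta(D_h)$, conjugating by $T$ in degree $0$ gives exactly (\ref{eqn:gauge-transf_curvature}) after multiplying by $i$.

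I expect no real obstacle here. The only point needing a little care is the bookkeeping in the second step. We have to extend the identity $\{\cdot,\cdot\}_{h_T} = \{T\cdot,\,T\cdot\}_h$ to $E$-valued forms. We also have to check that $T^{-1}\circ D_h\circ T$ in higher degrees agrees with the connection induced from degree $0$. Both follow from the $C^\infty$-linearity of $T$ and the graded Leibniz rule.
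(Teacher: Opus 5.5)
Your proposal is correct and follows essentially the paper's route: show $D_{h_T}=T^{-1}\circ D_h\circ T$, then square to get the curvature identity. The only cosmetic difference is in how the conjugation formula is obtained. The paper derives $D'_{h_T}=T^{-1}\circ D'_h\circ T$ directly by comparing two expansions of $\partial\langle u,\,v\rangle_{h_T}$. You instead check that the conjugated connection has the defining properties of the Chern connection and then invoke its uniqueness.
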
  

  \noindent {\it Proof.} $\bullet$ We first compute $D'_{h_T}$ in terms of $D'_h$ and $T$.

  For all $u,v\in E_x$ (for some fixed $x\in X$), the Leibniz rule for $h_T$ yields: \begin{eqnarray}\label{eqn:del-Leibniz_proof-curvatures_1}\partial\{u,\,v\}_{h_T} = \bigg\{D'_{h_T}u,\,v\bigg\}_{h_T} + \bigg\{u,\,D''_Tv\bigg\}_{h_T} = \bigg\{TD'_{h_T}u,\,Tv\bigg\}_h + \bigg\{Tu,\,D''(Tv)\bigg\}_h,\end{eqnarray} where for the last equality we used the identity $T\circ D''_T = D''\circ T$. Meanwhile, the Leibniz rule for $h$ yields the second equality below: \begin{eqnarray}\label{eqn:del-Leibniz_proof-curvatures_2}\partial\{u,\,v\}_{h_T}  & = & \partial\{Tu,\,Tv\}_h = \bigg\{D'_h(Tu),\,Tv\bigg\}_h + \bigg\{Tu,\,D''(Tv)\bigg\}_h.\end{eqnarray}

We deduce from (\ref{eqn:del-Leibniz_proof-curvatures_1}) and (\ref{eqn:del-Leibniz_proof-curvatures_2}) that $\{TD'_{h_T}u,\,Tv\}_h = \{D'_h(Tu),\,Tv\}_h$ for all $u,v$. This yields the identity $D'_{h_T} = T^{-1}\circ D'_h\circ T$. Thus, $D'_{h_T}$ transforms from $D'_h$ in the same way as $D''_T$ transforms from $D''$. In particular, we get: \begin{eqnarray}\label{Lem:gauge-transf_Chern-connection}D_{h_T} = T^{-1}\circ D_h\circ T.\end{eqnarray} 
  
$\bullet$ We can now compare the curvature forms using their definitions and (\ref{Lem:gauge-transf_Chern-connection}). We get: \begin{eqnarray*}i\Theta(D_{h_T}) = i\,D_{h_T}^2 = i\,(T^{-1}\circ D_h\circ T)\circ(T^{-1}\circ D_h\circ T) = i\,T^{-1}\circ D_h^2\circ T = T^{-1}\circ i\Theta(D_h)\circ T.  \end{eqnarray*} This proves (\ref{eqn:gauge-transf_curvature}).  \hfill $\Box$

\vspace{1ex}

Note that the change of fibre metric from $h$ to $h_T$ (in addition to the change of holomorphic structure from $D''$ to $D''_T$ under the ${\cal G}^{\cal C}$-action) was key in Lemma \ref{Lem:gauge-transf_curvature} and will continue to be so in the proof of Proposition \ref{Prop:m-pos-stability_G-C-action_invariance}. For the sake of context, we recall the following standard result (see e.g. [LT95, Remark 1.1.20., (i)]):

\begin{Prop}\label{Prop:D'_T_D'_same-metric} In the setting of Lemma \ref{Lem:gauge-transf_curvature}, let $D_T = D_T' + D_T''$ be the Chern connection of $(E,\,D_T'',\,h)$, where $D''_T:=T^{-1}\circ D''\circ T$ for some $T\in{\cal G}^{\C}$. We continue to denote by $D_h=D_h' + D''$ the Chern connection of $(E,\,D'',\,h)$.

  Then, $D_T' = T^\star\circ D_h'\circ(T^\star)^{-1}$, where $T^\star$ is the adjoint of $T$ with respect to the fibre metric $h$.

\end{Prop}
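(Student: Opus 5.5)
We characterise $D_T'$ by the Leibniz rule for the fixed metric $h$, in the same way as in the proof of Lemma \ref{Lem:gauge-transf_curvature}. The Chern connection of $(E,\,D_T'',\,h)$ is the unique connection of type $(1,\,0)$ plus $D''_T$ that is $h$-compatible. Hence it suffices to show that the operator
\begin{eqnarray*}\widetilde{D}' := T^\star\circ D_h'\circ(T^\star)^{-1}\end{eqnarray*}
is a $(1,\,0)$-connection satisfying
\begin{eqnarray*}\partial\{u,\,v\}_h = \{\widetilde{D}'u,\,v\}_h + \{u,\,D''_Tv\}_h\end{eqnarray*}
for all smooth sections $u,v$ of $E$. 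Uniqueness of the $(1,\,0)$-part of the Chern connection then gives $D_T'=\widetilde{D}'$.

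First, $\widetilde{D}'$ is a $(1,\,0)$-connection. Conjugating a connection by a $C^\infty$ automorphism preserves the Leibniz rule: $T^\star D_h'((T^\star)^{-1}fs) = \partial f\otimes s + f\,\widetilde{D}'s$. It also preserves the bidegree of the output.

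Second, we check the compatibility identity. Write $u = T^\star a$, with $a := (T^\star)^{-1}u$. Then
\begin{eqnarray*}\{u,\,v\}_h = \{T^\star a,\,v\}_h = \{a,\,Tv\}_h.\end{eqnarray*}
Applying the Leibniz rule for the Chern connection $D_h = D_h' + D''$ of $(E,\,D'',\,h)$ gives
\begin{eqnarray*}\partial\{a,\,Tv\}_h = \{D_h'a,\,Tv\}_h + \{a,\,D''(Tv)\}_h.\end{eqnarray*}
The first term equals $\{T^\star D_h'(T^\star)^{-1}u,\,v\}_h = \{\widetilde{D}'u,\,v\}_h$. This uses that the adjoint relation $\{T^\star\alpha,\,v\}_h = \{\alpha,\,Tv\}_h$ holds pointwise for $E$-valued forms $\alpha$, since $T$ acts only on the vector part.

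For the second term, use $D''\circ T = T\circ D''_T$, which is the definition of $D''_T$. This gives
\begin{eqnarray*}\{a,\,T D''_Tv\}_h = \{T^\star a,\,D''_Tv\}_h = \{u,\,D''_Tv\}_h.\end{eqnarray*}
Combining the two terms yields the required identity, and the claim follows.

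The only delicate point is the sign and conjugation convention for the pairing $\{\cdot,\cdot\}_h$ when a form-valued entry sits in the second slot. Moving $T$ across the pairing as $T^\star$ must be compatible with that convention. Since $T$ is a bundle endomorphism of degree zero, no signs arise, and the argument mirrors the one used for $h_T$ in Lemma \ref{Lem:gauge-transf_curvature}.
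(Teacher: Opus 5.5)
Your proof is correct: computing $\partial\{u,\,v\}_h$ in two ways (writing $u=T^\star a$, using the $h$-compatibility of $D_h$ and $D''\circ T=T\circ D''_T$), combined with the nondegeneracy of $\{\cdot\,,\cdot\}_h$, determines $D_T'$. The sign and conjugation bookkeeping you flag is indeed harmless, since $T$ acts only on vector parts. The paper does not prove this proposition itself; it quotes it as standard from [LT95, Remark 1.1.20 (i)]. Your argument is, however, exactly the technique the paper uses for the companion identities $D'_{h_T}=T^{-1}\circ D'_h\circ T$ in Lemma \ref{Lem:gauge-transf_curvature} and $D_2'=T^{-1}\circ D_1'\circ T$ in the proof of Theorem \ref{The:h_1-h_2_comparison_proportinality}.
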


\vspace{1ex}

This standard result shows that if the fibre metric $h$ is kept fixed on $E$ when the holomorphic structure $D''$ is changed to $D''_T$ under the ${\cal G}^{\cal C}$-action, the $(1,\,0)$-part $D_T'$ of the resulting Chern connection $D_T$ transforms by a different rule compared to the $(0,\,1)$-part. (Note, however, that if ${\cal G}^{\cal C}$ is replaced by the {\it $h$-unitary gauge group} $U(E,\,h)$ -- the group of $h$-unitary elements $T\in C^\infty(X,\,\mbox{Aut}\,E)$ -- then $T^\star = T^{-1}$ for every $T\in U(E,\,h)$, so $D'$ transforms similarly to $D''$.) By contrast, the formula $D'_{h_T} = T^{-1}\circ D'_h\circ T$ we obtained in the proof of Lemma \ref{Lem:gauge-transf_curvature} when $h$ too is changed to $h_T$ obeys the same transformation law as $D''_T$.

\begin{Prop}\label{Prop:m-pos-stability_G-C-action_invariance} Let $X$ be a compact complex manifold with $\mbox{dim}_\C X =n$ and let $m\in\{1,\dots , n\}$. Suppose there exists a K\"ahler metric $\omega$ on $X$. Let $\Omega\in C^\infty_{n-m,\,n-m}(X,\,\R)$ be a form such that \begin{eqnarray*}\Omega>0 \hspace{1ex} \mbox{(metrically weakly) \hspace{3ex} and} \hspace{3ex} \partial\bar\partial\Omega = 0.\end{eqnarray*}

  Then, for every $C^\infty$ $\C$-vector bundle $E$ of rank $r\geq 1$ on $X$, the subset ${\cal H}_{m\mbox{-stable}}(E) = {\cal H}_{\omega-\Omega-m\mbox{-stable}}(E)\subset{\cal H}(E)$ defined in Corollary \ref{Cor:m-pos-stable_implies_simple} is {\bf invariant} under the ${\cal G}^{\C}$-action (\ref{eqn:G-C_action_A}).


\end{Prop}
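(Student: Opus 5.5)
Let $D''\in{\cal H}_{m\mbox{-stable}}(E)$ and $T\in{\cal G}^{\C}$. We must show that $D''_T:=T^{-1}\circ D''\circ T$ is again $(\omega,\,\Omega)$-$m$-positively stable. The strategy is to transport everything through $T$. By Proposition \ref{Prop:hol-isom_equiv-action}, $T:(E,\,D''_T)\longrightarrow(E,\,D'')$ is a holomorphic isomorphism. Hence the map ${\cal F}'\longmapsto{\cal F}:=T({\cal F}')$ is a rank-preserving bijection between the coherent subsheaves of ${\cal O}(E,\,D''_T)$ and those of ${\cal O}(E,\,D'')$.

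Fix ${\cal F}'\subset{\cal O}(E,\,D''_T)$ of rank $s\in\{1,\dots , r-1\}$ and set ${\cal F}=T({\cal F}')$. By the stability of $D''$, there is a $C^\infty$ metric $h$ on $E$ with $Z_{\omega,\,\Omega,\,h}^{(m)}({\cal F})>0$ on $X$. We take $h_T$ as in Lemma \ref{Lem:gauge-transf_curvature} and aim to prove
\begin{eqnarray*}Z^{(m)}_{\omega,\,\Omega,\,h_T}({\cal F}')\,\mbox{(for $D''_T$)} = Z^{(m)}_{\omega,\,\Omega,\,h}({\cal F})\,\mbox{(for $D''$)}\end{eqnarray*}
pointwise on $X$, possibly up to a positive multiplicative constant coming from the normalisation of the Hermite–Einstein metric. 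This gives positivity, and since the metric is allowed to depend on ${\cal F}'$, the proof is then complete.

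To prove the identity, first note that $\Lambda^sT$ maps $\Lambda^sE$ with holomorphic structure induced by $D''_T$ isomorphically and holomorphically onto $\Lambda^sE$ with structure induced by $D''$. It is isometric from $\Lambda^sh_T$ to $\Lambda^sh$, and it restricts on $X\setminus S$ to a holomorphic isomorphism $\det{\cal F}'\simeq\det{\cal F}$, which extends across $S$ by Hartogs. Next, take $h^{H-E}_{\det{\cal F}^\star}$ and pull it back via this line bundle isomorphism to $\det{\cal F}'^\star$. Since the curvature of a line bundle is preserved by a holomorphic isometry, the pulled-back metric is still $(\omega,\,\Omega)$-Hermite–Einstein, so by uniqueness it is $h^{H-E}_{\det{\cal F}'^\star}$ up to a constant. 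Tensoring, we obtain an isomorphism $\Phi:\Lambda^sE\otimes\det{\cal F}'^\star\longrightarrow\Lambda^sE\otimes\det{\cal F}^\star$ that is holomorphic for the respective structures and isometric for the metrics entering Definition \ref{Def:Z-function}. By the naturality of the construction in Fact \ref{Fact:canonical-section}, $\Phi$ sends $\sigma_{{\cal F}'}$ to $\sigma_{{\cal F}}$. Finally, Lemma \ref{Lem:gauge-transf_curvature}, applied to $\Phi$ (or directly: a holomorphic isometry intertwines the Chern connections), gives $i\Theta=\Phi^{-1}\circ i\Theta\circ\Phi$. Therefore
\begin{eqnarray*}\{(i\Theta\wedge\omega^{m-1}\wedge\Omega)\sigma_{{\cal F}'},\,\sigma_{{\cal F}'}\} = \{(i\Theta\wedge\omega^{m-1}\wedge\Omega)\sigma_{{\cal F}},\,\sigma_{{\cal F}}\},\end{eqnarray*}
which is the desired equality of stability functions.

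The main point needing care is the compatibility of the canonical sections and the Hermite–Einstein metrics under $T$. One has to check that the isomorphism $\det{\cal F}'\to\det{\cal F}$ induced by $\Lambda^sT$ is holomorphic on all of $X$, not only on $X\setminus S$. One must also check that the Hermite–Einstein condition, which depends only on the curvature of the line bundle and on $(\omega,\,\Omega)$, is transported along it. I would verify both on $X\setminus S$ in local frames adapted to $F'$ and $F=T(F')$, as in the proof of Proposition \ref{Prop:subsheaves_quotient-sheaves_equiv}, and then extend across $S$ using $\mbox{codim}\,S\geq 2$ and continuity. Note also that the multiplicative constant left undetermined by the Hermite–Einstein normalisation does not affect the sign of $Z$, as remarked after Definition \ref{Def:Z-function}.
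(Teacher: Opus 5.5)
Your proposal is correct and follows essentially the same route as the paper. You transport ${\cal F}$ through the holomorphic isomorphism induced by $T$, use the metric $h_T$ on $E$, and carry the Hermite--Einstein metric on the dual determinant line bundle across the induced line bundle isomorphism. The resulting map between the two versions of $\Lambda^sE\otimes\det{\cal F}^\star$ is then a holomorphic isometry that sends canonical section to canonical section and intertwines curvatures, which yields $Z^{(m)}_{\omega,\,\Omega,\,h_T}({\cal F}_T)=Z^{(m)}_{\omega,\,\Omega,\,h}({\cal F})$. The only difference is cosmetic: you start from a subsheaf of ${\cal O}(E,\,D''_T)$ and push it forward by $T$, which spells out the final step that the paper leaves implicit.
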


\noindent {\it Proof.} Let us fix $T\in{\cal G}^{\C}$ and $D''\in{\cal H}(E)$. Assume that the holomorphic vector bundle $(E,\,D'')$ is {\bf $(\omega,\,\Omega)$-$m$-positively stable}. We have to show that the holomorphic vector bundle $(E,\,D_T'')$ has the same property, where $D_T'' = T^{-1}\circ D''\circ T$.

\vspace{1ex}

$\bullet$ By Proposition \ref{Prop:hol-isom_equiv-action}, $T^{-1}:(E,\,D'')\longrightarrow(E,\,D''_T)$ is a {\it holomorphic} isomorphism of vector bundles. 

Let ${\cal E}$ and ${\cal E}_T$ be the ${\cal O}_X$-modules associated respectively with the isomorphic holomorphic vector bundles $(E,\,D'')$ and $E_T:=(E,\,D_T'')$. Thus, ${\cal E} = \ker D''$ and ${\cal E}_T = \ker D''_T$. Moreover, $T^{-1}:{\cal E}\longrightarrow{\cal E}_T$ is a sheaf isomorphism and every coherent subsheaf ${\cal F}\subset{\cal E}$ induces a coherent subsheaf ${\cal F}_T:=T^{-1}{\cal F}\subset{\cal E}_T$. Conversely, every coherent subsheaf ${\cal F}_T\subset{\cal E}_T$ arises uniquely in this way. 

This can also be seen directly. Indeed, if $s$ is a local section of ${\cal E}$, then $D''s=0$, hence $D''_T(T^{-1}s) = T^{-1}D''(TT^{-1}s) = T^{-1}(D''s) = 0$, so $T^{-1}s$ is a local section of ${\cal E}_T$. Conversely, if $T^{-1}s$ is a local section of ${\cal E}_T$, then $s$ is a local section of ${\cal E}$. 

\vspace{1ex}

$\bullet$ Now, fix an arbitrary coherent subsheaf ${\cal F}\subset{\cal E}$ of rank $s\in\{1,\dots , r-1\}$. Let $A:=\Lambda^sE\otimes\det{\cal F}^\star$ and $A_T:=\Lambda^sE_T\otimes\det{\cal F}_T^\star$ be the induced vector bundles, holomorphic with respect to the holomorphic structure induced by $D''$, respectively $D''_T$. Thus, $A$ and $A_T$ coincide as $C^\infty$ vector bundles and are isomorphic as holomorphic vector bundles under the holomorphic isomorphism \begin{eqnarray}\label{eqn:T-tilde_isom_def}\widetilde{T}^{-1}=(\Lambda^sT^{-1})\otimes\det(T^{-1})^\star : A \longrightarrow A_T.\end{eqnarray}

Let $h$ be an arbitrary $C^\infty$ Hermitian fibre metric on $E$ and let $k$ be an arbitrary $C^\infty$ Hermitian fibre metric on $\det{\cal F}^\star$ (e.g. the unique -- up to a positive multiplicative constant -- $(\omega,\,\Omega)$-Hermitian-Einstein metric). The sheaf isomorphism $T^{-1}:{\cal F}\longrightarrow{\cal F}_T$ induces a holomorphic line bundle isomorphism: \begin{eqnarray*}S:\det{\cal F}^\star\longrightarrow\det{\cal F}_T^\star.\end{eqnarray*} We transport the fibre metric $k$ on $\det{\cal F}^\star$ through $S$ to a $C^\infty$ Hermitian fibre metric $k_T$ on $\det{\cal F}_T^\star$ by requiring \begin{eqnarray*}S:(\det{\cal F}^\star,\,k)\longrightarrow(\det{\cal F}_T^\star,\,k_T)\end{eqnarray*} to be isometric. (Note that if $k$ is $(\omega,\,\Omega)$-Hermitian-Einstein, so is $k_T$ because curvature is preserved under holomorphic isometries.) Let $h_A$ be the $C^\infty$ Hermitian fibre metric induced by $(h,\,k)$ on $A$.

On the other hand, we define the $C^\infty$ Hermitian fibre metric $h_T$ on $E_T$ as in Lemma \ref{Lem:gauge-transf_curvature} by \begin{eqnarray*}\langle u,\,v\rangle_{h_T}:= \langle Tu,\,Tv\rangle_h, \hspace{5ex} u,v\in E_x,\hspace{1ex} x\in X.\end{eqnarray*} Let $(h_T)_{A_T}$ be the $C^\infty$ Hermitian fibre metric induced on $A_T$ by $(h_T,\,k_T)$.

With respect to these metrics, the holomorphic isomorphism (\ref{eqn:T-tilde_isom_def}) becomes {\it isometric}: \begin{eqnarray*}\widetilde{T}^{-1} : (A,\,h_A)\longrightarrow(A_T,\,(h_T)_{A_T}).\end{eqnarray*}

Let $D_A$, respectively $D_{A_T}$, be the Chern connection of $(A,\,D'',\,h_A)$, respectively $(A_T,\,D''_T,\,(h_T)_{A_T})$. By the analogue of (\ref{Lem:gauge-transf_Chern-connection}), we get: \begin{eqnarray*}D_{A_T} = \widetilde{T}^{-1}\circ D_A\circ\widetilde{T}.\end{eqnarray*} By Lemma \ref{Lem:gauge-transf_curvature}, the corresponding curvature forms are related by: \begin{eqnarray}\label{eqn:gauge-transf_curvature_proof_stability}i\Theta(D_{A_T}) = \widetilde{T}^{-1}\circ i\Theta(D_A)\circ\widetilde{T}.\end{eqnarray}

$\bullet$ Now, let $\sigma_{\cal F}\in H^0(X,\,A)$, respectively $\sigma_{{\cal F}_T}\in H^0(X,\,A_T)$, be the canonical section induced by the inclusion ${\cal F}\hookrightarrow{\cal E}$, respectively ${\cal F}_T\hookrightarrow{\cal E}_T$. We have $\sigma_{{\cal F}_T} = \widetilde{T}^{-1}\sigma_{\cal F}$. Together with (\ref{eqn:gauge-transf_curvature_proof_stability}), this yields: \begin{eqnarray*}i\Theta(D_{A_T})\,\sigma_{{\cal F}_T} = \widetilde{T}^{-1}\bigg(i\Theta(D_A)\,\sigma_{\cal F}\bigg).\end{eqnarray*} We get: \begin{eqnarray*}\bigg\{i\Theta(D_{A_T})\,\sigma_{{\cal F}_T},\,\sigma_{{\cal F}_T}\bigg\}_{(h_T)_{A_T}} = \bigg\{\widetilde{T}^{-1}\bigg(i\Theta(D_A)\,\sigma_{\cal F}\bigg),\,\widetilde{T}^{-1}\sigma_{\cal F}\bigg\}_{(h_T)_{A_T}} = \bigg\{i\Theta(D_A)\,\sigma_{\cal F},\,\sigma_{\cal F}\bigg\}_{h_A},\end{eqnarray*} where the last equality follows from the isometry $\widetilde{T}^{-1} : (A,\,h_A)\longrightarrow(A_T,\,(h_T)_{A_T})$.

We conclude that $Z_{\omega,\,\Omega,\,h_T}^{(m)}({\cal F}_T) = Z_{\omega,\,\Omega,\,h}^{(m)}({\cal F})$ for every coherent subsheaf ${\cal F}\subset{\cal E}$, every $T\in{\cal G}^{\cal C}$ and every $C^\infty$ Hermitian fibre metric $h$ on $E$. This proves the contention.  \hfill $\Box$

\vspace{2ex}

It follows from Proposition \ref{Prop:m-pos-stability_G-C-action_invariance} that the object defined in the next statement is meaningful and from Theorem \ref{The:m-pos-stable_implies_simple} that it is contained in a well-known space.

\begin{Cor-Def}\label{Cor-def:moduli-stable_set} In the setting of Proposition \ref{Prop:m-pos-stability_G-C-action_invariance}, we define the set-theoretic {\bf moduli space of $(\omega,\,\Omega)$-$m$-positively stable holomorphic structures} on $E$ by the first quotient in the inclusion: \begin{eqnarray}\label{eqn:moduli-stable_set}{\cal M}_{m\mbox{-stable}}(E):={\cal H}_{m\mbox{-stable}}(E)/{\cal G}^{\cal C}\xhookrightarrow { } {\cal M}^s(E)={\cal H}^s(E)/{\cal G}^{\cal C},\end{eqnarray} where the second quotient is the moduli space of simple holomorphic structures on $E$.

\end{Cor-Def}

\subsection{Group actions and uniformly stable vector bundles}\label{subsection:groups_uniformly-stable}

We now turn our attention to the {\bf uniform $m$-positive (semi-)stability} introduced in (b) of Definition \ref{Def:m-pos-stability}. The proof of Proposition \ref{Prop:m-pos-stability_G-C-action_invariance} also yields the following statement:

\begin{Cor}\label{Cor:uniform-stab_G-C-action_invariance} The setting is the same as in Proposition \ref{Prop:m-pos-stability_G-C-action_invariance}. Let $(D'',\,h)$ be a pair consisting of a holomorphic structure and a $C^\infty$ Hermitian fibre metric on a given $C^\infty$ complex vector bundle $E$ of rank $r\geq 1$ on $X$.

  Then, for every $T\in{\cal G}^{\cal C}$, the following equivalence holds: \begin{eqnarray*}D'' \hspace{1ex} \mbox{is {\bf uniformly stable} for}\hspace{1ex} h \iff  D''_T:=T^{-1}\circ D''\circ T \hspace{1ex} \mbox{is {\bf uniformly stable} for}\hspace{1ex} h_T,\end{eqnarray*} where $h_T$ is the $C^\infty$ Hermitian fibre metric on $E$ defined as in Lemma \ref{Lem:gauge-transf_curvature}.

\end{Cor}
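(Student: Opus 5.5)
The plan is to reuse, almost word for word, the computation at the end of the proof of Proposition \ref{Prop:m-pos-stability_G-C-action_invariance}. There we showed that for every $T\in{\cal G}^{\cal C}$, every $C^\infty$ Hermitian fibre metric $h$ on $E$ and every coherent subsheaf ${\cal F}\subset{\cal E}=\ker D''$ of rank $s\in\{1,\dots,r-1\}$, the identity $Z_{\omega,\,\Omega,\,h_T}^{(m)}({\cal F}_T) = Z_{\omega,\,\Omega,\,h}^{(m)}({\cal F})$ holds pointwise on $X$. Here ${\cal F}_T:=T^{-1}{\cal F}\subset{\cal E}_T=\ker D''_T$. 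The crucial point, to be emphasised, is that the metric $h$ in that computation was arbitrary and was \emph{not} allowed to depend on ${\cal F}$. The same pair $(h,\,h_T)$ therefore works simultaneously for all subsheaves, which is exactly what uniformity requires.

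The steps are as follows. First, recall from that proof that ${\cal F}\mapsto{\cal F}_T=T^{-1}{\cal F}$ is a rank-preserving bijection between the coherent subsheaves of ${\cal E}$ and those of ${\cal E}_T$. This holds because $T^{-1}:(E,\,D'')\to(E,\,D''_T)$ is a holomorphic isomorphism by Proposition \ref{Prop:hol-isom_equiv-action}, with inverse map ${\cal F}_T\mapsto T{\cal F}_T$.

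Second, for each such ${\cal F}$, transport the $(\omega,\,\Omega)$-Hermite-Einstein metric $k$ on $\det{\cal F}^\star$ through the induced holomorphic isomorphism $S$ to $k_T$ on $\det{\cal F}_T^\star$. The metric $k_T$ is again $(\omega,\,\Omega)$-Hermite-Einstein, since curvature is preserved under holomorphic isometries. By uniqueness up to a positive constant, $k_T$ is therefore a legitimate choice in Definition \ref{Def:Z-function} for ${\cal F}_T$. The positive constant is harmless, as noted after that definition.

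Third, invoke Lemma \ref{Lem:gauge-transf_curvature} for $A=\Lambda^sE\otimes\det{\cal F}^\star$, together with the isometry $\widetilde T^{-1}:(A,\,h_A)\to(A_T,\,(h_T)_{A_T})$ and the relation $\sigma_{{\cal F}_T}=\widetilde T^{-1}\sigma_{\cal F}$. This gives the equality of stability functions, and hence $\widehat Z_{h_T}({\cal F}_T)=\widehat Z_h({\cal F})$ and $c_{h_T}({\cal F}_T)=c_h({\cal F})$.

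Finally, apply Corollary and Definition \ref{Cor-Def:Z_unif-lower-bounf_F}. The metric $h$ is uniformly stable for $D''$ if and only if $c_h({\cal F})>0$ for all proper ${\cal F}\subset{\cal E}$. Through the bijection, this is equivalent to $c_{h_T}({\cal F}_T)>0$ for all proper ${\cal F}_T\subset{\cal E}_T$, i.e.\ to $h_T$ being uniformly stable for $D''_T$. For the converse direction one can alternatively apply the forward direction to $T^{-1}$, using $(D''_T)_{T^{-1}}=D''$ and $(h_T)_{T^{-1}}=h$. The same argument gives the semi-stable version.

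The only delicate step is the surjectivity of the subsheaf correspondence together with the legitimacy of $k_T$ as the Hermite-Einstein metric for ${\cal F}_T$. Without it, one could only conclude an inequality over a subclass of subsheaves of ${\cal E}_T$. Both facts are already contained in the proof of Proposition \ref{Prop:m-pos-stability_G-C-action_invariance}, so I expect no genuine obstacle.
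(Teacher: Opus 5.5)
Your proposal is correct and follows the paper's own route. The paper's proof is simply the observation that the identity $Z^{(m)}_{\omega,\,\Omega,\,h_T}(T^{-1}({\cal F})) = Z^{(m)}_{\omega,\,\Omega,\,h}({\cal F})$ from the proof of Proposition \ref{Prop:m-pos-stability_G-C-action_invariance} holds for an arbitrary $h$ that does not depend on ${\cal F}$, together with the bijection ${\cal F}\mapsto T^{-1}{\cal F}$ of subsheaves. You make exactly these points, just in more detail.
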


\noindent {\it Proof.} The conclusion of the proof of Proposition \ref{Prop:m-pos-stability_G-C-action_invariance} was that $Z_{\omega,\,\Omega,\,h_T}^{(m)}(T^{-1}({\cal F})) = Z_{\omega,\,\Omega,\,h}^{(m)}({\cal F})$ for every coherent subsheaf ${\cal F}\subset{\cal E}$, every $T\in{\cal G}^{\cal C}$ and every $C^\infty$ Hermitian fibre metric $h$ on $E$. This also proves the current statement. \hfill $\Box$

\vspace{2ex}

In particular, this corollary shows that if a ${\cal G}^{\cal C}$-orbit of holomorphic structures on $E$ contains a {\it uniformly stable} element, then all its elements are {\it uniformly stable} (for possibly different fibre metrics). Thus, uniform stability is a property for the points of the classical moduli space ${\cal H}(E)/{\cal G}^{\cal C}$ of isomorphism classes of holomorphic structures on $E$: there may be {\it uniformly stable} ${\cal G}^{\cal C}$-orbits and {\it non-uniformly stable} ones. This does not rule out the existence of $C^\infty$ complex vector bundles $E$ whose all isomorphism classes of holomorphic structures are of a single of the above two types. 

The special case of this statement for positive definite automorphisms of $E$, when we start off with two Hermitian fibre metrics and consider the transition automorphism between them, can be expressed as follows:  

\begin{Cor}\label{Cor:uniform-stab_class-intrinsic} Let $(E,\,D'')\longrightarrow (X,\,\omega,\,\Omega)$ be a holomorphic vector bundle of rank $r\geq 1$ over an $n$-dimensional compact complex manifold supposed to carry forms $\omega$ and $\Omega$ with the properties of Definition \ref{Def:m-pos-stability}. Let $h_1$ and $h_2$ be arbitrary $C^\infty$ Hermitian fibre metrics on $E$ and let $T\in C^\infty(X,\,\operatorname{Herm}^{+}(E,\,h_1))$ be the (unique) positive definite (for $h_1$) endomorphism of $E$ such that \begin{eqnarray*}\langle s,\,t\rangle_{h_2} = \langle Ts,\,t\rangle_{h_1}, \hspace{5ex} x\in X, \hspace{1ex} s,t\in E_x.\end{eqnarray*}

  Let $S\in C^\infty(X,\,\operatorname{Herm}^{+}(E,\,h_1))$ be the positive definite (for $h_1$) endomorphism of $E$ such that $T=S^2$. Define a holomorphic structure on $E$ by $D''_{S^{-1}}:=S\circ D''\circ S^{-1}$.

  \vspace{1ex}

  Then, the following identity holds at every point of $X$: \begin{eqnarray}\label{eqn:Z-functions_equality_F-SF}Z^{(m)}_{\omega,\,\Omega,\,h_2}({\cal F}) = Z^{(m)}_{\omega,\,\Omega,\,h_1}(S({\cal F}))\end{eqnarray} for every coherent subsheaf ${\cal F}$ of ${\cal O}(E,\,D'')$, where $S({\cal F})\subset{\cal O}(E,\,D''_{S^{-1}})$ denotes the image sheaf of ${\cal F}$ under the holomorphic bundle isomorphism $S:(E,\,D'')\longrightarrow(E,\,D''_{S^{-1}})$.

In particular, the following equivalence holds: \begin{eqnarray*}\label{eqn:uniform-stab_class-intrinsic_equiv}h_2 \hspace{1ex} \mbox{is {\bf uniformly stable} for} \hspace{1ex} D'' \iff h_1 \hspace{1ex} \mbox{is {\bf uniformly stable} for} \hspace{1ex} D''_{S^{-1}}.\end{eqnarray*} 

\end{Cor}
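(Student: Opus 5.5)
The plan is to deduce this from Corollary \ref{Cor:uniform-stab_G-C-action_invariance}, or more precisely from the identity $Z_{\omega,\,\Omega,\,h_T}^{(m)}(T^{-1}({\cal F})) = Z_{\omega,\,\Omega,\,h}^{(m)}({\cal F})$ established at the end of the proof of Proposition \ref{Prop:m-pos-stability_G-C-action_invariance}. We apply it with the gauge transformation $S^{-1}\in{\cal G}^{\cal C}$ in place of $T$, and with the metric $h_2$ in place of $h$.

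\textbf{Step 1: identify the transformed holomorphic structure.} With $T$ replaced by $S^{-1}$, the transformed structure is $D''_{S^{-1}} = (S^{-1})^{-1}\circ D''\circ S^{-1} = S\circ D''\circ S^{-1}$, which is exactly the one in the statement. The holomorphic isomorphism $(S^{-1})^{-1} = S:(E,\,D'')\to(E,\,D''_{S^{-1}})$ is the one supplied by Proposition \ref{Prop:hol-isom_equiv-action}. It sends ${\cal F}$ to $S({\cal F})$.

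\textbf{Step 2: identify the transformed metric.} The metric $(h_2)_{S^{-1}}$ of Lemma \ref{Lem:gauge-transf_curvature} is
$$\langle u,\,v\rangle_{(h_2)_{S^{-1}}} = \langle S^{-1}u,\,S^{-1}v\rangle_{h_2} = \langle TS^{-1}u,\,S^{-1}v\rangle_{h_1}.$$
Since $S$ is $h_1$-self-adjoint, $S^{-1}$ is too. Moving $S^{-1}$ across and using $T=S^2$ gives
$$\langle S^{-1}S^2S^{-1}u,\,v\rangle_{h_1} = \langle u,\,v\rangle_{h_1}.$$
Hence $(h_2)_{S^{-1}} = h_1$. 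This is the only genuinely new computation, and the one place where care is needed: positivity and self-adjointness of $S$ with respect to $h_1$ must be used, and the order in which $S^{\pm1}$ act must be respected. The formula $\langle u,v\rangle_{h_T}=\langle Tu,Tv\rangle_h$ requires the gauge element to be $S^{-1}$, not $S$.

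\textbf{Step 3: conclude.} Plugging Steps 1 and 2 into the identity from the proof of Proposition \ref{Prop:m-pos-stability_G-C-action_invariance} gives
$$Z^{(m)}_{\omega,\,\Omega,\,h_1}(S({\cal F})) = Z^{(m)}_{\omega,\,\Omega,\,h_2}({\cal F})$$
pointwise. Here the $Z$ on the left is computed with respect to $D''_{S^{-1}}$. The Hermite–Einstein metric on $\det S({\cal F})^\star$ is the transported one, as noted in that proof, and any ambiguity is only a positive multiplicative constant, which is harmless.

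For the equivalence, ${\cal F}\mapsto S({\cal F})$ is a rank-preserving bijection between the proper coherent subsheaves of ${\cal O}(E,\,D'')$ and those of ${\cal O}(E,\,D''_{S^{-1}})$. Positivity of $Z$ at every point therefore transfers in both directions, which is Definition \ref{Def:m-pos-stability}(b). Equivalently, the equivalence is Corollary \ref{Cor:uniform-stab_G-C-action_invariance} applied with $T=S^{-1}$ and $h=h_2$.
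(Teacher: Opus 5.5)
Your proposal is correct and follows the paper's own proof. The paper likewise just observes that $h_1=(h_2)_{S^{-1}}$ and applies Corollary \ref{Cor:uniform-stab_G-C-action_invariance} with gauge element $S^{-1}$ and metric $h_2$. Your explicit check of $(h_2)_{S^{-1}}=h_1$, using the $h_1$-self-adjointness of $S^{-1}$ and $T=S^2$, supplies the one-line computation the paper leaves to the reader.
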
  

\noindent {\it Proof.} It suffices to observe that $h_1 = (h_2)_{S^{-1}}$ and to apply Corollary \ref{Cor:uniform-stab_G-C-action_invariance}. \hfill $\Box$

\vspace{2ex}

We now investigate to what extent uniqueness up to scale can be expected from fibre metrics that are uniformly stable for a given holomorphic structure. The first of the two notions introduced below uses the notion {\it $(\omega,\,\Omega)$-$m$-semi-negativity} introduced in (c) of Definition \ref{Def:m-pos}. 

\begin{Def}\label{Def:relations_metrics} The setting is the same as in Corollary \ref{Cor:uniform-stab_class-intrinsic}. We introduce the following two relations between arbitrary $C^\infty$ Hermitian fibre metrics $h_1$ and $h_2$ on $E$: \begin{eqnarray*}h_2\prec_{pt} h_1 & \iff & \bigg\{\bigg(i\Theta_{\widetilde{h}}(\widetilde{E})\wedge\omega^{m-1}\wedge\Omega\bigg)\,\widetilde{u},\,\widetilde{u}\bigg\}_{\widetilde{h}}\leq 0, \hspace{5ex} x\in X, \hspace{1ex} \widetilde{u}\in\widetilde{E}_x, \\
 h_2\prec_{int} h_1 & \iff & \int\limits_X\bigg\{\bigg(i\Theta_{\widetilde{h}}(\widetilde{E})\wedge\omega^{m-1}\wedge\Omega\bigg)\,\widetilde{u},\,\widetilde{u}\bigg\}_{\widetilde{h}}\leq 0, \hspace{5ex}  \widetilde{u}\in H^0(X,\,\widetilde{E}),\end{eqnarray*} where the holomorphic vector bundle $\widetilde{E}$ is defined as \begin{eqnarray*}\widetilde{E}:=\operatorname{Hom}\bigg((E,\,D''),\,(E,\,D''_{S^{-1}})\bigg)\simeq (E,\,D'')^\star\otimes(E,\,D''_{S^{-1}})\end{eqnarray*} and it is equipped with the $C^\infty$ Hermitian fibre metric $\widetilde{h}:=h_1^\star\otimes h_1$. By $i\Theta_{\widetilde{h}}(\widetilde{E})$ we mean the Chern curvature form of $(\widetilde{E},\,\widetilde{h})$.

\end{Def}  

With these relations in place, we obtain the following

\begin{The}\label{The:h_1-h_2_comparison_proportinality} The setting is the same as in Corollary \ref{Cor:uniform-stab_class-intrinsic}.

\vspace{1ex}

$(1)$\, If $(E,\,D'')$ is {\bf simple} and $h_2\prec_{int} h_1$, then there exists a constant $\lambda>0$ such that $h_2 = \lambda\,h_1$.

\vspace{1ex}

$(2)$\, If either $h_1$ or $h_2$ is {\bf uniformly stable} for $D''$ and if they are {\bf integrably comparable} (in the sense that either $h_2\prec_{int} h_1$ or $h_1\prec_{int} h_2$), then there exists a constant $\lambda>0$ such that $h_2 = \lambda\,h_1$.

\vspace{1ex}

$(3)$\, If $h$ is a $C^\infty$ Hermitian fibre metric on $E$, the following equivalence holds: \begin{eqnarray}\label{eqn:h_prec_h_H-E}h\prec_{pt} h \iff h \hspace{1ex} \mbox{is} \hspace{1ex} \mbox{\bf weakly} \hspace{1ex} (\omega,\,\Omega)\mbox{\bf -Hermite-Einstein}.\end{eqnarray}

\vspace{1ex}

$(4)$\, If $(E,\,D'')$ is {\bf simple} and $h_2\prec_{pt} h_1$, then there exists a constant $\lambda>0$ such that $h_2 = \lambda\,h_1$ and $h_1$, $h_2$ are both {\bf weakly $(\omega,\,\Omega)$-Hermite-Einstein}.

\end{The}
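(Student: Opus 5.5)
The plan is to run the Bochner argument of Theorem \ref{The:Kobayashi-vanishing_gen} and Proposition \ref{Prop:Z-function_alternative} on the holomorphic vector bundle $(\widetilde{E},\,\widetilde{h})$, applied to one distinguished section. Since $D''_{S^{-1}} = S\circ D''\circ S^{-1}$, Proposition \ref{Prop:hol-isom_equiv-action} (with $T=S^{-1}$) shows that $S:(E,\,D'')\longrightarrow(E,\,D''_{S^{-1}})$ is a holomorphic isomorphism. Hence $S\in H^0(X,\,\widetilde{E})$, and more generally $H^0(X,\,\widetilde{E}) = \{S\circ f\,\mid\, f\in H^0(X,\,\operatorname{End}(E,\,D''))\}$.

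For any $\widetilde{u}\in H^0(X,\,\widetilde{E})$, the computation in the proof of Theorem \ref{The:Kobayashi-vanishing_gen} gives the pointwise identity
\begin{eqnarray*}i\bar\partial\partial\{\widetilde{u},\,\widetilde{u}\}_{\widetilde{h}}\wedge\omega^{m-1}\wedge\Omega = \bigg\{\bigg(i\Theta_{\widetilde{h}}(\widetilde{E})\wedge\omega^{m-1}\wedge\Omega\bigg)\widetilde{u},\,\widetilde{u}\bigg\}_{\widetilde{h}} - i\{D'\widetilde{u},\,D'\widetilde{u}\}_{\widetilde{h}}\wedge\omega^{m-1}\wedge\Omega .\end{eqnarray*}
Integrating and using Stokes (valid since $d\omega=0$ and $\partial\bar\partial\Omega=0$), the integral of the curvature term equals $\int_X i\{D'\widetilde{u},\,D'\widetilde{u}\}\wedge\omega^{m-1}\wedge\Omega\geq 0$. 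So the hypothesis $h_2\prec_{int}h_1$ applied to $\widetilde{u}=S$ forces this integral to vanish. By Lemma 2.9 of [Pop25] we then get $D'S=0$, hence $S$ is parallel for the Chern connection of $(\widetilde{E},\,\widetilde{h})$.

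The step I expect to be the main obstacle is converting parallelism of $S$ into a statement about $T$. Let $D_1$ be the Chern connection of $(E,\,D'',\,h_1)$ and $D_2$ that of $(E,\,D''_{S^{-1}},\,h_1)$. Parallelism of $S$ in $\widetilde{E}=\operatorname{Hom}$ means $D_2\circ S = S\circ D_1$. One checks that $h_1 = (h_2)_{S^{-1}}$, since $\langle S^{-1}u,\,S^{-1}v\rangle_{h_2} = \langle TS^{-1}u,\,S^{-1}v\rangle_{h_1} = \langle u,\,v\rangle_{h_1}$. Formula (\ref{Lem:gauge-transf_Chern-connection}) from the proof of Lemma \ref{Lem:gauge-transf_curvature} then gives $D_2 = S\circ D_{h_2}\circ S^{-1}$, where $D_{h_2}$ is the Chern connection of $(E,\,D'',\,h_2)$. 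Substituting, we obtain $D_{h_2} = D_{h_1}$. By the standard formula $D'_{h_2} = D'_{h_1} + T^{-1}D'_{h_1}T$, this yields $D'_{h_1}T = 0$. Since $T$ is $h_1$-self-adjoint, taking adjoints gives $D''T=0$ as well, so $T\in H^0(X,\,\operatorname{End}(E,\,D''))$. Simplicity then gives $T=\lambda\,\mbox{Id}_E$, with $\lambda>0$ by positivity, i.e. $h_2=\lambda h_1$. This proves $(1)$.

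For $(2)$, uniform stability implies $m$-positive stability, hence simplicity of $(E,\,D'')$ by Theorem \ref{The:m-pos-stable_implies_simple}. This holds for whichever of $h_1,\,h_2$ is assumed uniformly stable; when it is $h_1$, which is stable for $D''$ or, via Corollary \ref{Cor:uniform-stab_class-intrinsic}, for the isomorphic structure $D''_{S^{-1}}$, simplicity is isomorphism-invariant anyway. If $h_2\prec_{int}h_1$ we apply $(1)$ directly; if $h_1\prec_{int}h_2$ we apply $(1)$ with the roles of $h_1$ and $h_2$ swapped.

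For $(3)$, take $h_1=h_2=h$, so $S=\mbox{Id}$ and $\widetilde{E}=\operatorname{End}E$ with metric $h^\star\otimes h$, whose curvature acts by $u\mapsto[i\Theta_h(E),\,u]$. Set $K:=i\Theta_h(E)\wedge\omega^{m-1}\wedge\Omega/dV_\omega$, which is $h$-self-adjoint. The quadratic form in the definition of $\prec_{pt}$ becomes $\operatorname{tr}(Kuu^\star)-\operatorname{tr}(uKu^\star)$. Testing on $u=e_i\otimes e_j^\star$ for an $h$-orthonormal eigenbasis of $K$ with eigenvalues $\lambda_1,\dots,\lambda_r$ gives $\lambda_i-\lambda_j\leq 0$ for all $i,j$. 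Hence all eigenvalues coincide and $K=\varphi\,\mbox{Id}_E$ pointwise, which is what I take weakly $(\omega,\,\Omega)$-Hermite-Einstein to mean. Conversely, if $K=\varphi\,\mbox{Id}_E$ then the form vanishes identically, so $h\prec_{pt}h$.

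For $(4)$, $\prec_{pt}$ implies $\prec_{int}$, so $(1)$ gives $h_2=\lambda h_1$. Then $S=\lambda^{1/2}\mbox{Id}_E$, so $D''_{S^{-1}}=D''$ and $\widetilde{E}=\operatorname{End}E$. The hypothesis $h_2\prec_{pt}h_1$ is therefore literally $h_1\prec_{pt}h_1$, and $(3)$ shows that $h_1$ (hence also $h_2=\lambda h_1$, which has the same curvature) is weakly $(\omega,\,\Omega)$-Hermite-Einstein.
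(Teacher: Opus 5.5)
Your proposal is correct and follows essentially the paper's proof. You apply the Bochner identity to the holomorphic section $S$ of $(\widetilde{E},\,\widetilde{h})$, integrate to get $D'_{\widetilde{h}}S=0$, convert this into $D'_1T=0$ so that $T$ is holomorphic and hence scalar by simplicity, and deduce (2)--(4) the same way the paper does. The only differences are cosmetic: you obtain $D'_{S^{-1}}=S^{-1}\circ D'_1\circ S$ from the gauge formula and $h_1=(h_2)_{S^{-1}}$ rather than from Proposition~\ref{Prop:D'_T_D'_same-metric}, and in (3) you test on an eigenbasis of the mean curvature instead of comparing Rayleigh quotients.
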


\noindent {\it Proof.} $\bullet$ We know from Corollary \ref{Cor:uniform-stab_class-intrinsic} that $S:(E,\,D'',\,h_2)\longrightarrow(E,\,D''_{S^{-1}},\,h_1)$ is a {\it holomorphic} and {\it isometric} vector bundle isomorphism and that $h_1 = (h_2)_{S^{-1}}$. In particular, \begin{eqnarray}\label{eqn:S_hol-section_E-tilde}S\in H^0(X,\,\widetilde{E}).\end{eqnarray} Thus, denoting by $D_{\widetilde{h}} = D'_{\widetilde{h}} + D''_{\widetilde{h}}$ the Chern connection of $(\widetilde{E},\,\widetilde{h})$, we get $D''_{\widetilde{h}} S =0$ on $X$.

$\bullet$ Let $D_1 = D_1' + D''$, respectively $D_2 = D_2' + D''$, be the Chern connection of $(E,\,D'',\,h_1)$, respectively of $(E,\,D'',\,h_2)$. Then \begin{eqnarray}\label{eqn:D'_same-D''_different-h}D_2' = T^{-1}\circ D_1'\circ T.\end{eqnarray} Indeed, for any smooth local sections $s,t$ of $E$, we can compute $\partial\langle s,\,t\rangle_{h_2}$ in two ways and get: \begin{eqnarray*}\partial\langle s,\,t\rangle_{h_2} & = & \{D'_2s,\,t\}_{h_2} + \{s,\,D''t\}_{h_2} = \{(T\circ D'_2)\,s,\,t\}_{h_1} + \{Ts,\,D''t\}_{h_1},\\
 \partial\langle s,\,t\rangle_{h_2} & = & \partial\langle Ts,\,t\rangle_{h_1} = \{D'_1(Ts),\,t\}_{h_1} + \{Ts,\,D''t\}_{h_1}.\end{eqnarray*} Comparing the two expressions, we infer $T\circ D'_2 = D'_1\circ T$, which is (\ref{eqn:D'_same-D''_different-h}).

$\bullet$ Since $D''_{S^{-1}}=S\circ D''\circ S^{-1}$, the standard Proposition \ref{Prop:D'_T_D'_same-metric} ensures that, for the Chern connection $D_{S^{-1}} = D'_{S^{-1}} + D''_{S^{-1}}$ of $(E,\,D''_{S^{-1}},\,h_1)$, the $(1,\,0)$-part is given by \begin{eqnarray}\label{eqn:D'_S-1_D'_1}D'_{S^{-1}} = (S^{-1})^\star_{h_1}\circ D_1'\circ S^\star_{h_1} = S^{-1}\circ D_1'\circ S,\end{eqnarray} where the second equality follows from $S$ (hence also $S^{-1}$) being $h_1$-positive definite (hence also $h_1$-self-adjoint).

$\bullet$ A link between these objects is provided by the formula: \begin{eqnarray}\label{eqn:D'_h-tilde_D'_S-1_formula}D'_{\widetilde{h}}S = D'_{S^{-1}}\circ S - S\circ D'_1,\end{eqnarray} which expresses the Leibniz rule $D'_{S^{-1}}(Su) = (D'_{\widetilde{h}}S)\,u + S(D'_1u)$ applied to an arbitrary smooth local section $u$ of $(E,\,D'',\,h_1)$ whose image $Su$ under the holomorphic (possibly non-isometric) bundle isomorphism $S:(E,\,D'',\,h_1)\longrightarrow(E,\,D''_{S^{-1}},\,h_1)$ is a smooth local section of $(E,\,D''_{S^{-1}},\,h_1)$.  

$\bullet$ A straightforward computation of the Chern curvature form of $(E,\,D''_{S^{-1}},\,h_1)$ yields: \begin{eqnarray*}i\Theta(D_{S^{-1}}) & = & i\,D'_{S^{-1}}\circ D''_{S^{-1}} + i\,D''_{S^{-1}}\circ D'_{S^{-1}} \\
  & = & i\,S^{-1}\circ (D_1'\circ T)\circ D''\circ S^{-1} + i\,S\circ D''\circ (T^{-1} \circ D_1')\circ S \\
  & = & i\,S^{-1}\circ (T\circ D_2')\circ D''\circ S^{-1} + i\,S\circ D''\circ (D_2'\circ T^{-1})\circ S \\
  & = & i\,S\circ(D_2'\circ D'')\circ S^{-1} + i\,\,S\circ(D''\circ D_2')\circ S^{-1} \\
  & = & S\circ i\,(D_2'\circ D'' + D''\circ D_2')\circ S^{-1},\end{eqnarray*} where the third line followed from the second thanks to (\ref{eqn:D'_same-D''_different-h}). We have thus got the following formula: \begin{eqnarray}\label{eqn:curvature_D_S-1_D_2}i\Theta(D_{S^{-1}}) = S\circ i\Theta(D_2)\circ S^{-1}.\end{eqnarray}

$\bullet$ On the other hand, since $S$ is a global holomorphic section of $\widetilde{E}$ (cf. (\ref{eqn:S_hol-section_E-tilde})), the computation we carried out in the proof of Theorem \ref{The:Kobayashi-vanishing_gen} (in which we now replace $E$ with $\widetilde{E}$, $s$ with $S$ and $h$ with $\widetilde{h}$) yields the following identity at every point of $X$: \begin{eqnarray}\label{eqn:dd-bar_S-S_curvature_Bochner_pointwise}i\bar\partial\partial\{S,\,S\}_{\widetilde{h}}\wedge\omega^{m-1}\wedge\Omega  =  \bigg\{\bigg(i\Theta_{\widetilde{h}}(\widetilde{E})\wedge\omega^{m-1}\wedge\Omega\bigg)\,S,\,S\bigg\}_{\widetilde{h}} - i\,\{D'_{\widetilde{h}}S,\,D'_{\widetilde{h}}S\}_{\widetilde{h}}\wedge\omega^{m-1}\wedge\Omega.\end{eqnarray} Integrating over $X$ and using the compactness of $X$ and the $\partial\bar\partial$-closedness of $\omega^{m-1}\wedge\Omega$ (which together imply the vanishing of the integral of the l.h.s. term above), we get: \begin{eqnarray}\label{eqn:dd-bar_S-S_curvature_Bochner_integrated}\int\limits_X\bigg\{\bigg(i\Theta_{\widetilde{h}}(\widetilde{E})\wedge\omega^{m-1}\wedge\Omega\bigg)\,S,\,S\bigg\}_{\widetilde{h}} = \int\limits_X  i\,\{D'_{\widetilde{h}}S,\,D'_{\widetilde{h}}S\}_{\widetilde{h}}\wedge\omega^{m-1}\wedge\Omega\geq 0,\end{eqnarray} where the last inequality follows from the pointwise non-negativity of the integrand.

Note that no special assumption on the fibre metrics $h_1$ and $h_2$ has been used so far.

\vspace{1ex}

$(1)$\, The assumption $h_2\prec_{int} h_1$ implies the non-positivity of the l.h.s. of (\ref{eqn:dd-bar_S-S_curvature_Bochner_integrated}) by taking $\widetilde{u} = S\in H^0(X,\,\widetilde{E})$. Thus, under this assumption, we obtain the vanishing of the integral of $i\,\{D'_{\widetilde{h}}S,\,D'_{\widetilde{h}}S\}_{\widetilde{h}}\wedge\omega^{m-1}\wedge\Omega$, hence the vanishing of this semi-positive $(n,\,n)$-form at every point of $X$. As already argued for other bundle-valued $(1,\,0)$-forms (see e.g. proof of Theorem \ref{The:Kobayashi-vanishing_gen}), this is equivalent to $D'_{\widetilde{h}} S =0$ everywhere on $X$.

Consequently, we get: \begin{eqnarray*}0 & = & D'_{\widetilde{h}} S \stackrel{(a)}{=} D'_{S^{-1}}\circ S - S\circ D'_1 \stackrel{(b)}{=} (S^{-1}\circ D_1'\circ S)\circ S - S\circ D'_1 = S^{-1}\circ D_1'\circ T - S\circ D'_1 \\
 & \stackrel{(c)}{=} & S^{-1}\circ\bigg(D'_{1,\,\operatorname{End}E}T\bigg) + S^{-1}\circ(T\circ D'_1)  - S\circ D'_1 = S^{-1}\circ\bigg(D'_{1,\,\operatorname{End}E}T\bigg),\end{eqnarray*} where (a) is (\ref{eqn:D'_h-tilde_D'_S-1_formula}), (b) follows from (\ref{eqn:D'_S-1_D'_1}), while (c) follows from the Leibniz rule $D_1'(Tv) = (D'_{1,\,\operatorname{End}E}T)\,v + T(D_1'v)$ applied to an arbitrary smooth local
section $v$ of $(E,\,D'',\,h_1)$ and using the $(1,\,0)$-part $D'_{1,\,\operatorname{End}E}$ of the Chern connection of $\bigg(\operatorname{End}(E,\,D''),\,\widetilde{h}=h_1^\star\otimes h_1\bigg)$. Note that this last vector bundle has a different holomorphic structure to $\widetilde{E}$ with which it shares the same underlying $C^\infty$ bundle and the same fibre metric.

Since $S$ is an isomorphism, we get $D'_{1,\,\operatorname{End}E}T=0$ everywhere on $X$. Taking adjoints with respect to $h_1$, this amounts to the first equality below: \begin{eqnarray*}0 = \bigg(D'_{1,\,\operatorname{End}E}T\bigg)^\star_{h_1} = D''_{\operatorname{End}E}\bigg(T^\star_{h_1}\bigg) = D''_{\operatorname{End}E}T,\end{eqnarray*} where $D''_{\operatorname{End}E}$ is the holomorphic structure induced on $\operatorname{End}E$ by $D''$ ($=$ the holomorphic structure of $\operatorname{End}(E,\,D'')$). For the last equality, we used the $h_1$-self-adjointness of $T$, which follows from its $h_1$-positive definiteness. We have thus proved that $T$ is a {\it holomorphic} endomorphism of $(E,\,D'')$.

It remains to invoke the {\it simplicity} assumption on $(E,\,D'')$ to conclude that $T$ is a positive constant multiple of the identity automorphism of $E$. Since $T$ expresses $h_2$ in terms of $h_1$, we conclude that these two fibre metrics are proportional. This proves $(1)$. 

\vspace{1ex}

$(2)$\, If either of the fibre metrics $h_1$ and $h_2$ on $E$ is {\it uniformly stable} for $D''$, the holomorphic vector bundle $(E,\,D'')$ is, in particular, {\it $(\omega,\,\Omega)$-$m$-positively stable}. Therefore, thanks to Theorem \ref{The:m-pos-stable_implies_simple} (see also its rewording as Corollary \ref{Cor:m-pos-stable_implies_simple}), $(E,\,D'')$ is {\it simple}.

Thus, $(2)$ follows at once from $(1)$.

\vspace{1ex}

$(3)$\, Writing $(\widetilde{E},\,\widetilde{h})\simeq (E,\,D'',\,h_1)^\star\otimes(E,\,D''_{S^{-1}},\,h_1)$ and applying the standard formula for the curvature form of a tensor product vector bundle, we get the first line below: \begin{eqnarray*}i\Theta_{\widetilde{h}}(\widetilde{E}) & = & i\Theta_{h_1^\star}((E,\,D'')^\star)\otimes\operatorname{Id}_{(E,\,D''_{S^{-1}})} + \operatorname{Id}_{(E,\,D'')^\star}\otimes i\Theta_{h_1}((E,\,D''_{S^{-1}})) \\
  & = & \operatorname{Id}_{(E,\,D'')^\star}\otimes\bigg(S\circ i\Theta(D_2)\circ S^{-1}\bigg) - i\Theta(D_1)^{\dagger}\otimes\operatorname{Id}_{(E,\,D''_{S^{-1}})},\end{eqnarray*} where the second line follows from formula (\ref{eqn:curvature_D_S-1_D_2}) for $i\Theta_{h_1}((E,\,D''_{S^{-1}})) = i\Theta(D_{S^{-1}})$ and from the classical formula (\ref{eqn:curvature_E-star_dagger}) for the curvature form of a dual bundle.

Thus, for every smooth local section of $\widetilde{E}$ of the shape $\widetilde{u} = u^\star\otimes v$ with $u,v$ smooth local sections of $E$ and $u^\star$ the $h_1$-dual of $u$ (as a section of $E^\star$), we get: \begin{eqnarray*}\bigg\{i\Theta_{\widetilde{h}}(\widetilde{E})\,\widetilde{u},\,\widetilde{u}\bigg\}_{\widetilde{h}} & = & \bigg\{\bigg(S\circ i\Theta(D_2)\circ S^{-1}\bigg)\,v,\,v\bigg\}_{h_1}\,|u^\star|^2_{h_1^\star} - \bigg\{i\Theta(D_1)^{\dagger}\,u^\star,\,u^\star\bigg\}_{h_1^\star}\,|v|^2_{h_1} \\
  & = & \bigg\{i\Theta(D_2)\,(S^{-1}v),\,S^{-1}v\bigg\}_{h_2}\,|u|^2_{h_1} - \bigg\{i\Theta(D_1)\,u,\,u\bigg\}_{h_1}\,|v|^2_{h_1},\end{eqnarray*} where, in order to get the first term on the second line, we wrote the second factor $v$ on the previous line as $S(S^{-1}v)$ and used the fact that $S$ is an isometry from $(E,\,h_2)$ to $(E,\,h_1)$.

Now, suppose that $h_2\prec_{pt} h_1$. This implies that the above quantity, after multiplication by $\omega^{m-1}\wedge\Omega$, is $\leq 0$ for every $x\in X$ and for all sections $u,v$ as above in a neighbourhood of $x$. In other words, \begin{eqnarray*}\frac{\bigg\{\bigg(i\Theta(D_2)\wedge\omega^{m-1}\wedge\Omega\bigg)\,(S^{-1}v),\,S^{-1}v\bigg\}_{h_2}}{|v|^2_{h_1}} \leq \frac{\bigg\{i\Theta(D_1)\,u,\,u\bigg\}_{h_1}}{|u|^2_{h_1}}\end{eqnarray*} for all non-vanishing smooth local sections $u,v$ of $E$.

In the special case when $h_1 = h_2:=h$, from $h\prec_{pt} h$ we infer: \begin{eqnarray*}\frac{\bigg\{\bigg(i\Theta_h(E)\wedge\omega^{m-1}\wedge\Omega\bigg)\,v\,\,v\bigg\}_h}{|v|^2_h}\leq\frac{\bigg\{\bigg(i\Theta_h(E)\wedge\omega^{m-1}\wedge\Omega\bigg)\,u\,\,u\bigg\}_h}{|u|^2_h}, \hspace{5ex} \mbox{for all}\hspace{1ex} u,v,\end{eqnarray*} because $S=\operatorname{Id}_E$ and $D_1=D_2$. We have set $i\Theta_h(E):= i\Theta(D_1) = i\Theta(D_2)$. Interchanging $u$ and $v$, we infer that, for every point $x\in X$, there exists $\lambda(x)\in\R$ such that \begin{eqnarray*}\frac{i\Theta_h(E)\wedge\omega^{m-1}\wedge\Omega}{dV_\omega}(x) = \lambda(x)\,\operatorname{Id}_{E_x},  \hspace{5ex} x\in X.\end{eqnarray*} This means precisely that the fibre metric $h$ is {\it weakly $(\omega,\,\Omega)$-Hermite-Einstein}.

\vspace{1ex}

$(4)$\, The first part of the statement follows from $(1)$ thanks to the implication: $h_2\prec_{pt}h_1 \implies h_2\prec_{int}h_1$. Then, the second part of the statement follows from $(3)$ after rescaling by a positive constant one of the metrics $h_1$ and $h_2$ to make them equal.  \hfill $\Box$

\subsection{Moduli spaces of uniformly stable vector bundles}\label{subsection:moduli_unif-stable}

Based on the above considerations and on (b) of Definition \ref{Def:m-pos-stability} (see also the lightening of the terminology that immediately follows that definition -- recall that a holomorphic structure $D''$ on $E$ is termed {\it uniformly stable} if it has this property with respect to some  $C^\infty$ Hermitian fibre metric $h$ on $E$), we propose the following:

\begin{Cor-Def}\label{Cor-Def:m-pos-stable_moduli} Let $E\longrightarrow X$ be a $C^\infty$ $\C$-vector bundle of rank $r\geq 1$ on an $n$-dimensional compact complex manifold. Fix $m\in\{1,\dots , n\}$ and suppose that there exist forms $\omega$ and $\Omega$ on $X$ satisfying the properties of Definition \ref{Def:m-pos-stability}.

\vspace{1ex}

(a)\, We consider the following sets: \begin{eqnarray*}{\cal H}_{m\mbox{-u-stable}}(E) &:= &\bigg\{D''\,\mid\, D''\in{\cal H}(E) \hspace{1ex} \mbox{is {\bf uniformly stable}}\bigg\}\\
  {\cal HM}_{m\mbox{-u-stable}}(E) &:= & \bigg\{(D'',\,h)\,\mid\, D''\in{\cal H}(E) \hspace{1ex} \mbox{is {\bf uniformly stable} for the}\hspace{1ex} C^\infty \hspace{1ex} \mbox{fibre metric} \hspace{1ex} h\bigg\} \\
 {\cal H}_{m\mbox{-u-stable}}(E,\,h) &:=& \bigg\{D''\,\mid\, D''\in{\cal H}(E) \hspace{1ex} \mbox{is {\bf uniformly stable for} h}\bigg\},\end{eqnarray*} where for the last set, $h$ is a fixed $C^\infty$ Hermitian fibre metric on $E$, supposed uniformly stable for at least one holomorphic structure $D''$ on $E$.  

\vspace{1ex}

(b)\, The right actions: \begin{eqnarray*}{\cal H}_{m\mbox{-u-stable}}(E)\times{\cal G}^{\cal C} \longrightarrow {\cal H}_{m\mbox{-u-stable}}(E), & \hspace{5ex} & (D'',\, T)\longmapsto D''_T,\\
  {\cal HM}_{m\mbox{-u-stable}}(E)\times{\cal G}^{\cal C} \longrightarrow {\cal HM}_{m\mbox{-u-stable}}(E), & \hspace{5ex} & \bigg((D'',\, h),\, T)\bigg)\longmapsto(D''_T,\, h_T),\\
 {\cal H}_{m\mbox{-u-stable}}(E,\,h)\times U(E,\,h)\longrightarrow{\cal H}_{m\mbox{-u-stable}}(E,\,h), & \hspace{5ex} & (D'',\, T)\longmapsto D''_T,\end{eqnarray*} are well defined, where $U(E,\,h)$ denotes the group of $h$-unitary automorphisms of $E$ ($=$ the group of $C^\infty$ isomorphisms of $E$ that define isometries of each fibre $E_x$ for the inner product $\langle\,\cdot\,,\,\cdot\,\rangle_{h(x)}$).

\vspace{1ex}

(c)\, We define the set-theoretic {\bf moduli space of $(\omega,\,\Omega)$-uniformly-$m$-positively stable} ({\bf uniformly stable}, for short) {\bf holomorphic structures} on $E$ to be the quotient: \begin{eqnarray*}{\cal M}_{m\mbox{-u-stable}}(E):={\cal H}_{m\mbox{-u-stable}}(E)/{\cal G}^{\cal C}.\end{eqnarray*}

Similarly, we define the set-theoretic {\bf moduli space of $(\omega,\,\Omega)$-uniformly-$m$-positively stable} ({\bf uniformly stable}, for short) {\bf pairs of holomorphic structures/fibre metrics} on $E$ to be the quotient: \begin{eqnarray*}\widetilde{\cal M}_{m\mbox{-u-stable}}(E):={\cal HM}_{m\mbox{-u-stable}}(E)/{\cal G}^{\cal C}.\end{eqnarray*}

Finally, we define the set-theoretic {\bf moduli space of $(\omega,\,\Omega)$-$h$-uniformly-$m$-positively stable} ({\bf $h$-uniformly stable}, for short) {\bf holomorphic structures} on $E$ to be the quotient: \begin{eqnarray*}{\cal M}_{m\mbox{-u-stable}}(E,\,h):={\cal H}_{m\mbox{-u-stable}}(E,\,h)/U(E,\,h).\end{eqnarray*}

\end{Cor-Def}

\noindent {\it Proof.} Part (b) follows at once from Corollary \ref{Cor:uniform-stab_G-C-action_invariance} and also, in the case of the third set, from the fact that $h_T = h$ for every $h$ and every $T\in U(E,\,h)$. \hfill $\Box$

\vspace{2ex}

The first observation is that the moduli space of uniformly stable pairs is a natural refinement of its counterpart for holomorphic structures alone.

\begin{Cor}\label{Cor:can-surjection_u-stable_moduli-spaces} The setting is the same as in Corollary and Definition \ref{Cor-Def:m-pos-stable_moduli}.

  There is a {\bf canonical surjection}: \begin{eqnarray*}\widetilde{p}:\widetilde{\cal M}_{m\mbox{-u-stable}}(E)\longrightarrow{\cal M}_{m\mbox{-u-stable}}(E), \hspace{5ex} [(D'',\,h)]\longmapsto[D''],\end{eqnarray*} where $[\,\,]$ denotes ${\cal G}^{\cal C}$-orbits.

\end{Cor}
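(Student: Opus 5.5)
The plan is to check three things in turn: that $\widetilde{p}$ is well defined on ${\cal G}^{\cal C}$-orbits, that it lands in ${\cal M}_{m\mbox{-u-stable}}(E)$, and that it is surjective. Canonicity will then be automatic, since $\widetilde{p}$ is induced by the forgetful map $(D'',\,h)\mapsto D''$, which involves no choices.

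First, the forgetful map ${\cal HM}_{m\mbox{-u-stable}}(E)\longrightarrow{\cal H}_{m\mbox{-u-stable}}(E)$, $(D'',\,h)\mapsto D''$, is well defined. Indeed, if $D''$ is uniformly stable for some $h$, then by the convention fixed after Definition \ref{Def:m-pos-stability} the holomorphic structure $D''$ is uniformly stable, so $D''\in{\cal H}_{m\mbox{-u-stable}}(E)$.

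Next, this map is equivariant for the two right ${\cal G}^{\cal C}$-actions of Corollary and Definition \ref{Cor-Def:m-pos-stable_moduli}(b). The pair $(D'',\,h)$ is sent by $T$ to $(D''_T,\,h_T)$, whose first component is exactly $D''_T$, the image of $D''$ under $T$. It follows that orbits map into orbits. Concretely, if $[(D_1'',\,h_1)] = [(D_2'',\,h_2)]$, then $D_2'' = (D_1'')_T$ for some $T\in{\cal G}^{\cal C}$, and hence $[D_1''] = [D_2'']$. So $\widetilde{p}$ descends to the quotients. The only input here, that $(D''_T,\,h_T)$ is again uniformly stable, is Corollary \ref{Cor:uniform-stab_G-C-action_invariance}, and it is already packaged in part (b).

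For surjectivity, I would take any class $[D'']\in{\cal M}_{m\mbox{-u-stable}}(E)$. By the definition of ${\cal H}_{m\mbox{-u-stable}}(E)$ there exists a $C^\infty$ Hermitian fibre metric $h$ for which $D''$ is uniformly stable. Then $(D'',\,h)\in{\cal HM}_{m\mbox{-u-stable}}(E)$ and $\widetilde{p}([(D'',\,h)]) = [D'']$. The choice of representative $D''$ within its orbit does not matter, since any representative is uniformly stable for some metric.

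I do not expect any genuine obstacle. The one point needing care is that the second action really does transform the holomorphic structure by the same rule $D''\mapsto D''_T$ as the first action, with only the metric additionally changed to $h_T$. That is exactly how the actions were defined, so equivariance, and with it well-definedness, holds by construction.
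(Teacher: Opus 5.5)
Your proposal is correct and follows the paper's proof. The forgetful map $(D'',\,h)\mapsto D''$ is ${\cal G}^{\cal C}$-equivariant, so it descends to $\widetilde{p}$, and surjectivity is immediate because every uniformly stable $D''$ is uniformly stable for some fibre metric $h$; you just spell out the well-definedness checks in more detail than the paper does.
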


\noindent {\it Proof.} Let $p:{\cal HM}_{m\mbox{-u-stable}}(E)\longrightarrow{\cal H}_{m\mbox{-u-stable}}(E)$ be the map defined by $(D'',\,h)\mapsto D''$. It is ${\cal G}^{\cal C}$-equivariant: \begin{eqnarray*}p\bigg((D'',\,h)\cdot T\bigg) = p\bigg((D'',\,h)\bigg)\cdot T = D''_T,  \hspace{5ex} T\in{\cal G}^{\cal C}.\end{eqnarray*} Hence, it descends to the map $\widetilde{p}$ of the statement.

Its surjectivity is immediate from the definition. \hfill $\Box$

\vspace{2ex}

We do not know at this stage whether $\widetilde{p}$ is injective, but that possibility seems unlikely. Indeed, let $(D''_1,\,h_1), (D''_2,\,h_2)\in{\cal HM}_{m\mbox{-u-stable}}(E)$ such that $\widetilde{p}([(D''_1,\,h_1)]) = \widetilde{p}([(D''_2,\,h_2)])$. Thus, there exists $T\in{\cal G}^{\cal C}$ such that $D''_2 = (D_1'')_T$.

On the other hand, there exists a unique $S\in{\cal G}^{\cal C}$ such that $h_2 = (h_1)_{S^{-1}}$.

Meanwhile, we have the equivalence: \begin{eqnarray*}[(D''_1,\,h_1)] = [(D''_2,\,h_2)] \iff \exists\,R\in{\cal G}^{\cal C} \hspace{1ex}\mbox{such that}\hspace{1ex} (D''_1,\,h_1) = \bigg((D''_2)_R,\,(h_2)_R)  \bigg).\end{eqnarray*} So, the injectivity of $\widetilde{p}$ would force the equality $[(D''_1,\,h_1)] = [(D''_2,\,h_2)]$, which in turn would force $h_1 = (h_2)_R$. We would then get $R = S^{-1}$ by the uniqueness of $S^{-1}$ with this property. Then, we would also get $D''_1 = (D''_2)_{S^{-1}}$, or equivalently $D_2'' = (D''_1)_S$.

Since we also have $D''_2 = (D_1'')_T$, we would infer that $ST^{-1} : (E,\,D_1'')\longrightarrow(E,\,D_1'')$ is holomorphic. However, the holomorphic structure $D''$ of $E$ is {\it simple} because it is {\it $(\omega,\,\Omega)$-$m$-positively stable} (see Corollary \ref{Cor:m-pos-stable_implies_simple}). Thus, there must exist a constant $\lambda\in\C$ such that $ST^{-1} = \lambda\,\operatorname{Id}_E$, or equivalently $S = \lambda\, T$. However, $S$ is uniquely determined by the metrics $h_1$ and $h_2$, while $T$ is determined (uniquely up to a constant factor) by the holomorphic structures $D_1''$ and $D_2''$. The possible proportionality of $S$ and $T$ seems to be a rather restrictive condition.

\vspace{2ex}

The last part of the above argument, applied to the case where $D_1'' = D_2'' = D''$, yields the following:

\begin{Cor}\label{Cor:pairs_same-orbit_condition} The setting is the same as in Corollary and Definition \ref{Cor-Def:m-pos-stable_moduli}. Let $D''$ be a holomorphic structure on $E$, supposed to be {\bf uniformly stable} for two $C^\infty$ Hermitian fibre metrics $h_1$ and $h_2$ on E. (In other words, $(D'',\,h_1), (D'',\,h_2)\in{\cal HM}_{m\mbox{-u-stable}}(E)$.)

  \vspace{1ex}

  Then, $(D'',\,h_1)$ and $(D'',\,h_2)$ lie in the same ${\cal G}^{\cal C}$-orbit
if and only if the metrics $h_1$ and $h_2$ are proportional: \begin{eqnarray*}[(D'',\,h_1)] = [(D'',\,h_2)]\in\widetilde{\cal M}_{m\mbox{-u-stable}}(E) \iff \exists\, \lambda>0 \hspace{1ex} \mbox{constant, \hspace{2ex} such that} \hspace{2ex} h_2 = \lambda\,h_1.\end{eqnarray*}

\end{Cor}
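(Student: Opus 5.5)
The plan is to prove the two implications separately, using the action $(D'',\,h)\mapsto(D''_T,\,h_T)$ of Corollary and Definition \ref{Cor-Def:m-pos-stable_moduli} and the simplicity of $D''$.

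For ``$\Longleftarrow$'': suppose $h_2=\lambda\,h_1$ with $\lambda>0$ constant. Take $T:=\lambda^{1/2}\,\operatorname{Id}_E\in{\cal G}^{\cal C}$. Then
\begin{eqnarray*}D''_T=T^{-1}\circ D''\circ T=D''\end{eqnarray*}
because $T$ is a constant multiple of the identity, which commutes with $D''$. Moreover
\begin{eqnarray*}\langle u,\,v\rangle_{(h_1)_T}=\langle Tu,\,Tv\rangle_{h_1}=\lambda\,\langle u,\,v\rangle_{h_1}=\langle u,\,v\rangle_{h_2}.\end{eqnarray*}
Hence $(D'',\,h_1)\cdot T=(D'',\,h_2)$, and the two pairs lie in the same orbit.

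For ``$\Longrightarrow$'': suppose there is $R\in{\cal G}^{\cal C}$ with $(D'',\,h_2)=(D''_R,\,(h_1)_R)$. The first component gives $D''=R^{-1}\circ D''\circ R$. By Proposition \ref{Prop:hol-isom_equiv-action}, applied with $D_1''=D_2''=D''$, this means that $R^{-1}$, and hence $R$, is a holomorphic automorphism of $(E,\,D'')$, i.e. $R\in H^0(X,\,\operatorname{End}E)$.

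Since $D''$ is uniformly stable for $h_1$, it is in particular $(\omega,\,\Omega)$-$m$-positively stable. Theorem \ref{The:m-pos-stable_implies_simple} (equivalently Corollary \ref{Cor:m-pos-stable_implies_simple}) then gives $R=c\,\operatorname{Id}_E$ for some constant $c\in\C$, and $c\neq 0$ because $R$ is invertible.

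The second component then gives
\begin{eqnarray*}\langle u,\,v\rangle_{h_2}=\langle cu,\,cv\rangle_{h_1}=|c|^2\langle u,\,v\rangle_{h_1},\end{eqnarray*}
so $h_2=\lambda\,h_1$ with $\lambda:=|c|^2>0$ constant.

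The only point needing care is the direction of the action: the statement should be read with the convention $[(D''_1,\,h_1)]=[(D''_2,\,h_2)]\iff\exists\,R$ with $(D''_1,\,h_1)=((D''_2)_R,\,(h_2)_R)$, as in the preceding discussion. Either convention leads to the same argument with $R$ or $R^{-1}$ in place of $T$. Simplicity needs stability for only one of the two metrics, and that is available by hypothesis.
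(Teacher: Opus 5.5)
Your proof is correct and follows essentially the paper's reasoning. Equality of orbits gives a gauge element $R$ with $D''_R=D''$, i.e.\ a holomorphic automorphism of $(E,\,D'')$; the simplicity supplied by Theorem~\ref{The:m-pos-stable_implies_simple} forces $R=c\,\operatorname{Id}_E$, so $h_2=|c|^2h_1$, and the converse via $T=\lambda^{1/2}\operatorname{Id}_E$ is immediate. Reading off the holomorphicity of $R$ directly from the first component, as you do, is cleaner than the paper's detour through the ``unique'' $S$ with $h_2=(h_1)_{S^{-1}}$, since that $S$ is only determined up to an $h_1$-unitary factor.
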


\vspace{2ex}

The above result tells us when two points $[(D'',\,h_1)], [(D'',\,h_2)]\in\widetilde{\cal M}_{m\mbox{-u-stable}}(E)$ sharing the holomorphic structure $D''$ are equal. We can go further and wonder whether they can ever be distinct. In other words, do we have {\bf uniqueness}, up to scale, of the uniformly stable metric for a given holomorphic structure?

\begin{Question}\label{Question:uniqueness_unif-stable-metrics} The setting is the same as in Corollary and Definition \ref{Cor-Def:m-pos-stable_moduli}. Let $D''$ be a holomorphic structure on $E$, supposed to be {\bf uniformly stable} for two $C^\infty$ Hermitian fibre metrics $h_1$ and $h_2$ on E.

  Does it follow that there exists a constant $\lambda>0$ such that $h_2 = \lambda\,h_1$?

\end{Question}

We will outline a strategy for a possible answer to this question further down, but we now observe one of its consequences.

\begin{Cor}\label{Cor:can-injection_u-stable_moduli-spaces} The setting is the same as in Corollary and Definition \ref{Cor-Def:m-pos-stable_moduli}. If the answer to Question \ref{Question:uniqueness_unif-stable-metrics} is {\bf affirmative}, the following {\bf canonical} map: \begin{eqnarray*}\iota_h:{\cal M}_{m\mbox{-u-stable}}(E,\, h)\longrightarrow{\cal M}_{m\mbox{-u-stable}}(E), \hspace{5ex} [D'']_{U(E,\, h)}\longmapsto[D'']_{{\cal G}^{\cal C}},\end{eqnarray*} is {\bf injective}, where $[\,\,]_{U(E,\, h)}$ denotes a $U(E,\, h)$-orbit and $[\,\,]_{{\cal G}^{\cal C}}$ denotes a ${\cal G}^{\cal C}$-orbit.

\end{Cor}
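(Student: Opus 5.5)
Let $D_1'', D_2''\in{\cal H}_{m\mbox{-u-stable}}(E,\,h)$ satisfy $\iota_h([D_1'']_{U(E,\,h)}) = \iota_h([D_2'']_{U(E,\,h)})$. We will show that they lie in the same $U(E,\,h)$-orbit. The hypothesis gives some $T\in{\cal G}^{\cal C}$ with $D_2'' = (D_1'')_T = T^{-1}\circ D_1''\circ T$. Well-definedness of $\iota_h$ is immediate, since $U(E,\,h)\subset{\cal G}^{\cal C}$, so only injectivity requires proof.

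The first step transports the metric along $T$. By Corollary \ref{Cor:uniform-stab_G-C-action_invariance}, $D_1''$ uniformly stable for $h$ implies that $D_2'' = (D_1'')_T$ is uniformly stable for $h_T$. By assumption, $D_2''$ is also uniformly stable for $h$. Thus the single holomorphic structure $D_2''$ is uniformly stable for the two metrics $h$ and $h_T$. The affirmative answer to Question \ref{Question:uniqueness_unif-stable-metrics} then yields a constant $\lambda>0$ with $h_T = \lambda\,h$. By definition of $h_T$, this says $\langle Tu,\,Tv\rangle_h = \lambda\,\langle u,\,v\rangle_h$ for all $u,v\in E_x$ and all $x\in X$.

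The second step converts $T$ into a unitary gauge transformation. Set $U:=\lambda^{-1/2}\,T$. Then $\langle Uu,\,Uv\rangle_h = \langle u,\,v\rangle_h$, so $U\in U(E,\,h)$. Moreover, conjugation by a nonzero constant multiple of the identity does not change the $(0,\,1)$-connection:
\begin{eqnarray*}U^{-1}\circ D_1''\circ U = \lambda^{1/2}\lambda^{-1/2}\,T^{-1}\circ D_1''\circ T = D_2''.\end{eqnarray*}
Hence $D_2'' = (D_1'')_U$ with $U\in U(E,\,h)$, that is, $[D_1'']_{U(E,\,h)} = [D_2'']_{U(E,\,h)}$, which proves injectivity.

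The only substantive point is the first step. One must apply the uniqueness hypothesis to the structure $D_2''$ with the two metrics $h$ and $h_T$, not to $D_1''$, and Corollary \ref{Cor:uniform-stab_G-C-action_invariance} is precisely what guarantees that $h_T$ qualifies as a uniformly stable metric for $D_2''$. The remaining work is checking that the scalar $\lambda^{-1/2}$ commutes with $D_1''$, which holds by the Leibniz rule since the factor is constant.
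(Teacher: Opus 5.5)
Your proof is correct and follows the paper's argument. Both use Corollary \ref{Cor:uniform-stab_G-C-action_invariance} to see that $D_2''$ is uniformly stable for both $h$ and $h_T$, apply the affirmative answer to Question \ref{Question:uniqueness_unif-stable-metrics} to get $h_T=\lambda h$, and conclude that $T$ is a constant multiple of an $h$-unitary automorphism. The paper reaches that last step through the polar decomposition $T=US$ and a preliminary simplicity remark that is not logically needed, whereas you read off $T^\star T=\lambda\,\operatorname{Id}_E$ directly, which is slightly shorter.
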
  

\noindent {\it Proof.} The map $\iota_h$ is well defined and canonical as it is induced by the inclusions ${\cal H}_{m\mbox{-u-stable}}(E,\,h)\subset{\cal H}_{m\mbox{-u-stable}}(E)$ and $U(E,\,h)\subset{\cal G}^{\cal C}$.

Now, let $D_1'', D_2''\in{\cal H}_{m\mbox{-u-stable}}(E,\,h)$ such that $[D_1'']_{{\cal G}^{\cal C}} = [D_2'']_{{\cal G}^{\cal C}}$. This means that there exists $T\in{\cal G}^{\cal C}$ such that $D_2'' = (D_1'')_T = T^{-1}\circ D_1''\circ T$. To prove injectivity of $\iota_h$, we need to prove the existence of $R\in U(E,\,h)$ such that $D_2'' = (D_1'')_R$. If such an $R$ exists, then $(D_1'')_{TR^{-1}} = D_1''$, which amounts to the smooth automorphism $TR^{-1}:(E,\,D_1'')\longrightarrow(E,\,D_1'')$ being holomorphic. However, $(E,\,D_1'')$ is uniformly stable (by hypothesis), hence also {\it $(\omega,\,\Omega)$-$m$-positively stable}. By Corollary \ref{Cor:m-pos-stable_implies_simple}, $(E,\,D_1'')$ is then {\it simple}. Therefore, its only holomorphic endomorphisms are the homotheties. We deduce the existence of a constant $\lambda\in\C$ such that $T=\lambda R$ if an $R$ as above exists.

Thus, the question reduces to whether $T$ is constrained to be a scalar multiple of an $h$-unitary automorphism by the $h$-uniform stability hypotheses on $D_1''$ and $D_2''$. Let $T = US$ be the (unique) polar decomposition of $T$ w.r.t. $h$. This means that $U\in U(E,\,h)$ and $S\in{\cal G}^{\cal C}$ is positive definite w.r.t. $h$ at every point of $X$. We get: \begin{eqnarray*}D_2'' = (D_1'')_T = ((D_1'')_U)_S\end{eqnarray*} is at once {\it uniformly stable} for $h$ (by the assumption made on $D_2''$) and {\it uniformly stable} for $h_T = (h_U)_S = h_S$ (by the assumption made on $D_1''$ combined with Corollary \ref{Cor:uniform-stab_G-C-action_invariance}).

  Thus, $h$ and $h_S$ are uniformly stable fibre metrics for the same holomorphic structure $D_2''$ on $E$. If the answer to Question \ref{Question:uniqueness_unif-stable-metrics} is affirmative, there exists a constant $\lambda>0$ such that $h_S = \lambda^2\,h$. This amounts to $\lambda^2\,\langle u,\,v\rangle_h = \langle S^2u,\,v\rangle_h$ for every $x\in X$ and all $u,v\in E_x$. We conclude that $S^2 =\lambda^2\,\operatorname{Id}_E$, hence $S =\lambda\,\operatorname{Id}_E$. (We have used several times the fact that $S$ is positive definite, hence also self-adjoint, w.r.t. $h$.) Thus, the polar decompsition of $T$ becomes $T = \lambda\,U$ and we are done. \hfill $\Box$

\vspace{2ex}

We equip each of the three set-theoretic moduli spaces introduced in Corollary and Definition \ref{Cor-Def:m-pos-stable_moduli} with the quotient topology inherited from the original topological space, ${\cal H}_{m\mbox{-u-stable}}(E)$, ${\cal HM}_{m\mbox{-u-stable}}(E)$, respectively $ {\cal H}_{m\mbox{-u-stable}}(E,\,h)$, whose quotient it is.

\begin{Prop}\label{Prop:Hausdorff_moduli_h-unif-stable} The setting is the same as in Corollary and Definition \ref{Cor-Def:m-pos-stable_moduli}. We fix a $C^\infty$ Hermitian fibre metric $h$ on $E$.

  Then, the moduli space ${\cal M}_{m\mbox{-u-stable}}(E,\,h)={\cal H}_{m\mbox{-u-stable}}(E,\,h)/U(E,\,h)$ of {\bf $h$-uniformly stable holomorphic structures} on $E$, equipped with the quotient topology, is {\bf Hausdorff}.

\end{Prop}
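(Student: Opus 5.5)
The plan follows the standard argument for Hausdorffness of quotients by compact-type gauge groups, adapted to the $C^\infty$ topology. Take two $U(E,\,h)$-orbits $[D_1'']\neq[D_2'']$ in ${\cal M}_{m\mbox{-u-stable}}(E,\,h)$. It suffices to show that the orbit relation
\begin{eqnarray*}{\cal R}:=\bigg\{(D'',\,D''_R)\,\mid\, D''\in{\cal H}_{m\mbox{-u-stable}}(E,\,h),\, R\in U(E,\,h)\bigg\}\end{eqnarray*}
is closed in ${\cal H}_{m\mbox{-u-stable}}(E,\,h)\times{\cal H}_{m\mbox{-u-stable}}(E,\,h)$. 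Indeed, the quotient map by a group action is open, so closedness of ${\cal R}$ gives Hausdorffness of the quotient. Since the $C^\infty$ topology on $\bar{\cal A}(E)$ (an affine space over $C^\infty_{0,\,1}(X,\,\operatorname{End}E)$) is metrisable, closedness can be tested on sequences. So I take $D_k''\to D_1''$, $R_k\in U(E,\,h)$ with $(D_k'')_{R_k}\to D_2''$, and I aim to produce $R\in U(E,\,h)$ with $D_2''=(D_1'')_R$.

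The first step is a priori control of $R_k$. Because $R_k$ is $h$-unitary, $|R_k|_h=\sqrt r$ pointwise, so the sequence is uniformly bounded in $C^0$. The relation $(D_k'')_{R_k}=R_k^{-1}D_k''R_k$ rewrites as $D_k''\circ R_k - R_k\circ(D_k'')_{R_k}=0$. This says that $R_k$ is a holomorphic section of $\operatorname{Hom}((E,\,(D_k'')_{R_k}),\,(E,\,D_k''))$. Writing $D_k''=\bar\partial_0+a_k$ and $(D_k'')_{R_k}=\bar\partial_0+b_k$ relative to a fixed reference $(0,\,1)$-connection, we get $\bar\partial_0R_k=R_kb_k-a_kR_k$, where $a_k,b_k$ converge in $C^\infty$. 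The unitarity of $R_k$ gives $R_k^\star R_k=\operatorname{Id}$, so $\partial_0$-derivatives are controlled by the conjugates of $\bar\partial$-derivatives. Bootstrapping (elliptic estimates for $\bar\partial_0\oplus\bar\partial_0^\star$ acting on sections, or simply differentiating the identity repeatedly and using the unitary constraint) will yield uniform $C^\ell$ bounds on $R_k$ for every $\ell$. By Arzelà–Ascoli, a subsequence then converges in $C^\infty$ to some $R$. The limit $R$ is still $h$-unitary, hence invertible, and passing to the limit in $D_k''R_k=R_k(D_k'')_{R_k}$ gives $D_1''R=RD_2''$, i.e. $D_2''=(D_1'')_R$.

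I expect the derivative bound to be the main obstacle. Unitarity alone bounds only $R_k$ in $C^0$, so the gain in regularity must come from ellipticity. Here is what I would try first. Set $\delta_k:=D_k''$ on $\operatorname{Hom}$ and use the Chern connection of $h$. The identity $\delta_k R_k=0$ gives, by the Bochner/Weitzenböck formula as in the proof of Theorem \ref{The:Kobayashi-vanishing_gen}, an estimate of $\int_X|D'R_k|^2$ by curvature terms times $\|R_k\|_{C^0}^2$, and these curvature terms are bounded since $a_k,b_k$ are $C^\infty$-bounded. That gives an $L^2_1$ bound; standard elliptic regularity for $\bar\partial$-type equations with smooth, uniformly bounded coefficients then upgrades it to all $L^2_\ell$ bounds.

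Uniform stability itself enters only to guarantee that limits stay in the space, namely that $D_2''$ lies in ${\cal H}_{m\mbox{-u-stable}}(E,\,h)$, which holds by assumption. Simplicity (Corollary \ref{Cor:m-pos-stable_implies_simple}) can be used to note that stabilisers are $U(1)$, but this is not essential for the Hausdorff claim.
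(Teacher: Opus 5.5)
Your proof is correct, but it is organised differently from the paper's. The paper proves nothing about gauge transformations itself. It quotes the classical fact that ${\cal H}(E,\,h)/U(E,\,h)$ is Hausdorff ([Kob87, Corollary 7.1.15]), whose key step is properness of the $U(E,\,h)$-action, coming from the compactness of $U(r)$. It then notes that ${\cal H}_{m\mbox{-u-stable}}(E,\,h)$ is a $U(E,\,h)$-invariant subset, so its quotient injects continuously into (indeed is a subspace of) that Hausdorff space. You instead show directly that the orbit relation is closed, which amounts to reproving the cited properness, restricted to the subspace. The paper's route is two lines and makes clear that uniform stability plays no role beyond the invariance of the subset. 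Yours is self-contained and is carried out directly in the $C^\infty$ topology of the statement.

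One correction of emphasis: the derivative bound is not the obstacle you anticipate, and ellipticity is not needed. Because $R_k$ is $h$-unitary, it intertwines the full Chern connections, not just their $(0,\,1)$-parts (cf.\ the remark following Proposition \ref{Prop:D'_T_D'_same-metric}). Let $D_0$ be the Chern connection of $(\bar\partial_0,\,h)$, and set $\alpha_k:=a_k-a_k^\star$ and $\beta_k:=b_k-b_k^\star$. Then $D_0R_k=R_k\beta_k-\alpha_kR_k$, and differentiating this identity repeatedly gives uniform $C^\ell$ bounds from $|R_k|_h=\sqrt{r}$ by induction. This is exactly your ``differentiate and use the unitary constraint'' option. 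The Bochner-plus-elliptic route also works, but it is heavier than necessary; since $\bar\partial_0^\star\bar\partial_0$ is elliptic on sections, you could even bootstrap from the $L^2$ bound alone and skip the Bochner step.
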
  

\noindent {\it Proof.} This follows at once from the following classical result (see e.g. [Kob87, Corollary 7.1.15]): the moduli space ${\cal H}(E,\,h)/U(E,\,h)$ of $h$-compatible Chern connections on $E$ (or, equivalently, of pairs $(D'',\,h)$ of a holomorphic structure on $E$ and the fixed Hermitian fibre metric $h$) is {\it Hausdorff}. The main step in the proof of this classical result consists in proving that the right action of $U(E,\,h)$ on ${\cal H}(E,\,h)$ is {\it proper}. The main ingredient that yields this properness is the {\it compactness} of the unitary group $U(r)$.

Our topological space ${\cal H}_{m\mbox{-u-stable}}(E,\,h)$ of $h$-uniformly stable holomorphic structures on $E$ is a topological subspace of ${\cal H}(E,\,h)$. Since we quotient both topological spaces by the same group $U(E,\,h)$ acting in the same way -- recall that we have already proved that ${\cal H}_{m\mbox{-u-stable}}(E,\,h)$ is invariant under the right action of $U(E,\,h)$ -- and we equip both quotients with the quotient topologies, our moduli space ${\cal M}_{m\mbox{-u-stable}}(E,\,h)={\cal H}_{m\mbox{-u-stable}}(E,\,h)/U(E,\,h)$ is a topological subspace of the classical moduli space ${\cal H}(E,\,h)/U(E,\,h)$.

The contention follows from the elementary fact according to which every topological subspace of a Hausdorff topological space is Hausdorff.  \hfill $\Box$

\section{The weakly holomorphic subbundle formalism in our case}\label{section:w-h-subbundle_formalism}

Let $(E,\,D'',\,h)\longrightarrow(X,\,\omega,\,\Omega)$ be a Hermitian holomorphic vector bundle of complex rank $r\geq 1$ over a compact complex $n$-dimensional manifold supposed to carry forms $\omega$ and $\Omega$ on $X$ satisfying the properties of Definition \ref{Def:m-pos-stability}. The following statement is standard:

\begin{Fact}\label{Fact:sheaf_pi} There is a {\bf canonical bijection} between:

\vspace{1ex}

\noindent (a)\, the set ${\cal A}$ of {\bf coherent analytic subsheaves} ${\cal F}$ of ${\cal O}(E)$;

\vspace{1ex}

\noindent (b)\, the set ${\cal B}$ of {\bf weakly holomorphic subbundles} of $E$, namely sections $\pi\in L^2_1(X,\,\operatorname{End}E)$ satisfying the following conditions:

\vspace{1ex}

(i)\, there exists an analytic subset $S\subset X$ with $\mbox{codim}_XS\geq 2$ such that the restriction $\pi_{|X\setminus S}\in C^\infty(X\setminus S,\,\operatorname{End}E)$;

\vspace{1ex}

(ii)\, $\pi = \pi^\star = \pi^2$ on $X\setminus S$;

\vspace{1ex}

(iii)\, $(\operatorname{Id}_E - \pi)\circ D''_{\operatorname{End}E}\pi = 0$  on $X\setminus S$. 

\vspace{1ex}

This bijection is given by: \begin{eqnarray*}{\cal B}\ni\pi\longmapsto{\cal F}_{|X\setminus S}:=\pi_{|X\setminus S}(E_{|X\setminus S})\in{\cal A}.\end{eqnarray*}

\end{Fact}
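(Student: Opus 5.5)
The plan is to build the two maps explicitly and check that they are mutually inverse. The delicate point is the $L^2_1$ regularity of $\pi$ across $S$. One caveat should be made explicit first. A coherent subsheaf and its saturation define the same $\pi$, so the bijection should be understood with ${\cal F}$ saturated, i.e. ${\cal E}/{\cal F}$ torsion-free. Equivalently, each ${\cal F}$ is identified with the saturated sheaf it generates, which is the sheaf recovered from ${\cal F}_{|X\setminus S}$ in the last step below. I will work with this convention throughout.

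\emph{Step 1: from ${\cal A}$ to ${\cal B}$.} Since ${\cal F}$ and ${\cal E}/{\cal F}$ are torsion-free, both are locally free outside an analytic subset $S$ with $\mbox{codim}_XS\geq 2$ (as in the proof of Fact \ref{Fact:canonical-section}). Off $S$ we therefore have a holomorphic subbundle $F\subset E_{|X\setminus S}$. Let $\pi$ be the $h$-orthogonal projection onto $F$. On $X\setminus S$ it is smooth and satisfies $\pi=\pi^\star=\pi^2$, which gives (i) and (ii). For (iii), take any local smooth section $u$ of $E$. Then $\pi u$ is a section of $F$, and $D''(\pi u)$ is a $F$-valued $(0,1)$-form because $F$ is $D''$-stable. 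Hence $(\operatorname{Id}-\pi)D''(\pi u)=0$, and by the Leibniz rule (\ref{eqn:Leibniz-rule_D_End}) this becomes $(\operatorname{Id}-\pi)(D''_{\operatorname{End}E}\pi)\,u=0$.

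\emph{Step 2: $\pi\in L^2_1(X)$.} Pointwise $|\pi|\leq\sqrt r$, so $\pi\in L^2$, and by (iii) the derivative $D''_{\operatorname{End}E}\pi$ is essentially the adjoint $\beta^\star$ of the second fundamental form of $F\subset E$. The Gauss–Codazzi formula used in the proof of Proposition \ref{Prop:m-pos_properties} then gives, on $X\setminus S$,
\begin{eqnarray*}i\,\mbox{tr}(\beta\wedge\beta^\star)\wedge\omega^{n-1} = \mbox{tr}\big(\pi\, i\Theta_h(E)\big)\wedge\omega^{n-1} - i\Theta(\det F)\wedge\omega^{n-1}.\end{eqnarray*}
To integrate this I would use the canonical section: $\Lambda^s\pi=\sigma_{\cal F}\circ\sigma_{\cal F}^\star$ is smooth on all of $X$. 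Consequently the curvature of the induced metric on $\det{\cal F}$ differs from a smooth form by $i\partial\bar\partial\log|\sigma_{\cal F}|^2$. Since $d\omega=0$, its integral against $\omega^{n-1}$ over $X\setminus S$ is finite. This is the standard argument, using that $\mbox{codim}\,S\geq 2$, so that Stokes' theorem applies with cut-offs of small capacity. It yields $\int_{X\setminus S}|D''\pi|^2<\infty$. The bound on $D'\pi$ follows by taking adjoints, and $S$ has measure zero, so $\pi\in L^2_1(X)$. I expect this step to be the main obstacle: one has to control the singular $\partial\bar\partial\log$ term near $S$.

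\emph{Step 3: from ${\cal B}$ to ${\cal A}$ and bijectivity.} Conversely, given $\pi$ satisfying (i)–(iii), conditions (ii)–(iii) say that $F:=\pi(E)$ is a smooth subbundle of $E_{|X\setminus S}$ stable under $D''$, hence a holomorphic subbundle, by the integrability criterion recalled at the start of \S\ref{section:moduli}. I would then set ${\cal F}:={\cal O}(E)\cap j_\star{\cal O}(F)$, where $j:X\setminus S\hookrightarrow X$ is the inclusion. This is a subsheaf of ${\cal O}(E)$ whose coherence follows from the Siu–Trautmann extension theorem for coherent subsheaves across analytic sets of codimension $\geq 2$ (here $S$ is given as analytic, so no Bishop-type argument is needed). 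By construction ${\cal F}$ is saturated. Finally, the two constructions are inverse to each other. Both are determined by data on $X\setminus S$, and a saturated subsheaf is determined by its restriction off $S$ by Hartogs, exactly as in Fact \ref{Fact:canonical-section}.
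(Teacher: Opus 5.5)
Your proposal is correct, and it takes a genuinely different route from the paper, which offers no argument of its own. The paper calls the direction ${\cal A}\to{\cal B}$ easy and attributes ${\cal B}\to{\cal A}$ to Uhlenbeck--Yau [UY86, UY89] and to [Pop05].

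You supply the $L^2_1$ estimate that the paper passes over, via traced Gauss--Codazzi and the quasi-plurisubharmonicity of $\log|\sigma_{\cal F}|^2$ on the compact K\"ahler manifold $X$. For the converse, you correctly observe that condition (i), as the Fact is formulated, already gives you an analytic set $S$ of codimension $\geq 2$ off which $\pi$ is smooth. All that is then left is the integrability of $D''$ restricted to $\pi(E)$ on $X\setminus S$ and a Siu--Trautmann extension across $S$.

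The Uhlenbeck--Yau theorem is far deeper. Its content is that an $L^2_1$ projection satisfying (ii)--(iii) only almost everywhere is smooth off a closed set that turns out to be analytic of codimension $\geq 2$; it produces (i) instead of assuming it. Your route is therefore more elementary and exactly adapted to the statement as written. Combined with your Step 2, it even shows that the $L^2_1$ requirement in (b) is redundant once (i) is imposed. The paper's citation is what is needed for the stronger formulation without (i), which is the one that arises when a destabilising subsheaf is extracted from a weak limit.

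Your saturation caveat is also right, and it corrects the statement. ${\cal F}$ and ${\cal F}\otimes{\cal O}(-D)$, for $D$ an effective divisor, give the same $\pi$, and the paper's inverse map only determines ${\cal F}_{|X\setminus S}$. So the bijection is with subsheaves having torsion-free quotient.

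Two small repairs are needed.

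\begin{enumerate}
\item At the end of Step 2, ``$S$ has measure zero'' is not the reason $\pi\in L^2_1(X)$: a jump across a real hypersurface is also carried by a null set. The point is that $S$, of real codimension $\geq 4$, has zero $W^{1,2}$-capacity. Hence the distributional derivative of the bounded $\pi$ on $X$ has no part supported on $S$; this is the same cut-off argument you already invoke.

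\item The smoothness of $\Lambda^s\pi=\sigma_{\cal F}\circ\sigma_{\cal F}^\star$ on all of $X$, which you borrow from the paper, fails whenever $\sigma_{\cal F}$ vanishes somewhere on $S$. For example, for the saturated image of ${\cal O}(-1)\to{\cal O}^{\oplus 2}$ on $\Proj^2$ given by $(x_1,\,x_2)$, $\pi$ is discontinuous at $[1:0:0]$. Your estimate does not need this. It only uses two facts:
\begin{itemize}
\item $i\Theta(\det F) = i\Theta_k(\det{\cal F}) - i\partial\bar\partial\log|\sigma_{\cal F}|^2$ on $X\setminus S$, for a smooth metric $k$ on $\det{\cal F}$;
\item $i\partial\bar\partial\log|\sigma_{\cal F}|^2 + C\omega$ is a closed positive current on $X$, so its mass against $\omega^{n-1}$ on $X\setminus S$ is at most its total mass $C\int_X\omega^n$.
\end{itemize}
This gives the upper bound that makes $\int_{X\setminus S}|D''\pi|^2$ finite.
\end{enumerate}
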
  

\vspace{2ex}

Thus, on $X\setminus S$, $\pi$ is the $h$-orthogonal projection onto the holomorphic vector subbundle $F:={\cal F}_{|X\setminus S}\subset E_{|X\setminus S}$. The fact that every element ${\cal F}\in{\cal A}$ defines an element $\pi\in{\cal B}$ can be seen easily. Conversely, the fact that every element $\pi\in{\cal B}$ defines an element ${\cal F}\in{\cal A}$ was first stated and proved by Uhlenbeck and Yau in [UY86] and [UY89]. It was subsequently given a different proof in [Pop05].

\vspace{2ex}

Before applying this formalism to our setting, we recall, for the reader's convenience, a few standard facts as they are presented in [Dem97, V-$\S14$]. We group them in the following:

\begin{Prop}({\bf standard})\label{Prop:standard_exact-seq_2nd-fundamental} Let $0\longrightarrow F {\stackrel{j}{\longrightarrow}} E {\stackrel{g}{\longrightarrow}} Q \longrightarrow 0$ be a short exact sequence of holomorphic vector bundles over a complex manifold $X$. Equip $E$ with a $C^\infty$ Hermitian fibre metric $h=h_E$ and consider the induced, respectively quotient, fibre metrics $h_F$, respectively $h_Q$, on $F$, respectively $Q$. One lets $j^\star:E\longrightarrow F$ and $g^\star:Q\longrightarrow E$ be the adjoint morphisms of $j$, respectively $g$, with respect to these fibre metrics. Thus, $j$ and $g$ are holomorphic morphisms, while $j^\star$ and $g^\star$ are $C^\infty$.

\vspace{1ex}

$(1)$\, According to the $C^\infty$ isomorphism $j^\star\oplus g:E\longrightarrow F\oplus Q$, the Chern connection $D_E = D_E' + D_E''$ of $(E,\, D''_E=\bar\partial,\, h)$ has the shape: \begin{eqnarray*}D_E = \begin{pmatrix} D_F & -\beta^\star \\
    \beta & D_Q\end{pmatrix},\end{eqnarray*} where $D_F = D_F' + D_F''$ and $D_Q = D_Q' + D_Q''$ are the Chern connections of $(F,\,D''_F=\bar\partial,\, h_F)$, respectively $(Q,\,D''_Q=\bar\partial,\, h_Q)$, while $\beta\in C^\infty_{1,\,0}(X,\,\operatorname{Hom}(F,\,Q))$ is called the {\bf second fundamental form} of $F$ in $E$ and $\beta^\star\in C^\infty_{0,\,1}(X,\,\operatorname{Hom}(Q,\,F))$ is its adjoint.

\vspace{1ex}

$(2)$\, The morphisms $j$, $g$ and their adjoints have the following properties: \begin{align}\label{eqn:j-g_adjoints_prop}\nonumber D'_{\operatorname{Hom}(F,\,E)}j & =  g^\star\circ\beta;  &    \bar\partial j & =  0, \\
\nonumber  D'_{\operatorname{Hom}(E,\,Q)}g & =  -\beta\circ j^\star;  &  \bar\partial g & = 0 \\ 
\nonumber D'_{\operatorname{Hom}(E,\,F)}j^\star & =  0;  &    \bar\partial j^\star & =  \beta^\star\circ g; \\
 D'_{\operatorname{Hom}(Q,\,E)}g^\star & =  0;   &  \bar\partial g^\star & = -j\circ\beta^\star.\end{align} 
  
\vspace{1ex}

$(3)$\, We always have $\bar\partial\beta^\star = D''_{\operatorname{Hom}(Q,\,F)}\beta^\star = 0$ and the Chern curvature form of $(E,\, D''_E=\bar\partial,\, h)$ is given by: \begin{eqnarray*}i\Theta_h(E) = \begin{pmatrix} i\Theta_{h_F}(F) - i\beta^\star\wedge\beta   &  iD'_{\operatorname{Hom}(Q,\,F)}\beta^\star  \\
    i\bar\partial\beta &   i\Theta_{h_Q}(Q) - i\beta\wedge\beta^\star\end{pmatrix}.\end{eqnarray*}

Explicitly, this means, in particular, that the following formulae hold for the $(F\to F)$ and the $(Q\to Q)$ components of $i\Theta_h(E)\in C^\infty_{1,\,1}(X,\,\operatorname{End}E)$ w.r.t. the $C^\infty$ isomorphism $j^\star\oplus g:E\longrightarrow F\oplus Q$: \begin{eqnarray}\label{eqn:curvature_subbundle}i\Theta_h(E)_{|F}: & = & j^\star\circ i\Theta_h(E)\circ j = i\Theta_{h_F}(F) - i\beta^\star\wedge\beta \\
 \label{eqn:curvature_quot-bundle} i\Theta_h(E)_{|Q}: & = & g\circ i\Theta_h(E)\circ g^\star = i\Theta_{h_Q}(Q) - i\beta\wedge\beta^\star.\end{eqnarray}

\end{Prop}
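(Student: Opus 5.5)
The plan is to work throughout with the $C^\infty$ orthogonal splitting $E\simeq F\oplus Q$ given by $j^\star\oplus g$. Its inverse is $j\oplus g^\star$, because $j^\star j=\operatorname{Id}_F$, $gg^\star=\operatorname{Id}_Q$, $j^\star g^\star=0$ and $jj^\star+g^\star g=\operatorname{Id}_E$. These identities hold since $h_F$ is the induced metric, $h_Q$ is the quotient metric, and $g^\star$ identifies $Q$ isometrically with $F^{\perp}$.

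For $(1)$, write the four blocks of $D_E$ explicitly as
\begin{eqnarray*}
j^\star D_E j,\qquad j^\star D_E g^\star,\qquad g D_E j,\qquad g D_E g^\star.
\end{eqnarray*}
\begin{itemize}
\item The lower-left block $g D_E j$ has vanishing $(0,1)$-part, because $\bar\partial j=0$ (that is, $D''_E$ preserves $F$) and $gj=0$. We define $\beta:=g\,D'_E\,j\in C^\infty_{1,0}(X,\operatorname{Hom}(F,Q))$.
\item The diagonal block $D_F:=j^\star D_E j$ has $(0,1)$-part $j^\star\bar\partial j=\bar\partial_F$. It is $h_F$-compatible, which follows by restricting the $h$-compatibility of $D_E$ to sections of $F$. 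By uniqueness of the Chern connection, it is therefore the Chern connection of $(F,h_F)$.
\item The other diagonal block $D_Q:=gD_Eg^\star$ has $(0,1)$-part $g\bar\partial g^\star$. Since $g$ is holomorphic, this equals $\bar\partial_Q$. It is $h_Q$-compatible because $g^\star$ is an isometry onto $F^\perp$, so again it is the Chern connection.
\item For the upper-right block, use that $D_E$ is $h$-compatible. In the unitary splitting, the off-diagonal part of $D_E$ minus the diagonal connection must be skew-adjoint. This forces the upper-right entry to be $-\beta^\star$.
\end{itemize}

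For $(2)$, each formula is read off from the Leibniz rule for the induced connections on $\operatorname{Hom}$-bundles, for example $D_{\operatorname{Hom}(F,E)}j=D_E\circ j-j\circ D_F$. Substituting the block form of $D_E$ from $(1)$ gives, for instance, $D_{\operatorname{Hom}(F,E)}j=g^\star\beta$. Its $(1,0)$ and $(0,1)$ components give the first row. The formulas for $g$, $j^\star$ and $g^\star$ follow identically, or by taking adjoints of those for $j$ and $g$.

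For $(3)$, square the block matrix from $(1)$:
\begin{eqnarray*}
D_E^2=\begin{pmatrix} D_F^2-\beta^\star\wedge\beta & -(D_{\operatorname{Hom}(Q,F)}\beta^\star)\\ D_{\operatorname{Hom}(F,Q)}\beta & D_Q^2-\beta\wedge\beta^\star\end{pmatrix}.
\end{eqnarray*}
Here the off-diagonal entries are the induced connections applied to $\beta$ and $\beta^\star$, which again follows from the Leibniz rule. The $(0,2)$-part of $D_E^2$ vanishes by integrability of $\bar\partial$, and the $(0,2)$-part of the upper-right entry is $-\bar\partial\beta^\star$. This gives $\bar\partial\beta^\star=0$. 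What remains is the curvature matrix, with the bidegrees sorted out, and after multiplying by $i$ it yields (\ref{eqn:curvature_subbundle}) and (\ref{eqn:curvature_quot-bundle}).

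I expect the main obstacle to be purely one of bookkeeping. One has to keep track of the sign in front of $\beta^\star$, and of the graded sign conventions when moving $(1,0)$- and $(0,1)$-forms past each other in $\beta^\star\wedge\beta$. I would fix these by checking everything on a local unitary frame adapted to $F$.
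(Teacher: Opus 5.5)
Your proposal is correct and is exactly the standard argument of [Dem97, V-$\S$14], which the paper only cites without giving a proof of its own: block-decompose $D_E$ in the $h$-orthogonal splitting, identify the diagonal blocks as the Chern connections of $(F,h_F)$ and $(Q,h_Q)$ by uniqueness, get the lower-left block of type $(1,0)$ from $\bar\partial j=0$ and the upper-right block $-\beta^\star$ from metric compatibility, read off $(2)$ from the Leibniz rule on $\operatorname{Hom}$-bundles, and obtain $(3)$ by squaring.

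When you write up $(3)$, two points need attention. First, your squared matrix correctly gives $-iD'_{\operatorname{Hom}(Q,F)}\beta^\star$ in the upper-right corner, and Hermitian symmetry of $i\Theta_h(E)$ against the lower-left entry $i\bar\partial\beta$ forces this sign. So the $+$ sign printed in the statement is a harmless slip, which you should flag rather than claim to reproduce. Second, the lower-left entry $i\bar\partial\beta$ also requires $D'_{\operatorname{Hom}(F,Q)}\beta=0$; this comes from the vanishing of the $(2,0)$-part $(D'_E)^2$ of the Chern curvature, not from the $(0,2)$-part you invoke.
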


\vspace{2ex}

Now, let us go back to our setting and suppose that ${\cal F}$ is a coherent subsheaf of a Hermitian holomorphic vector bundle $(E,\,h)$ of rank $r\geq 1$ over an $n$-dimensional complex manifold $X$. Let $S\subset X$ be the analytic subset of codimension $\geq 2$ outside which ${\cal F}$ is locally free (and given by a holomorphic vector bundle $F$). We denote by $Q:=E/F$ the quotient bundle over $X\setminus S$ and we will use the notation of Proposition \ref{Prop:standard_exact-seq_2nd-fundamental}. In particular, the short exact sequence of holomorphic vector bundles of Proposition \ref{Prop:standard_exact-seq_2nd-fundamental} is now defined only over $X\setminus S$.

\begin{Cor}\label{Cor:pi_j_g_curvature} Let $\pi\in L^2_1(X,\,\operatorname{End}E)$ be the element of the set ${\cal B}$ of Fact \ref{Fact:sheaf_pi} that corresponds to ${\cal F}$ under that canonical bijection. The following identities hold everywhere on $X\setminus S$:

\vspace{1ex}

$(1)$\, (a)\, $\pi = j\circ j^\star$ \hspace{2ex} and \hspace{2ex} $\operatorname{Id}_F = j^\star\circ j$; \hspace{6ex} (b)\, $\operatorname{Id}_E - \pi = g^\star\circ g$ \hspace{2ex} and \hspace{2ex} $\operatorname{Id}_Q = g\circ g^\star$;

\vspace{1ex}

$(2)$\,  (a)\, $D'_{\operatorname{End}E}\pi = g^\star\circ\beta\circ j^\star$, \hspace{2ex} hence \hspace{2ex} $\beta = g\circ(D'_{\operatorname{End}E}\pi)\circ j$;

\vspace{1ex}

\hspace{3ex} (b)\, $D''_{\operatorname{End}E}\pi = j\circ\beta^\star\circ g$, \hspace{2ex} hence \hspace{2ex} $\beta^\star = j^\star\circ(D''_{\operatorname{End}E}\pi)\circ g^\star$;

\vspace{1ex}

$(3)$\, (a)\, $j\circ i\Theta_{h_F}(F)\circ j^\star = j\circ i\Theta_h(E)_{|F}\circ j^\star + iD''_{\operatorname{End}E}\pi\wedge D'_{\operatorname{End}E}\pi$, hence also \begin{eqnarray*}i\Theta_{\det h_F}(\det F) = \mbox{Tr}_F\bigg(i\Theta_h(E)_{|F}\bigg) + \mbox{Tr}_E\bigg(iD''_{\operatorname{End}E}\pi\wedge D'_{\operatorname{End}E}\pi\bigg);\end{eqnarray*}  

 \vspace{1ex}

\hspace{3ex} (b)\, $g^\star\circ i\Theta_{h_Q}(Q)\circ g = g^\star\circ i\Theta_h(E)_{|Q}\circ g + iD'_{\operatorname{End}E}\pi\wedge D''_{\operatorname{End}E}\pi$, hence also \begin{eqnarray*}i\Theta_{\det h_Q}(\det Q) = \mbox{Tr}_Q\bigg(i\Theta_h(E)_{|Q}\bigg) + \mbox{Tr}_E\bigg(iD'_{\operatorname{End}E}\pi\wedge D''_{\operatorname{End}E}\pi\bigg).\end{eqnarray*}

\end{Cor}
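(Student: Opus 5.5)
The whole argument takes place on $X\setminus S$, where everything is smooth and the short exact sequence $0\to F\stackrel{j}{\to}E\stackrel{g}{\to}Q\to 0$ of Proposition \ref{Prop:standard_exact-seq_2nd-fundamental} is available; every claimed identity is pointwise and algebraic-differential, so it suffices to derive it from the formulae of that proposition.

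For $(1)$, I would use that $h_F$ is the induced metric and $h_Q$ the quotient metric. This makes $j$ an isometric embedding, so $j^\star j=\operatorname{Id}_F$ and $jj^\star$ is the orthogonal projection onto $F$, i.e. $\pi$. Likewise $g^\star$ is an isometric embedding of $Q$ onto $F^\perp$, so $gg^\star=\operatorname{Id}_Q$ and $g^\star g=\operatorname{Id}_E-\pi$.

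For $(2)$, I would apply the Leibniz rule for the Chern connection on $\operatorname{End}E$ to $\pi=j\circ j^\star$: $D'\pi=(D'j)\circ j^\star+j\circ D'j^\star$. By (\ref{eqn:j-g_adjoints_prop}) this equals $g^\star\beta j^\star+0$. Similarly, $D''\pi=(\bar\partial j)j^\star+j\,\bar\partial j^\star=j\beta^\star g$. The ``hence'' parts then follow by composing on the left with $g$ (resp. $j^\star$) and on the right with $j$ (resp. $g^\star$), using $gg^\star=\operatorname{Id}_Q$, $j^\star j=\operatorname{Id}_F$ from $(1)$.

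For $(3)$(a), I would start from (\ref{eqn:curvature_subbundle}), $i\Theta_{h_F}(F)=i\Theta_h(E)_{|F}+i\beta^\star\wedge\beta$, and conjugate by $j\cdot j^\star$. The extra term becomes
\begin{eqnarray*}j\,i\beta^\star\wedge\beta\, j^\star = i\,(j\beta^\star g)\wedge(g^\star\beta j^\star) = iD''\pi\wedge D'\pi,\end{eqnarray*}
after inserting $gg^\star=\operatorname{Id}_Q$ in the middle and invoking $(2)$. Part $(3)$(b) is identical, using (\ref{eqn:curvature_quot-bundle}): $g^\star\, i\beta\wedge\beta^\star\, g=i(g^\star\beta j^\star)\wedge(j\beta^\star g)=iD'\pi\wedge D''\pi$, with $j^\star j=\operatorname{Id}_F$ inserted.

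For the determinant statements, I would use that the curvature of $(\det F,\det h_F)$ is $\operatorname{Tr}_F i\Theta_{h_F}(F)$. I would then note that $\operatorname{Tr}_E(j A j^\star)=\operatorname{Tr}_F(j^\star j A)=\operatorname{Tr}_F A$ for $\operatorname{End}F$-valued forms $A$; the even degree $(1,1)$ causes no sign issue for cyclicity here. Taking $\operatorname{Tr}_E$ of the conjugated identity then gives the formula, and the same applies for $Q$ with $g^\star\cdot g$.

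The only delicate point, which is the main obstacle, is sign and ordering bookkeeping in the wedge products of endomorphism-valued $1$-forms. Moving the $0$-forms $j,g,g^\star,j^\star$ across is harmless, but I would check carefully that the orders $\beta^\star\wedge\beta$ versus $D''\pi\wedge D'\pi$ match as written and are not reversed.
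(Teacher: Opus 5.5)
Your proposal is correct and follows essentially the same route as the paper. You read off $(1)$ from the induced and quotient metrics, and obtain $(2)$ by the Leibniz rule applied to $\pi=j\circ j^\star$ together with (\ref{eqn:j-g_adjoints_prop}). You then conjugate (\ref{eqn:curvature_subbundle}) and (\ref{eqn:curvature_quot-bundle}) by $j\cdot j^\star$, resp. $g^\star\cdot g$, inserting $g\circ g^\star=\operatorname{Id}_Q$, resp. $j^\star\circ j=\operatorname{Id}_F$, in the middle of the $\beta$-terms, and take $\operatorname{Tr}_E$ using cyclicity with the $0$-forms $j$, $g$ and their adjoints. Your ordering check agrees with the paper's proof as well.
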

  
\noindent {\it Proof.} (1)\, The first identity in (a) follows from the fact that $\pi$ is the $h$-orthogonal projection onto $F$. Similarly, the first identity in (b) follows from the fact that $\operatorname{Id}_E - \pi$ is the $h$-orthogonal projection onto $Q$. The second identity in both (a) and (b) follows at once from the definitions.

$(2)$\, Taking $D'_{\operatorname{End}E}$ in the identity $\pi = j\circ j^\star$, we get: \begin{eqnarray*}D'_{\operatorname{End}E}\pi = \bigg(D'_{\operatorname{Hom}(F,\,E)}j\bigg)\circ j^\star + j\circ\bigg(D'_{\operatorname{Hom}(E,\,F)}j^\star\bigg) = (g^\star\circ\beta)\circ j^\star,\end{eqnarray*} where we have used the values of the $D'$-derivatives of $j$ and $j^\star$ given by (\ref{eqn:j-g_adjoints_prop}). This proves the first identity in (a). The second identity follows from the first by composing by $g$ on the left and by $j$ on the right and using the identities $g\circ g^\star = \operatorname{Id}_Q$ and $j^\star\circ j = \operatorname{Id}_F$ observed under $(1)$ above.

To prove part (b), we take $D''_{\operatorname{End}E}$ in the identity $\pi = j\circ j^\star$ to get: \begin{eqnarray*}D''_{\operatorname{End}E}\pi = (\bar\partial j)\circ j^\star + j\circ\bar\partial j^\star = j\circ(\beta^\star\circ g),\end{eqnarray*} where we have used the values of the $\bar\partial$-derivatives of $j$ and $j^\star$ given by (\ref{eqn:j-g_adjoints_prop}). This proves the first identity in (b). The second identity follows from the first by composing by $j^\star$ on the left and by $g^\star$ on the right and using the identities $g\circ g^\star = \operatorname{Id}_Q$ and $j^\star\circ j = \operatorname{Id}_F$ observed under $(1)$ above.

$(3)$\, Putting together the identities proved under $(2)$, we get: \begin{eqnarray*}iD''_{\operatorname{End}E}\pi\wedge D'_{\operatorname{End}E}\pi = i(j\circ\beta^\star\circ g)\wedge(g^\star\circ\beta\circ j^\star) = j\circ(i\beta^\star\wedge\beta)\circ j^\star,\end{eqnarray*} since $g\circ g^\star = \operatorname{Id}_Q$.

On the other hand, (\ref{eqn:curvature_subbundle}) yields: $i\Theta_{h_F}(F) = i\Theta_h(E)_{|F} + i\beta^\star\wedge\beta$. Composing this identity with $j$ on the left and with $j^\star$ on the right and using the above identity for $j\circ(i\beta^\star\wedge\beta)\circ j^\star$, we get the first identity stated under $(3)(a)$. The second identity in $(3)(a)$ follows from the first after taking $\operatorname{Tr}_E$ and observing that $\operatorname{Tr}_E\bigg(j\circ i\Theta_h(E)_{|F}\circ j^\star\bigg) = \operatorname{Tr}_F\bigg(i\Theta_h(E)_{|F}\bigg)$ and \begin{eqnarray*}\operatorname{Tr}_E\bigg(j\circ i\Theta_{h_F}(F)\circ j^\star\bigg) = \operatorname{Tr}_F\bigg(i\Theta_{h_F}(F)\bigg) = i\Theta_{\det h_F}(\det F).\end{eqnarray*} This proves part (a).

Part (b) can be proved in a similar way, starting from \begin{eqnarray*}iD'_{\operatorname{End}E}\pi\wedge D''_{\operatorname{End}E}\pi = i(g^\star\circ\beta\circ j^\star)\wedge(j\circ\beta^\star\circ g) = g^\star\circ(i\beta\wedge\beta^\star)\circ g,\end{eqnarray*} since $j^\star\circ j = \operatorname{Id}_F$. We then use the identity $i\Theta_{h_Q}(Q) = i\Theta_h(E)_{|Q} + i\beta\wedge\beta^\star$ given by (\ref{eqn:curvature_quot-bundle}), that we compose with $g^\star$ on the left and $g$ on the right to get the first identity claimed under under $(3)(b)$. The second identity in $(3)(b)$ follows from the first after taking $\operatorname{Tr}_E$ and observing that $\operatorname{Tr}_E\bigg(g^\star\circ i\Theta_h(E)_{|Q}\circ g\bigg) = \operatorname{Tr}_Q\bigg(i\Theta_h(E)_{|Q}\bigg)$ and \begin{eqnarray*}\operatorname{Tr}_E\bigg(g^\star\circ i\Theta_{h_Q}(Q)\circ g\bigg) = \operatorname{Tr}_Q\bigg(i\Theta_{h_Q}(Q)\bigg) = i\Theta_{\det h_Q}(\det Q).\end{eqnarray*} \hfill $\Box$

\vspace{2ex}

Now, recall that $\det{\cal F}$, defined to be the double dual $(\Lambda^s{\cal F})^{\vee\vee}$ of the top exterior power of ${\cal F}$, is locally free on the whole of $X$. Moreover, it is the unique extension across $S$ of the holomorphic line bundle $\Lambda^sF$ (a priori defined on $X\setminus S$). The role of the next statement is to translate the Uhlenbeck-Yau projection formalism -- which involves objects that are $C^\infty$ only on $X\setminus S$ and have only $L^2_1$-regularity on $X$ -- into globally $C^\infty$ objects on the whole of $X$, by passing from ${\cal F}$ to its holomorphic determinant line bundle.

\begin{Cor}\label{Cor:Lambda_s_exact-seq} The setting is the same as in Corollary \ref{Cor:pi_j_g_curvature}. Let $s\in\{1,\dots , r-1\}$ be the rank of ${\cal F}$. We consider the following short exact sequence of holomorphic vector bundles on $X$: \begin{eqnarray}\label{eqn:det_F_short-exact-seq}0\longrightarrow \det{\cal F} {\stackrel{\Lambda^s j}{\longrightarrow}} \Lambda^sE {\stackrel{\widetilde{g}}{\longrightarrow}} \widetilde{Q}:= \Lambda^sE/\det {\cal F}\longrightarrow 0.\end{eqnarray}

  Let $\widetilde\beta\in C^\infty_{1,\,0}(X,\,\operatorname{Hom}(\det{\cal F},\,\widetilde{Q}))$ be the second fundamental form of $\det{\cal F}$ in $\Lambda^sE$ and let $\sigma_{\cal F}\in H^0(X,\,\Lambda^sE\otimes\det{\cal F}^\star)$ be the canonical section associated with ${\cal F}$ as in Fact \ref{Fact:canonical-section}.

  We equip $\Lambda^sE$ with the $C^\infty$ Hermitian fibre metric $\Lambda^sh$ induced by $h$. We equip $\det{\cal F}$ and $\widetilde{Q}$ with the fibre metrics $h_{\det\cal F}$, respectively $h_{\widetilde{Q}}$, induced by $\Lambda^sh$. We equip the holomorphic vector bundle $A:=\Lambda^sE\otimes\det{\cal F}^\star$ with the fibre metric $h_A:=\Lambda^sh\otimes h_{\det\cal F}^\star$ induced by $\Lambda^sh$ and $h_{\det\cal F}$. Let $D_A:=D'_A + \bar\partial$ be the corresponding Chern connection.  

\vspace{1ex}

Then, the following identities hold everywhere on $X$:

\vspace{1ex}

$(1)$\, (a)\, $\sigma_{\cal F} = \Lambda^s j$; \hspace{5ex} (b)\, $\Lambda^s\pi = \sigma_{\cal F}\circ\sigma_{\cal F}^\star\in C^\infty(X,\,\operatorname{End}(\Lambda^sE))$;   \hspace{5ex} (c)\, $\sigma_{\cal F}^\star\circ\sigma_{\cal F} = \operatorname{Id}_{\det\cal F}$;

\vspace{1ex}

\hspace{3ex} (d)\, $\operatorname{Id}_{\Lambda^sE} - \Lambda^s\pi = \widetilde{g}^\star\circ\widetilde{g}$; \hspace{5ex} (e)\, $\widetilde{g}\circ\widetilde{g}^\star = \operatorname{Id}_{\widetilde{Q}}$;

\vspace{1ex}

$(2)$\, (a)\, $D'_A\sigma_{\cal F} = \widetilde{g}^\star\circ\widetilde\beta$, \hspace{2ex} hence \hspace{2ex} $\widetilde\beta = \widetilde{g}\circ D'_A\sigma_{\cal F}$;
  
\vspace{1ex}

\hspace{3ex} (b)\, $\bar\partial\sigma_{\cal F}^\star = \widetilde\beta^\star\circ\widetilde{g}$, \hspace{4ex} hence \hspace{4ex} $\widetilde\beta^\star = (\bar\partial\sigma_{\cal F}^\star)\circ\widetilde{g}^\star$;

\vspace{2ex}

$(3)$\, $\displaystyle\sigma_{\cal F}\circ i\Theta_{h_{\det\cal F}}(\det{\cal F})\circ\sigma_{\cal F}^\star = (\Lambda^s\pi)\circ i\Theta_{\Lambda^sh}(\Lambda^sE)\circ(\Lambda^s\pi) + i D''_{\operatorname{End}(\Lambda^sE)}(\Lambda^s\pi)\wedge D'_{\operatorname{End}(\Lambda^sE)}(\Lambda^s\pi)$;

\vspace{2ex}

$(4)$\, $\displaystyle\bigg\{i\Theta_{h_A}(A)\sigma_{\cal F},\,\sigma_{\cal F}\bigg\}_{h_A} = i\,\{\widetilde\beta,\,\widetilde\beta\}_{h_{\operatorname{Hom}(\det{\cal F},\,\widetilde{Q})}} = i\,\{D'_A\sigma_{\cal F},\,D'_A\sigma_{\cal F}\}_{h_A} \geq 0$,

\noindent where $h_{\operatorname{Hom}(\det{\cal F},\,\widetilde{Q})}$ is the $C^\infty$ fibre metric induced on $\operatorname{Hom}(\det{\cal F},\,\widetilde{Q})$ by $h_{\det\cal F}$ and $h_{\widetilde{Q}}$.

\end{Cor}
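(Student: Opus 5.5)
The plan is to view $\sigma_{\cal F}$, via the canonical isomorphism $A=\Lambda^sE\otimes\det{\cal F}^\star\simeq\operatorname{Hom}(\det{\cal F},\,\Lambda^sE)$, as a holomorphic bundle morphism. Under this identification it is the extension across $S$ produced in the proof of Fact \ref{Fact:canonical-section}, i.e. $\Lambda^sj$. This gives $(1)(a)$. Since $h_{\det{\cal F}}$ is by definition the restriction of $\Lambda^sh$ to $\det{\cal F}\subset\Lambda^sE$, the morphism $\sigma_{\cal F}=\Lambda^sj$ is an isometric embedding. Hence $\sigma_{\cal F}^\star\circ\sigma_{\cal F}=\operatorname{Id}_{\det{\cal F}}$, which is $(1)(c)$, and $\sigma_{\cal F}\circ\sigma_{\cal F}^\star$ is the $\Lambda^sh$-orthogonal projection onto the line $\det{\cal F}\subset\Lambda^sE$. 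On $X\setminus S$ this line is $\Lambda^sF$, and $\Lambda^s\pi$ is the orthogonal projection onto it: if $\pi$ is the orthogonal projection onto $F$, then $\Lambda^s\pi$ kills every $s$-vector containing a factor from $F^\perp$. This gives $(1)(b)$ on $X\setminus S$. The right-hand side is $C^\infty$ on all of $X$, because $\det{\cal F}$ is a genuine subbundle of $\Lambda^sE$, as the extended morphism is injective of constant rank. So $(1)(b)$ extends by continuity and yields the smoothness of $\Lambda^s\pi$. Parts $(1)(d)$ and $(1)(e)$ are Corollary \ref{Cor:pi_j_g_curvature}(1)(b) applied to the global exact sequence (\ref{eqn:det_F_short-exact-seq}), where $\Lambda^s\pi$ plays the role of $\pi$.

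For $(2)$, I apply Proposition \ref{Prop:standard_exact-seq_2nd-fundamental}(2) to (\ref{eqn:det_F_short-exact-seq}), which now lives on all of $X$, with $j$ replaced by $\Lambda^sj=\sigma_{\cal F}$. This gives $D'_{\operatorname{Hom}(\det{\cal F},\Lambda^sE)}\sigma_{\cal F}=\widetilde g^\star\circ\widetilde\beta$ and $\bar\partial\sigma_{\cal F}^\star=\widetilde\beta^\star\circ\widetilde g$. The Chern connection of $\operatorname{Hom}(\det{\cal F},\Lambda^sE)$ with the induced metric is exactly $D_A$, so this is $(2)(a)$ and $(2)(b)$. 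The "hence" parts follow by composing with $\widetilde g$ on the left, respectively with $\widetilde g^\star$ on the right, and using $(1)(e)$ together with $\widetilde g\circ\sigma_{\cal F}=0$.

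For $(3)$, I use Corollary \ref{Cor:pi_j_g_curvature}(3)(a) for the sequence (\ref{eqn:det_F_short-exact-seq}), where $j=\sigma_{\cal F}$ and $\pi$ is replaced by $\Lambda^s\pi$. Its first identity reads
$$\sigma_{\cal F}\, i\Theta(\det{\cal F})\,\sigma_{\cal F}^\star=\sigma_{\cal F}\,\sigma_{\cal F}^\star\, i\Theta(\Lambda^sE)\,\sigma_{\cal F}\,\sigma_{\cal F}^\star+iD''\Lambda^s\pi\wedge D'\Lambda^s\pi.$$
Here the term $j\circ i\Theta_{|F}\circ j^\star$ has been rewritten as $j\,j^\star\,\Theta\, j\,j^\star$, and $(1)(b)$ has been inserted. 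This is exactly $(3)$, and it holds everywhere on $X$ because the sequence is global.

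For $(4)$, the curvature of $A=\operatorname{Hom}(\det{\cal F},\Lambda^sE)$ acts on $\sigma_{\cal F}$ by
$$i\Theta_{h_A}(A)\sigma_{\cal F}=i\Theta(\Lambda^sE)\circ\sigma_{\cal F}-\sigma_{\cal F}\circ i\Theta(\det{\cal F}).$$
Pairing with $\sigma_{\cal F}$ gives a scalar $(1,1)$-form. Taking $\{\cdot,\cdot\}_{h_A}$ amounts to taking the trace over the line $\det{\cal F}$ of $\sigma_{\cal F}^\star(\cdot)$, which equals $\sigma_{\cal F}^\star\, i\Theta(\Lambda^sE)\,\sigma_{\cal F}-i\Theta(\det{\cal F})$ by $(1)(c)$. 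By the subbundle formula (\ref{eqn:curvature_subbundle}), this is $i\widetilde\beta^\star\wedge\widetilde\beta$, i.e. $i\{\widetilde\beta,\widetilde\beta\}$ up to the ordering and sign convention of the pairing. Then $(2)(a)$ and the isometry property of $\widetilde g^\star$ give $i\{\widetilde\beta,\widetilde\beta\}=i\{D'_A\sigma_{\cal F},D'_A\sigma_{\cal F}\}_{h_A}$. Nonnegativity holds because, for a $(1,0)$-form $\eta$ valued in a Hermitian bundle, $i\{\eta,\eta\}$ is a semi-positive $(1,1)$-form. This is the standard fact also underlying Lemma 2.9 of [Pop25].

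The main obstacle is the sign bookkeeping in $(4)$. One must reconcile the sign in $i\Theta_F=i\Theta_{E|F}-i\beta^\star\wedge\beta$ with the convention for $\{\cdot,\cdot\}$ on $(1,0)$-forms, so that the result comes out as $+i\{\widetilde\beta,\widetilde\beta\}\ge0$. I would check this in a local orthonormal frame at a point, writing $\widetilde\beta=\sum dz_k\otimes\widetilde\beta_k$, where both sides become $\sum i\,dz_j\wedge d\bar z_k\,\langle\widetilde\beta_j,\widetilde\beta_k\rangle$.
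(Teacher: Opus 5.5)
Your argument is essentially the paper's: $(1)$--$(3)$ transcribe Corollary \ref{Cor:pi_j_g_curvature} and Proposition \ref{Prop:standard_exact-seq_2nd-fundamental} to the sequence (\ref{eqn:det_F_short-exact-seq}) with $j$ replaced by $\sigma_{\cal F}=\Lambda^sj$, and your orthonormal-frame check that $\Lambda^s\pi$ projects onto $\Lambda^sF$ only makes explicit what the paper leaves implicit; $(4)$ is the same reduction of the pairing to $\sigma_{\cal F}^\star\circ i\Theta_{\Lambda^sh}(\Lambda^sE)\circ\sigma_{\cal F}-i\Theta_{h_{\det{\cal F}}}(\det{\cal F})$ followed by the subbundle formula, with your isometry argument for $\widetilde g^\star$ in place of the paper's minimal-norm remark. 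Two corrections: first, (\ref{eqn:curvature_subbundle}) gives $-i\widetilde\beta^\star\wedge\widetilde\beta$ at that step, not $+i\widetilde\beta^\star\wedge\widetilde\beta$, and it is this minus sign that produces your $\sum_{j,k}i\,dz_j\wedge d\bar z_k\langle\widetilde\beta_j,\widetilde\beta_k\rangle$; second, fibrewise injectivity of $\Lambda^sj$ on all of $X$ does not follow from sheaf-injectivity (it fails for ${\cal O}\stackrel{(z_1,\,z_2)}{\longrightarrow}{\cal O}(1)^{\oplus 2}$ on $\Proj^2$, where $\sigma_{\cal F}$ vanishes at $[1:0:0]$ and $\pi$ is discontinuous there), so it is a premise built into the statement's description of (\ref{eqn:det_F_short-exact-seq}) as a sequence of vector bundles, on which the paper relies exactly as you do.
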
  

\noindent {\it Proof.} $(1)$\, Since $j$ is the inclusion map of ${\cal F}$ into ${\cal O}(E)$, $\Lambda^sj$ is the inclusion map of $\det{\cal F}$ into $\Lambda^sE$. It is intially defined as a holomorphic map of holomorphic vector bundles over $X\setminus S$ and then it extends holomorphically and uniquely across $S$. As this is the definition of $\sigma_{\cal F}$, we get identity (a). 

Identities (b)-(e) are the analogues for the short exact sequence considered in this corollary of their counterparts of Corollary \ref{Cor:pi_j_g_curvature}.

\vspace{1ex}

$(2)$\, (a) is the analogue in this case of the standard first identity in (\ref{eqn:j-g_adjoints_prop}), with $j$ replaced by $\Lambda^sj= \sigma_{\cal F}$, $g$ replaced by $\widetilde{g}$ and $\beta$ replaced by $\widetilde\beta$. Similarly, (b) is the analogue of the second identity on the third line of (\ref{eqn:j-g_adjoints_prop}).

\vspace{1ex}

$(3)$\, The analogue of the first identity in $(3)$(a) of Corollary \ref{Cor:pi_j_g_curvature} reads: \begin{eqnarray*}(\Lambda^sj)\circ i\Theta_{h_{\det F}}(\det{\cal F})\circ (\Lambda^sj)^\star & = & (\Lambda^sj)\circ\bigg(i\Theta_{\Lambda^sh}(\Lambda^sE)_{|\det {\cal F}}\bigg)\circ(\Lambda^sj)^\star \\
  & + & iD''_{\operatorname{End}(\Lambda^sE)}(\Lambda^s\pi)\wedge D'_{\operatorname{End}(\Lambda^sE)}(\Lambda^s\pi).\end{eqnarray*} This proves $(3)$ after writing: $\Lambda^sj = \sigma_{\cal F}$ and $i\Theta_{\Lambda^sh}(\Lambda^sE)_{|\det {\cal F}} = (\Lambda^sj)^\star\circ i\Theta_{\Lambda^sh}(\Lambda^sE)\circ (\Lambda^sj)$ and $(\Lambda^sj)\circ(\Lambda^sj)^\star = \sigma_{\cal F}\circ\sigma_{\cal F}^\star = \Lambda^s\pi$.

\vspace{1ex}

$(4)$\, We will use the following elementary fact. If $T,S:(V,\,h_V)\longrightarrow(W,\,h_W)$ are $\C$-linear maps of Hermitian $\C$-vector spaces, when we view $T,S\in V^\star\otimes W$ we have the following identity: \begin{eqnarray}\label{eqn:T-S-S-star-T-Id_inner-prod}\langle T,\,S\rangle_{h_{V^\star\otimes W}} = \langle S^\star T,\,\operatorname{Id}_V\rangle_{h_{\operatorname{End}V}},\end{eqnarray} $h_{V^\star\otimes W}$ and $h_{\operatorname{End}V}$ being the inner products induced on the respective vector spaces by $h_V$ and $h_W$.

Applying this to our case, we get: \begin{eqnarray}\label{Cor:Lambda_s_exact-seq_proof_1}\bigg\{i\Theta_{h_A}(A)\sigma_{\cal F},\,\sigma_{\cal F}\bigg\}_{h_A} = \bigg\{\sigma_{\cal F}^\star\circ i\Theta_{h_A}(A)\sigma_{\cal F},\,\operatorname{Id}_{\det{\cal F}}\bigg\}_{h_{\operatorname{End}(\det{\cal F})}},\end{eqnarray} where $i\Theta_{h_A}(A)\sigma_{\cal F}\in C^\infty_{1,\,1}(X,\,A)\simeq C^\infty_{1,\,1}(X,\,\operatorname{Hom}(\det{\cal F},\,\Lambda^sE))$ is obtained by evaluating the $(\operatorname{End}A)$-part of $i\Theta_{h_A}(A)$ on the section $\sigma_{\cal F}$ of $A$.

On the other hand, a standard formula yields: \begin{eqnarray*}i\Theta_{h_A}(A) = i\Theta_{\Lambda^sh}(\Lambda^sE)\otimes\operatorname{Id}_{\det{\cal F}^\star} + \operatorname{Id}_{\Lambda^sE}\otimes i\Theta_{h_{\det{\cal F}^\star}}(\det{\cal F}^\star).\end{eqnarray*} Hence: \begin{eqnarray}\label{Cor:Lambda_s_exact-seq_proof_2}\nonumber\sigma_{\cal F}^\star\circ i\Theta_{h_A}(A)\sigma_{\cal F} & = & \sigma_{\cal F}^\star\circ i\Theta_{\Lambda^sh}(\Lambda^sE)\circ\sigma_{\cal F} - \bigg(\sigma_{\cal F}^\star\circ\operatorname{Id}_{\Lambda^sE}\circ\sigma_{\cal F}\bigg)\otimes i\Theta_{h_{\det{\cal F}}}(\det{\cal F}) \\
\nonumber  & = & i\Theta_{\Lambda^sh}(\Lambda^sE)_{|\det{\cal F}} - (\sigma_{\cal F}^\star\circ\sigma_{\cal F})\otimes i\Theta_{h_{\det{\cal F}}}(\det{\cal F}) \\
& = & \bigg(i\Theta_{h_{\det{\cal F}}}(\det{\cal F}) - i\widetilde\beta^\star\wedge\widetilde\beta\bigg) - i\Theta_{h_{\det{\cal F}}}(\det{\cal F}) = - i\,\widetilde\beta^\star\wedge\widetilde\beta,\end{eqnarray} where we have used the identity $\sigma_{\cal F} = \Lambda^sj$ to go from the first to the second lines and then the analogue in the context of the short exact sequence (\ref{eqn:det_F_short-exact-seq}) of the standard identity (\ref{eqn:curvature_subbundle}) and the identity $\sigma_{\cal F}^\star\circ\sigma_{\cal F} = \operatorname{Id}_{\det\cal F}$ to go from the second to the third lines.

Putting together (\ref{Cor:Lambda_s_exact-seq_proof_1}) and (\ref{Cor:Lambda_s_exact-seq_proof_2}), we get: \begin{eqnarray*}\bigg\{i\Theta_{h_A}(A)\sigma_{\cal F},\,\sigma_{\cal F}\bigg\}_{h_A} = -\bigg\{i\,\widetilde\beta^\star\wedge\widetilde\beta,\,\operatorname{Id}_{\det{\cal F}}\bigg\}_{h_{\operatorname{End}(\det{\cal F})}} = -\bigg(-i\,\{\widetilde\beta,\,\widetilde\beta\}_{h_{\operatorname{Hom}(\det{\cal F},\,\widetilde{Q})}}\bigg)\geq 0,\end{eqnarray*} where to get the last equality we reapplied the elementary identity (\ref{eqn:T-S-S-star-T-Id_inner-prod}) with an extra $(-1)$-factor introduced by the fcat that $\widetilde\beta$ is an odd-degree form (a $1$-form). The last inequality is standard for every vector-bundle-valued $(1,\,0)$-form.

Now, using the second identity in $(2)$(a) of the statement, we further get: \begin{eqnarray*}\bigg\{i\Theta_{h_A}(A)\sigma_{\cal F},\,\sigma_{\cal F}\bigg\}_{h_A} = i\,\bigg\{\widetilde{g}\circ D_A'\sigma_{\cal F},\,\widetilde{g}\circ D_A'\sigma_{\cal F}\bigg\}_{h_{\operatorname{Hom}(\det{\cal F},\,\widetilde{Q})}} =  i\,\{D'_A\sigma_{\cal F},\,D'_A\sigma_{\cal F}\}_{h_A},\end{eqnarray*} where the last equality follows from the first identity in $(2)$(a) of the statement -- which ensures that $D'_A\sigma_{\cal F}$ is the element of minimal norm in the $\widetilde{g}$-fibre above $\widetilde\beta$ -- and the definition of the quotient metric.   \hfill $\Box$

\vspace{2ex}

Note that $(4)$ in Corollary \ref{Cor:Lambda_s_exact-seq} implies that if one were to equip $\det{\cal F}^\star$ with the fibre metric induced by the given fibre metric $h$ on $E$, then the analogue of the function $Z_{\omega,\,\Omega,\,h}^{(m)}({\cal F})$ introduced in Definition \ref{Def:Z-function} would be non-negative everywhere on $X$, for every coherent subsheaf ${\cal F}\subset{\cal O}(E)$. In particular, every Hermitian holomorphic vector bundle $(E,\,h)$ would be uniformly stable in the sense of Definition \ref{Def:m-pos-stability}.

The key point is that, in our definitions, $\det{\cal F}^\star$ is instead equipped with the (unique, up to a positive multiplicative constant) $(\omega,\,\Omega)$-Hermite-Einstein fibre metric. Consequently, our notions of stability depend on the discrepancy between these two fibre metrics on $\det{\cal F}^\star$, as spelt out in the following result.

\begin{The-Def}\label{The-Def:discrepancy-equation} Let ${\cal F}$ be a coherent subsheaf of rank $s\in\{1,\dots , r-1\}$ of a Hermitian holomorphic vector bundle $(E,\,h)$ of rank $r\geq 1$ over an $n$-dimensional compact complex manifold $X$. We suppose that $X$ carries forms $\omega$ and $\Omega$ satisfying the properties of Definition \ref{Def:m-pos-stability}.

  Let $h_{\det{\cal F}}^{H-E}$ be the (unique, up to a positive multiplicative constant) {\bf $(\omega,\,\Omega)$-Hermite-Einstein} fibre metric on the line bundle $\det{\cal F}$. (It was also denoted by $\det h_{{\cal F}}$ earlier.) Let $h_{\det{\cal F}}$ be the fibre metric on $\det{\cal F}$ induced by the fibre metric $\Lambda^sh$ of $\Lambda^sE$ when viewing $\det{\cal F}$ as a holomorphic {\bf subbundle} of $\Lambda^sE$. The unique $C^\infty$ function $f = f_{\cal F}:X\longrightarrow\R$ such that \begin{eqnarray}\label{eqn:discrepancy-fucntion_def}h_{\det{\cal F}}^{H-E} = e^{-f}\,h_{\det{\cal F}}\end{eqnarray} is called the {\bf discrepancy function} of the triple $(E,\,h,\,{\cal F})$.

  We equip the holomorphic vector bundle $A:=\Lambda^sE\otimes\det{\cal F}^\star$ with the fibre metric $h_A:=\Lambda^sh\otimes h_{\det\cal F}^\star$ induced by $\Lambda^sh$ and $h_{\det\cal F}$. On the other hand, we equip $A$ with the fibre metric $h_A^{H-E}:=\Lambda^sh\otimes (h_{\det\cal F}^{H-E})^\star$ induced by $\Lambda^sh$ and $h_{\det\cal F}^{H-E}$.

\vspace{1ex}

Then, the following identities hold: \begin{eqnarray}\label{eqn:discrepancy-equation_1}\widehat{Z}_{\omega,\,\Omega,\,h}^{(m)}({\cal F}) = e^{-f}\,\frac{Z_{\omega,\,\Omega,\,h}^{(m)}({\cal F})}{|\sigma_{{\cal F}}|^2_{h_A}} & = & P_{\omega,\,\Omega}(f) + \frac{1}{|\sigma_{{\cal F}}|^2_{h_A}}\,\frac{i\,\{D'_A\sigma_{\cal F},\,D'_A\sigma_{\cal F}\}_{h_A}\wedge\omega^{m-1}\wedge\Omega}{dV_\omega} \\
  \label{eqn:discrepancy-equation_2} & = & P_{\omega,\,\Omega}(f) - Q(\pi),\end{eqnarray} where $Q(\pi):=$ \begin{eqnarray*}\frac{1}{|\sigma_{{\cal F}}|^2_{h_A}}\,\frac{\bigg\{\sigma_{{\cal F}}^\star\circ(\Lambda^s\pi)\circ\bigg(iD''_{\operatorname{End}(\Lambda^sE)}(\Lambda^s\pi)\wedge D'_{\operatorname{End}(\Lambda^sE)}(\Lambda^s\pi)\bigg)\circ(\Lambda^s\pi)\circ\sigma_{{\cal F}},\,\operatorname{Id}_{\det{\cal F}}\bigg\}_{h_{\operatorname{End}({\det\cal F})}}\wedge\omega^{m-1}\wedge\Omega}{dV_\omega}\end{eqnarray*} \hspace{10ex} $\leq 0$,

\noindent where $h_{\operatorname{End}({\det\cal F})}$ is the fibre metric induced on $\operatorname{End}({\det\cal F})$ by $h_{\det\cal F}$ and $P_{\omega,\,\Omega}$ is the elliptic second-order differential operator of (\ref{eqn:P_operator_def}). As in Corollary \ref{Cor:Lambda_s_exact-seq}, $D_A:=D'_A + \bar\partial$ denotes the Chern connection of $(A,\,h_A)$. 

\end{The-Def} 

\noindent {\it Proof.} The first equality in (\ref{eqn:discrepancy-equation_1}) follows at once from the definitions. We are left with the other two.

From (\ref{eqn:discrepancy-fucntion_def}), we get $h_A^{H-E} = e^f\,h_A$, hence:  \begin{eqnarray*}i\Theta_{h_A^{H-E}}(A) = i\Theta_{h_A}(A) - i\partial\bar\partial f\otimes\operatorname{Id}_{\operatorname{End}(A)}.\end{eqnarray*} From this and from $(4)$ of Corollary \ref{Cor:Lambda_s_exact-seq}, we get: \begin{eqnarray*}\bigg\{i\Theta_{h_A^{H-E}}(A)\sigma_{\cal F},\,\sigma_{\cal F}\bigg\}_{h_A^{H-E}} = e^f\,i\,\{D'_A\sigma_{\cal F},\,D'_A\sigma_{\cal F}\}_{h_A} - e^f\,|\sigma_{\cal F}|^2_{h_A}i\partial\bar\partial f.\end{eqnarray*} Multiplying the last equality by $\omega^{m-1}\wedge\Omega$ and then dividing the result by $dV_\omega$, we get: \begin{eqnarray*}Z_{\omega,\,\Omega,\,h}^{(m)}({\cal F}) & = & \frac{\bigg\{i\Theta_{h_A^{H-E}}(A)\sigma_{\cal F},\,\sigma_{\cal F}\bigg\}_{h_A^{H-E}}\wedge\omega^{m-1}\wedge\Omega}{dV_\omega} \\
  & = & e^f\,\frac{i\,\{D'_A\sigma_{\cal F},\,D'_A\sigma_{\cal F}\}_{h_A}\wedge\omega^{m-1}\wedge\Omega}{dV_\omega} - e^f\,|\sigma_{\cal F}|^2_{h_A}\,\frac{i\partial\bar\partial f\wedge\omega^{m-1}\wedge\Omega}{dV_\omega}.\end{eqnarray*} This proves (\ref{eqn:discrepancy-equation_1}).

\vspace{1ex}

To deduce (\ref{eqn:discrepancy-equation_2}) from (\ref{eqn:discrepancy-equation_1}), we will express the above quantities in terms of $\Lambda^s\pi$.

In the proof of $(3)$ of Corollary \ref{Cor:Lambda_s_exact-seq}, we obtained the first equality below: \begin{eqnarray*}iD''_{\operatorname{End}(\Lambda^sE)}(\Lambda^s\pi)\wedge D'_{\operatorname{End}(\Lambda^sE)}(\Lambda^s\pi) & = & \sigma_{\cal F}\circ\bigg(i\Theta_{h_{\det F}}(\det{\cal F}) - i\Theta_{\Lambda^sh}(\Lambda^sE)_{|\det {\cal F}}\bigg)\circ\sigma_{\cal F}^\star \\
  & = & \sigma_{\cal F}\circ\bigg(i\,\widetilde\beta^\star\wedge\widetilde\beta\bigg)\circ\sigma_{\cal F}^\star,\end{eqnarray*} where the second equality follows from the analogue of the standard identity (\ref{eqn:curvature_subbundle}) in the context of the short exact sequence (\ref{eqn:det_F_short-exact-seq}). Now, composing on either side with $\Lambda^s\pi = \sigma_{\cal F}\circ\sigma_{\cal F}^\star$ and using the identity $\sigma_{\cal F}^\star\circ\sigma_{\cal F} = \operatorname{Id}_{\det{\cal F}}$, we get: \begin{eqnarray*}(\Lambda^s\pi)\circ\bigg(iD''_{\operatorname{End}(\Lambda^sE)}(\Lambda^s\pi)\wedge D'_{\operatorname{End}(\Lambda^sE)}(\Lambda^s\pi)\bigg)\circ(\Lambda^s\pi) =  \sigma_{\cal F}\circ\bigg(i\,\widetilde\beta^\star\wedge\widetilde\beta\bigg)\circ\sigma_{\cal F}^\star.\end{eqnarray*} Further composing with $\sigma_{\cal F}^\star$ on the left and $\sigma_{\cal F}$ on the right and re-using the identity $\sigma_{\cal F}^\star\circ\sigma_{\cal F} = \operatorname{Id}_{\det{\cal F}}$, we get: \begin{eqnarray*}\sigma_{\cal F}^\star\circ(\Lambda^s\pi)\circ\bigg(iD''_{\operatorname{End}(\Lambda^sE)}(\Lambda^s\pi)\wedge D'_{\operatorname{End}(\Lambda^sE)}(\Lambda^s\pi)\bigg)\circ(\Lambda^s\pi)\circ\sigma_{\cal F} =  i\,\widetilde\beta^\star\wedge\widetilde\beta.\end{eqnarray*} Hence: \begin{eqnarray*} & & \bigg\{\sigma_{\cal F}^\star\circ(\Lambda^s\pi)\circ\bigg(iD''_{\operatorname{End}(\Lambda^sE)}(\Lambda^s\pi)\wedge D'_{\operatorname{End}(\Lambda^sE)}(\Lambda^s\pi)\bigg)\circ(\Lambda^s\pi)\circ\sigma_{\cal F},\,\operatorname{Id}_{\det{\cal F}}\bigg\}_{h_{\operatorname{End}({\det\cal F})}} \\
  & = & -i\,\{\widetilde\beta,\,\widetilde\beta\}_{h_{\operatorname{Hom}(\det{\cal F},\,\widetilde{Q})}} = -i\,\{D'_A\sigma_{\cal F},\,D'_A\sigma_{\cal F}\}_{h_A},\end{eqnarray*} where the last equality was obtained at the end of the proof of Corollary \ref{Cor:Lambda_s_exact-seq}.

This shows that (\ref{eqn:discrepancy-equation_2}) follows from (\ref{eqn:discrepancy-equation_1}). \hfill $\Box$

\vspace{2ex}

We now see explicitly that if a given Hermitian fibre metric $h$ on $E$ induces an $(\omega,\,\Omega)$-Hermite-Einstein metric on the determinant line bundle of every coherent subsheaf ${\cal F}\subset{\cal O}(E)$ (equivalently, if the discrepancy function $f$ is constant for every ${\cal F}$), then $P_{\omega,\,\Omega}(f)\equiv 0$, hence $Z_{\omega,\,\Omega,\,h}^{(m)}({\cal F})\geq 0$ everywhere on $X$, for every such ${\cal F}$, making $(E,\,h)$ into a {\it uniformly semi-stable} vector bundle.

\begin{Cor}\label{Cor:discrepancy-equation} The setting is the same as in Theorem and Definition \ref{The-Def:discrepancy-equation}.

\vspace{1ex}  

 (i)\, The Hermitian holomorphic vector bundle $(E,\,h)$ is {\bf uniformly stable} (respectively {\bf uniformly semi-stable}) if and only if \begin{eqnarray}\label{eqn:discrepancy-inequality}P_{\omega,\,\Omega}(f) > Q(\pi)  \hspace{3ex} \bigg(\mbox{respectively} \hspace{2ex} P_{\omega,\,\Omega}(f) \geq Q(\pi)\bigg) \hspace{2ex} \mbox{on} \hspace{1ex} X, \hspace{1ex} \mbox{for every} \hspace{1ex} {\cal F}\subset{\cal O}(E),\end{eqnarray} where $Q(\pi):X\longrightarrow(-\infty,\, 0]$ is a $C^\infty$ function depending on $h$ and ${\cal F}$.

\vspace{1ex}  

 (ii)\, Let ${\cal F}\subset{\cal O}(E)$ be a coherent subsheaf of rank $s\in\{1,\dots , r-1\}$. If $Z_{\omega,\,\Omega,\,h}^{(m)}({\cal F})\equiv 0$, then:

\vspace{1ex}  

\hspace{2ex} (a)\, the fibre metric $h_{\det{\cal F}}$ induced on $\det{\cal F}$ by the fibre metric $\Lambda^sh$ of $\Lambda^sE$ (when viewing $\det{\cal F}$ as a holomorphic subbundle of $\Lambda^sE$) is {\bf $(\omega,\,\Omega)$-Hermite-Einstein};

\vspace{1ex}  

\hspace{2ex} (b)\, $D^{H-E}_A\sigma_{{\cal F}}\equiv 0$ \hspace{3ex} and \hspace{3ex} $D_A\sigma_{{\cal F}}\equiv 0$;

\vspace{1ex}

\noindent (i.e. $\sigma_{{\cal F}}$ is {\bf parallel} with respect to the Chern connections $D^{H-E}_A = (D^{H-E}_A)' + \bar\partial$ of $(A,\,h_A^{H-E})$ and $D_A = D_A' + \bar\partial$ of $(A,\,h_A)$.)

\vspace{1ex}  

(iii)\, Let ${\cal F}\subset{\cal O}(E)$ be a coherent subsheaf of rank $s\in\{1,\dots , r-1\}$. If the fibre metric $h_{\det{\cal F}}$ induced on $\det{\cal F}$ by the fibre metric $\Lambda^sh$ of $\Lambda^sE$ (when viewing $\det{\cal F}$ as a holomorphic subbundle of $\Lambda^sE$) is {\bf $(\omega,\,\Omega)$-Hermite-Einstein}, then  $Z_{\omega,\,\Omega,\,h}^{(m)}({\cal F})\geq 0$ everywhere on $X$.

\vspace{1ex}  

(iv)\, Suppose that $(E,\,h)$ is {\bf not uniformly semi-stable} and let ${\cal F}\subset{\cal O}(E)$ be a {\bf destabilising} coherent subsheaf of rank $s\in\{1,\dots , r-1\}$. Then, the {\bf destabilising open subset} \begin{eqnarray}\label{eqn:U_F_inclusion}U_{\cal F}:=\bigg\{x\in X\,\mid\, \widehat{Z}_{\omega,\,\Omega,\,h}^{(m)}({\cal F})(x)<0\bigg\}\subset\bigg\{x\in X\,\mid\, P_{\omega,\,\Omega}(f)(x)<0\bigg\} \subset X\end{eqnarray} is non-empty and every point of $X$ at which the discrepancy function $f = f_{\cal F}:X\longrightarrow\R$ attains a local maximum lies outside $U_{\cal F}$. 


\end{Cor}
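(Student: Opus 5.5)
Almost everything follows from the identities of Theorem and Definition \ref{The-Def:discrepancy-equation}, namely
\begin{eqnarray*}\widehat{Z}_{\omega,\,\Omega,\,h}^{(m)}({\cal F}) = e^{-f}\,\frac{Z_{\omega,\,\Omega,\,h}^{(m)}({\cal F})}{|\sigma_{\cal F}|^2_{h_A}} = P_{\omega,\,\Omega}(f) - Q(\pi),\end{eqnarray*}
with $Q(\pi)\leq 0$, combined with the strict positivity of $e^{-f}$ and of $|\sigma_{\cal F}|^2_{h_A}$. For $|\sigma_{\cal F}|_{h_A}$ we use $\sigma_{\cal F}^\star\circ\sigma_{\cal F}=\operatorname{Id}_{\det{\cal F}}$ from Corollary \ref{Cor:Lambda_s_exact-seq}, which gives $|\sigma_{\cal F}|_{h_A}\equiv 1$ on $X$. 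The sign of $Z$ therefore equals the sign of $P_{\omega,\,\Omega}(f)-Q(\pi)$ at every point.

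For (i), this sign equivalence and Definition \ref{Def:m-pos-stability}(b) give the claim at once. For (iii), if $h_{\det{\cal F}}$ is $(\omega,\,\Omega)$-Hermite-Einstein, uniqueness up to a constant makes $f$ constant. Then $P_{\omega,\,\Omega}(f)=0$ and $\widehat Z=-Q(\pi)\geq 0$.

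For (ii), suppose $Z\equiv 0$. Then $P_{\omega,\,\Omega}(f)=Q(\pi)\leq 0$ everywhere. The maximum principle for $P_{\omega,\,\Omega}$ is the one used in the proof of Theorem \ref{The:Kobayashi-vanishing_gen}: it is elliptic and its adjoint has no zeroth-order term, so integrating against $dV_\omega$ kills $\int_X P_{\omega,\,\Omega}(f)\,dV_\omega$. This is Stokes' theorem with $d\omega=0$ and $\partial\bar\partial\Omega=0$. Hence $f$ is constant, which is (a), and then $Q(\pi)\equiv 0$. By (\ref{eqn:discrepancy-equation_1}) this means $i\{D'_A\sigma_{\cal F},D'_A\sigma_{\cal F}\}_{h_A}\wedge\omega^{m-1}\wedge\Omega\equiv0$. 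The pointwise definiteness statement (Lemma 2.9 of [Pop25], as in Theorem \ref{The:Kobayashi-vanishing_gen}) then gives $D'_A\sigma_{\cal F}\equiv0$, so $D_A\sigma_{\cal F}\equiv 0$ by holomorphicity. Since $f$ is constant, $h_A^{H-E}=e^f h_A$ differs from $h_A$ by a constant factor, and the two metrics have the same Chern connection. This gives $D^{H-E}_A\sigma_{\cal F}\equiv 0$. Alternatively, (b) follows directly from Proposition \ref{Prop:Z-function_alternative}, since $\int_X Z=0$.

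For (iv), a destabilising ${\cal F}$ means that $Z\geq 0$ fails somewhere, so $U_{\cal F}\neq\emptyset$. On $U_{\cal F}$ we have $P_{\omega,\,\Omega}(f)<Q(\pi)\leq 0$, which proves the inclusion (\ref{eqn:U_F_inclusion}). At a local maximum $x_0$ of $f$, the Hessian condition gives $i\partial\bar\partial f(x_0)\leq 0$ as a $(1,1)$-form. Wedging with the positive $(n-1,n-1)$-form $\omega^{m-1}\wedge\Omega=\rho^{n-1}/(n-1)!$ gives a nonpositive $(n,n)$-form, so $P_{\omega,\,\Omega}(f)(x_0)\geq 0$. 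Hence $x_0\notin U_{\cal F}$.

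The main subtle point is the justification in (ii) that $P_{\omega,\,\Omega}(f)\leq0$ forces $f$ constant; I would cite the maximum principle argument from [DP25] already invoked in Theorem \ref{The:Kobayashi-vanishing_gen}.
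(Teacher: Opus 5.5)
Your proposal is correct and follows the paper's proof essentially step for step. Parts (i) and (iii) are read off from the identity $\widehat{Z}_{\omega,\,\Omega,\,h}^{(m)}({\cal F}) = P_{\omega,\,\Omega}(f) - Q(\pi)$ with $Q(\pi)\leq 0$; (ii) comes from $P_{\omega,\,\Omega}(f)=Q(\pi)\leq 0$ together with the maximum principle and Lemma 2.9 of [Pop25]; and (iv) comes from $P_{\omega,\,\Omega}(f)\geq 0$ at a local maximum of $f$. Your additions only spell out what the paper compresses into ``the maximum principle'': the observation $|\sigma_{\cal F}|_{h_A}\equiv 1$ via Corollary \ref{Cor:Lambda_s_exact-seq}(1)(c), the Stokes argument giving $\int_X P_{\omega,\,\Omega}(f)\,dV_\omega=0$, and the explicit Hessian computation at the maximum.
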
   

\noindent {\it Proof.} Only (ii), (iii) and (iv) still need proofs.

\vspace{1ex}

(ii)\, We have already proved conclusion (ii)(b) in Proposition \ref{Prop:Z-function_alternative} under the weaker assumption $\int_XZ_{\omega,\,\Omega,\,h}^{(m)}({\cal F})\,dV_\omega = 0$. From (\ref{eqn:discrepancy-equation_1}) we deduce that the hypothesis $Z_{\omega,\,\Omega,\,h}^{(m)}({\cal F})\equiv 0$ implies \begin{eqnarray*}P_{\omega,\,\Omega}(f) = Q(\pi)\leq 0   \hspace{10ex} \mbox{everywhere on}\hspace{1ex} X,\end{eqnarray*} where the last inequality was seen in Theorem and Definition \ref{The-Def:discrepancy-equation} to always hold. 

  As in the proof of theorem \ref{The:Kobayashi-vanishing_gen}, the ellipticity of $P_{\omega,\,\Omega}$, the compactness of $X$ and the maximum principle yield: \begin{eqnarray*}P_{\omega,\,\Omega}(f)\equiv 0 \hspace{5ex}\mbox{and}\hspace{5ex} f = \mbox{Const}.\end{eqnarray*} The constancy of $f$ implies that $h_{\det{\cal F}}$ is $(\omega,\,\Omega)$-Hermite-Einstein. This proves (ii)(a).

  We also get $Q(\pi)\equiv 0$, which is equivalent, thanks to (\ref{eqn:discrepancy-equation_1}) and (\ref{eqn:discrepancy-equation_2}), to $i\,\{D'_A\sigma_{\cal F},\,D'_A\sigma_{\cal F}\}_{h_A}\wedge\omega^{m-1}\wedge\Omega = 0$. As noted in the proof of Proposition \ref{Prop:Z-function_alternative}, this implies $D'_A\sigma_{\cal F}\equiv 0$.

  Meanwhile, since $f$ is constant, the fibre metrics $h_A^{H-E}$ and $h_A$ differ by a positive multiplicative constant, so we also have $(D^{H-E}_A)'\sigma_{\cal F}\equiv 0$. On the other hand, we already know that $\bar\partial\sigma_{\cal F} \equiv 0$.

\vspace{1ex}

(iii)\, The hypothesis means that the discrepancy function $f$ of the triple $(E,\,h,\,{\cal F})$ is {\it constant} (because the $(\omega,\,\Omega)$-Hermite-Einstein metric on $\det{\cal F}$ is unique up to a positive multiplicative constant). Therefore, $P_{\omega,\,\Omega}(f)\equiv 0$, so (\ref{eqn:discrepancy-equation_2}) reduces to: \begin{eqnarray*}e^{-f}\,\frac{Z_{\omega,\,\Omega,\,h}^{(m)}({\cal F})}{|\sigma_{{\cal F}}|^2_{h_A}} = - Q(\pi)\geq 0,\end{eqnarray*} at every point of $X$. This amounts to $Z_{\omega,\,\Omega,\,h}^{(m)}({\cal F})\geq 0$ everywhere on $X$.

\vspace{1ex}

(iv)\, Since $(E,\,h)$ is not uniformly semi-stable, there exists a proper coherent subsheaf ${\cal F}\subset{\cal O}(E)$ such that \begin{eqnarray*}c_h({\cal F})=\min\limits_X\widehat{Z}_{\omega,\,\Omega,\,h}^{(m)}({\cal F}) = \min\limits_X\bigg(P_{\omega,\,\Omega}(f) - Q(\pi)\bigg) <0.\end{eqnarray*} Thus, there exists $x_0\in X$ at which $\widehat{Z}_{\omega,\,\Omega,\,h}^{(m)}({\cal F})$ is negative. By continuity, $\widehat{Z}_{\omega,\,\Omega,\,h}^{(m)}({\cal F})$ is negative on a whole neighbourhood of $x_0$ and the subset $U_{\cal F}$ is open  in $X$ and contains $x_0$, hence it is non-empty.

Now, let $a\in X$ be such that $f$ attains a (local) maximum at $a$. (By continuity of $f$ and compactness of $X$, $f$ attains a global maximum at least at one point of $X$.) The maximum principle applied to the elliptic second-order differential operator $P_{\omega,\,\Omega}$ with no zero$^{th}$-order terms yields: \begin{eqnarray*}0\leq P_{\omega,\,\Omega}(f)(a).\end{eqnarray*} If we had $a\in U_{\cal F}$, then we would also have: $P_{\omega,\,\Omega}(f)(a)<Q(\pi)(a)\leq 0$, a contradiction.

As for the inclusion of $U_{\cal F}$ stated in (\ref{eqn:U_F_inclusion}), we note that whenever $\widehat{Z}_{\omega,\,\Omega,\,h}^{(m)}({\cal F})(x) = P_{\omega,\,\Omega}(f)(x) - Q(\pi)(x)< 0$, we get $P_{\omega,\,\Omega}(f)(x) < Q(\pi)(x)\leq 0$. The contention follows. \hfill $\Box$

\vspace{2ex}

The following corollary is the analogue for the normalised stability function $\widehat{Z}_{\omega,\,\Omega,\,h}^{(m)}({\cal F})$ (and the induced fibre metric $h_A$ on $A$) of Proposition \ref{Prop:Z-function_alternative} (which dealt with $Z_{\omega,\,\Omega,\,h}^{(m)}({\cal F})$ and $h_A^{H-E}$).

\begin{Cor}\label{Cor:discrepancy-equation_Z-hat} The setting is the same as in Theorem and Definition \ref{The-Def:discrepancy-equation}. We always have: \begin{eqnarray}\label{eqn:Z-hat-function_integral}\int\limits_X \widehat{Z}_{\omega,\,\Omega,\,h}^{(m)}({\cal F})\,dV_\omega \geq 0.\end{eqnarray}

   Moreover, if $\int_X\widehat{Z}_{\omega,\,\Omega,\,h}^{(m)}({\cal F})\,dV_\omega = 0$, then $D_A\sigma_{{\cal F}}\equiv 0$ (i.e. $\sigma_{{\cal F}}$ is parallel with respect to the Chern connection $D_A = D_A' + \bar\partial$ of $(A,\,h_A)$).

\end{Cor}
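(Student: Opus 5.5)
The plan is to integrate the discrepancy identity (\ref{eqn:discrepancy-equation_1}) of Theorem and Definition \ref{The-Def:discrepancy-equation} over $X$ against $dV_\omega$. That identity writes $\widehat{Z}_{\omega,\,\Omega,\,h}^{(m)}({\cal F})$ as the sum of two terms:
\begin{eqnarray*}
\widehat{Z}_{\omega,\,\Omega,\,h}^{(m)}({\cal F}) = P_{\omega,\,\Omega}(f) + \frac{1}{|\sigma_{{\cal F}}|^2_{h_A}}\,\frac{i\,\{D'_A\sigma_{\cal F},\,D'_A\sigma_{\cal F}\}_{h_A}\wedge\omega^{m-1}\wedge\Omega}{dV_\omega}.
\end{eqnarray*}
I will show that the first term integrates to zero and that the second is pointwise non-negative. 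This is the same scheme as in the proof of Proposition \ref{Prop:Z-function_alternative}, except that the operator $P_{\omega,\,\Omega}$ now acts on the discrepancy function $f$ rather than on $|\sigma_{\cal F}|^2$.

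For the first term, by the definition (\ref{eqn:P_operator_def}) of $P_{\omega,\,\Omega}$ we have
\begin{eqnarray*}
\int_X P_{\omega,\,\Omega}(f)\,dV_\omega = -\int_X i\partial\bar\partial f\wedge\omega^{m-1}\wedge\Omega.
\end{eqnarray*}
I will integrate by parts twice using Stokes on the compact manifold $X$, which turns this into $-\int_X f\,i\partial\bar\partial(\omega^{m-1}\wedge\Omega)$. Since $d\omega=0$, we get $\partial\bar\partial(\omega^{m-1}\wedge\Omega)=\omega^{m-1}\wedge\partial\bar\partial\Omega=0$, because $\partial\bar\partial\Omega=0$ by assumption. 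Hence this integral vanishes. The only point needing care here is the sign and bidegree bookkeeping: the cross terms involve $\partial\omega$ and $\bar\partial\omega$, which are zero, so they drop out. I do not expect a real obstacle at this step.

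For the second term, $|\sigma_{\cal F}|^2_{h_A}>0$. Indeed, by Corollary \ref{Cor:Lambda_s_exact-seq}(1)(c), $\sigma_{\cal F}^\star\circ\sigma_{\cal F}=\operatorname{Id}_{\det{\cal F}}$, which gives $|\sigma_{\cal F}|_{h_A}\equiv 1$. By Lemma 2.9 of [Pop25], as already used in the proof of Theorem \ref{The:Kobayashi-vanishing_gen}, the $(n,\,n)$-form $i\{\eta,\,\eta\}_{h_A}\wedge\omega^{m-1}\wedge\Omega$ is $\geq 0$ for every $A$-valued $(1,\,0)$-form $\eta$. Applying this with $\eta=D'_A\sigma_{\cal F}$ gives the pointwise non-negativity of the second term, and integrating yields (\ref{eqn:Z-hat-function_integral}).

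For the equality case, if $\int_X\widehat{Z}_{\omega,\,\Omega,\,h}^{(m)}({\cal F})\,dV_\omega=0$, then the integral of the non-negative continuous second term vanishes. That term is therefore identically zero, and the second part of Lemma 2.9 of [Pop25] gives $D'_A\sigma_{\cal F}\equiv 0$. Together with $\bar\partial\sigma_{\cal F}=0$, which holds because $\sigma_{\cal F}$ is holomorphic, this gives $D_A\sigma_{\cal F}\equiv 0$.
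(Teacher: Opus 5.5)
Your argument is correct and is essentially the paper's proof. The paper integrates the equivalent form $\widehat{Z}_{\omega,\,\Omega,\,h}^{(m)}({\cal F}) = P_{\omega,\,\Omega}(f) - Q(\pi)$ and obtains $\int_X P_{\omega,\,\Omega}(f)\,dV_\omega = 0$ by citing the $L^2_\omega$-orthogonal decomposition $C^\infty(X,\,\R)=\R\oplus\operatorname{Im}P_{\omega,\,\Omega}$ of [DP25]; you instead check this vanishing directly by Stokes, using $d\omega=0$ and $\partial\bar\partial\Omega=0$, which is the same underlying fact $P^\star_{\omega,\,\Omega}(1)=0$. Your remark that $|\sigma_{\cal F}|_{h_A}\equiv 1$ follows correctly from $\sigma_{\cal F}^\star\circ\sigma_{\cal F}=\operatorname{Id}_{\det{\cal F}}$, and it makes the normalising factor in (\ref{eqn:discrepancy-equation_1}) trivial.
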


\noindent {\it Proof.} Integrating identity (\ref{eqn:discrepancy-equation_2}), we get the first equality below: \begin{eqnarray*}\int\limits_X \widehat{Z}_{\omega,\,\Omega,\,h}^{(m)}({\cal F})\,dV_\omega = \int\limits_X  P_{\omega,\,\Omega}(f)\,dV_\omega - \int\limits_X  Q(\pi)\,dV_\omega = - \int\limits_X  Q(\pi)\,dV_\omega\geq 0,\end{eqnarray*} where the second equality is a consequence of the following $L^2_\omega$-orthogonal decomposition: \begin{eqnarray*}C^\infty(X,\,\R) = \R\oplus\mbox{Im}\,P_{\omega,\,\Omega}\end{eqnarray*} given in Corollary 2.9. of [DP25] (and proved there as a consequence of the fact that the adjoint operator $P_{\omega,\,\Omega}^\star$ has no zero$^{th}$-order terms, combined with the ellipticity of $P_{\omega,\,\Omega}$ and the compactness of $X$). The last inequality follows from the pointwise inequality $-Q(\pi)\geq 0$ on $X$ (seen above).

This proves (\ref{eqn:Z-hat-function_integral}) and shows that it becomes an equality if and only if $Q(\pi) = 0$ everywhere on $X$. We have seen in the proof of (ii) of Corollary \ref{Cor:discrepancy-equation} that this implies $D'_A\sigma_{\cal F}\equiv 0$. Since $\sigma_{\cal F}$ is already holomorphic, we have $\bar\partial\sigma_{\cal F} =0$, hence $D_A\sigma_{\cal F} = 0$ everywhere on $X$.  \hfill $\Box$

\section{Link with the Hermite-Einstein condition}\label{section:H-E_link}

We now investigate how the notion of $m$-positive semi-stability interacts with the $(\omega,\,\Omega)$-Hermite-Einstein condition introduced in [Pop25, Definition 2.1.].

\begin{The}\label{The:H-E_implies_m-pos-stability} In the setting of Definition \ref{Def:m-pos-stability}, suppose there exists an {\bf $(\omega,\,\Omega)$-Hermite-Einstein} $C^\infty$ Hermitian fibre metric $h$ on $E$. Then $(E,\,h)$ is {\bf uniformly $(\omega,\,\Omega)$-$m$-positively semi-stable}.

  Moreover, if $(E,\,h)$ is not uniformly $(\omega,\,\Omega)$-$m$-positively stable, then $E$ is a holomorphic, $h$-orthogonal direct sum \begin{eqnarray*}(E,\,h) = \bigoplus\limits_i (E_i,\,h_i)\end{eqnarray*} of {\bf uniformly $(\omega,\,\Omega)$-$m$-positively stable} holomorphic subbundles $(E_i,\,h_i)\subset (E,\,h)$, the $C^\infty$ Hermitian fibre metric $h_i$ induced by $h$ on each $E_i$ being {\bf $(\omega,\,\Omega)$-Hermite-Einstein} with the same Einstein factor as $(E,\,h)$.

\end{The}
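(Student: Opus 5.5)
The plan is to exploit the fact that, for an $(\omega,\,\Omega)$-Hermite-Einstein metric, the quadratic form defining $Z_{\omega,\,\Omega,\,h}^{(m)}({\cal F})$ becomes a scalar multiple of the identity. By hypothesis, $i\Theta_h(E)\wedge\omega^{m-1}\wedge\Omega/dV_\omega=\lambda\,\operatorname{Id}_E$ for a constant $\lambda$ (the Einstein factor). Since the curvature of $\Lambda^sh$ is the derivation extension of $i\Theta_h(E)$, I would first check that $i\Theta_{\Lambda^sh}(\Lambda^sE)\wedge\omega^{m-1}\wedge\Omega/dV_\omega=s\lambda\,\operatorname{Id}_{\Lambda^sE}$. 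Next, the $(\omega,\,\Omega)$-Hermite-Einstein metric on the line bundle $\det{\cal F}^\star$ has constant mean curvature $-c_{\cal F}$. By the tensor product formula for $A=\Lambda^sE\otimes\det{\cal F}^\star$, this gives
\begin{eqnarray*}Z_{\omega,\,\Omega,\,h}^{(m)}({\cal F})=(s\lambda-c_{\cal F})\,|\sigma_{\cal F}|^2_{h_A^{H-E}}, \hspace{5ex} \widehat{Z}_{\omega,\,\Omega,\,h}^{(m)}({\cal F})\equiv s\lambda-c_{\cal F}=:\kappa_{\cal F}\ \ \mbox{(a constant)}.\end{eqnarray*}
Proposition \ref{Prop:Z-function_alternative} gives $\int_X Z\,dV_\omega\geq 0$. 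Since $\sigma_{\cal F}\not\equiv 0$, the integral of $|\sigma_{\cal F}|^2$ is positive, so $\kappa_{\cal F}\geq 0$. This holds for every proper ${\cal F}$, so by Corollary and Definition \ref{Cor-Def:Z_unif-lower-bounf_F} we get $c_h({\cal F})=\kappa_{\cal F}\geq 0$, i.e. uniform semi-stability. Working with $\widehat{Z}$ rather than $Z$ avoids any issue with possible zeros of $\sigma_{\cal F}$ on $S$.

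For the second part, suppose $(E,\,h)$ is not uniformly stable. Then some proper ${\cal F}$ has $\kappa_{\cal F}=0$, so $Z\equiv 0$. Corollary \ref{Cor:discrepancy-equation} (ii) then gives that $h_{\det{\cal F}}$ is $(\omega,\,\Omega)$-Hermite-Einstein and that $D_A\sigma_{\cal F}\equiv 0$. Consequently $|\sigma_{\cal F}|_{h_A}$ is a nonzero constant, so $\sigma_{\cal F}$ vanishes nowhere. Since $\sigma_{\cal F}=\Lambda^sj$ is a nowhere-vanishing decomposable $s$-vector field, I expect ${\cal F}$ to be a genuine holomorphic subbundle $F$ of $E$ on all of $X$ (i.e. $S=\emptyset$).

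Furthermore, $D'_A\sigma_{\cal F}=0$ gives $\widetilde\beta=0$ by Corollary \ref{Cor:Lambda_s_exact-seq} (2)(a), so $\Lambda^s\pi$ is parallel. I would then deduce that $D'_{\operatorname{End}E}\pi=0$, hence $\beta=0$ by Corollary \ref{Cor:pi_j_g_curvature} (2). The idea for this step is that a projection $\pi$ whose $s$-th exterior power is parallel must itself be parallel: differentiating $\Lambda^s\pi$ as a derivation and evaluating on vectors $e_1\wedge\dots\wedge e_{s-1}\wedge v$ with $e_i\in F$ and $v\in F^\perp$ isolates $\beta$. With $\beta=0$, also $\beta^\star=0$, so $F^\perp\cong Q$ is holomorphic and $E=F\oplus F^\perp$ holomorphically and $h$-orthogonally. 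By Proposition \ref{Prop:standard_exact-seq_2nd-fundamental} (3), the curvature is block-diagonal with blocks $i\Theta_{h_F}(F)$ and $i\Theta_{h_Q}(Q)$. Hence both induced metrics satisfy the $(\omega,\,\Omega)$-Hermite-Einstein equation with the same factor $\lambda$.

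Finally I would induct on the rank. Each summand is Hermite-Einstein with factor $\lambda$, so by the first part it is uniformly semi-stable. Either it is uniformly stable, or it splits again, and the process terminates because ranks strictly decrease (rank-one summands are trivially uniformly stable). The main obstacle I expect is the step from $\kappa_{\cal F}=0$ to a global holomorphic orthogonal splitting: showing that parallelism of $\sigma_{\cal F}$ forces ${\cal F}$ to be locally free and saturated, with $\beta\equiv 0$ across $S$, via the passage from $\widetilde\beta=0$ to $\beta=0$.
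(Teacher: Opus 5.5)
Your proof is correct, and it departs from the paper's in the second half. The first part matches the paper: $(A,\,h_A^{H-E})$ is Hermite--Einstein with factor $s\lambda_E+\lambda_{\det{\cal F}^\star}$, so $Z^{(m)}_{\omega,\,\Omega,\,h}({\cal F})$ is that constant times $|\sigma_{\cal F}|^2$, and Proposition \ref{Prop:Z-function_alternative} fixes the sign. You use the integral inequality; the paper uses the dichotomy.

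For the splitting, the paper obtains $\kappa_{\cal F}=0$ by evaluating at a point $x_0$ where $Z$ vanishes and taking $\sigma_{\cal F}(x_0)\neq 0$ for granted. It then notes that $\sigma_{\cal F}$ is parallel and hands over to the classical Kobayashi--L\"ubke argument. That argument compares $d^{(m)}_{\omega,\,\Omega,\,h}$ of a subbundle, of $E$ and of the quotient through the traces of $i\beta^\star\wedge\beta\wedge\omega^{m-1}\wedge\Omega$, with equality iff $\beta=0$ by Lemma 2.9 of [Pop25], and leaves the extension across the singular set to the references.

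You go through $c_h({\cal F})$ instead (Corollary and Definition \ref{Cor-Def:Z_unif-lower-bounf_F}), which avoids the unjustified claim $\sigma_{\cal F}(x_0)\neq 0$. You then use the parallel section directly: $\widetilde\beta=\widetilde g\circ D'_A\sigma_{\cal F}=0$. Since $\widetilde\beta(e_1\wedge\dots\wedge e_s)\equiv\sum_k e_1\wedge\dots\wedge\beta e_k\wedge\dots\wedge e_s$ modulo $\Lambda^sF$, this is pointwise equivalent to $\beta=0$. Your route needs no degree comparison, stays inside the formalism of $\S$\ref{section:w-h-subbundle_formalism}, and disposes of the singular set through the non-vanishing of $\sigma_{\cal F}$. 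The paper's route reuses a known argument but leaves the codimension-two issues to the literature.

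Two small repairs are needed.

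First, $|\sigma_{\cal F}|_{h_A}\equiv 1$ tautologically wherever $h_A$ is defined, because $h_{\det{\cal F}}$ is induced through $\sigma_{\cal F}$ itself (Corollary \ref{Cor:Lambda_s_exact-seq}, (1)(c)). Moreover $h_A$ degenerates at zeros of $\sigma_{\cal F}$, so its constancy tells you nothing about zeros. Argue instead with $D^{H-E}_A\sigma_{\cal F}\equiv 0$, which Proposition \ref{Prop:Z-function_alternative} gives directly for the smooth metric $h_A^{H-E}$. Then $|\sigma_{\cal F}|_{h_A^{H-E}}$ is a positive constant.

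Second, ${\cal F}$ itself need not be locally free: for $n\geq 2$, ${\cal I}_p\,{\cal F}$ has the same $\det$ and the same $\sigma$. What you actually get is the following. The line subbundle $\sigma_{\cal F}(\det{\cal F})\subset\Lambda^sE$ is decomposable on a dense open set, hence everywhere, since the Pl\"ucker image is closed. So it equals $\Lambda^sW$ for a holomorphic subbundle $W\subset E$ (the saturation of ${\cal F}$), and it is $W$ that splits off.
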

  
\noindent {\it Proof.} The hypothesis means that there exists a $C^\infty$ Hermitian fibre metric $h$ on $E$ and a constant $\lambda_E\in\R$ such that \begin{eqnarray*}i\Theta_h(E)\wedge\omega^{m-1}\wedge\Omega = \lambda_E\,dV_\omega\otimes\mbox{Id}_E.\end{eqnarray*}

Let ${\cal F}\subset{\cal O}(E)$ be a coherent subsheaf  of rank $s\in\{1,\dots , r-1\}$. Since every holomorphic line bundle admits a (unique, up to a positive multiplicative constant) $(\omega,\,\Omega)$-Hermite-Einstein fibre metric and $\det{\cal F}^\star$ is a holomorphic line bundle, there exists a $C^\infty$ Hermitian fibre metric $h^{H-E}_{\det{\cal F}^\star} =\det h_{{\cal F}}^\star$ on $\det{\cal F}^\star$ and a constant $\lambda_{\det{\cal F}^\star}\in\R$ such that \begin{eqnarray*}i\Theta_{h^{H-E}_{\det{\cal F}^\star}}(\det{\cal F}^\star)\wedge\omega^{m-1}\wedge\Omega = \lambda_{\det{\cal F}^\star}\,dV_\omega.\end{eqnarray*}

Then, $h$ induces a metric $\Lambda^sh$ on $\Lambda^sE$. The latter metric and $h^{H-E}_{\det{\cal F}^\star}$ induce a metric $\Lambda^sh\otimes h^{H-E}_{\det{\cal F}^\star}$ on $\Lambda^sE\otimes\det{\cal F}^\star$ whose Chern curvature form satisfies the equality: \begin{eqnarray*}i\Theta_{\Lambda^sh\otimes h^{H-E}_{\det{\cal F}^\star}}(\Lambda^sE\otimes\det{\cal F}^\star)\wedge\omega^{m-1}\wedge\Omega = \bigg(s\lambda_E + \lambda_{\det{\cal F}^\star}\bigg)\,dV_\omega\otimes\mbox{Id}_E\end{eqnarray*} at every point of $X$. (Indeed, as in the classical case, $(E,\,h)$ being $(\omega,\,\Omega)$-Hermite-Einstein with Einstein factor $\lambda_E$ implies that $(\Lambda^sE,\,\Lambda^sh)$ is $(\omega,\,\Omega)$-Hermite-Einstein with Einstein factor $s\lambda_E$. Moreover, the Einstein factors add up in a tensor product of vector bundles. See Lemma 2.1.4 in [LT95] for the standard case and Lemma 2.7. in [Pop25] for the current case.)

  Combined with Definition \ref{Def:Z-function}, this yields: \begin{eqnarray}\label{The:H-E_implies_m-pos-stability_proof_1}Z_{\omega,\,\Omega,\,h}^{(m)}({\cal F}) = \bigg(s\lambda_E + \lambda_{\det{\cal F}^\star}\bigg)\,|\sigma_{{\cal F}}|^2_{\Lambda^sh\otimes h^{H-E}_{\det{\cal F}^\star}}\end{eqnarray} at every point of $X$. Since the canonical section $\sigma_{{\cal F}}$ is non-vanishing, $|\sigma_{{\cal F}}|^2> 0$ (we have dropped the subscript of the pointwise norm for the sake of brevity) everywhere on $X$. Therefore, from conclusion (\ref{eqn:Z-function_alternative}) of Proposition \ref{Prop:Z-function_alternative} we infer that the constant $s\lambda_E + \lambda_{\det{\cal F}^\star}$ is non-negative, hence $Z_{\omega,\,\Omega,\,h}^{(m)}({\cal F})\geq 0$ at every point of $X$.

  This proves that $E$ is {\bf $m$-positively semi-stable} with a {\bf uniform} $C^\infty$ Hermitian fibre metric $h$. Indeed, the $(\omega,\,\Omega)$-Hermite-Einstein metric $h$ on $E$ is independent of ${\cal F}\subset{\cal O}(E)$.

\vspace{1ex}

Now, suppose that $(E,\,h)$ is {\it not uniformly $m$-positively stable}. Then, there is a destabilising subsheaf, namely a coherent subsheaf ${\cal F}\subset{\cal O}(E)$ of some rank $s\in\{1,\dots , r-1\}$ such that $Z_{\omega,\,\Omega,\,h}^{(m)}({\cal F})\geq 0$ everywhere on $X$ (as we have seen in the first part of the proof) but there exists a point $x_0\in X$ such that \begin{eqnarray*}Z_{\omega,\,\Omega,\,h}^{(m)}({\cal F})(x_0) = 0.\end{eqnarray*}

Since (\ref{The:H-E_implies_m-pos-stability_proof_1}) holds (at every point of $X$, hence also at $x_0$) and $\sigma_{\cal F}(x_0)\neq 0$, we infer the vanishing of the constant $s\lambda_E + \lambda_{\det{\cal F}^\star}$. Hence \begin{eqnarray*}Z_{\omega,\,\Omega,\,h}^{(m)}({\cal F})\equiv 0.\end{eqnarray*} Thus, the last conclusion of Proposition \ref{Prop:Z-function_alternative} ensures that $\sigma_{{\cal F}}$ is parallel with respect to the Chern connection $D_{\Lambda^sh\otimes h^{H-E}_{\det{\cal F}^\star}} = D_{\Lambda^sh\otimes h^{H-E}_{\det{\cal F}^\star}}' + \bar\partial$ of the holomorphic Hermitian vector bundle $(\Lambda^s E\otimes\det{\cal F}^\star,\,\Lambda^sh\otimes h^{H-E}_{\det{\cal F}^\star})$. Hence, the holomorphic line bundle $\det{\cal F}$ must be a parallel subbundle of $\Lambda^s E$. This implies that the $(\omega,\,\Omega)$-Hermite-Einstein fibre metric $h^{H-E}_{\det{\cal F}} = (h^{H-E}_{\det{\cal F}^\star})^\star$ of $\det{\cal F}$ equals (up to a positive multiplicative constant) the metric induced from the one on $\Lambda^s E$ by the inclusion $\det\varphi:\det{\cal F}\hookrightarrow\Lambda^s E$, where $\varphi:{\cal F}\hookrightarrow{\cal O}(E)$ is the original inclusion.

From this point onwards, the proof proceeds in the same (by now standard) way as in the classical case treated by Kobayashi in [Kob82] and by L\"ubke in [Lub83], presented as Theorem 2.3.2 in [LT95] and, in the slightly more general context of this paper, as Theorem 2.15. in [Pop25].

\vspace{1ex}

Here are the main steps of the argument.

\vspace{1ex}

$\bullet$ One shows that the destabilising subsheaf ${\cal F}\subset{\cal O}(E)$ must be locally free and a direct summand of $E$, namely the quotient sheaf ${\cal O}(E)/{\cal F}$ is itself locally free and, denoting by $G$ the associated holomorphic vector bundle, one has a holomorphic direct-sum decomposition: \begin{eqnarray*}E = F\oplus G\end{eqnarray*} in which the subbundles $F$ and $G$ of $E$ are $(\omega,\,\Omega)$-Hermite-Einstein with the same Einstein factor $\lambda_E$ as $(E,\,h)$.

  $\bullet$ This is done by associating with every torsion-free coherent sheaf ${\cal F}$ on $X$ and every Hermitian fibre metric $h$ on the determinant line bundle $\det{\cal F}$ the function: \begin{eqnarray*}d^{(m)}_{\omega,\,\Omega,\,h}({\cal F}):X\longrightarrow\R, \hspace{5ex}  d^{(m)}_{\omega,\,\Omega,\,h}({\cal F}):=\frac{i\Theta_h(\det{\cal F})\wedge\omega^{m-1}\wedge\Omega}{dV_\omega},\end{eqnarray*} and by showing that in any short exact sequence of holomorphic vector bundles on $X$: \begin{eqnarray*}0\longrightarrow (S,\,h_S) \longrightarrow (E,\,h_E) \longrightarrow (Q,\,h_Q)\longrightarrow 0\end{eqnarray*} in which $h_E$ is an $(\omega,\,\Omega)$-Hermite-Einstein fibre metric on $E$ and $h_S$, resp. $h_Q$, are the induced, resp. quotient, Hermitian fibre metrics on $S$, resp. $Q$, one has \begin{eqnarray*}\frac{d^{(m)}_{\omega,\,\Omega,\,h_S}(S)}{\rk S}\stackrel{(i)}{\leq} \frac{d^{(m)}_{\omega,\,\Omega,\,h_E}(E)}{\rk E}\stackrel{(ii)}{\leq} \frac{d^{(m)}_{\omega,\,\Omega,\,h_Q}(Q)}{\rk Q},\end{eqnarray*} with equality in either (i) or (ii) being equivalent to the short exact sequence splitting holomorphically. In this case, $h_S$ and $h_Q$ are again $(\omega,\,\Omega)$-Hermite-Einstein with the same Einstein factors as $(E,\,h_E)$.

    $\bullet$ To see this, one uses the well-known formula $i\Theta_{h_E}(\det E) = \mbox{Trace}_E(i\Theta_{h_E}(E))$ (where we still denote by $h_E$ the fibre metric induced on $\det E$ by the fibre metric $h_E$ on $E$) and its analogues for $S$ and $Q$, as well as the standard formulae (see e.g. [Dem97, V-$\S14$]): \begin{eqnarray}\label{eqn:curvatures_S-Q_exact-seq}i\Theta_{h_S}(S) = i\Theta_{h_E}(E)_{|S} + i\beta^\star\wedge\beta \hspace{5ex}\mbox{and}\hspace{5ex} i\Theta_{h_Q}(Q) = i\Theta_{h_E}(E)_{|Q} + i\beta\wedge\beta^\star,\end{eqnarray} where $\beta\in C^\infty_{1,\,0}(X,\,\mbox{Hom}\,(S,\,Q))$ is the second fundamental form of $S$ in $E$ and $\beta^\star\in C^\infty_{0,\,1}(X,\,\mbox{Hom}\,(Q,\,S))$ is its adjoint. As is well known, $\bar\partial\beta^\star = 0$ and the above short exact sequence splits holomorphically if and only if the extension class $\{\beta^\star\}\in H^{0,\,1}_{\bar\partial}(X,\,\mbox{Hom}\,(Q,\,S))$ vanishes.

    $\bullet$ Under the $(\omega,\,\Omega)$-Hermite-Einstein assumption on $(E,\,h_E)$, inequalities (i) and (ii) are then seen to be respectively equivalent to the inequalities: \begin{eqnarray*}\mbox{Trace}_S\bigg(\frac{i\beta^\star\wedge\beta\wedge\omega^{m-1}\wedge\Omega}{dV_\omega}\bigg)\leq 0\leq\mbox{Trace}_Q\bigg(\frac{i\beta\wedge\beta^\star\wedge\omega^{m-1}\wedge\Omega}{dV_\omega}\bigg),\end{eqnarray*} which hold thanks to Lemma 2.9. of [Pop25]. Moreover, either of these inequalities is an equality if and only if $\beta=0$ (again by  Lemma 2.9. of [Pop25]), which implies the holomorphic splitting of the exact sequence of vector bundles.

\vspace{1ex}    

An induction on the rank of $E$ then completes the proof.    \hfill $\Box$

\vspace{3ex}


Recall that the generalisation to Gauduchon metrics of the main Uhlenbeck-Yau result of [UY86] and [UY89], obtained in [LY86] (see also [Buc88]), ensures that every {\it slope stable} (with respect to $\rho$, in the sense of Mumford-Takemoto) holomorphic vector bundle $E$ carries a $\rho$-Hermite-Einstein $C^\infty$ Hermitian fibre metric $h$. Combined with our Theorem \ref{The:H-E_implies_m-pos-stability}, this proves the implication (\ref{eqn:introd_slope-stab_implies_unif-stab}) highlighted in the introduction. 




The result that proves implication (\ref{eqn:introd_unif-semi-stab_not-implies_slope-semi}) (see introduction) is the following:

\begin{The}\label{The:unif-semi-stab_not-slope-semi-stab} There exists a Hermitian holomorphic vector bundle $(E,\,h)$ on a compact complex manifold $(X,\,\omega,\,\Omega)$ such that $(E,\,h)$ is {\bf uniformly $(\omega,\,\Omega)$-$m$-positively semi-stable}, but it is {\bf not $(\omega,\,\Omega)$-slope semi-stable}.

\end{The}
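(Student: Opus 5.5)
The plan is to build the example as a direct sum of two line bundles carrying $(\omega,\,\Omega)$-Hermite-Einstein metrics with different Einstein factors, and to verify uniform semi-stability by hand. The verification will use the explicit form of $Z$ for rank-one subsheaves rather than Corollary \ref{Cor:discrepancy-equation} (iii), because the induced metric on $\det{\cal F}$ is generally not Hermite-Einstein.

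\textbf{The construction.} Take $X$ a compact Kähler manifold with forms $\omega,\Omega$ as in Definition \ref{Def:m-pos-stability}; the simplest choice is a compact Riemann surface with $n=m=1$ and $\Omega\equiv 1$. Pick holomorphic line bundles $L_1,L_2$ whose $(\omega,\Omega)$-Hermite-Einstein metrics $h_1,h_2$ have Einstein factors $a>b$, i.e. $i\Theta_{h_j}(L_j)\wedge\omega^{m-1}\wedge\Omega=\lambda_j\,dV_\omega$ with $\lambda_1=a$, $\lambda_2=b$. On a curve one can take $\deg L_1>\deg L_2$. Set $E=L_1\oplus L_2$ and $h=h_1\oplus h_2$. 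Then
\[
i\Theta_h(E)\wedge\omega^{m-1}\wedge\Omega/dV_\omega=\operatorname{diag}(a,b).
\]
\textbf{Failure of slope semi-stability.} This is immediate. Integrating the curvature identities gives $\mu_\rho(L_1)=c\,a>c\,(a+b)/2=\mu_\rho(E)$ for a positive normalising constant $c$, where $\rho$ is the Gauduchon metric with $\rho^{n-1}/(n-1)!=\omega^{m-1}\wedge\Omega$. So $L_1$ destabilises $E$.

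\textbf{Uniform semi-stability.} Since $r=2$, every proper coherent subsheaf ${\cal F}$ has rank $s=1$, and $\sigma_{\cal F}=(\sigma_1,\sigma_2)$ with $\sigma_j\in H^0(X,\,L_j\otimes\det{\cal F}^\star)$. Let $c_{\cal F}$ be the Einstein factor of $\det{\cal F}$. The curvature of $A=E\otimes\det{\cal F}^\star$ with $h\otimes h^{H-E}_{\det{\cal F}^\star}$ is diagonal, so
\[
\widehat{Z}_{\omega,\,\Omega,\,h}^{(m)}({\cal F})=\frac{(a-c_{\cal F})|\sigma_1|^2+(b-c_{\cal F})|\sigma_2|^2}{|\sigma_1|^2+|\sigma_2|^2}.
\]
Now split according to whether $\sigma_2$ vanishes.

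If $\sigma_2\not\equiv 0$, then $L_2\otimes\det{\cal F}^\star$ has a nonzero holomorphic section, and its Hermite-Einstein metric has factor $b-c_{\cal F}$. Theorem \ref{The:Kobayashi-vanishing_gen} (a), applied to this line bundle, forces $b-c_{\cal F}\geq 0$, since a negative constant factor would make the bundle $(\omega,\Omega)$-$m$-negative everywhere. Then $\widehat{Z}\geq 0$ pointwise, because $a>b\geq c_{\cal F}$.

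If $\sigma_2\equiv 0$, then $\sigma_1\not\equiv 0$, and the same argument gives $a-c_{\cal F}\geq 0$. In that case $\widehat{Z}\equiv a-c_{\cal F}\geq 0$ wherever $\sigma_{\cal F}\neq 0$.

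In both cases $Z\geq 0$ with the single metric $h$, so $(E,h)$ is uniformly semi-stable. As a consistency check, $Z\equiv 0$ exactly for ${\cal F}=L_1$, which is the destabilising sheaf.

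\textbf{Expected obstacle.} The main point to get right is the sign argument via Theorem \ref{The:Kobayashi-vanishing_gen}: a line bundle carrying a Hermite-Einstein metric with negative factor has no nonzero sections, so a nonzero section forces a nonnegative factor. A secondary subtlety is that $\sigma_{\cal F}$ may vanish where ${\cal F}$ is not saturated. I would handle this by replacing ${\cal F}$ with its saturation, or by simply checking that the formula for $Z$ (not $\widehat{Z}$) is still $\geq 0$ there, since it is a sum of nonnegative coefficients times $|\sigma_j|^2$. Existence of $L_1,L_2$ with $a>b$ is clear on a curve, e.g. $L_1={\cal O}(p)$ and $L_2={\cal O}_X$.
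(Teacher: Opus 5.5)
Your proof is correct and uses the same construction as the paper. The paper takes $X=\Proj^1$, $(\omega,\Omega)=(\omega_{FS},1)$ and $E={\cal O}(d_1)\oplus{\cal O}(d_2)$ with $d_1>d_2>0$. It sets $h=h_1\oplus h_2$ with Hermite--Einstein metrics on the summands, so the curvature of $A$ is $c\,\mathrm{diag}(d_1-d,\,d_2-d)\,\omega_{FS}$.

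The difference is in how the subsheaves are handled.
\begin{itemize}
\item The paper first determines which ${\cal O}(d)$ embed as subbundles, using $H^0(\Proj^1,{\cal O}(k))=0$ for $k<0$; this gives $d\le d_2$ or $d=d_1$. It then treats $d<d_2$, $d=d_2$ and $d=d_1$ separately.
\item You instead split on whether $\sigma_2\equiv 0$. You get the sign of the relevant Einstein factor from Theorem~\ref{The:Kobayashi-vanishing_gen}~(a).
\end{itemize}
On a curve your sign argument is just the degree argument. However, your version works verbatim on any $(X,\omega,\Omega)$ and handles every subsheaf at once. This includes non-saturated subsheaves, where $\sigma_{\cal F}$ has zeros and you correctly check $Z$ rather than $\widehat{Z}$. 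It also includes embeddings ${\cal O}(d_2)\to E$ of the form $(s_1,\,1)$ with $s_1\not\equiv 0$. For these, $\sigma_{\cal F}\neq(0,1)$, so the paper's case (b) does not literally apply, although $Z=c(d_1-d_2)|s_1|^2\geq 0$ still holds.

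One correction to your consistency check: $Z\equiv 0$ does not single out $L_1$. For ${\cal F}=L_2$ embedded as the second summand you have $\sigma_{\cal F}=(0,1)$ and $c_{\cal F}=b$, so $Z\equiv 0$ there as well. The paper also records this in Remark~\ref{Rem:example_u-semi-stable_not-stable}. This does not affect the proof, since semi-stability only needs $Z\geq 0$.
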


By $E$ being {\it $(\omega,\,\Omega)$-slope semi-stable} (or simply {\it $(\omega,\,\Omega)$-semi-stable}, as it was called in (iii) of Definition 2.10. of [Pop25]) we mean that $\mu_{\omega,\,\Omega}({\cal F})\leq\mu_{\omega,\,\Omega}(E)$ for every proper coherent subsheaf ${\cal F}$ of $E$, where (cf. (i)-(iii) of Definition 2.10. of [Pop25]): \begin{eqnarray*}\mu_{\omega,\,\Omega}({\cal F})=\frac{\deg_{\omega,\,\Omega}({\cal F})}{\rk {\cal F}}: = \frac{\int\limits_X c_1(\det {\cal F})\wedge\omega^{m-1}\wedge\Omega}{\rk {\cal F}}.\end{eqnarray*}

Note that in the case of complex curves, namely when $n=m=1$, the auxiliary forms $\omega$ and $\Omega$ appearing in the general definition play no essential role. Indeed, the factor $\omega^{m-1}$ reduces to $1$, while $\Omega$ has bidegree $(n-m,\,n-m) = (0,\,0)$, so it is merely a smooth positive function on $X$, which we may normalise to be identically equal to $1$. Thus, on curves, these notions reduce to their classical one-dimensional versions. On the other hand, for explicit computations of pointwise quantities on $\Proj^1$ (see the proof of Theorem \ref{The:unif-semi-stab_not-slope-semi-stab} below), we will choose $(\omega,\,\Omega) = (\omega_{FS},\,1)$, where $\omega_{FS}$ is the Fubini-Study metric of the complex projective line.  

\vspace{2ex}
 
Before giving an explicit example of a vector bundle that will prove Theorem \ref{The:unif-semi-stab_not-slope-semi-stab}, we describe a more general set-up that may lead to further examples and counter-examples in future. Let $C$ be a compact complex curve of genus $g\geq 0$ and let \begin{eqnarray}\label{eqn:extension_curve_examples}0\longrightarrow L_1\longrightarrow E \longrightarrow L_2\longrightarrow 0\end{eqnarray} be a short exact sequence of holomorphic vector bundles over $C$ such that $\rk(L_1) = \rk(L_2) = 1$ (so $\rk(E) = 2$) and $d_1:=\deg (L_1)> d_2:=\deg (L_2) > 0$.

\begin{Obs}\label{Obs:example_not-slope-semi-stable} The following inequalities hold: \begin{eqnarray}\label{eqn:ranks-slopes_ineq_extension_curve_examples}\mu (L_2) = \deg (L_2)<\mu (E) < \deg (L_1) = \mu (L_1),\end{eqnarray} where $\mu(F)=\deg (F)/\rk (F)$ denotes the classical {\bf slope} of a vector bundle $F$.

    In particular, $E$ is {\bf not slope semi-stable} since it is destabilised by $L_1$.   

\end{Obs}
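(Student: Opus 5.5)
The plan is to obtain everything from the additivity of degree in the short exact sequence (\ref{eqn:extension_curve_examples}). On the curve $C$ we have $\det E\simeq L_1\otimes L_2$. Indeed, the canonical isomorphism $\det E\simeq\det F\otimes\det Q$ is already used in the proof of Proposition \ref{Prop:subsheaves_quotient-sheaves_equiv}, and it globalises to a holomorphic isomorphism of line bundles because here all three sheaves are locally free on the whole of $C$. Taking first Chern classes and integrating over $C$, we get $\deg(E) = d_1 + d_2$. Since $\rk(E)=2$, this gives $\mu(E) = (d_1+d_2)/2$.

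The two strict inequalities in (\ref{eqn:ranks-slopes_ineq_extension_curve_examples}) are then elementary. By the hypothesis $d_1>d_2$, the arithmetic mean $(d_1+d_2)/2$ lies strictly between $d_2$ and $d_1$. The outer equalities $\mu(L_j)=\deg(L_j)$ hold simply because $\rk(L_j)=1$. The positivity assumption $d_2>0$ plays no role in this observation; I expect it to be needed later, when the metric is constructed in the proof of Theorem \ref{The:unif-semi-stab_not-slope-semi-stab}.

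For the final claim, I would regard $L_1$, through the injection in (\ref{eqn:extension_curve_examples}), as a proper coherent subsheaf of ${\cal O}(E)$ of rank $1\in\{1,\dots,r-1\}$ with $r=2$. The inequality $\mu(L_1)>\mu(E)$ then violates the defining condition $\mu({\cal F})\leq\mu(E)$ of slope semi-stability, so $E$ is not slope semi-stable.

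I also need that the slope used here is the classical one. As noted before the statement, on curves with $n=m=1$ and $\Omega\equiv 1$ the $(\omega,\,\Omega)$-slope reduces to $\deg/\rk$, so no auxiliary forms interfere. There is no real obstacle in this argument. The only point to handle carefully is the identification $\det E\simeq L_1\otimes L_2$, equivalently additivity of $c_1$ in short exact sequences, and that is standard.
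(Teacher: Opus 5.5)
Your proof is correct and matches the paper's argument. Both use $\det E\simeq L_1\otimes L_2$ to get $c_1(E)=c_1(L_1)+c_1(L_2)$, hence $\mu(E)=(d_1+d_2)/2$, which lies strictly between $d_2$ and $d_1$ because $d_1>d_2$; the subsheaf $L_1$ then violates $\mu({\cal F})\leq\mu(E)$.
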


\noindent {\it Proof.} From $\det E = \det L_1\otimes\det L_2 =  L_1\otimes L_2$ we infer $c_1(E) = c_1(\det E) = c_1(L_1) + c_1(L_2)$, where $c_1(F)$ denotes the first Chern class of a vector bundle $F$. Hence \begin{eqnarray*}\mu (E) = \frac{\deg (E)}{2} = \frac{\int\limits_C c_1(E)}{2} = \frac{\deg (L_1) + \deg (L_2)}{2}.\end{eqnarray*} Thus, (\ref{eqn:ranks-slopes_ineq_extension_curve_examples}) follows from this and from the hypothesis $\deg (L_1)> \deg (L_2)$. \hfill $\Box$

\vspace{2ex}

Our example that will prove Theorem \ref{The:unif-semi-stab_not-slope-semi-stab} will be a certain {\it split} extension of the type (\ref{eqn:extension_curve_examples}). However, it may be possible to construct other examples for different situations from {\it non-split} such extensions. As is well known, the extension is non-split if and only if its extension class $\{\beta^\star\}_{\bar\partial}\in H^{0,\,1}(C,\,\operatorname{Hom}(L_2,\,L_1))\simeq H^{0,\,1}(C,\, L_1-L_2)$ is non-zero, where $\beta^\star$ is the adjoint of the second fundamental form of $L_1$ in $E$ with respect to a given fibre metric on $E$ and the induced fibre metrics on $L_1$, $L_2$. Let $h^{0,\,1}(L_1-L_2)$ denote the complex dimension of this cohomology vector space.

By Serre duality, we get the first equality below: \begin{eqnarray*}h^{0,\,1}(L_1-L_2) = h^{1,\,0}(L_2-L_1) = h^0(K_C + L_2-L_1),\end{eqnarray*} where $h^0(F)$ denotes the dimension of the complex vector space $H^0(C,\,F)$ of global holomorphic sections of a vector bundle $F\to C$ and $K_C$ is the canonical bundle of $C$.

If $C$ is an elliptic curve, then $K_C$ is trivial. Since $\deg (L_2-L_1)<0$, we get $h^{0,\,1}(L_1-L_2) = 0$.

\begin{Conc}\label{Conc:elliptic-curve_solit-extension} If $C$ is an {\bf elliptic curve}, every extension of the type (\ref{eqn:extension_curve_examples}) with $\deg (L_1)> \deg (L_2)$ {\bf splits} holomorphically.

\end{Conc}

Thus, in order to get a non-split extension (\ref{eqn:extension_curve_examples}), we have to increase the genus $g$ of the curve $C$. The classical Riemann-Roch formula yields: \begin{eqnarray*}h^0(L_1-L_2) - h^{0,\,1}(L_1-L_2) = \deg (L_1-L_2) - g +1.\end{eqnarray*} Since $\deg (L_1-L_2) > 0$ in our case, this shows that $h^{0,\,1}(L_1-L_2) > 0$ (a condition equivalent to the existence of non-split extensions (\ref{eqn:extension_curve_examples})) whenever $g$ is large enough (explicitly, whenever $g > \deg (L_1-L_2) + 1 - h^0(L_1-L_2)$).

\vspace{2ex}

\noindent {\it Proof of Theorem \ref{The:unif-semi-stab_not-slope-semi-stab}.} $\bullet$ We take $C:=\Proj^1$ and the extension (\ref{eqn:extension_curve_examples}) to be {\it split}. In other words, we take $E:=L_1\oplus L_2\longrightarrow\Proj^1$, where $L_1={\cal O}(d_1)$ and $L_2={\cal O}(d_2)$ are holomorphic line bundles over $\Proj^1$ with $d_1>d_2>0$. We further take $\omega = \omega_{FS}$, the Fubini-Study metric of $\Proj^1$. Since $n=1$, $m=1$ and there is no $\Omega$. For $i\in\{1,\,2\}$, let $h_i$ be a $C^\infty$ Hermitian fibre metric, that we choose to be $\omega_{FS}$-Hermite-Einstein, on $L_i$. Finally, we equip $E$ with the $C^\infty$ Hermitian fibre metric $h:=h_1\oplus h_2$.

Since $C=\Proj^1$ is a curve and $E$ is of rank $2$, every proper coherent subsheaf ${\cal F}$ of $E$ is necessarily locally free of rank $1$. Equivalently, it is a holomorphic line bundle ${\cal O}(d)$ for some integer $d$.

\vspace{1ex}

$\bullet$ We will now determine the integers $d$ such that ${\cal O}(d)$ is a holomorphic subbundle of $E$. This is equivalent to finding all $d\in\Z$ such that there exist {\it non-zero} bundle morphisms: \begin{eqnarray*}\Phi = (s_1,\,s_2):{\cal O}(d)\longrightarrow {\cal O}(d_1)\oplus{\cal O}(d_2).\end{eqnarray*} Any such $\Phi$ is determined by a pair of sections $s_1\in H^0(\Proj^1,\,{\cal O}(d_1-d))$ and $s_2\in H^0(\Proj^1,\,{\cal O}(d_2-d))$ that do not have common zeroes in $\Proj^1$.

\vspace{1ex}

{\it Case $1$.} If $d>d_1$, then $d_i-d<0$ for $i=1,2$, so $H^0(\Proj^1,\,{\cal O}(d_1-d)) = H^0(\Proj^1,\,{\cal O}(d_2-d)) = \{0\}$.

We conclude that there exists {\it no non-zero} bundle morphism $\Phi$ as above in this case.

\vspace{1ex}

{\it Case $2$.} If $d_2<d\leq d_1$, then $d_2-d<0$, so $H^0(\Proj^1,\,{\cal O}(d_2-d)) = \{0\}$. This forces $s_2\equiv 0 $. We get $\Phi = (s_1,\,0)$, so we need $s_1$ (a global holomorphic section of the line bundle ${\cal O}(d_1-d)$) to be non-vanishing. This forces ${\cal O}(d_1-d)$ to be trivial. Equivalently, we must have $d=d_1$.

Conversely, when $d=d_1$, we can choose $s_1$ to be any non-zero constant (for example, $1$). Then, we get $\Phi=(1,\,0)$ and its image is the obvious line subbundle $L_1={\cal O}(d_1)\hookrightarrow{\cal O}(d_1)\oplus{\cal O}(d_2)$.

\vspace{1ex}

{\it Case $3$.} If $d\leq d_2$, then $d_1-d>0$ and $d_2-d\geq 0$, so $H^0(\Proj^1,\,{\cal O}(d_1-d))\neq\{0\}$ and $H^0(\Proj^1,\,{\cal O}(d_2-d))\neq\{0\}$. We can then find non-zero sections $s_1$ and $s_2$, one in each of these two vector spaces, that have no common zeroes. Indeed, we can always arrange for their respective zero divisors (which are finite linear combinations with positive integer coefficients of points) to have disjoint supports in $\Proj^1$.   

\vspace{1ex}

The conclusion of this case-by-case discussion is summed up in the following equivalence: \begin{eqnarray*}{\cal O}(d)\subset{\cal O}(d_1)\oplus{\cal O}(d_2) \iff d\leq d_2 \hspace{3ex}\mbox{or}\hspace{3ex} d=d_1.\end{eqnarray*}

\vspace{1ex}

$\bullet$ Let $d$ be an integer such that $d\leq d_2$ or $d=d_1$. Let $\sigma_d\in H^0(\Proj^1,\,{\cal O}(d_1-d)\oplus{\cal O}(d_2-d))$ be the canonical section induced by the inclusion ${\cal O}(d)\hookrightarrow{\cal O}(d_1)\oplus{\cal O}(d_2)$. Let $h_d$ be a $C^\infty$ $\omega_{FS}$-Hermite-Einstein fibre metric on ${\cal O}(d)$. 

Since each of the fibre metrics $h_1$, $h_2$, $h_d$ is $\omega_{FS}$-Hermite-Einstein, there exist constants $\lambda_1, \lambda_2,\lambda_d\in\R$ such that the following identities hold everywhere on $\Proj^1$: \begin{eqnarray*}i\Theta_{h_i}\bigg({\cal O}(d_i)\bigg) = \lambda_i\,\omega_{FS}, \hspace{5ex} i=1,2, \hspace{5ex}\mbox{and}\hspace{5ex} i\Theta_{h_d}\bigg({\cal O}(d)\bigg) = \lambda_d\,\omega_{FS}.\end{eqnarray*}

Integrating over $C$, we get: \begin{eqnarray*}\lambda_i = c\,d_i > 0, \hspace{5ex} i=1,2, \hspace{5ex}\mbox{and}\hspace{5ex} \lambda_d = c\,d,\end{eqnarray*} where $c:=1/\int_{\Proj^1}\omega_{FS}>0$ is a constant. This yields: \begin{eqnarray*}i\Theta:=i\Theta_{h\otimes h_d^{-1}}\bigg({\cal O}(d_1-d)\oplus{\cal O}(d_2-d)\bigg) = \begin{pmatrix}\lambda_1 - \lambda_d & 0 \\
           0 & \lambda_2 - \lambda_d\end{pmatrix}\,\omega_{FS} = c\,\begin{pmatrix}d_1 - d & 0 \\
           0 & d_2 - d\end{pmatrix}\,\omega_{FS}.\end{eqnarray*}

\vspace{1ex}

{\it Case $(a)$.} If $d<d_2$, then $d_1-d>0$ and $d_2-d>0$, so $i\Theta>0$. Therefore, for the stability function associated with ${\cal O}(d)\hookrightarrow{\cal O}(d_1)\oplus{\cal O}(d_2)$, we get: \begin{eqnarray*}Z_d(x):=\bigg\{(i\Theta\,\sigma_d)(x),\,\sigma_d(x)\bigg\}_{h\otimes h_d^{-1}}>0, \hspace{5ex} x\in\Proj^1.\end{eqnarray*}

\vspace{1ex}

{\it Case $(b)$.} If $d=d_2$, then $d_1-d>0$ and $d_2-d=0$. So $\sigma_{d_2} = (0,\,1)$, hence $Z_{d_2}\equiv 0$.

\vspace{1ex}

{\it Case $(c)$.} If $d=d_1$, then $d_1-d=0$ and $d_2-d<0$. We get $\sigma_{d_1} = (1,\,0)$, hence $Z_{d_1}\equiv 0$.

\vspace{1ex}

This case-by-case discussion shows that $Z_d\geq 0$ everywhere on $\Proj^1$ for every integer $d$ for which ${\cal O}(d)$ is a vector subbundle of $E$. We conclude that $(E,\,h) = ({\cal O}(d_1)\oplus{\cal O}(d_2),\,h_1\oplus h_2)$ is {\it uniformly semi-stable}.

On the other hand, we saw in Observation \ref{Obs:example_not-slope-semi-stable} that $E$ is {\it not slope semi-stable}.  \hfill $\Box$

\vspace{2ex}

\begin{Rem}\label{Rem:example_u-semi-stable_not-stable} The above proof shows that in the example $E={\cal O}(d_1)\oplus{\cal O}(d_2)\longrightarrow\Proj^1$ highlighted to prove Theorem \ref{The:unif-semi-stab_not-slope-semi-stab}, the only coherent subsheaves ${\cal F} = {\cal O}(d)\subset{\cal O}(E)$ whose associated stability function $Z_{\cal F} = Z_d$ is not positive everywhere on $\Proj^1$ are the two direct summands $L_1={\cal O}(d_1)$ and $L_2={\cal O}(d_2)$, for which $Z_d$ vanishes identically.

  Thus, $L_1$ and $L_2$ are the only obstructions to $(E,\,h)$ being uniformly stable.

\end{Rem}

\vspace{6ex}

\noindent {\bf References.} \\

\noindent [Buc88]\, N. P. Buchdahl --- {\it Hermitian-Einstein Connections and Stable Vector Bundles over Compact Complex Surfaces} --- Math. Ann. {\bf 280} (1988), 625-648. 

\vspace{1ex}

\noindent [Dem97]\, J.-P. Demailly --- {\it Complex Analytic and Algebraic Geometry} --- \url{https://www-fourier.univ-grenoble-alpes.fr/~demailly/manuscripts/agbook.pdf}

\vspace{1ex}


\noindent [DP25]\, S. Dinew, D. Popovici --- {\it $m$-Pseudo-effectivity and a Monge-Amp\`ere-Type Equation for Forms of Positive Degree} --- arXiv:2510.27362v1 [math.DG].

\vspace{1ex}

\noindent [Kob82]\, S. Kobayashi --- {\it Curvature and Stability of Vector Bundles} --- Proc. Jap. Acad. {\bf 58} (1982), 158-162.

\vspace{1ex}

\noindent [Kob87]\, S. Kobayashi --- {\it Differential Geometry of Complex Vector Bundles} --- Princeton University Press, 1987.

\vspace{1ex}

\noindent [Lub83]\, M. L\"ubke --- {\it Stability of Einstein-Hermitian Vector Bundles} --- Manuscripta Math. {\bf 42} (1983), 245-257.

\vspace{1ex}

\noindent [LT95]\, M. L\"ubke, A. Teleman --- {\it The Kobayashi-Hitchin Correspondence} --- World Scientific, 1995.

\vspace{1ex}

\noindent [LY86]\, J. Li, S-T Yau --- {\it Hermitian-Yang-Mills Connection on Non-K\"ahler Manifolds} --- Mathematical aspects of string theory (San Diego, Calif., 1986), 560–573, Adv. Ser. Math. Phys., {\bf 1}, World Sci. Publishing, Singapore, 1987.

\vspace{1ex}

\noindent [Pop05]\, D. Popovici --- {\it A Simple Proof of a Theorem by Uhlenbeck and Yau} --- Math. Z. {\bf 250} (2005), 855-872.

\vspace{1ex}

\noindent [Pop25]\, D. Popovici --- {\it Generalised Hermite-Einstein Fibre Metrics and Slope Stability for Holomorphic Vector Bundles} --- arXiv:2512.24932v2 [math.AG].

\vspace{1ex}

\noindent [UY86]\, K. Uhlenbeck, S.-T. Yau --- {\it On the Existence of Hermitian-Yang-Mills Connections in Stable Vector Bundles} --- Communications in Pure and Applied Mathematics, Vol. {\bf XXXIX} (1986), Supplement, S257-S293. 

\vspace{1ex}

\noindent [UY89]\, K. Uhlenbeck, S.-T. Yau --- {\it A Note on Our Previous Paper: On the Existence of Hermitian-Yang-Mills Connections in Stable Vector Bundles} --- Communications in Pure and Applied Mathematics, Vol. {\bf XLII} (1989), 703-707.

\vspace{6ex}

\noindent Institut de Math\'ematiques de Toulouse, Universit\'e de Toulouse,

\noindent 118 route de Narbonne, 31062 Toulouse, France

\noindent Email: popovici@math.univ-toulouse.fr

\end{document}